\documentclass[11pt]{article}

\usepackage{amsfonts,amssymb,amsmath,graphicx,graphics,color}
\usepackage[english]{babel}
\selectlanguage{english}


\topmargin -0.5in
\textheight 9in
\oddsidemargin 0.15in
\evensidemargin 0.25in
\textwidth 6.15in
\parskip=3pt plus 1pt minus 1pt




 \def\botcaption#1#2{\medskip\centerline{{\scshape #1.}\kern8pt
 {\rm #2}}\bigskip}


 \makeatletter
 \@addtoreset{equation}{section}
 \makeatother

 \makeatletter
 \@addtoreset{enunciato}{section}
 \makeatother

 \newcounter{enunciato}[section]


 \newtheorem{ittheorem}{Theorem}
 \newtheorem{ithypothesis}{Hypothesis}
 \newtheorem{itlemma}{Lemma}
 \newtheorem{itproposition}{Proposition}
 \newtheorem{itconjecture}{Conjecture}
 \newtheorem{itdefinition}{Definition}
 \newtheorem{itassumption}{Assumption}
 \newtheorem{itremark}{Remark}
 \newtheorem{itclaim}{Claim}
 \newtheorem{itcorollary}{Corollary}

 \newenvironment{theorem}{\addtocounter{enunciato}{1}
 \begin{ittheorem}}{\end{ittheorem}}

 \newenvironment{corollary}{\addtocounter{enunciato}{1}
 \begin{itcorollary}}{\end{itcorollary}}

 \newenvironment{lemma}{\addtocounter{enunciato}{1}
 \begin{itlemma}}{\end{itlemma}}

 \newenvironment{proposition}{\addtocounter{enunciato}{1}
 \begin{itproposition}}{\end{itproposition}}
 
 \newenvironment{conjecture}{\addtocounter{enunciato}{1}
 \begin{itconjecture}}{\end{itconjecture}}

 \newenvironment{definition}{\addtocounter{enunciato}{1}
 \begin{itdefinition}}{\end{itdefinition}}

 \newenvironment{assumption}{\addtocounter{enunciato}{1}
 \begin{itassumption}}{\end{itassumption}}

 \newenvironment{hypothesis}{\addtocounter{enunciato}{1}
 \begin{ithypothesis}}{\end{ithypothesis}}

 \newenvironment{remark}{\addtocounter{enunciato}{1}
 \begin{itremark}}{\end{itremark}}

 \newenvironment{claim}{\addtocounter{enunciato}{1}
 \begin{itclaim}}{\end{itclaim}}

 \newenvironment{proof}{\noindent {\bf Proof.\,}
 }{\hspace*{\fill}$\square$\medskip}

 \newcommand{\be}[1]{\begin{equation}\label{#1}}
 \newcommand{\ee}{\end{equation}}

 \newcommand{\bl}[1]{\begin{lemma}\label{#1}}
 \newcommand{\el}{\end{lemma}}

 \newcommand{\br}[1]{\begin{remark}\label{#1}}
 \newcommand{\er}{\end{remark}}

 \newcommand{\bt}[1]{\begin{theorem}\label{#1}}
 \newcommand{\et}{\end{theorem}}

 \newcommand{\bd}[1]{\begin{definition}\label{#1}}
 \newcommand{\ed}{\end{definition}}

 \newcommand{\ba}[1]{\begin{assumption}\label{#1}}
 \newcommand{\ea}{\end{assumption}}

 \newcommand{\bh}[1]{\begin{hypothesis}\label{#1}}
 \newcommand{\eh}{\end{hypothesis}}

 \newcommand{\bcl}[1]{\begin{claim}\label{#1}}
 \newcommand{\ecl}{\end{claim}}

 \newcommand{\bp}[1]{\begin{proposition}\label{#1}}
 \newcommand{\ep}{\end{proposition}}
 
 \newcommand{\bconj}[1]{\begin{conjecture}\label{#1}}
 \newcommand{\econj}{\end{conjecture}}

 \newcommand{\bc}[1]{\begin{corollary}\label{#1}}
 \newcommand{\ec}{\end{corollary}}

 \newcommand{\bpr}{\begin{proof}}
 \newcommand{\epr}{\end{proof}}

 \newcommand{\bi}{\begin{itemize}}
 \newcommand{\ei}{\end{itemize}}

 \newcommand{\ben}{\begin{enumerate}}
 \newcommand{\een}{\end{enumerate}}


 \def\botcaption#1#2{\medskip\centerline{{\scshape #1.}\kern8pt
 {\rm #2}}\bigskip}


 \def \ba {\begin{array}}
 \def \ea {\end{array}}

 \def \Z {{\mathbb Z}}
 \def \R {{\mathbb R}}
 
 \def \N {{\mathbb N}}
 \def \P {{\mathbb P}}
 \def \E {{\mathbb E}}

 \def \cX {{\mathcal X}}
 \def \cC {{\mathcal C}}
 \def \cO {{\mathcal O}}
 \def \cW {{\mathcal W}}
 \def \cD {{\mathcal D}}
 \def \cL {{\mathcal L}}
 \def \cI {{\mathcal I}}
 \def \cJ {{\mathcal J}}
 \def \cR {{\mathcal R}}
 \def \cA {{\mathcal A}}
 \def \cM {{\mathcal M}}
 \def \cN {{\mathcal N}}
 \def \cE {{\mathcal E}}
 \def \cP {{\mathcal P}}
 \def \cQ {{\mathcal Q}}
 \def \cB {{\mathcal B}}
 \def \cF {{\mathcal F}}
 
 \def \cC {{\mathcal C}}
 \def \cU {{\mathcal U}}
 \def \cV {{\mathcal V}}
 \def \cH {{\mathcal H}}
 \def \cT {{\mathcal T}}
 
 \def \cS {{\mathcal S}}

 \def \AB {\mathrm{int}}
 \def \nAB {\mathrm{nint}}

 \def \ind {{1}}
 \def \gep {{\varepsilon}}

 \def \DOM {{\hbox{\footnotesize\rm DOM}}}
 \def \CONE {{\hbox{\footnotesize\rm CONE}}}
 \def \EIGH {{\hbox{\footnotesize\rm EIGH}}}


\begin{document}

\title{Phase diagram for a copolymer in a micro-emulsion}

\author{\renewcommand{\thefootnote}{\arabic{footnote}}
F.\ den Hollander
\footnotemark[1]
\\
\renewcommand{\thefootnote}{\arabic{footnote}}
N.\ P\'etr\'elis
\footnotemark[2]
}

\footnotetext[1]
{ Mathematical Institute, Leiden University, P.O.\ Box 9512,
2300 RA Leiden, The Netherlands.\\
E-mail adress: denholla@math.leidenuniv.nl\\
URL: https://www.math.leidenuniv.nl/~denholla/ 
}\,

\footnotetext[2]
{Laboratoire de Math\'ematiques Jean Leray UMR 6629,
Universit\'e de Nantes, 2 Rue de la Houssini\`ere,\\ 
BP 92208, F-44322 Nantes Cedex 03, France.\\
E-mail adress: nicolas.petrelis@univ-nantes.fr\\
URL: http://www.math.sciences.univ-nantes.fr/~petrelis/
}\,

\maketitle


\begin{abstract}
In this paper we study a model describing a copolymer in a micro-emulsion. The 
copolymer consists of a random concatenation of hydrophobic and hydrophilic 
monomers, the micro-emulsion consists of large blocks of oil and water arranged 
in a percolation-type fashion. The interaction Hamiltonian assigns energy $-\alpha$ 
to hydrophobic monomers in oil and energy $-\beta$ to hydrophilic monomers in 
water, where $\alpha,\beta$ are parameters that without loss of generality are taken 
to lie in the cone $\{(\alpha,\beta) \in\R^2\colon\,\alpha \geq |\beta|\}$. Depending on 
the values of these parameters, the copolymer either stays close to the oil-water 
interface (localization) or wanders off into the oil and/or the water (delocalization). 
Based on an \emph{assumption} about the strict concavity of the free energy of a copolymer 
near a linear interface, we derive a variational formula for the quenched free energy per monomer that 
is \emph{column-based}, i.e., captures what the copolymer does in columns of
different type. We subsequently transform this into a variational formula that is 
\emph{slope-based}, i.e., captures what the polymer does as it travels at different 
slopes, and we use the latter to identify the phase diagram in the $(\alpha,\beta)$-cone. 
There are two regimes: \emph{supercritical} (the oil blocks percolate) and \emph{subcritical} 
(the oil blocks do not percolate). The supercritical and the subcritical phase diagram 
each have two localized phases and two delocalized phases, separated by four 
critical curves meeting at a quadruple critical point. The different phases correspond 
to the different ways in which the copolymer can move through the micro-emulsion. 
The analysis of the phase diagram is based on \emph{three hypotheses} about the
possible frequencies at which the oil blocks and the water blocks can be visited. 
We show that these three hypotheses are plausible, but do not 
provide a proof.

\vskip 0.5truecm
\noindent
\emph{AMS} 2000 \emph{subject classifications.} 60F10, 60K37, 82B27.\\
\emph{Key words and phrases.} Random copolymer, random micro-emulsion, free 
energy, percolation, variational formula, large deviations, concentration of measure.\\
\emph{Acknowledgment:} The research in this paper is supported by ERC Advanced 
Grant 267356-VARIS. NP is grateful for hospitality at the Mathematical Institute of 
Leiden University during extended visits in 2011, 2012 and 2013 within the framework 
of this grant. FdH and NP are grateful for hospitality at the Institute for Mathematical 
Sciences at the National University of Singapore in May of 2015.\\
\emph{Remark:} The part of this paper dealing with the ``column-based'' variational 
formula for the free energy has appeared as a preprint on the mathematics archive: 
arXiv:1204.1234.
\end{abstract}

\newpage


\setcounter{section}{-1}

\section{Outline}

In Section~\ref{Intro}, we introduce our model for a copolymer in a micro-emulsion and present 
a variational formula for the quenched free energy per monomer, which we refer to as the 
\emph{slope-based variational formula}, involving the fractions of time the copolymer moves 
at a given slope in the interior of the two solvents and the fraction of time it moves along the 
interfaces between the two solvents. This variational formula is the corner stone of our analysis. 
In Section~\ref{phdiag}, we identify the phase diagram. There are two regimes: \emph{supercritical} 
(the oil blocks percolate) and \emph{subcritical} (the oil blocks do not percolate). We obtain the \emph{general structure} of the phase diagram, and state a number of properties that exhibit the 
\emph{fine structure} of the phase diagram as well. The latter come in the form of theorems and 
conjectures, and are based on three hypotheses.

In Section~\ref{keyingr}, we introduce a \emph{truncated} version of the model in which the
copolymer is not allowed to travel more than $M$ blocks upwards or downwards in each 
column, where $M\in\N$ is arbitrary but fixed. We give a precise definition of the various 
ingredients that are necessary to state the slope-based variational formula for the truncated
model, including various auxiliary quantities that are needed for its proof. Among these is 
the quenched free energy per monomer of the copolymer crossing a block column of a given 
type, whose existence and variational characterization are given in Section~\ref{proofprop1}. 
In Section~\ref{proofofgene}, we derive an auxiliary variational formula for the quenched free 
energy per monomer in the truncated model, which we refer to as the \emph{column-based 
variational formula}, involving both the free energy per monomer and the fraction of time 
spent inside single columns of a given type, summed over the possible types. At the end of 
Section~\ref{proofofgene}, we show how the truncation can be removed by letting $M\to\infty$.
In Section~\ref{varfo2}, we use the column-based variational formula to derive the slope-based variational formula. In Section~\ref{phdiagsupsub} we use the slope-based variational formula 
to prove our results for the phase diagram.  Appendices~\ref{Path entropies}--\ref{appA} 
collect several technical results that are needed along the way.

For more background on random polymers with disorder we refer the reader to the monographs 
by Giacomin~\cite{G07} and den Hollander~\cite{dH09}, and to the overview paper by Caravenna, 
den Hollander and P\'etr\'elis~\cite{CdHP12}.


\section{Model and slope-based variational formula}
\label{Intro}

In Section~\ref{Mfe} we define the model, in Section~\ref{newvarfor} we state the slope-based
variational formula, in Section~\ref{discus} we place this formula in the proper context.


\subsection{Model}
\label{Mfe}

To build our model, we distinguish between three scales: (1) the \emph {microscopic} 
scale associated with the size of the monomers in the copolymer ($=1$, by convention); 
(2) the {\emph {mesoscopic}} scale associated with the size of the droplets in the 
micro-emulsion ($L_n\gg1$); (3) the {\emph {macroscopic}} scale associated with the 
size of the copolymer ($n\gg L_n$). 

\medskip\noindent
{\bf Copolymer configurations.}
Pick $n\in \N$ and let $\cW_n$ be the set of $n$-step \emph{directed self-avoiding paths} 
starting at the origin and being allowed to move \emph{upwards, downwards and to the right}, 
i.e., 
\begin{align}
\label{defw}
\nonumber
\cW_n &= \big\{\pi=(\pi_i)_{i=0}^n \in (\N_0\times\Z)^{n+1}\colon\,\pi_0=(0,1),\\
&\qquad \pi_{i+1}-\pi_i\in \{(1,0),(0,1),(0,-1)\}\,\,\forall\,0\leq i< n,
\,\pi_i\neq \pi_j\,\,\forall\,0\leq i<j \leq n\big\}.
\end{align}
The copolymer is associated with the path $\pi$. The $i$-th monomer is associated with 
the bond $(\pi_{i-1},\pi_i)$. The starting point $\pi_0$ is chosen to be $(0,1)$ for 
convenience.

\begin{figure}[htbp]
\begin{center}
\vspace{-.4cm}
\includegraphics[width=.20\textwidth]{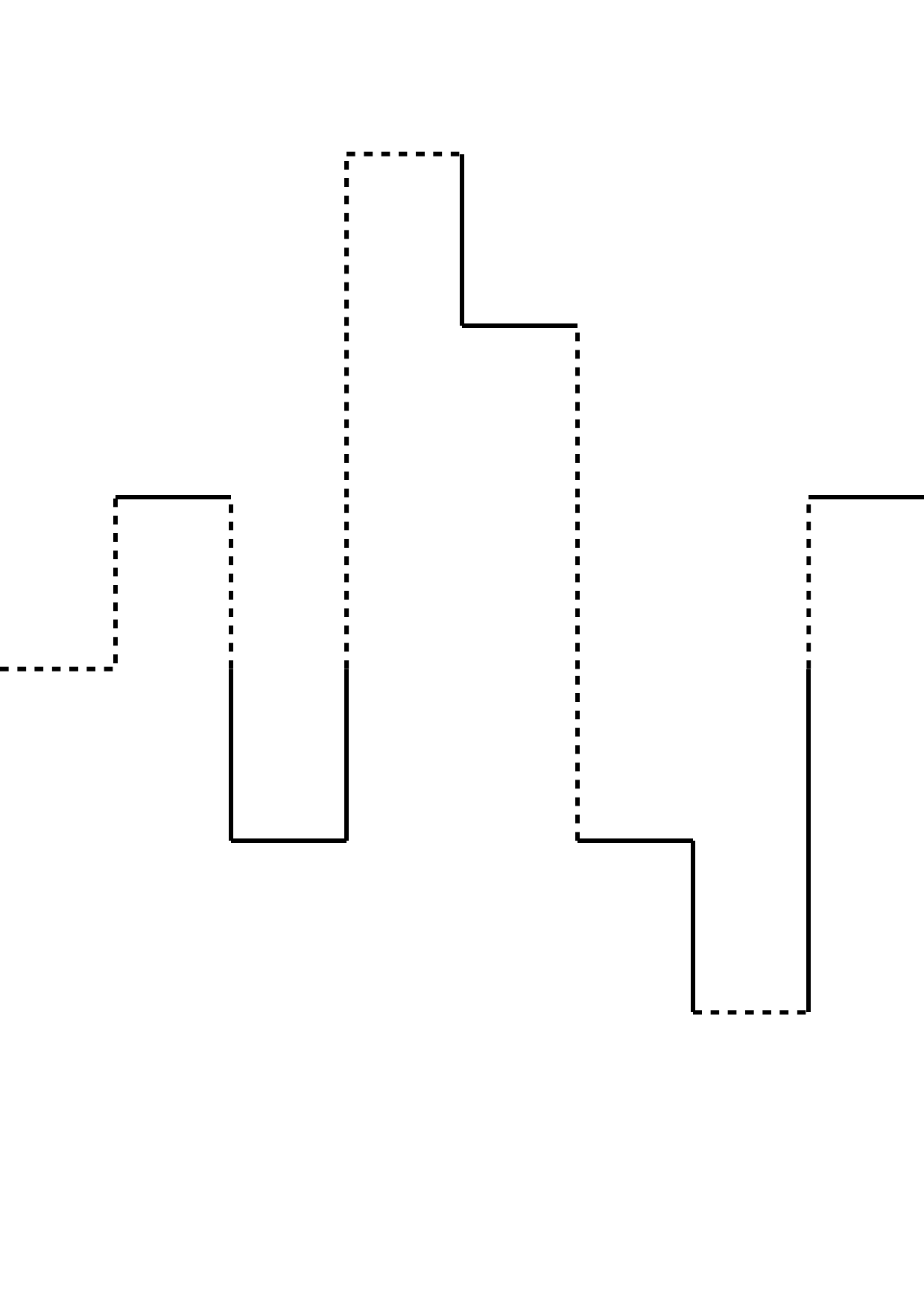}
\vspace{-1cm}
\caption{Microscopic disorder $\omega$ in the copolymer. Dashed bonds represent 
monomers of type $A$ (hydrophobic), drawn bonds represent monomers of type $B$
(hydrophilic).}
\label{fig-micrdis}
\end{center}
\end{figure}

\medskip\noindent
{\bf Microscopic disorder in the copolymer.} 
Each monomer is randomly labelled $A$ (hydrophobic) or $B$ (hydrophilic), with probability 
$\frac{1}{2}$ each, independently for different monomers. The resulting labelling is 
denoted by
\be{bondlabel}
\omega = \{\omega_i \colon\, i \in \N\} \in \{A,B\}^\N
\ee
and represents the \emph{randomness of the copolymer}, i.e., $\omega_i=A$ and $\omega_i=B$ 
mean that the $i$-th monomer is of type $A$, respectively, of type $B$ (see 
Fig.~\ref{fig-micrdis}). We denote by $\P_\omega$ the law of the microscopic disorder.

\begin{figure}[htbp]
\begin{center}
\includegraphics[width=.42\textwidth]{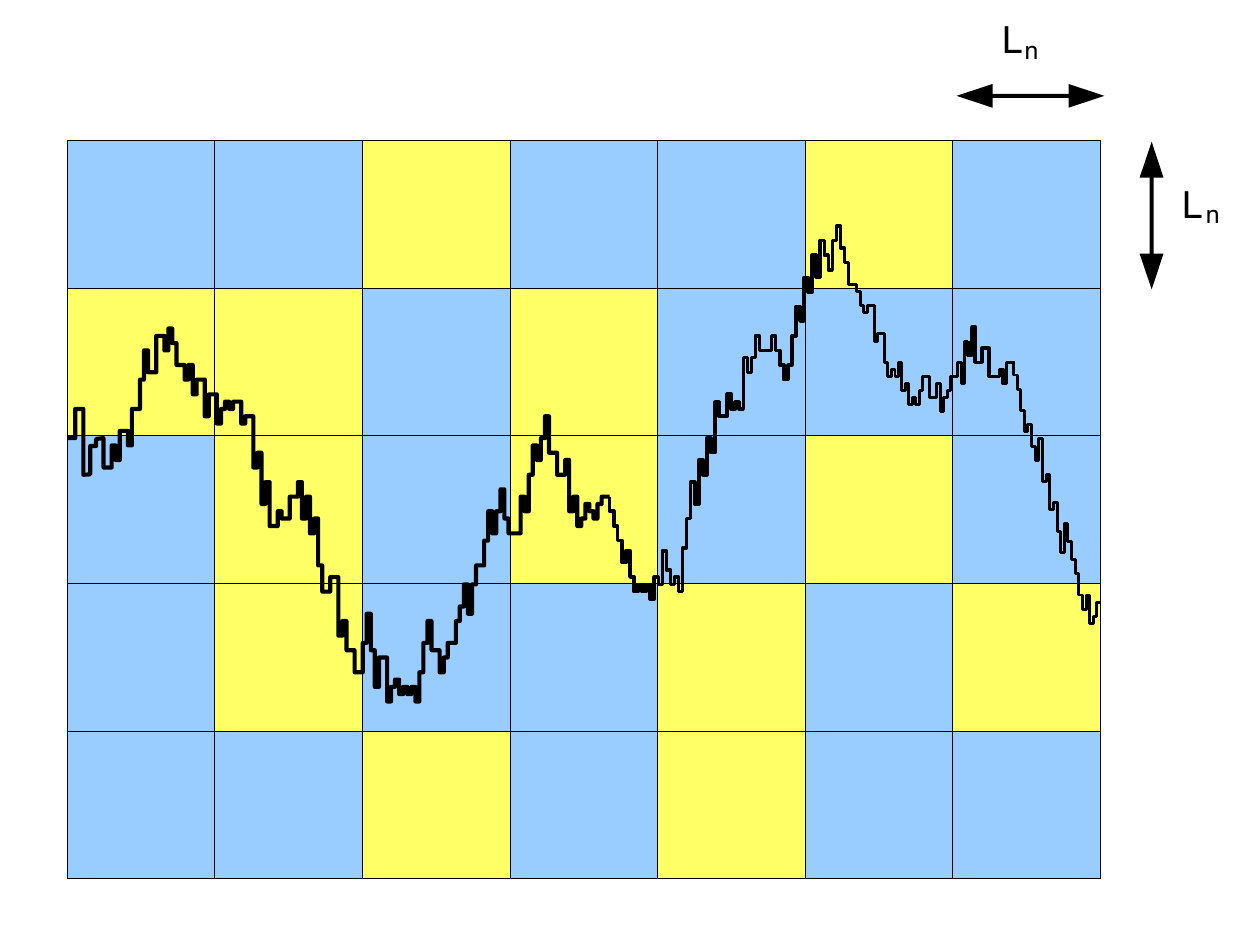}
\caption{Mesoscopic disorder $\Omega$ in the micro-emulsion. Light shaded blocks 
represent droplets of type $A$ (oil), dark shaded blocks represent droplets of type $B$ 
(water). Drawn is also the copolymer, but without an indication of the microscopic disorder 
$\omega$ attached to it.} 
\label{fig-mesdis}
\end{center}
\end{figure} 

\medskip\noindent
{\bf Mesoscopic disorder in the micro-emulsion.}
Fix $p \in (0,1)$ and $L_n \in \N$. Partition $(0,\infty)\times\R$ into square blocks 
of size $L_n$:
\be{blocks}
(0,\infty)\times \R= \bigcup_{x \in \N_0 \times \Z} \Lambda_{L_n}(x), \qquad
\Lambda_{L_n}(x) = xL_n + (0,L_n]^2.
\ee
Each block is randomly labelled $A$ (oil) or $B$ (water), with probability $p$, respectively, 
$1-p$, independently for different blocks. The resulting labelling is denoted by
\be{blocklabel}
\Omega = \{\Omega(x)\colon\,x \in \N_0\times \Z\} \in \{A,B\}^{\N_0\times \Z}
\ee
and represents the \emph{randomness of the micro-emulsion}, i.e., $\Omega(x)=A$ and 
$\Omega(x)=B$ mean that the $x$-th block is of type $A$, respectively, of type $B$
(see Fig.~\ref{fig-mesdis}). The law of the mesoscopic disorder is denoted by $\P_\Omega$ 
and is independent of $\P_\omega$. The size of the blocks $L_n$ is assumed to be 
non-decreasing and to satisfy
\be{speed}
\lim_{n\to \infty} L_n = \infty \quad\text{and} \quad 
\lim_{n\to \infty} \frac{\log n}{n} \,L_n= 0,
\ee  
i.e., the blocks are large compared to the monomer size but small compared to the copolymer 
size. For convenience we assume that if an $A$-block and a $B$-block are next to each other, 
then the interface belongs to the $A$-block.

\medskip\noindent 
{\bf Hamiltonian and free energy.} Given $\omega,\Omega$ and $n$, with each path 
$\pi \in \cW_n$ we associate an \emph{energy} given by the Hamiltonian
\be{Hamiltonian}
H_{n,L_n}^{\omega,\Omega}(\pi;\alpha,\beta)
=  \sum_{i=1}^n \Big(\alpha\, 1\Big\{\omega_i=\Omega^{L_n}_{(\pi_{i-1},\pi_i)}=A\Big\}
+ \beta\, 1\left\{\omega_i=\Omega^{L_n}_{(\pi_{i-1},\pi_i)}=B\right\}\Big),
\ee
where $\Omega^{L_n}_{(\pi_{i-1},\pi_i)}$ denotes the label of the block the step 
$(\pi_{i-1},\pi_i)$ lies in. What this Hamiltonian does is count the number of 
$AA$-matches and $BB$-matches and assign them energy $\alpha$ and $\beta$, respectively, 
where $\alpha,\beta\in\R$. (Note that the interaction is assigned to bonds rather than 
to sites, and that we do not follow the convention of putting a minus sign in front of 
the Hamiltonian.) Similarly to what was done in our earlier papers \cite{dHW06}, 
\cite{dHP07b}, \cite{dHP07c}, \cite{dHP07a}, without loss of generality we may 
restrict the interaction parameters to the cone
\be{defcone}
\CONE = \{(\alpha,\beta)\in\R^2\colon\,\alpha\geq |\beta|\}.
\ee
For $n\in \N$, the free energy per monomer is defined as
\be{partfunc}
f_{n}^{\omega,\Omega}(\alpha,\beta)
=\tfrac{1}{n}\log Z_{n,L_n}^{\omega,\Omega}(\alpha,\beta) 
\quad \text{with}\quad Z_{n,L_n}^{\omega,\Omega}(\alpha,\beta)
=\sum_{\pi \in \cW_n} e^{H_{n,L_n}^{\omega,\Omega}(\pi;\, \alpha,\beta)},
\ee
and in the limit as $n\to\infty$ the free energy per monomer is given by
\be{felimdeff}
f(\alpha,\beta;p) = \lim_{n\to \infty}f_{n,L_n}^{\omega,\Omega}(\alpha,\beta), 
\ee
provided this limit exists $\omega,\Omega$-a.s. 

Henceforth, we subtract the term $\alpha \sum_{i=1}^n 1\{\omega_i=A\}$ from the Hamiltonian, 
which by the law of large numbers $\omega$-a.s.\ is $\tfrac{\alpha}{2}n(1+o(1))$ as $n\to\infty$ 
and corresponds to a shift of $-\tfrac{\alpha}{2}$ in the free energy. The latter transformation allows 
us to lighten the notation, starting with the Hamiltonian in \eqref{Hamiltonian}, which becomes
\be{Hamiltonianalt}
H_{n,L_n}^{\omega,\Omega}(\pi;\alpha,\beta)
= \sum_{i=1}^n \Big(\beta\, 1\left\{\omega_i=B\right\}
-\alpha\,1\left\{\omega_i=A\right\}\Big)\,  
1\left\{\Omega^{L_n}_{(\pi_{i-1},\pi_i)}=B\right\}.
\ee


\subsection{The slope-based variational formula for the quenched free energy per step}
\label{newvarfor}

Theorem~\ref{varformula2} below gives a  variational formula for the free energy per step 
in \eqref{felimdeff}. This variational formula, which is the \emph{corner stone} of our paper, 
involves the fractions of time the copolymer moves at a given slope through the interior of 
solvents $A$ and $B$ and the fraction of time it moves along $AB$-interfaces. \emph{This 
variational formula will be crucial to identify the phase diagram}, i.e., to identify the typical 
behavior of the copolymer in the micro-emulsion as a function of the parameters $\alpha,
\beta,p$ (see Section~\ref{phdiag} for theorems and conjectures). Of particular interest is 
the distinction between \emph{localized phases}, where the copolymer stays close to the 
$AB$-interfaces, and \emph{delocalized phases}, where it wanders off into the solvents 
$A$ and/or $B$. We will see that there are several such phases.  

To state Theorem~\ref{varformula2} we need to introduce some further notation. With 
each $l\in\R_+=[0,\infty)$ we associate two numbers $v_{A,l},v_{B,l} \in [1+l,\infty)$ 
indicating how many steps per horizontal step the copolymer takes when traveling at 
slope $l$ in solvents $A$ and $B$, respectively. We further let $v_{\cI}\in [1,\infty)$ 
denote the number of steps per horizontal step the copolymer takes when traveling along
$AB$-interfaces. These numbers are gathered into the set 
\be{newB}
\bar{\cB}=\{v=(v_A,v_B,v_\cI) \in \cC \times \cC \times [1,\infty)\}
\ee
with
\be{newC}
\cC=\big\{l \mapsto u_l \text{ on } \R_+\colon\, \text{ continuous with } 
u_l\geq 1+l\,\,\,\forall\,l\in\R_{+}\big\}.
\ee

Let $\tilde\kappa(u,l)$ be the entropy per step carried by trajectories moving at 
slope $l$ with the constraint that the total number of steps divided by the total 
number of horizontal steps is equal to $u \in [1+l,\infty)$ (for more details, see 
Section~\ref{pathentr}). Let $\phi_{\cI}(u;\,\alpha,\beta)$ be the free energy per 
step when the copolymer moves along an $AB$-interface, with the constraint that the 
total number of steps divided by the total number of horizontal steps is equal to 
$u \in [1,\infty)$ (for more details, see Section~\ref{interf}). Let $\bar\rho = (\rho_A,
\rho_B,\rho_{\cI}) \in \cM_1(\R_+\times\R_+\times\{\cI\})$, where $\bar\rho_A(dl)$ 
and $\bar \rho_B(dl)$ denote the fractions of horizontal steps at which the copolymer 
travels through solvents $A$ and $B$ at a slope that lies between $l$ and $l+dl$, 
and $\rho_{\cI}$ denotes the fraction of horizontal steps at which the copolymer 
travels along $AB$-interfaces. The possible $\bar\rho$ form a set
\be{susub}
\bar \cR_p\subset \cM_1\big(\R_+\times\R_+\times\{\cI\}\big)
\ee
that depends on $p$ (for more details, see Section~\ref{Percofreq}). With these 
ingredients we can now state our \emph{slope-based variational formula}.

\begin{theorem} {\rm {\bf [slope-based variational formula]}}
\label{varformula2}
For every $(\alpha,\beta)\in\CONE$ and $p \in (0,1)$ the free energy in \eqref{felimdeff} 
exists for $\P$-a.e.\ $(\omega,\Omega)$ and in $L^1(\P)$, and is given by
\be{genevar}
f(\alpha,\beta;p) =\sup_{\bar{\rho}\in \bar{\cR}_p}\,
\sup_{v\,\in\,\bar{\cB}}\,\,\frac{\bar N(\bar{\rho},v)}{\bar D(\bar{\rho},v)},
\ee
where 
\begin{align}
\label{varformod}
\nonumber 
\bar N(\bar\rho,v) 
&= \int_{0}^{\infty} v_{A,l}\,\tilde{\kappa}(v_{A,l},l)\,\bar\rho_{A}(dl)
+ \int_{0}^{\infty} v_{B,l}\,\big[\tilde{\kappa}(v_{B,l},l)
+\tfrac{\beta-\alpha}{2}\big]\,\bar\rho_{B}(dl)
+ v_{\cI}\,\phi_{\cI}(v_{\cI};\alpha,\beta)\,\bar\rho_{\cI},\\
\bar D(\bar{\rho},v)
&=\int_{0}^{\infty} v_{A,l}\,\bar{\rho}_{A}(dl) 
+ \int_{0}^{\infty} v_{B,l}\,\bar{\rho}_{B}(dl)
+ v_{\cI}\,\bar{\rho}_{\cI},
\end{align}
with the convention that $\bar N(\bar{\rho},v)/\bar D(\bar{\rho},v)=-\infty$ when 
$\bar D(\bar{\rho},v)=\infty$.
\end{theorem}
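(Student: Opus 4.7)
The plan is to derive the slope-based formula from the column-based variational formula (established in the preceding Sections~\ref{proofprop1} and~\ref{proofofgene}), which expresses $f(\alpha,\beta;M,p)$ as a supremum of ratios $N^\ast(\rho,u)/D^\ast(\rho,u)$, where $\rho$ is a probability measure on column types $\Theta$ and $u=(u_\Theta)$ records, for each type, the number of steps per horizontal step spent in a column of that type. The key observation is that a column contributes to $N^\ast$ in one of three essentially different ways, depending on whether the copolymer traverses it through the $A$-solvent at some slope $l$, through the $B$-solvent at some slope $l$, or along an $AB$-interface; so the column-based supremum can be reorganized by conditioning on these three modes.

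First I would establish the existence of the limit in \eqref{felimdef} together with the column-based representation, using concentration of measure for the quenched free energies per column plus a standard block-averaging/subadditivity argument over $\N_\pi$ columns. Second, I would identify the single-column free energy in each of the three modes: for columns crossed through the interior of solvent $A$ at slope $l$ with speed $u \in [1+l,\infty)$, the single-column free energy equals $u\,\tilde\kappa(u,l)$ (only entropy contributes, because of the shift \eqref{Hamiltonianalt}); for the analogous $B$-columns it equals $u\,[\tilde\kappa(u,l)+\tfrac{\beta-\alpha}{2}]$; and for columns in which the path runs along an $AB$-interface with speed $u \in [1,\infty)$ it equals $u\,\phi_\cI(u;\alpha,\beta)$. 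The auxiliary results of Sections~\ref{Path entropies}--\ref{appA} and~\ref{interf} supply these identifications.

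Third, I would project the column-level measure $\rho$ onto a measure $\bar\rho$ on $\R_+\cup\R_+\cup\{\cI\}$ by mapping each column type to its mode and slope, obtaining a decomposition $\bar\rho=(\bar\rho_A,\bar\rho_B,\bar\rho_\cI)$. The optimization over the speed variables $u_\Theta$ for the columns of a given mode and slope decouples into choosing, for each slope $l$, a single value $v_{A,l}$, $v_{B,l}$ and a single interface speed $v_\cI$; this yields the inner supremum over $v \in \bar\cB$ in \eqref{genevar}, after regrouping the numerator and denominator into the integrals in \eqref{varformod}. To finish, I would verify that the image of the set of admissible column-level measures under this projection is precisely $\bar\cR_{p,M}$ defined in Section~\ref{Percofreq}: the upper bound ``$\leq$'' follows by direct substitution, while the matching lower bound ``$\geq$'' requires constructing, for any $\bar\rho\in\bar\cR_{p,M}$ and any $v\in\bar\cB$, a compatible column-level $(\rho,u)$ using the underlying percolation structure of the oil/water blocks restricted by the vertical budget $M$.

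The main obstacle is the last step: showing that every $\bar\rho\in\bar\cR_{p,M}$ is realized by a genuine column-based strategy on typical realizations of $\Omega$. This is a combinatorial/percolative issue, because the slope fractions must be distributed across columns whose types are dictated by $\Omega$, subject to the vertical constraint $|v_j(\pi)-v_{j-1}(\pi)|\le M$. The argument should combine the ergodic theorem for $\Omega$ (to ensure that the empirical column-type frequencies concentrate as $n\to\infty$) with a gluing construction producing admissible paths in $\cW_{n,M}$ that spend the prescribed fractions of horizontal steps in each mode and slope, and then invoke continuity of the column-based functional in $(\rho,u)$ to pass from approximations to the supremum.
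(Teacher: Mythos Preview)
Your overall architecture (derive the slope-based formula from the column-based one) matches the paper, but the bridge between the two formulas is built on a misconception. A column of type $\Theta$ is \emph{not} crossed in a single mode. A typical column contains both $A$- and $B$-blocks, and the optimal crossing splits its horizontal steps among an $A$-part at some slope, a $B$-part at some slope, and an interface part; the column free energy $\psi(\Theta,u)$ is itself the variational formula of Proposition~\ref{energ} over that within-column splitting $(h),(a)\in\cL(\Theta;u)$. Consequently the projection from $\rho$ to $\bar\rho$ is not ``column type $\mapsto$ (mode, slope)'' but rather requires an \emph{additional} choice $h\in\cE$ of how each column is split; this is exactly why $\bar\cR_{p,M}$ is defined in \eqref{setbar} as the closure of the image of $\cR_{p,M}\times\cE$ under the map $G_{\rho,h}$ of \eqref{Gdef}.

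Because of this, your difficulty analysis is inverted. The direction $f\geq\sup_{\bar\rho,v}$ that you flag as the ``main obstacle'' is in fact the easy one: since any $\bar\rho\in\bar\cR_{p,M}$ is (up to closure) of the form $G_{\rho,h}$ with $\rho\in\cR_{p,M}$ already given, you just set $u_\Theta=h_{A,\Theta}v_{A,l_{A,\Theta}/h_{A,\Theta}}+h_{B,\Theta}v_{B,l_{B,\Theta}/h_{B,\Theta}}+h_{\cI,\Theta}v_\cI$ and feed $(h,v)$ into the variational formula for $\psi(\Theta,u_\Theta)$ as an admissible but possibly suboptimal choice (Lemma~\ref{GGG}). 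No new percolative or ergodic construction is needed; all of that was absorbed into the definition of $\cR_{p,M}$ when the column-based formula was proved.

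The genuinely delicate direction is $f\leq\sup_{\bar\rho,v}$, and the mechanism you are missing is Jensen. Given $(\rho,u)$, one first selects for each $\Theta$ the maximizer $(h_\Theta,a_\Theta)$ of the within-column variational formula (this needs a Borel selection, Proposition~\ref{Borel}), sets $\bar\rho=G_{\rho,h}$, and then defines $v_{A,l}$ (resp.\ $v_{B,l}$, $v_\cI$) as the \emph{conditional expectation} of the ratio $a_{A,\Theta}/h_{A,\Theta}$ given that $l_{A,\Theta}/h_{A,\Theta}=l$, under the tilted measure $d\rho_{A,h}\propto h_{A,\Theta}\,d\rho$. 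The inequality $N(\rho,u)\leq\bar N(\bar\rho,v)$ then follows from the concavity of $a\mapsto a\tilde\kappa(a,l)$ and $\mu\mapsto\mu\phi_\cI(\mu)$ (Lemma~\ref{l:lemconv2}(i) and the strict concavity in Lemma~\ref{assu}), while $D(\rho,u)=\bar D(\bar\rho,v)$ holds exactly. Your ``decoupling'' intuition points at the right conclusion, but without this conditional-expectation-plus-concavity step the upper bound does not go through.
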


\br{hypothesis}
{\rm In order to obtain \eqref{varformod}, we need to assume strict concavity of 
an auxiliary free energy, involving a copolymer in the vicinity of a single linear 
interface. This is the object of Assumption~\ref{assu} in Section~\ref{interf}, 
which is supported by a brief discussion.} 
\er


\subsection{Discussion}
\label{discus}

The variational formula in (\ref{genevar}--\ref{varformod}) is tractable, to the extent that 
the $\tilde\kappa$-function is known explicitly, the $\phi_{\cI}$-function has been studied 
in depth in the literature (and much is known about it), while the set $\bar{\cB}$ is simple. 
\emph{The key difficulty of {\rm (\ref{genevar}--\ref{varformod})} resides in the set 
$\bar{\cR}_p$, whose structure is not easy to control}.  A detailed study of this set
is not within the scope of our paper. Fortunately, it turns out that we need to know 
\emph{relatively little} about $\bar{\cR}_p$ in order to identify the \emph{general structure} 
of the phase diagram (see Section~\ref{phdiag}). With the help of three \emph{hypotheses} 
on $\bar\cR_p$, each of which is plausible, we can also identify the \emph{fine structure} 
of the phase diagram (see Section~\ref{phdiagdetailed}).
 
We expect that the supremum in \eqref{genevar} is attained at a unique $\bar \rho \in 
\bar \cR_p$ and a unique $v\in \bar \cB$. This maximizer corresponds to the copolymer 
having a specific way to configure itself optimally within the micro-emulsion. 

\medskip\noindent
{\bf Column-based variational formula.}
The \emph{slope-based variational formula} in Theorem~\ref{varformula2} will be 
obtained by combining two auxiliary variational formulas. Both formulas involve the 
free energy per step $\psi(\Theta,u_\Theta;\alpha,\beta)$ when the copolymer 
crosses a block column of a given type $\Theta$, taking values in a type space 
$\overline\cV$, for a given $u_\Theta \in \R^+$ that indicates how many steps 
on scale $L_n$ the copolymer makes in this column type. A precise definition of 
this free energy per block column will be given in Section~\ref{newsec}.

The first auxiliary variational formula is stated in Section~\ref{keyingr} (Proposition~\ref{energ}) 
and gives an expression for $\psi(\Theta,u_\Theta;\alpha,\beta)$ that involves the entropy 
$\tilde \kappa(\cdot,l)$ of the copolymer moving at a given slope $l$ and the quenched free 
energy per monomer $\phi_\cI$ of the copolymer near a \emph{single linear interface}. 
Consequently, the free energy of our model with a \emph{random geometry} is directly linked 
to the free energy of a model with a \emph{non-random geometry}. This will be crucial for our 
analysis of the phase diagram in Section~\ref{phdiag}. \emph{The microscopic disorder 
manifests itself only through the free energy of the linear interface model}. 

The second auxiliary variational formula is stated in Section~\ref{proofofgene} 
(Proposition~\ref{varformula}). It is referred to as the \emph{column-based variational formula}, 
and provides an expression for $f(\alpha,\beta;p)$ by using the block-column free energies 
$\psi(\Theta,u_\Theta;\alpha,\beta)$ for $\Theta\in \overline\cV$ and by weighting each 
column type with the frequency $\rho(d\Theta)$ at which it is visited by the copolymer. The 
numerator is the total free energy, the denominator is the total number of monomers (both 
on the mesoscopic scale). The variational formula optimizes over $(u_\Theta)_{\Theta\in
\overline\cV} \in\cB_{\,\overline \cV}$ and $\rho\in \cR_p$. The reason why these two 
suprema appear in \eqref{genevarold} is that, as a consequence of assumption \eqref{speed}, 
the \emph{mesoscopic scale carries no entropy}: all the entropy comes from the microscopic 
scale, through the free energy per monomer in single columns. \emph{The mesoscopic 
disorder manifests itself only through the presence of the set} $\cR_p$.   

\medskip\noindent
{\bf Removal of the corner restriction.} 
In our earlier papers \cite{dHW06}, \cite{dHP07b}, \cite{dHP07c}, \cite{dHP07a}, we allowed 
the configurations of the copolymer to be given by the subset of $\cW_n$ consisting of those 
paths that enter pairs of blocks through a common corner, exit them at one of the two corners 
diagonally opposite and in between stay confined to the two blocks that are seen upon entering. 
The latter is an {\it unphysical restriction} that was adopted to simplify the model. In these 
papers we derived a variational formula for the free energy per step that had a \emph{much
simpler structure}. We analyzed this variational formula as a function of $\alpha,\beta,p$ and 
found that there are two regimes, \emph{supercritical} and \emph{subcritical}, depending on 
whether the oil blocks percolate or not along the \emph{coarse-grained self-avoiding path}
running along the corners. In the supercritical regime the phase diagram turned out to have 
two phases, in the subcritical regime it turned out to have four phases, meeting at two tricritical 
points. 

In Section~\ref{phdiag} we show how the variational formula in Theorem~\ref{varformula2} 
can be used to identify the phase diagram. It turns out that there are two types of phases: 
\emph{localized phases} (where the copolymer spends a positive fraction of its time near 
the $AB$-interfaces) and \emph{delocalized phases} (where it spends a zero fraction near 
the $AB$-interfaces). Which of these phases occurs depends on the parameters $\alpha,\beta,p$. 
It is energetically favorable for the copolymer to stay close to the $AB$-interfaces, where it has 
the possibility of placing more than half of its monomers in their preferred solvent (by switching 
sides when necessary), but this comes with a loss of entropy. The competition between energy 
and entropy is controlled by the energy parameters $\alpha,\beta$ (determining the reward of 
switching sides) and by the density parameter $p$ (determining the density of the $AB$-interfaces). 
It turns out that the phase diagram is different in the supercritical and the subcritical regimes, where 
the $A$-blocks percolate, respectively, do not percolate. The phase diagram is richer than for the 
model with the corner restriction. 
 
\begin{figure}[htbp]
\vspace{.3cm}
\begin{center}
\includegraphics[width=.3\textwidth]{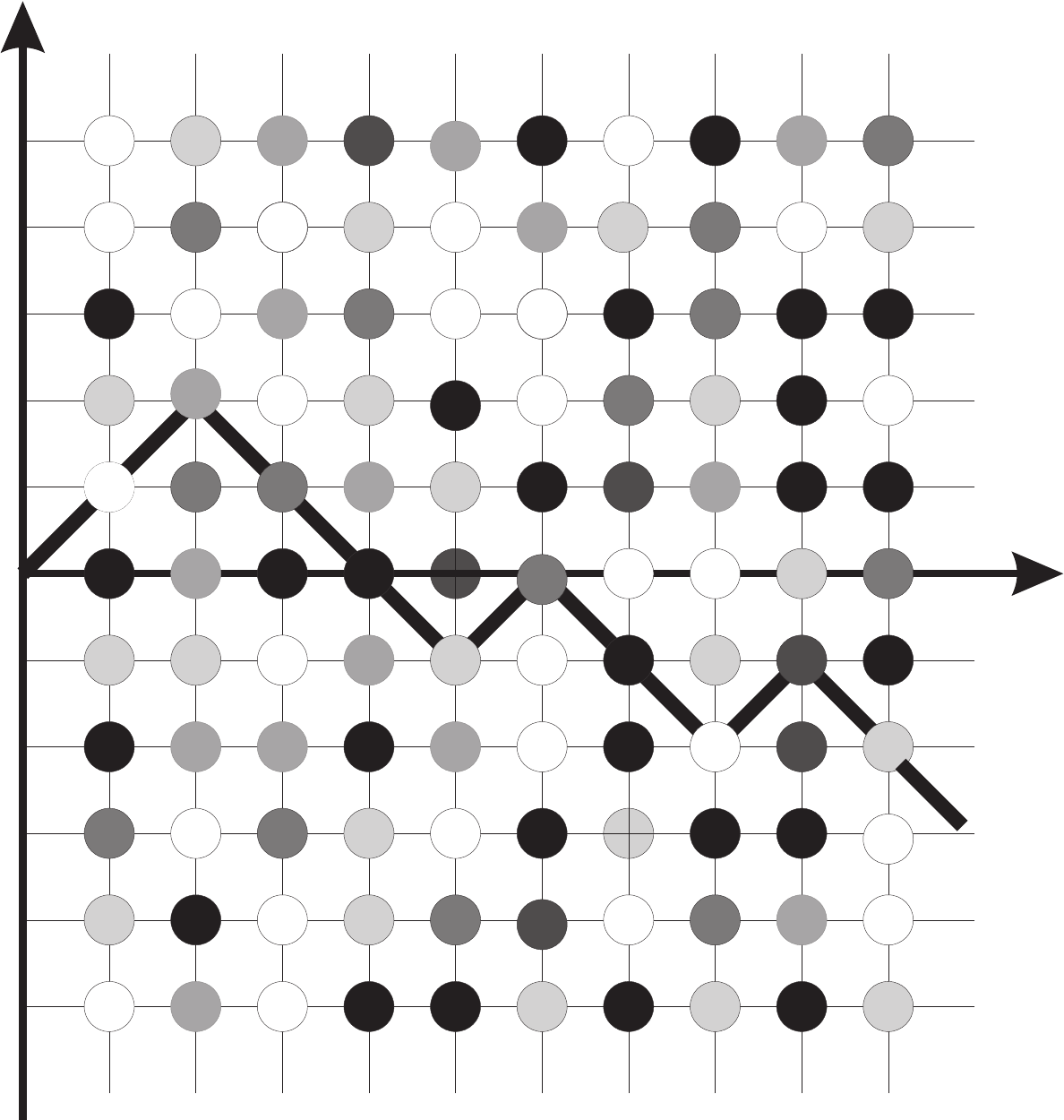}
\caption{Picture of a directed polymer with bulk disorder. The different shades
of black, grey and white represent different values of the disorder.} 
\label{fig-dirpolbulk}
\end{center}
\end{figure} 
\vspace{-.2cm}

\medskip\noindent 
{\bf Comparison with the directed polymer with bulk disorder.}
A model of a polymer with disorder that has been studied intensively in the literature 
is the \emph{directed polymer with bulk disorder}. Here, the set of paths is 
\be{dpre1}
\cW_n = \big\{\pi=(i,\pi_i)_{i=0}^n \in (\N_0\times\Z^d)^{n+1}\colon\,\pi_0=0,\,
\|\pi_{i+1}-\pi_i\|=1\,\,\forall\,0 \leq i<n\big\},
\ee
where $\|\cdot\|$ is the Euclidean norm on $\Z^d$, and the Hamiltonian is  
\be{dpre2}
H^\omega_n(\pi) = \lambda \sum_{i=1}^n \omega(i,\pi_i),
\ee
where $\lambda>0$ is a parameter and $\omega = \{\omega(i,x)\colon\,i\in\N,\,x\in\Z^d\}$ 
is a field of i.i.d.\ $\R$-valued random variables with zero mean, unit variance and finite 
moment generating function, where $\N$ is time and $\Z^d$ is space (see Fig.~\ref{fig-dirpolbulk}). 
This model can be viewed as a version of a copolymer in a micro-emulsion where the 
droplets are of the \emph{same} size as the monomers. For this model a variational 
formula for the free energy has been derived by Rassoul-Agha, Sepp\"al\"ainen and 
Yilmaz~\cite{RASY12}, \cite{RASY15}. However, the variational formula is abstract and 
therefore does not lead to a quantitative understanding of the phase diagram. Most
of the analysis in the literature relies on the application of martingale techniques (for 
details, see e.g.\ den Hollander~\cite{dH09}, Chapter 12).

In our model (which is restricted to $d=1$ and has self-avoiding paths that may move 
north, south and east instead of north-east and south-east), the droplets are much larger 
than the monomers. This causes a \emph{self-averaging of the microscopic disorder}, 
both when the copolymer moves inside one of the solvents and when it moves near an 
interface. Moreover, since the copolymer is much larger than the droplets, also 
\emph{self-averaging of the mesoscopic disorder} occurs. This is why the free energy 
can be expressed in terms of a variational formula, as in Theorem~\ref{varformula2}.
This variational formula acts as a \emph{jumpboard} for a detailed analysis of the phase 
diagram. Such a detailed analysis is lacking for the directed polymer with bulk disorder.

The directed polymer in random environment has two phases: a \emph{weak disorder
phase} (where the quenched and the annealed free energy are asymptotically comparable) 
and a \emph{strong disorder phase} (where the quenched free energy is asymptotically 
smaller than the annealed free energy). The strong disorder phase occurs in dimension 
$d=1,2$ for all $\lambda>0$ and in dimension $d \geq 3$ for $\lambda>\lambda_c$, with
$\lambda_c \in [0,\infty]$ a critical value that depends on $d$ and on the law of the 
disorder. It is predicted that in the strong disorder phase the copolymer moves within 
a narrow corridor that carries sites with high energy (recall our convention of not
putting a minus sign in front of the Hamiltonian), resulting in \emph{superdiffusive} 
behavior in the spatial direction. We expect a similar behavior to occur in the localized 
phases of our model, where the polymer targets the $AB$-interfaces. It would be interesting 
to find out how far the coarsed-grained self-avoiding path in our model travels vertically as 
a function of $n$.


\section{Phase diagram}
\label{phdiag}

In Section~\ref{phdiaggeneral} we identify the \emph{general structure} of the phase diagram. 
In particular, we show that there is a localized phase $\cL$ in which $AB$-localization occurs, 
and a delocalized phase $\cD$ in which no $AB$-localization occurs. In Section~\ref{phdiagdetailed}, 
we obtain various results for the \emph{fine structure} of the phase diagram, both for the 
supercritical regime $p>p_c$ and for the subcritical regime $p<p_c$, where $p_c$ denotes 
the \emph{critical threshold for directed bond percolation in the positive quandrant of} $\Z^2$. 
This fine structure comes in the form of theorems and conjectures, and is based on three
hypotheses, which we discuss in Section~\ref{hypo}.


\subsection{General structure}
\label{phdiaggeneral}

To state the general structure of the phase diagram, we need to define a reduced version 
of the free energy, called the \emph{delocalized free energy} $f_{\cD}$, obtained by taking into 
account those trajectories that, when moving along an $AB$-interface, are delocalized in the 
$A$-solvent. The latter amounts to replacing the linear interface free energy $\phi_{\cI}(v_\cI;
\alpha,\beta)$ in \eqref{genevar} by the entropic constant lower bound $\tilde{\kappa}(v_\cI,0)$. 
Thus, we define
\be{genevarD1d}
f_{\cD}(\alpha,\beta;p) = \sup_{\bar{\rho}\in\bar{\cR}_{p}}\,\sup_{v\in\,\bar\cB}\,\,
\frac{\bar{N}_{\cD}(\bar{\rho},v)}{\bar{D}_{\cD}(\bar{\rho},v)}
\ee
with 
\begin{align}
\label{ffd}
\bar{N}_{\cD}(\bar{\rho},v) 
&= \int_{0}^{\infty} v_{A,l}\,\tilde \kappa(v_{A,l},l)\,
[\bar{\rho}_{A}+\bar{\rho}_\cI\,\delta_0](dl)
+ \int_{0}^{\infty} v_{B,l}\,\big[\tilde\kappa(v_{B,l},l)+\tfrac{\beta-\alpha}{2}\big]
\,\bar \rho_{B}(dl),\\
\bar{D}_{\cD}(\bar{\rho},v) 
&= \int_{0}^{\infty} v_{A,l}\,
[\bar{\rho}_{A}+\bar{\rho}_\cI\,\delta_0](dl)
+ \int_{0}^{\infty} v_{B,l}
\,\bar \rho_{B}(dl),
\end{align}
provided $\bar D_{\cD}(\bar{\rho},v)<\infty$. Note that  $f_{\cD}(\alpha,\beta;p)$ depends 
on $(\alpha,\beta)$ through $\alpha-\beta$ only. 

We partition the $\CONE$ into the two phases $\cD$ and $\cL$ defined by 
\begin{equation}
\label{defD}
\begin{aligned}
\cL&=\{(\alpha,\beta)\in \CONE\colon f(\alpha,\beta;\,p)>f_{\cD}(\alpha,\beta;p)\},\\
\cD&=\{(\alpha,\beta)\in \CONE\colon f(\alpha,\beta;\,p)=f_{\cD}(\alpha,\beta;p)\}.
\end{aligned}
\end{equation}
The localized phase $\cL$ corresponds to large values of $\beta$, for which the energetic 
reward to spend some time travelling along $AB$-interfaces exceeds the entropic penalty 
to do so. The delocalized phase $\cD$, on the other hand, corresponds to small values of 
$\beta$, for which the energetic reward does not exceed the entropic penalty.  

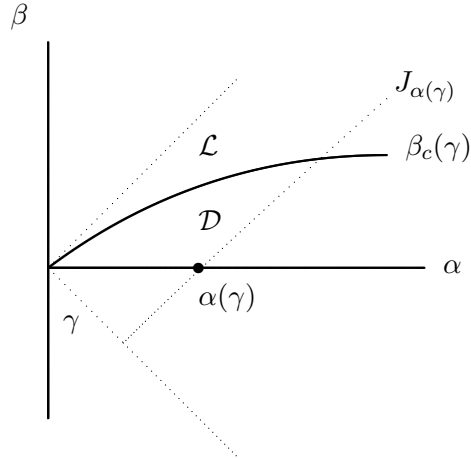
\begin{figure}[htbp]
\begin{center}
\setlength{\unitlength}{0.5cm}
\begin{picture}(10,10)(0,-2)
{\thicklines
\qbezier(0,-4)(0,0)(0,6)
\qbezier(0,0)(4,0)(10,0)
\qbezier(0,0)(4,3)(9,3)
}
\qbezier[40](0,0)(3,3)(5,5)
\qbezier[40](0,0)(3,-3)(5,-5)
\qbezier[60](2,-2)(4,0)(9,4.5)
\put(10.5,-.15){$\alpha$}
\put(-1,6.5){$\beta$}
\put(4,1){$\cD$}
\put(4,3){$\cL$}
\put(9.5,3){$\beta_c(\gamma)$}
\put(.4,-1.6){$\gamma$}
\put(4,-1){$\alpha(\gamma)$}
\put(4,0){\circle*{.25}}
\put(9.2,4.7){$J_{\alpha(\gamma)}$}
\end{picture}
\end{center}
\vspace{1cm}
\caption{Qualitative picture of the phase diagram in $\CONE$. The curve $\gamma 
\mapsto \beta_c(\gamma)$ separates the localized phase $\cL$ from the delocalized 
phase $\cD$. The parameter $\gamma$ measures the distance between the origin 
and the point on the lower boundary of $\CONE$ from which the line with slope $1$ 
hits the curve at height $\beta(\gamma)$. Note that $\alpha(\gamma)=\gamma\sqrt{2}$ 
is the value where this line crosses the horizontal axis.}
\label{fig-phdiaggen}
\end{figure}

For $\alpha\geq 0$, let $J_\alpha$ be the halfline in $\CONE$ defined by (see
Fig.~\ref{fig-phdiaggen})
\be{jalpha}
J_\alpha=\{(\alpha+\beta,\beta)\colon\, \beta\in [-\tfrac{\alpha}{2},\infty)\}.
\ee

\bt{phgene}
(a) There exists a curve $\gamma \mapsto \beta_c(\gamma)$, lying strictly inside 
the upper quadrant, such that 
\begin{equation}
\label{intjalpha1}
\begin{aligned}
\cL \cap J_\alpha
&=\{(\alpha+\beta,\beta)\colon\,\beta\in (\beta_c(\gamma(\alpha)),\infty)\},\\
\cD \cap J_\alpha
&=\big\{(\alpha+\beta,\beta)\colon\,\beta\in [-\tfrac{\alpha}{2},\beta_c(\gamma(\alpha))]\big\},
\end{aligned}
\end{equation}
for all $\alpha \in (0,\infty)$ with $\gamma(\alpha) = \alpha/\sqrt{2}$.\\
(b) Inside phase $\cD$ the free energy $f$ is a function of $\alpha-\beta$ only, i.e.,
$f$ is constant on $J_\alpha\cap \cD$ for all $\alpha\in (0,\infty)$.
\et


\subsection{Fine structure}
\label{phdiagdetailed}

This section is organized as follows. In Section \ref{phdiagsup}, we consider the supercritical 
regime $p>p_c$, and state a theorem. Subject to two hypotheses, we show that the delocalized 
phase $\cD$ (recall \eqref{defD}) splits into two subphases $\cD=\cD_1\cup\cD_2$ such that 
the fraction of monomers placed by the copolymer in the $B$ solvent is strictly positive inside 
$\cD_1$ and equals $0$ in  $\cD_2$. Thus,  $\cD_1$ and $\cD_2$ are  said to be non-saturated, 
respectively, saturated. We give a characterization of the critical curve $\alpha\mapsto 
\beta_c(\alpha)$ (recall \eqref{intjalpha1}) in terms of the single linear free energy and state 
some properties of this curve. Subsequently, we formulate a conjecture stating that the localized 
phase $\cL$ also splits into two subphases $\cL=\cL_1\cup\cL_2$, which are non-saturated, 
respectively, saturated. In Section~\ref{phdiagsub}, we consider the subcritical regime $p<p_c$, 
and obtain similar results. 

For $p\in (0,1)$ and $(\alpha,\beta)\in \CONE$, let $\cO_{p,\alpha,\beta}$ denote the 
subset of $\bar \cR_{p}$ containing those $\bar\rho$ that maximize the variational 
formula in \eqref{genevar}, i.e., 
\be{rap}
\cO_{p,\alpha,\beta}=\bigg\{\bar \rho \in \bar \cR_{p}\colon\, f(\alpha,\beta;p)
=\sup_{v\in \bar \cB}\,\frac{\bar N(\bar\rho,v)}{\bar D(\bar\rho,v)}\bigg\}.
\ee
Throughout the remainder of this section we need the following hypothesis:

\bh{hyp1}
For all $p \in (0,1)$ and $\alpha \in (0,\infty)$ there exists a $\bar\rho\in
\cO_{p,\alpha,0}$ such that $\bar\rho_{\cI}>0$. 
\eh

\noindent
This hypothesis will allow us to derive an expression for $\beta_c(\gamma)$ in \eqref{intjalpha1}.

\br{hyp1r1} 
{\rm Hypothesis \ref{hyp1} will be discussed in Section~\ref{hypo}. The existence of 
$\bar\rho$ is proven in Appendix~\ref{appT} for a truncated version of our model, 
introduced in Section~\ref{Mtruncation}. This truncated model approximates the full 
model as the truncation level diverges (see Proposition~\ref{pr:formimppp}).}
\er
 
For $c\in (0,\infty)$, define $v(c)=(v_A(c),v_B(c),v_\cI(c))\in \bar\cB$ as
\begin{eqnarray}
\label{defucp1}
v_{A,l}(c)
&\hspace{-1.25cm} = \chi_l^{-1}(c), &l\in [0,\infty),\\
\label{defucp2} 
v_{B,l}(c)
&= \chi_l^{-1}
\big(c+\tfrac{\alpha-\beta}{2}\big), &l\in [0,\infty),\\
\label{defucp3} 
v_\cI(c)
&\hspace{-2.1cm} = z, &\partial^-_u (u\,\phi_{\cI}(u))(z) 
\geq c\geq \partial^+_u (u\,\phi_{\cI}(u))(z),
\end{eqnarray}
where 
\be{}
\chi_l(v) = \big(\partial_u (u\,\tilde{\kappa}(u,l)\big)(v)
\ee
and $\chi_l^{-1}$ denotes the inverse function. Lemma~\ref{l:lemconv2}(v-vi) ensures 
that $v \mapsto \chi_l(v)$ is one-to-one between $(1+l,\infty)$ and $(0,\infty)$. The 
existence and uniqueness of $z$ in (\ref{defucp3}) follow from the strict concavity of 
$u\mapsto u\phi_{\cI}(u)$ (see Assumption~\ref{assu}) and Lemma~\ref{l:lemconv} 
(see (\ref{l3}--\ref{l4})). We will prove in Proposition~\ref{propa} that the maximizer 
$v\in \bar \cB$ of \eqref{genevar} necessarily belongs to the familly $\{v(c)\colon\,c\in
(0,\infty)\}$.

For $\bar\rho \in \bar\cR_p$, define
\be{}
K_A(\bar\rho) = \int_0^\infty (1+l)\bar\rho_A(dl),
\qquad K_B(\bar\rho) = \int_0^\infty (1+l)\bar\rho_B(dl).
\ee


\subsubsection{Supercritical regime}
\label{phdiagsup}

\begin{figure}[htbp]
\begin{center}
\includegraphics[width=.65\textwidth]{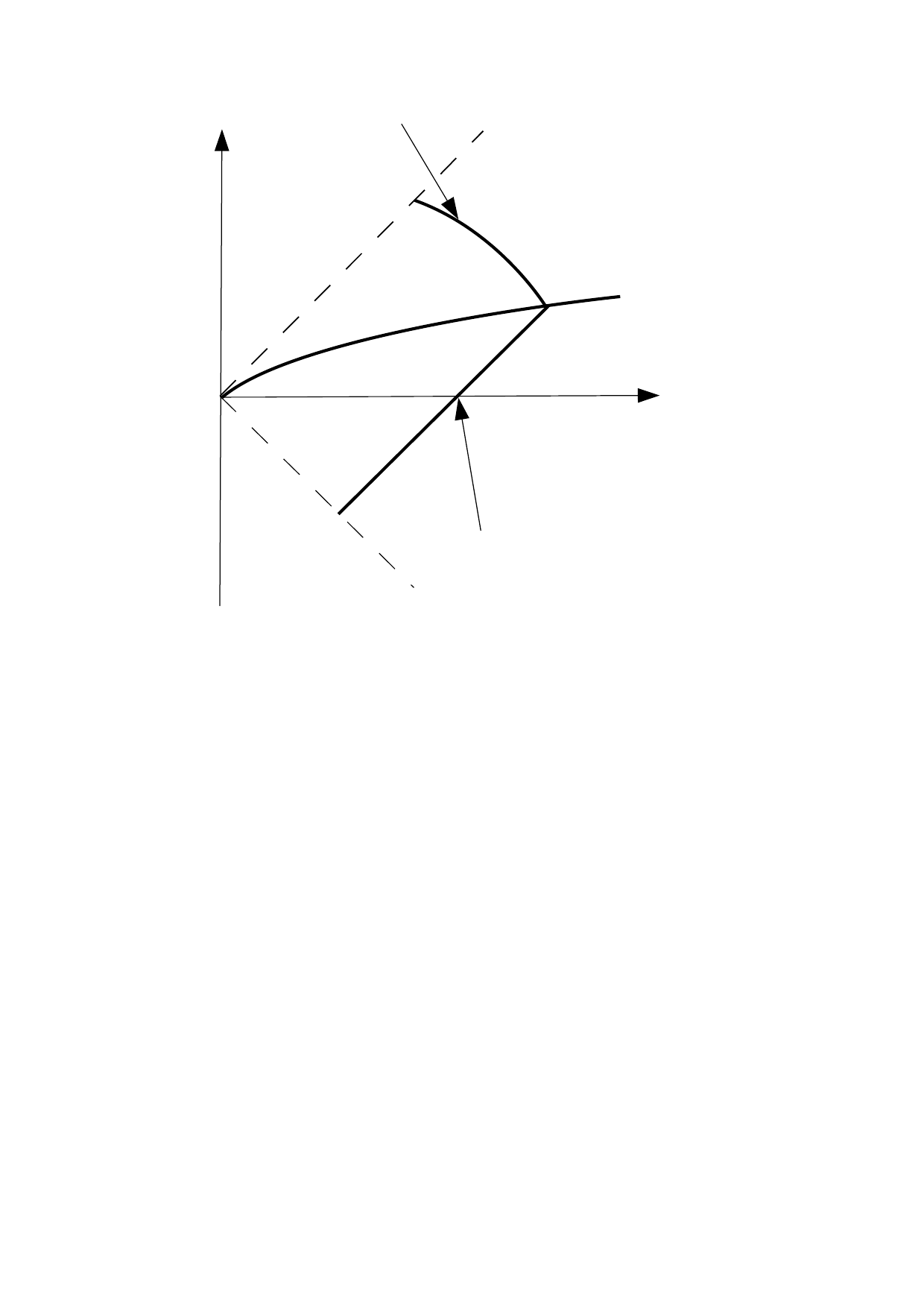}
\vspace{-7cm}
\caption{Qualitative picture of the phase diagram in the supercritical regime $p>p_c$.} 
\label{fig-phasediagsup}
\end{center}
\begin{picture}(10,0)(-150,-60)
\put(100,60){$\cD_2$}
\put(27,60){$\cD_1$}
\put(50,110){$\cL_1$ }
\put(100,130){$\cL_2$ }
\put(140,68){$\alpha$}
\put(-20,145){$\beta$}
\put(5,45){$\gamma$}
\put(74,24){$\alpha^*$}
\put(125,105){$\beta_c(\gamma)$}
\put(32,167){$\beta^*_c(\gamma)$}
\end{picture}
\end{figure} 


\paragraph{Splitting of the $\cD$-phase.}
\label{cD1}

We partition $\cD$ into two phases: $\cD=\cD_1\cup\cD_2$. To that end we introduce the 
\emph{delocalized $A$-saturated free energy}, denoted by $f_{\cD_2}(p)$, which is obtained 
by restricting the supremum in \eqref{genevarD1d} to those $\bar\rho\in\bar{\cR}_{p}$ 
that do not charge $B$. Such $\bar\rho$, which we call $A$-saturated, exist because $p>p_c$, 
allowing for trajectories that do not visit $B$-blocks. Thus, $f_{\cD_2}(p)$ is defined 
as
\be{genevarD2d}
f_{\cD_2}(p) = \sup_{ {\bar{\rho}\in\bar{\cR}_{p}} \atop {K_B(\bar\rho)=0} }\,
\sup_{v\in\,\bar{\cB}}\,\,\frac{\bar{N}_{\cD_2}(\bar{\rho},v)}{\bar{D}_{\cD}(\bar{\rho},v)}
\ee
with
\begin{equation}
\bar{N}_{\cD_2}(\bar{\rho},v) 
= \int_{0}^{\infty} v_{A,l}\,\tilde \kappa(v_{A,l},l)\,
[\bar{\rho}_{A}+\bar{\rho}_\cI\,\delta_0](dl),
\end{equation}
provided $D_\cD(\bar{\rho},v)<\infty$. Note that $f_{\cD_2}(p)$ is a constant that does 
not depend on $(\alpha,\beta)$.

With the help of this definition, we can split the $\cD$-phase defined in \eqref{defD} 
into two parts (see Fig.~\ref{fig-phasediagsup}):
\begin{itemize}
\item  
The $\cD_1$-phase corresponds to small values of $\beta$ and small to moderate values 
of $\alpha$. In this phase there is no $AB$-localization and no $A$-saturation. For the 
variational formula in \eqref{genevar} this corresponds to the restriction where the 
$AB$-localization term disappears while the $A$-block term and the $B$-block term 
contribute, i.e.,
\be{genevarD1}
\cD_1 = \big\{(\alpha,\beta)\in \CONE\colon\,f(\alpha,\beta;\,p) 
= f_{\cD}(\alpha,\beta;\,p)>f_{\cD_2}(p)\big\}.
\ee
\item 
The $\cD_2$-phase corresponds to small values of $\beta$ and large values of $\alpha$. 
In this phase there is no $AB$-localization but $A$-saturation occurs. For the variational 
formula in \eqref{genevar} this corresponds to the restriction where the $AB$-localization 
term disappears and the $B$-block term as well, i.e.,
\be{genevarD21}
\cD_2 = \big\{(\alpha,\beta)\in \CONE\colon\,f(\alpha,\beta;\,p) = f_{\cD_2}(p)\}.
\ee
\end{itemize}

Let $\cT_{p}$ be the subset of $\bar \cR_{p}$ containing those $\bar\rho$ that have a 
strictly positive $B$-component and are relevant for the variational formula 
in \eqref{genevar}, i.e., 
\be{tueo}
\cT_{p}=\big\{\bar\rho\in \bar \cR_{p}\colon\,K_B(\bar\rho)>0,\, 
K_A(\bar\rho) + K_B(\bar\rho)<\infty\big\}.
\ee
Note that $\cT_{p}$ does not depend on $(\alpha,\beta)$. To state our main result for the 
delocalized part of the phase diagram we need the following hypothesis:

\bh{hyp2} 
For all $p>p_c$,
\be{impo}
\sup_{\bar \rho\in \cT_{p}}
\frac{\int_{0}^\infty g_A(l)\,[\bar\rho_A + \bar\rho_\cI\,\delta_0](dl)}
{K_B(\bar\rho)}<\infty,
\ee
where 
\be{gdef}
g_A(l) = v_{A,l}(c)\,\big[\tilde \kappa(v_{A,l}(c),l)-c\big]\Big|_{c\,=\,f_{\cD_2}}
\ee
with $v_{A,l}(c)$ as defined in \eqref{defucp1}.
\eh

\noindent

\br{rhyp2}
{\rm Hypothesis~\ref{hyp2} will allow us to show that $\cD_1$ and $\cD_2$ are non-empty.
This hypothesis, which will be discussed further in Section~\ref{hypo}, relies on the fact that,
in the supercritical regime, large subcritical clusters typically have a diameter that is of the 
same size as their circumference.}
\er
   
\begin{remark}
\label{rr2}
{\rm The function $g_A$ has the following properties: (1) $g_A(0)>0$; (2) $g_A$ is strictly 
decreasing on $[0,\infty)$; (3) $\lim_{l \to \infty} g_A(l)=-\infty$. Property (2) follows from
Lemma~\ref{l:lemconv2}(ii) and the fact that $u\mapsto u\tilde{\kappa}(u,l)$ is concave 
(see Lemma \ref{l:lemconv2}(i)). Property (3) follows from $f_{\cD_2}>0$, 
Lemma~\ref{l:lemconv2}(iv) and the fact that $v_{A,l}(f_{\cD_2}) \geq 1+l$ for $l\in [0,\infty)$. 
Property (1) follows from property (2) because $\int_0^\infty g_A(l)[\hat\rho_A+\hat\rho_I
\delta_0](dl)=0$ for all $\hat\rho$ maximizing \eqref{genevarD2d}.}
\end{remark}

Let 
\be{alpha}
\alpha^{*} = \sup\{\alpha\geq 0\colon\,f_{\cD}(\alpha,0;\,p)>f_{\cD_2}(p)\}.
\ee

\bt{phdgsup}
Assume Hypotheses {\rm \ref{hyp1}} and {\rm \ref{hyp2}}. Then the 
following hold:\\
(a) $\alpha^*\in (0,\infty)$.\\
(b) For every  $\alpha \in [0,\alpha^*)$,
\be{intjalphai}
J_\alpha\cap \cD_1 = J_\alpha\cap \cD 
=\{(\alpha+\beta,\beta)\colon\,\beta\in [-\tfrac{\alpha}{2},\,\beta_c(\gamma(\alpha))].
\ee
(c) For every $\alpha \in [\alpha^*,\infty)$,  
\be{intjalphai2}
J_\alpha\cap \cD_2 = J_\alpha\cap \cD=\{(\alpha+\beta,\beta)\colon\,
\beta\in [-\tfrac{\alpha}{2},\beta_c(\gamma(\alpha))]\}.
\ee
(d) For every $\alpha\in [0,\infty)$,
\be{bc**}
\beta_c(\gamma(\alpha))=\inf\big\{\beta>0\colon\,\phi_{\cI}(\bar v_{A,0};\alpha+\beta,\beta)
>\tilde \kappa (\bar v_{A,0},0)\big\}\quad \text{with}\  \ \bar v=v(f_\cD(\alpha,0;\,p)).
\ee
(e) On $[\alpha^*,\infty)$, $\alpha\mapsto\beta_c(\gamma(\alpha))$ is concave, continuous, 
non-decreasing and bounded from above.\\
(f) Inside phase $\cD_1$ the free energy $f$ is a function of $\alpha-\beta$ only, 
i.e., $f$ is constant on $J_\alpha\cap \cD_1$  for all $\alpha\in [0,\alpha^*]$.\\
(g) Inside phase $\cD_2$ the free energy $f$ is constant.
\et


\paragraph{Splitting of the $\cL$-phase.}

We partition $\cL$ into two phases: $\cL=\cL_1\cup\cL_2$. To that end we introduce the 
\emph{localized $A$-saturated free energy}, denoted by $f_{\cL_2}$, which is obtained by
restricting the supremum in \eqref{genevar} to those $\bar\rho \in\bar{\cR}_{p}$ that 
do not charge $B$, i.e.,
\be{genevarL2}
f_{\cL_2}(\alpha,\beta;\,p) 
= \sup_{ {\bar{\rho}\in\bar{\cR}_{p}} \atop {K_B(\bar\rho)=0} }\,
\sup_{v\in\,\bar{\cB}}\,\, \frac{\bar{N}(\bar{\rho},v)}{\bar{D}(\bar{\rho},v)},
\ee
provided $D(\bar{\rho},v)<\infty$.

With the help of this definition, we can split the $\cL$-phase defined in \eqref{defD} 
into two parts (see Fig.~\ref{fig-phasediagsup}):
\begin{itemize}
\item 
The $\cL_1$-phase corresponds to small to moderate values of $\alpha$ and large values 
of $\beta$. In this phase $AB$-localization occurs, but $A$-saturation does not, so 
that the free energy is given by the variational formula in \eqref{genevar} without 
restrictions, i.e.,
\be{genevarL1}
\cL_1 = \big\{(\alpha,\beta)\in \CONE\colon\,f(\alpha,\beta;p) 
>\max \{f_{\cL_2}(\alpha,\beta;p),\,f_{\cD}(\alpha,\beta;p)\}\big\}.
\ee
\item 
The $\cL_2$-phase corresponds to large values of $\alpha$ and $\beta$. In this phase 
both $AB$-localization and $A$-saturation occur. For the variational formula in
\eqref{genevar} this corresponds to the restriction where the contribution of 
$B$-blocks disappears, i.e.,
\be{genevarL21}
\cL_2 = \big\{(\alpha,\beta)\in\CONE\colon\,
f(\alpha,\beta;p) = f_{\cL_2}(\alpha,\beta;p)>f_{\cD}(\alpha,\beta;\,p)\big\}.
\ee
\end{itemize}

\bconj{phdgsup*}
(a) There exists a curve $\gamma\mapsto\beta^*_c(\gamma)$, lying above the
curve $\gamma\mapsto\beta_c(\gamma)$, such that 
\begin{equation}
\label{intjalpha1alt}
\begin{aligned}
\cL_1 \cap J_\alpha
&=\{(\alpha+\beta,\beta)\colon\,
\beta\in (\beta_c(\gamma(\alpha)),\beta^*_c(\gamma(\alpha))]\},\\
\cL_2 \cap J_\alpha
&=\big\{(\alpha+\beta,\beta)\colon\,\beta\in [\beta^*_c(\gamma(\alpha)),\infty)\big\}.
\end{aligned}
\end{equation}
for all $\alpha\in (0,\alpha^*]$.\\ 
(b) $\cL_1 \cap J_\alpha = \emptyset$ for all $\alpha\in (\alpha^*,\infty)$.
\econj
 

\subsubsection{Subcritical regime}
\label{phdiagsub}

\begin{figure}[htbp]
\begin{center}
\includegraphics[width=.65\textwidth]{phdsup.pdf}
\vspace{-7cm}
\caption{Qualitative picture of the phase diagram in the subcritical regime $p<p_c$.}  
\label{fig-phasediagsub}
\end{center}
\begin{picture}(10,0)(-150,-60)
\put(100,60){$\cD_2$}
\put(27,60){$\cD_1$}
\put(50,110){$\cL_1$ }
\put(100,130){$\cL_2$ }
\put(140,68){$\alpha$}
\put(-20,145){$\beta$}
\put(5,45){$\gamma$}
\put(74,24){$\bar\alpha^*$}
\put(125,105){$\beta_c(\gamma)$}
\put(32,167){$\beta^*_c(\gamma)$}
\end{picture}
\end{figure} 

\paragraph{Splitting of the $\cD$-phase.}

Let
\be{}
K_p = \inf_{\bar\rho\in\bar\cR_p} K_B(\bar\rho). 
\ee
Note that $K_p>0$ because $p<p_c$. We again partition $\cD$ into two phases: 
$\cD=\cD_1\cup\cD_2$. To that end we introduce the \emph{delocalized maximally 
$A$-saturated free energy}, denoted by $f_{\cD_2}(p)$, which is obtained by restricting 
the supremum in \eqref{genevarD1d} to those $\bar\rho\in\bar{\cR}_p$ achieving 
$K_p$. Thus, $f_{\cD_2}(p)$ is defined as
\be{genevarD2d*}
f_{\cD_2}(\alpha,\beta;\,  p) = \sup_{ {\bar{\rho}\in\bar{\cR}_{p}} \atop {K_B(\bar\rho)=K_p} }\,
\sup_{v\in\,\bar{\cB}}\,\,\frac{\bar{N}_{\cD}(\bar{\rho},v)}{\bar{D}_{\cD}(\bar{\rho},v)},
\ee
provided $D_\cD(\bar{\rho},v)<\infty$. Note that, contrary to what we had in the supercritical regime,
 $f_{\cD_2}(\alpha,\beta; p)$ depends on $\alpha-\beta$.

With the help of this definition, we can split the $\cD$-phase defined in \eqref{defD} 
into two parts (see Fig.~\ref{fig-phasediagsub}):
\begin{itemize}
\item  
The $\cD_1$-phase corresponds to small values of $\beta$ and small to moderate values 
of $\alpha$. In this phase there is no $AB$-localization and no maximal $A$-saturation. For 
the variational formula in \eqref{genevar} this corresponds to the restriction where the 
$AB$-localization term disappears while the $A$-block term and the $B$-block term 
contribute, i.e.,
\be{genevarD1*}
\cD_1 = \big\{(\alpha,\beta)\in \CONE\colon\,f(\alpha,\beta;\,p) 
= f_{\cD}(\alpha,\beta;\,p)>f_{\cD_2}(p)\big\}.
\ee
\item 
The $\cD_2$-phase corresponds to small values of $\beta$ and large values of $\alpha$. 
In this phase there is no $AB$-localization and maximal $A$-saturation. For the variational 
formula in \eqref{genevar} this corresponds to the restriction where the $AB$-localization 
term disappears and the $B$-block term is minimal, i.e.,
\be{genevarD21*}
\cD_2 = \big\{(\alpha,\beta)\in \CONE\colon\,f(\alpha,\beta;\,p) = f_{\cD_2}(p)\}.
\ee
\end{itemize}

Let 
\be{}
\cT_p = \big\{\bar\rho \in \bar\cR_p\colon\,K_B(\bar\rho) > K_p,\,
K_A(\bar\rho)+K_B(\bar\rho)<\infty\big\}.
\ee
To state our main result for the delocalized part of the phase diagram we need the 
following hypothesis:

\bh{hyp3} 
For all $p>p_c$,
\be{impo-alt}
\sup_{\bar\rho\in \cT_p}
\frac{\int_0^\infty g_{A,\alpha-\beta}(l)\,[\bar\rho_A + \bar\rho_\cI\,\delta_0](dl)}
{K_B(\bar\rho)}<\infty,
\ee
where 
\be{gdef-alt}
g_{A,\alpha-\beta}(l) = v_{A,l}(c)\,\big[\tilde\kappa(v_{A,l}(c),l)-c\big]
\Big|_{c\,=\,f_{\cD_2}(\alpha-\beta)}
\ee
with $v_{A,l}(c)$ as defined in \eqref{defucp1}.
\eh

\br{hyp33}
{\rm Hypothesis \ref{hyp3} will allow us to show that $\cD_1$ and $\cD_2$ are non-empty. 
It is close in spirit to Hypothesis~\ref{hyp2} and will be discussed further in Section \ref{hypo}.}
\er

\noindent 
Let
\be{baralpha}
\bar\alpha^* = \inf\big\{\alpha \geq 0\colon\,\forall\,\alpha' \geq \alpha
\,\,\exists\,\bar\rho \in \cO_{p,\alpha',0}\colon\,K_B(\bar\rho)=K_p\big\}.
\ee

\bt{phdgsub}
Assume Hypotheses {\rm \ref{hyp1}} and {\rm \ref{hyp3}} hold. Then the following hold:\\
(a)  $\bar\alpha^* \in (0,\infty)$.\\  
(b) Theorems~{\rm \ref{phdgsup}}(b,c,d) hold with $\alpha^*$ replaced by $\bar\alpha^*$.\\
(c) Theorem~{\rm \ref{phdgsup}}(f) holds on the whole $\cD$ whereas Theorem~{\rm \ref{phdgsup}}(g)
does not hold.
\et

\paragraph{Splitting of the $\cL$-phase.} 

We again partition $\cL$ into two phases: $\cL=\cL_1\cup\cL_2$. To that end we introduce the 
\emph{localized maximally $A$-saturated free energy}, denoted by $f_{\cL_2}$, which is obtained 
by restricting the supremum in \eqref{genevar} to those $\bar\rho \in\bar{\cR}_{p}$ achieving $K_p$. 
Thus, $f_{\cL_2}(\alpha,\beta;\,p)$ is defined as 
\be{genevarL2*}
f_{\cL_2}(\alpha,\beta;\,p) 
= \sup_{ {\bar{\rho}\in\bar{\cR}_{p}} \atop {K_B(\bar\rho)=K_p} }\,
\sup_{v\in\,\bar{\cB}}\,\, \frac{\bar{N}(\bar{\rho},v)}{\bar{D}(\bar{\rho},v)},
\ee
provided $D(\bar{\rho},v)<\infty$.

With the help of this definition, we can split the $\cL$-phase defined in \eqref{defD} 
into two parts (see Fig.~\ref{fig-phasediagsub}):
\begin{itemize}
\item 
The $\cL_1$-phase corresponds to small to moderate values of $\alpha$ and large values 
of $\beta$. In this phase $AB$-localization occurs, but maximal $A$-saturation does not, so 
that the free energy is given by the variational formula in \eqref{genevar} without 
restrictions, i.e.,
\be{genevarL1*}
\cL_1 = \big\{(\alpha,\beta)\in \CONE\colon\,f(\alpha,\beta;p) 
>\max \{f_{\cL_2}(\alpha,\beta;p),\,f_{\cD}(\alpha,\beta;p)\}\big\}.
\ee
\item 
The $\cL_2$-phase corresponds to large values of $\alpha$ and $\beta$. In this phase 
both $AB$-localization and maximal $A$-saturation occur. For the variational formula in
\eqref{genevar} this corresponds to the restriction where the contribution of 
$B$-blocks is minimal, i.e.,
\be{genevarL21*}
\cL_2 = \big\{(\alpha,\beta)\in\CONE\colon\,
f(\alpha,\beta;p) = f_{\cL_2}(\alpha,\beta;p)>f_{\cD}(\alpha,\beta;\,p)\big\}.
\ee
\end{itemize}

\bconj{phdgsub*} 
Conjecture {\rm \ref{phdgsup*}} holds with $\bar\alpha^{*}$ instead of $\alpha^*$.
\econj


\subsection{Heuristics in support of the hypotheses}
\label{hypo}

{\bf Hypothesis~\ref{hyp1}.} 
At $(\alpha,0)\in \CONE$, the $BB$-interaction vanishes while the $AA$-interaction does 
not, and we have seen earlier that there is no localization of the copolymer along $AB$-interfaces 
when $\beta=0$. Consequently, when the copolymer moves at a non-zero slope $l\in \R
\setminus\{0\}$ it necessarily reduces the time it spends in the $B$-solvent. To be more specific, 
let $\bar\rho\in \bar \cR_p$ be a maximizer of the variational formula in \eqref{genevar}, and assume 
that the copolymer moves in the emulsion by following the strategy of  displacement associated 
with $\bar\rho$. Consider the situation in which the copolymer moves upwards for awhile at slope 
$l>0$ and over a horizontal distance $h>0$, and subsequently changes direction to move 
downward at slope $l'<0$ and over a horizontal distance $h'>0$. This change of vertical direction 
is necessary to pass over a $B$-block, otherwise it would be entropically more advantageous to 
move at slope $(hl+h'l')/(h+h')$ over a horizontal distance $h+h'$ (by the strict concavity of $\tilde
\kappa$ in Lemma~\ref{l:lemconv2}(i)). Next, we observe (see Fig.~\ref{fig-optim}) that when the 
copolymer passes over a $B$-block, the best strategy in terms of entropy is to follow the 
$AB$-interface (consisting of this $B$-block and the $A$-solvent above it) without being localized, 
i.e., the copolymer performs a long excursion into the $A$-solvent but the two ends of this 
excursion are located on the $AB$-interface. This long excursion is counted in $\bar\rho_\cI$. Consequently, Hypothesis~\ref{hyp1} ($\bar\rho_\cI>0$) will be satisfied if we can show that 
the copolymer necessarily spends a strictly positive fraction of its time performing such changes 
of vertical direction. But, by the ergodicity of $\omega$ and $\Omega$, this has to be the case.

\begin{figure}[htbp]
\begin{center}
\includegraphics[width=.55\textwidth]{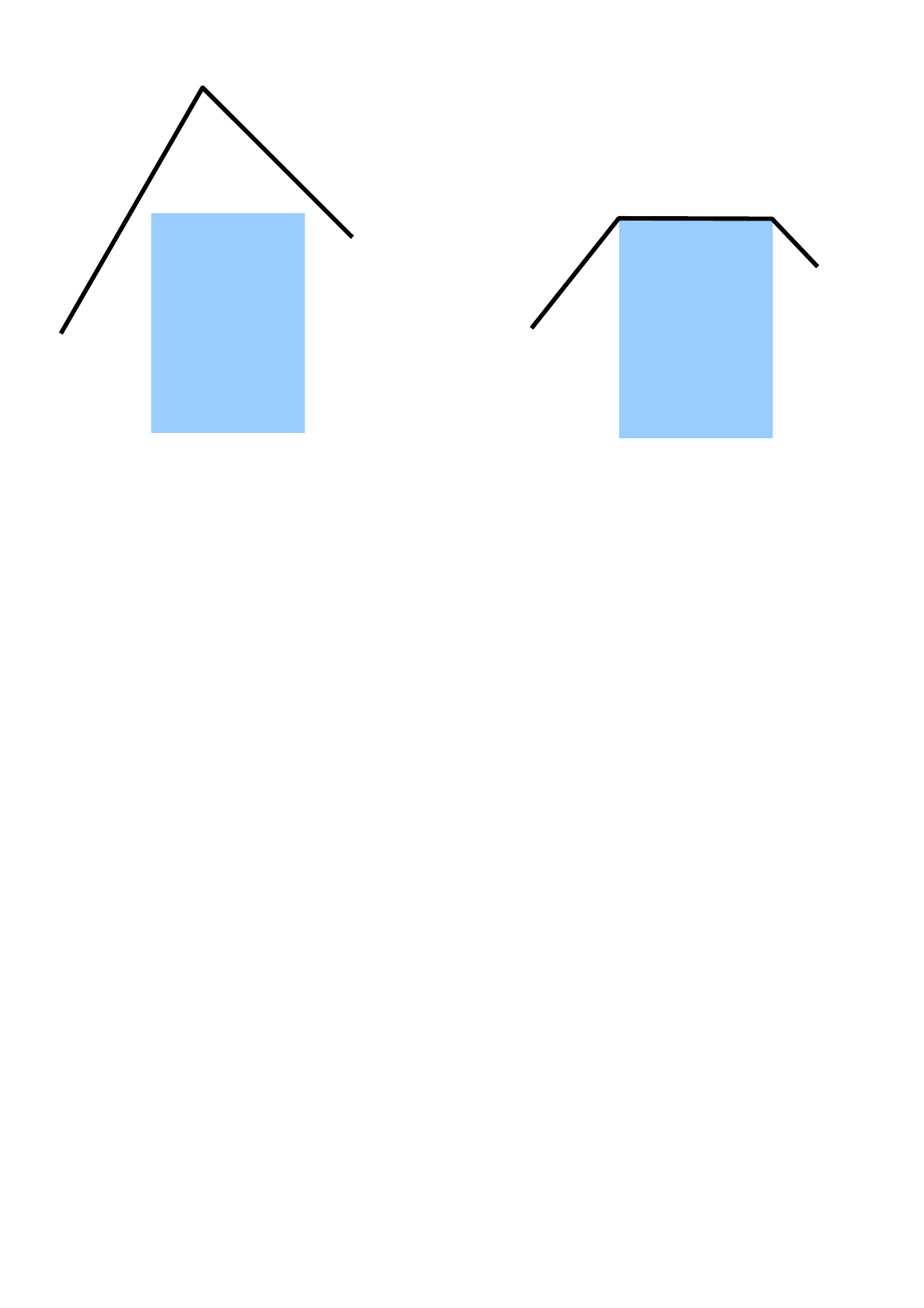}
\vspace{-8cm}
\caption{Entropic optimization when the copolymer passes over a $B$-block.} 
\label{fig-optim}
\end{center}
\end{figure} 

\medskip\noindent
{\bf Hypothesis~\ref{hyp2}.} 
The hypothesis can be rephrased in a simpler way. Recall Remark~\ref{rr2} and note that there 
is an $l_0\in (0,\infty)$ such that $g_A>0$ on $[0,l_0)$ and $g_A<0$ on $(l_0,\infty)$. Assume 
by contradiction that Hypothesis~\ref{hyp2} fails, so that the ratio in \eqref{impo} is unbounded. 
Then, by spending an arbitrarily small amount of time in the $B$-solvent, the copolymer can 
improve the best saturated strategies by moving some of the mass of $\bar\rho_A(l_0,\infty)$ 
to $\bar \rho_A (0,l_0)$, such that the entropic gain is arbitrarily larger than the time spent in 
the $B$-solvent. In other words, failure of Hypothesis~\ref{hyp2} means that spending an 
arbitrarily small fraction of time in the $B$-solvent allows the copolymer to travel flatter when 
it is in the $A$-solvent during a fraction of the time that is arbitrarily larger than the fraction of 
the time it spends in the $B$-solvent. This means that, instead of going around some large 
cluster of the $B$-solvent, the copolymer simply crosses it straight to travel flatter. However, 
the fact that large subcritical clusters scale are shaped like large balls contradicts this scenario, 
because it means that the time needed to go around the cluster is of the same order as the 
time required to cross the cluster, which makes the unboundedness of the ratio in \eqref{impo} 
impossible.

\medskip\noindent
{\bf Hypothesis~\ref{hyp3}.}
Hypothesis~\ref{hyp3} is similar to Hypothesis~\ref{hyp2}, except that in the subcritical 
regime the copolymer spends a strictly positive fraction of time in the $B$-solvent. Failure 
of  Hypothesis~\ref{hyp3} would lead to the same type of contradiction. Indeed, the 
unboundedness of the ratio in \eqref{impo-alt} would mean that there are optimal paths 
that spend an arbitrarily small additional fraction of time in the $B$-solvent in such a way 
that the path can travel flatter in the $A$-solvent during a fraction of the time that is arbitrarily 
larger than the fraction of the time it spends in the $B$-solvent. Again, the fact that large 
subcritical clusters adopt round shapes rules out such a scenario.


\section{Key ingredients}
\label{keyingr}

In Section~\ref{pathentr}, we define the entropy per step $\tilde{\kappa}(u,l)$ carried 
by trajectories moving at slope $l\in\R_+$ with the constraint that the total number of 
steps divided by the total number of horizontal steps is equal to $u \in [1+l,\infty)$ 
(Proposition~\ref{lementr} below). In Section~\ref{interf}, we define the free energy 
per step $\phi_{\cI}(\mu)$ of a copolymer in the vicinity of an $AB$-interface with the 
constraint that the total number of steps divided by the total number of horizontal steps 
is equal to $\mu \in [1,\infty)$ (Proposition~\ref{l:feinflim} below). In Section~\ref{Mtruncation}, 
we introduce a truncated version of the model in which we bound the vertical displacement 
on the block scale in each column of blocks by $M$, with $M\in\N$ arbitrary but fixed. 
(This restriction will be removed in Section~\ref{Minfinity} by letting $M\to\infty$.) In 
Section~\ref{freeenp}, we combine the definitions in Sections~\ref{pathentr}--\ref{interf} 
to obtain a variational formula for the free energy per step in single columns of different 
types (Proposition~\ref{energ} below). In Section~\ref{Percofreq} we define the set of 
probability laws introduced in \eqref{susub}, which is a key ingredient of the slope-based 
variational formula in Theorem~\ref{varformula2}. Finally, in Section~\ref{Positivity of the
free energy}, we prove that the quenched free energy per step $f(\alpha,\beta; p)$ is strictly 
positive on $\CONE$.


\subsection{Path entropies at given slope}
\label{pathentr}

{\bf Path entropies.} 
We define the entropy of a path crossing a single column. To that aim, we set 
\begin{align}
\label{add4}
\nonumber 
\cH &= \{(u,l)\in [0,\infty) \times \R \colon\, u\geq 1+|l|\},\\
\cH_L &= \big\{(u,l)\in \cH \colon\,l \in \tfrac{\Z}{L},\, 
u\in 1+|l|+\tfrac{2\N}{L}\big\}, \qquad  L \in \N,
\end{align}
and note that $\cH\cap \mathbb{Q}^2=\cup_{L\in \N} \cH_L$. For $(u,l) \in \cH$, we
denote by $\cW_L(u,l)$ the set containing those paths $\pi=(0,-1)+\widetilde \pi$ with 
$\widetilde\pi\in\cW_{uL}$ (recall \eqref{defw}) for which $\pi_{uL}= (L,lL)$ (see 
Fig.~\ref{figtra}). The entropy per step associated with the paths in $\cW_L(u,l)$ is 
given by
\be{ttrajblock}
\tilde{\kappa}_L(u,l)=\tfrac{1}{uL} \log |\cW_L(u,l)|.
\ee

\begin{figure}[htbp]
\vspace{-.5cm}
\begin{center}
\includegraphics[width=.44\textwidth]{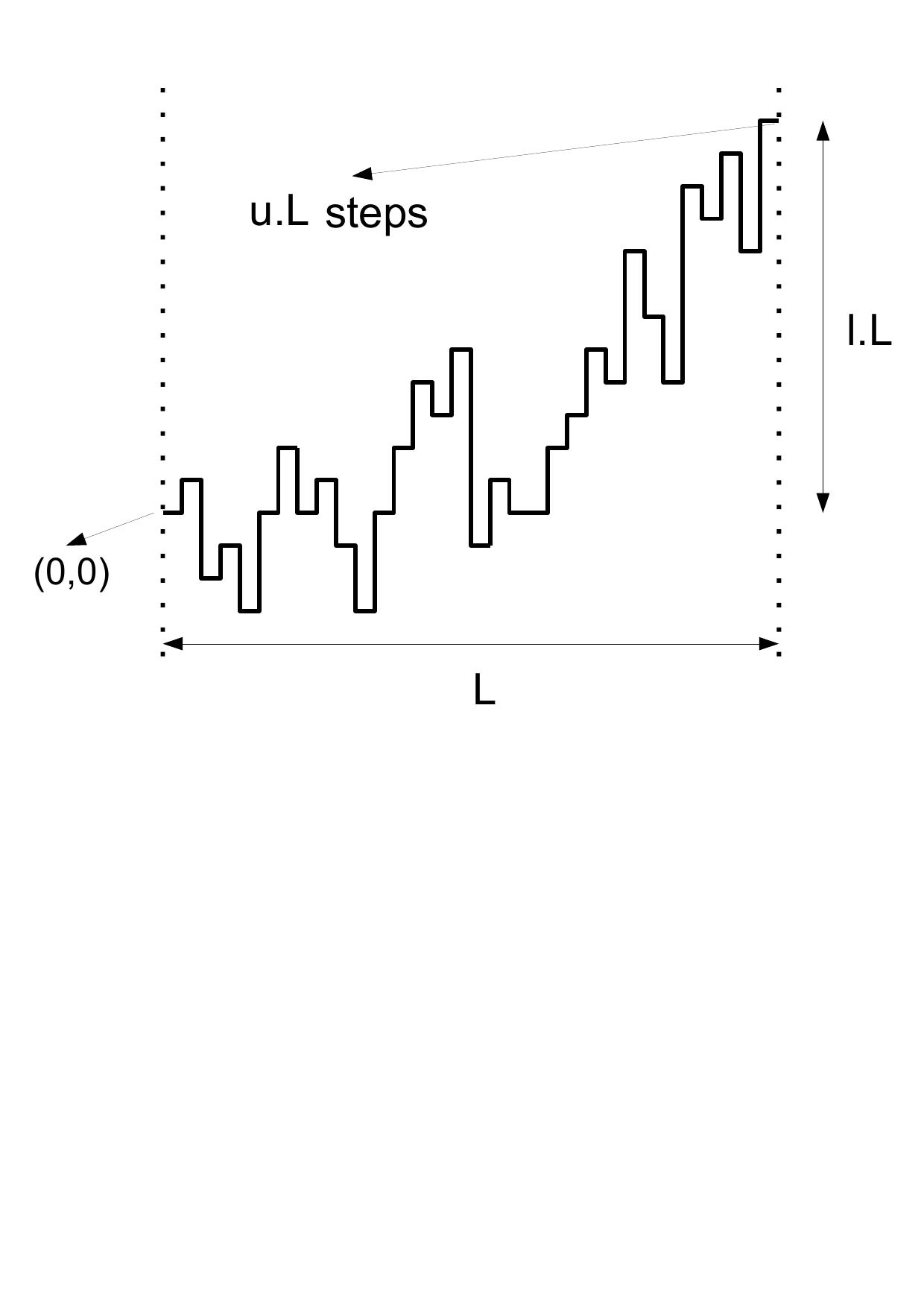}
\end{center}
\vspace{-4.7cm}
\caption{A trajectory in $\cW_L(u,l)$.}
\label{figtra}
\end{figure}

\noindent 
The following propositions will be proven in Appendix \ref{Path entropies}.
\bp{lementr}
For all $(u,l)\in\cH\cap\mathbb{Q}^2$ there exists a $\tilde{\kappa}(u,l) \in 
[0,\log 3]$ such that 
\be{conventr}
\lim_{ {L\to \infty} \atop {(u,l)\in \cH_L} } \tilde{\kappa}_L(u,l)
= \sup_{  {L\in \N} \atop {(u,l)\in \cH_L}} \tilde{\kappa}_L(u,l)
= \tilde{\kappa}(u,l). 
\ee  
\ep

\noindent
An explicit formula is available for $\tilde{\kappa}(u,l)$, namely, 
\be{kapexplform}
\tilde{\kappa}(u,l) = \left\{\begin{array}{ll}
\kappa(u/|l|,1/|l|), &\l \neq 0,\\
\hat{\kappa}(u), &l = 0,
\end{array}
\right.
\ee
where $\kappa(a,b)$, $a\geq 1+b$, $b\geq 0$, and $\hat{\kappa}(\mu)$, $\mu \geq 1$, 
are given in \cite{dHW06}, Section 2.1, in terms of elementary variational formulas
involving entropies (see \cite{dHW06}, proof of Lemmas 2.1.1--2.1.2). The two formulas 
in \eqref{kapexplform} allow us to extend $(u,l)\mapsto \tilde{\kappa}(u,l)$ to a 
continuous and strictly concave function on $\cH$ (see  Lemma \ref{l:lemconv2} ).


\subsection{Free energy for a linear interface}
\label{interf}

{\bf Free energy along a single linear interface.} 
To analyze the free energy per monomer in a single column we need to first analyze the
free energy per monomer when the path moves in the vicinity of an $AB$-interface. To 
that end we consider a \emph{single linear interface} $\cI$ separating a solvent $B$ 
in the lower halfplane from a solvent $A$ in the upper halfplane (the latter is assumed 
to include the interface itself).

For $L\in \N$ and $\mu\in 1+\frac{2\N}{L}$, let $\cW^\cI_L(\mu)=\cW_L(\mu,0)$ denote 
the set of $\mu L$-step directed self-avoiding paths starting at $(0,0)$ and ending at 
$(L,0)$. Recall \eqref{bondlabel} and define
\be{feinf}
\phi^{\omega,\cI}_L(\mu) = \frac{1}{\mu L} \log Z^{\omega,\cI}_{L,\mu} 
\quad \text{ and } \quad \phi^\cI_L(\mu)=\E[\phi^{\omega,\cI}_L(\mu)] ,
\ee
with
\be{Zinf}
\begin{aligned}
Z^{\omega,\cI}_{L,\mu}
&= \sum_{\pi\in\cW_L^\cI(\mu)} \exp\left[H^{\omega,\cI}_{L}(\pi)\right],\\
H^{\omega,\cI}_{L}(\pi)
&= \sum_{i=1}^{\mu L}\big(\beta\, 1\{\omega_i=B\}-\alpha\,
1\{\omega_i=A\}\big)\  1\{(\pi_{i-1},\pi_i) < 0\},
\end{aligned}
\ee
where $(\pi_{i-1},\pi_i) < 0$ means that the $i$-th step lies in the lower halfplane, 
strictly below the interface (see Fig.~\ref{fig7}).

\bp{l:feinflim} {\rm (\cite{dHW06}, Section 2.2.2)}\\
For all $(\alpha,\beta)\in\CONE$ and $\mu \in \mathbb{Q}\cap[1,\infty)$ there exists 
a $\phi_\cI(\mu)=\phi_\cI(\mu;\alpha,\beta)$ $\in\R$ such that
\be{fesainf}
\lim_{ {L\to \infty} \atop {\mu\in 1+\frac{2\N}{L}} } 
\phi^{\omega,\cI}_L(\mu) = \phi_\cI(\mu) 
\quad \text{for $\P$-a.e. $\omega$ and in $L^1(\P)$.}
\ee
\ep

It is easy to check (with the help of concatenation of trajectories) that $\mu \mapsto \mu 
\phi_{\cI}(\mu;\alpha,\beta)$ is concave. For later use we need strict concavity:

\begin{assumption}
\label{assu}
For all $(\alpha,\beta)\in\CONE$ the function $\mu\mapsto\mu \phi_{\cI}(\mu;\alpha,\beta)$ 
is strictly concave on $[1,\infty)$.
\end{assumption}

\noindent
This property is plausible, but hard to prove. There is to date no model of a polymer near a
linear interface with disorder for which a property of this type has been established. A proof
would require an explicit representation for the free energy, which for models with disorder 
typically is not available. 

\begin{figure}[htbp]
\vspace{-1cm}
\begin{center}
\includegraphics[width=.58\textwidth]{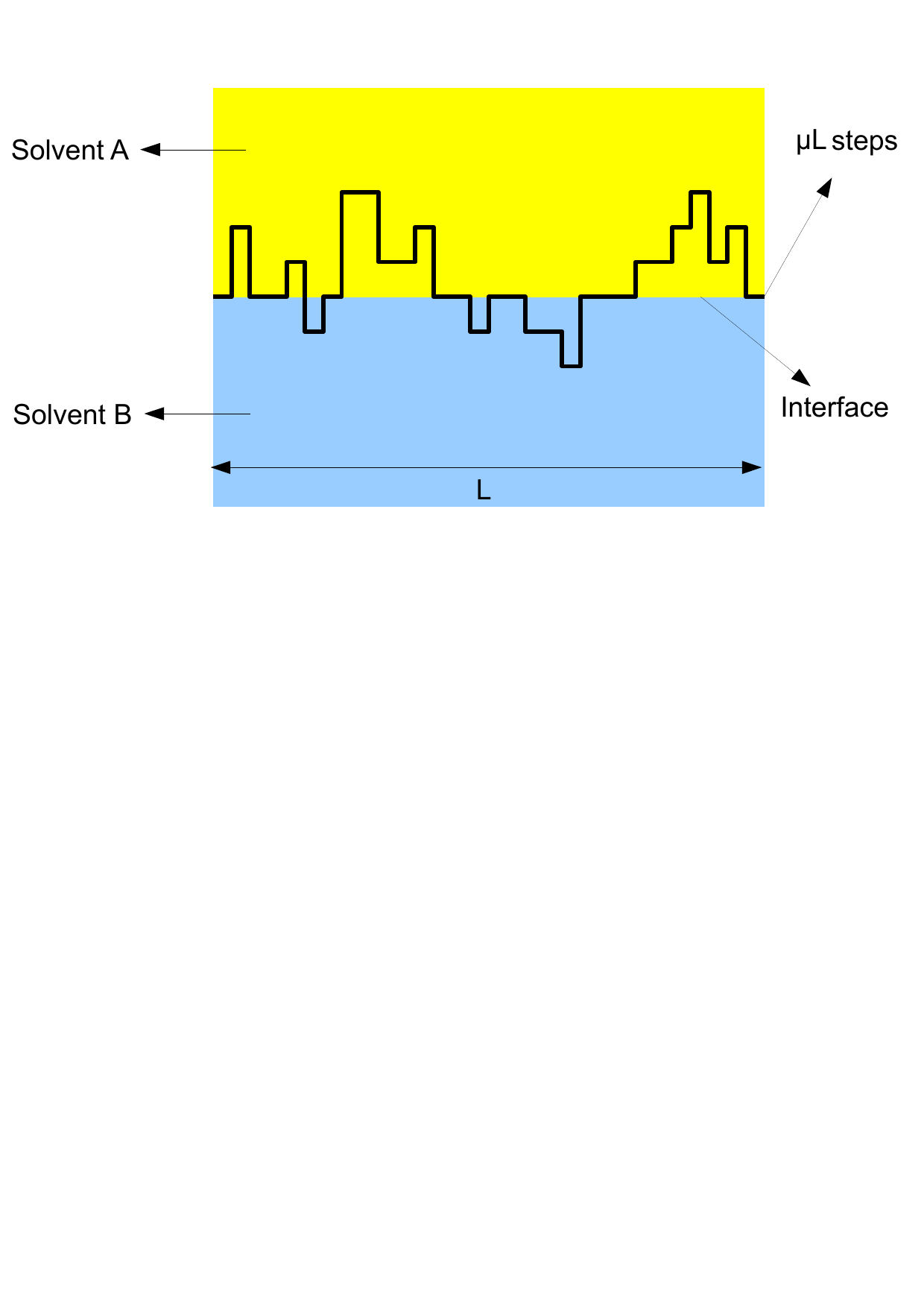}
\end{center}
\vspace{-8cm}
\caption{Copolymer near a single linear interface.}
\label{fig7}
\end{figure}


\subsection{Path restriction}
\label{Mtruncation}
 
In the remainder of this section, as well as in Sections~\ref{proofprop1}--\ref{varfo2}, we will 
work with a \emph{truncation} of the model in which we bound the vertical displacement on 
the block scale in each column of blocks by $M\in\N$. The value of $M$ will be \emph{arbitrary 
but fixed}. In other words, instead of considering the full set of trajectories $\cW_n$, we 
consider only trajectories that exit a column through a block at most $M$ above or $M$ below 
the block where the column was entered (see Fig.~\ref{fig-const}). The reason for doing the
truncation is that is simplifies our proof of the column-based variational formula. In 
Section~\ref{Minfinity} we will remove the truncation by showing that the free energy of
the untruncated model is the $M\to\infty$ limit of the free energy of the $M$-truncated 
model, and that the variational formulas match up as well.   

We recall \eqref{blocks} and, formally, we partition $(0,\infty) \times \R$ into columns of blocks of width $L_n$, i.e.,  
\be{blockcol}
(0,\infty)\times\R = \cup_{j\in \N_0} \cC_{j,L_n}, \qquad 
\cC_{j,L_n}=\cup_{k\in \Z} \Lambda_{L_n}(j,k),
\ee
where $C_{j,L_n}$ is the $j$-th column. For each $\pi \in \cW_n$, we let $\tau_j$ 
be the time at which $\pi$ leaves the $(j-1)$-th column and enters the $j$-th column, 
i.e.,
\be{deftau}
\tau_j = \sup\{i\in \N_{0} \colon \,\pi_i\in \cC_{j-1,n}\}
= \inf \{i\in \N_0 \colon \, \pi_i\in \cC_{j,n} \}-1,
\qquad j = 1,\dots,N_\pi-1,
\ee
where $N_\pi$ indicates how many columns have been visited by $\pi$. Finally, we let 
$v_{-1}(\pi)=0$ and, for $j \in \{0,\dots,N_\pi-1\}$, we let $v_{j}(\pi) \in \Z$ be 
such that the block containing the last step of the copolymer in $\cC_{j,n}$ is 
labelled by $(j,v_j(\pi))$, i.e., $(\pi_{\tau_{j+1}-1},\pi_{\tau_{j+1}}) \in \Lambda_{L_N}
(j,v_j(\pi))$. Thus, we restrict $\cW_{n}$ to the subset $\cW_{n,M}$ defined as 
\be{defwalt}
\cW_{n,M} = \big\{\pi\in \cW_n \colon\, |v_j(\pi)-v_{j-1}(\pi)|\leq M
\,\,\forall\,j\in \{0,\dots,N_\pi-1\} \big\}.
\ee

\begin{figure}[htbp]
\begin{center}
\includegraphics[width=.3\textwidth]{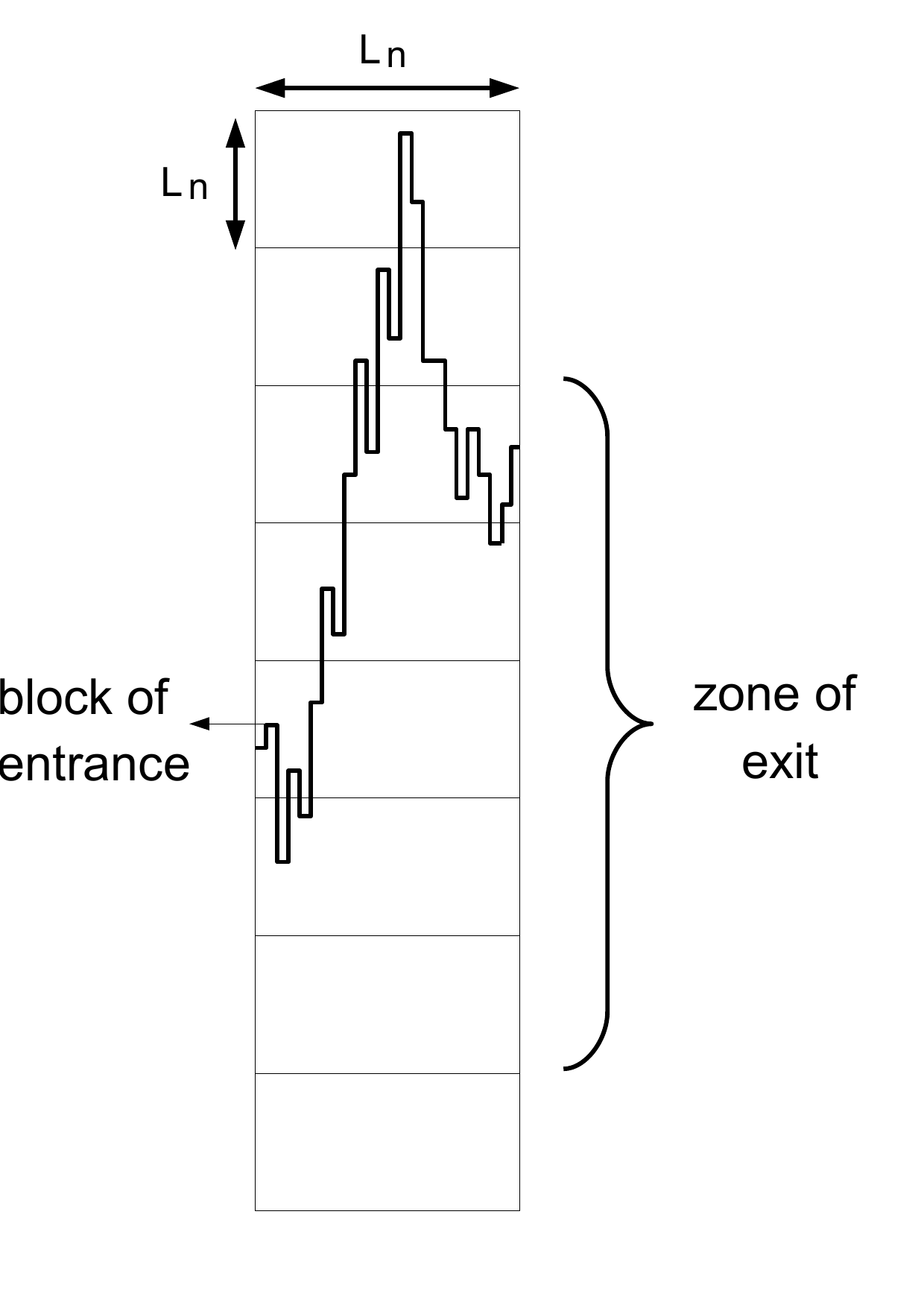}
\caption{Example of a trajectory $\pi \in\cW_{n,M}$ with $M=2$ crossing the column 
$\cC_{0,L_n}$ with $v_0(\pi)=2$.} 
\label{fig-const}
\end{center}
\end{figure} 

\noindent 
We recall \eqref{partfunc} and we define $Z_{n,L_n}^{\omega,\Omega}(M;\alpha,\beta)$
and $f_{n}^{\omega,\Omega}(M;\alpha,\beta)$ the partition function and the quenched 
free energy restricted to those trajectories in $\cW_{n,M}$, i.e., 
\be{partfunc11} 
f_{n}^{\omega,\Omega}(M;\alpha,\beta)
=\tfrac{1}{n}\log Z_{n,L_n}^{\omega,\Omega}(M;\alpha,\beta) 
\quad \text{with}\quad Z_{n,L_n}^{\omega,\Omega}(M;\alpha,\beta)
=\sum_{\pi \in \cW_{n,M}} e^{H_{n,L_n}^{\omega,\Omega}(\pi;\alpha,\beta)},
\ee 
and,  as $n\to\infty$, the free energy per monomer is given by
\be{felimdef}
f(M;\alpha,\beta) = \lim_{n\to \infty}f_{n}^{\omega,\Omega}(M;\alpha,\beta)
\ee
provided this limit exists $\omega,\Omega$-a.s. 

\noindent
In Remark~\ref{reM} below we discuss how the mesoscopic vertical restriction can be 
relaxed by letting $M\to \infty$.

\begin{remark}
\label{reM}
{\rm 
As mentioned in Section~\ref{discus}, the \emph{slope-based variational formula} in 
Theorem~\ref{varformula2} will be deduced from a \emph{column-based variational 
formula} stated in Proposition~\ref{varformula}. In this framework, the truncated model 
is used as follows. First, we prove the column-based variational formula for the truncated 
model: this will be the object of Propositions~\ref{pr:formimp}--\ref{pr:varar} in 
Section~\ref{Key Propositions}. Next, we show with Proposition~\ref{pr:formimppp} 
that, as the truncation levl $M$ diverges, the truncated free energy converges to the 
non-truncated free energy. This will complete the proof of the column-based variational 
formula for the non-truncated model. Finally, in Section~\ref{varfo2}, we transform the 
column-based variational formula into the slope-based variational formula for the 
non-truncated model.}
\end{remark}


\subsection{Free energy in a single column and variational formulas}
\label{freeenp}

In this section, we prove the convergence of  the free energy per step in a single column 
(Proposition~\ref{convfree1}) and derive a variational formula for this free energy with the 
help of Propositions~\ref{lementr}--\ref{l:feinflim}. The variational formula takes different 
forms (Propositions~\ref{energ}), depending on \emph{whether there is or is not an $AB$-interface 
between the heights where the copolymer enters and exits the column, and in the latter case 
whether an $AB$-interface is reached or not}. 

In what follows we need to consider the randomness in a single column. To that 
aim, we recall \eqref{blockcol}, we pick $L\in \N$ and once $\Omega$ is chosen, 
we can record the randomness of $C_{j,L}$ as
\be{add1}
\Omega_{(j,~\cdot~)}=\{\Omega_{(j,l)}\colon\, l\in \Z\}.
\ee
We will also need to consider the randomness of the $j$-th column seen by a 
trajectory that enters $\cC_{j,L}$ through the block $\Lambda_{j,k}$ with 
$k\neq 0$ instead of $k=0$. In this case, the randomness of $\cC_{j,L}$ is 
recorded as
\be{add2}
\Omega_{(j,k+~\cdot~)}=\{\Omega_{(j,k+l)}\colon\, l\in \Z\}.
\ee

Pick $L\in \N$, $\chi\in \{A,B\}^\Z$ and consider $\cC_{0,L}$ endowed with the disorder 
$\chi$, i.e., $\Omega(0,\cdot)=\chi$.  Let $(n_i)_{i\in \Z}\in \Z^\Z$ be the successive 
heights of the $AB$-interfaces in $\cC_{0,L}$ divided by $L$, i.e.,  
\be{efn}
\dots<n_{-1}<n_0\leq 0< n_1<n_2<\dots.
\ee
and the $j$-th interface of $\cC_{0,L}$ 
is $\cI_j=\{0,\dots,L\}\times \{n_j L\}$ (see 
Fig.~\ref{fig5bis}). Next, for $r\in \N_0$ we set
\begin{align}\label{defchi1}
k_{r,\chi}=&\,0 \text{ if } n_1>r \text{ and } 
k_{r,\chi}=\max\{i\geq 1\colon n_i\leq r\} \text{ otherwise},
\end{align}
while for $r\in -\N$ we set
\begin{align}\label{defchi2}
\quad \quad \quad \quad  k_{r,\chi}=&\, 0 \text{ if }
n_0\leq r\text{ and } k_{r,\chi}
=\min\{i\leq 0\colon n_{i}\geq r+1\}-1 \text{ otherwise}.
\end{align}
Thus, $|k_{r,\chi}|$ is the number of $AB$-interfaces between heigths $1$ and $r L$ 
in $\cC_{0,L}$. 

\begin{figure}[htbp]
\begin{center}
\includegraphics[width=.38\textwidth]{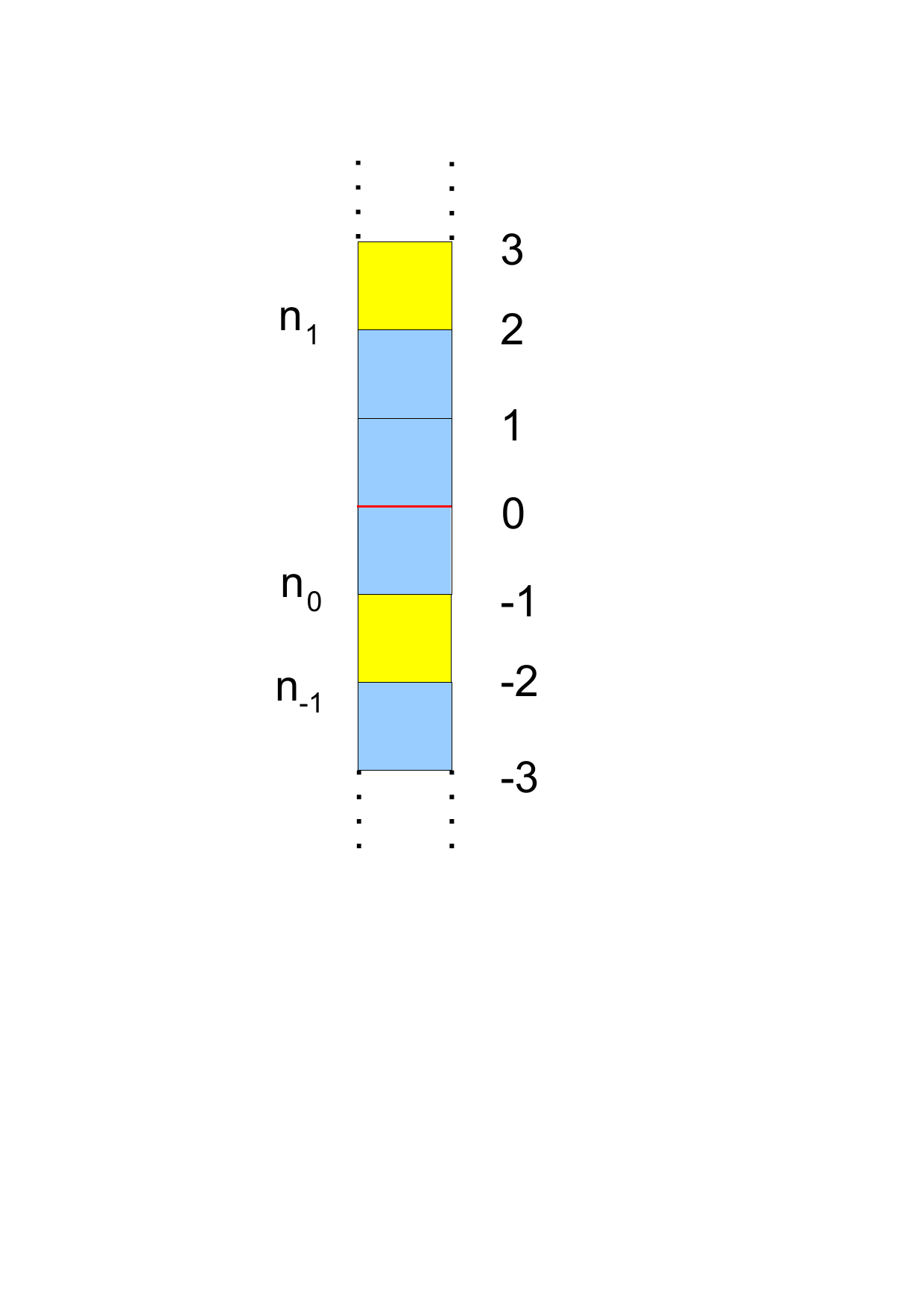}
\end{center}
\vspace{-3.5cm}
\caption{Example of a column with disorder $\chi=(\dots,\chi(-3),\chi(-2),\chi(-1),
\chi(0),\chi(1),\chi(2),$ $\dots)=(\dots,B,A,B,B,B,A,,\dots)$. In this example, for 
instance, $k_{-2,\chi}=-1$ and $k_{1,\chi}=0$.}
\label{fig5bis}
\end{figure}


\subsubsection{Free energy in a single column}
\label{frenp1}

{\bf Column crossing characteristics.} 
Pick $L,M\in \N$, and consider the first column $\cC_{0,L}$. The type of $\cC_{0,L}$ is 
determined by $\Theta=(\chi,\Xi,x)$, where $\chi=(\chi_j)_{j\in \Z}$ encodes the type 
of each block in $\cC_{0,L}$, i.e., $\chi_j=\Omega_{(0,j)}$ for $j\in \Z$, and $(\Xi,x)$ 
indicates which trajectories $\pi$ are taken into account. In the latter, $\Xi$ is 
given by $(\Delta \Pi,b_0,b_1)$ such that the vertical increment in $\cC_{0,L}$ on 
the block scale is $\Delta \Pi$ and satisfies $|\Delta \Pi|\leq M$ , i.e., $\pi$ 
enters $\cC_{0,L}$ at $(0,b_0 L)$ and exits $\cC_{0,L}$ at $(L,(\Delta \Pi+b_1)L)$. 
As in \eqref{defchi1} and \eqref{defchi2}, we set $k_\Theta=k_{\Delta\Pi,\chi}$ and 
we let $\cV_\AB$ be the set containing those $\Theta$ satisfying $k_\Theta\neq 0$. 
Thus, $\Theta\in \cV_\AB$ means that the trajectories crossing $\cC_{0,L}$ from 
$(0,b_0 L)$ to $(L,(\Delta \Pi+b_1)L)$ necessarily hit an $AB$-interface, and in 
this case we set $x=1$. If, on the other hand, $\Theta\in \cV_{\nAB}=\cV\setminus
\cV_{\AB}$, then we have $k_\Theta= 0$ and we set $x=1$ when the set of trajectories 
crossing $\cC_{0,L}$ from $(0,b_0 L)$ to $(L,(\Delta \Pi+b_1)L)$ is restricted to 
those that do not reach an $AB$-interface before exiting $\cC_{0,L}$, while we set 
$x=2$ when it is restricted to those trajectories that reach at least one $AB$-interface 
before exiting $\cC_{0,L}$. To fix the possible values taken by $\Theta=(\chi,\Xi,x)$ 
in a column of width $L$, we put $\cV_{L,M}=\cV_{\AB,L,M}\cup\cV_{\nAB,L,M}$ with 
\begin{align}
\label{set2}
\nonumber \cV_{\AB,L,M}&=\big\{(\chi,\Delta\Pi,b_0,b_1,x)\in 
\{A,B\}^\Z\times \Z\times \big\{\tfrac{1}{L},\tfrac{2}{L},\dots,1\big\}^2
\times \{1\}\colon\\
\nonumber 
&\hspace{6cm}\qquad \qquad\qquad\qquad 
|\Delta\Pi|\leq M,\,k_{\Delta\Pi,\chi}\neq 0\big\},\\
\nonumber \cV_{\nAB,L,M}&=\big\{(\chi,\Delta\Pi,b_0,b_1,x)\in 
\{A,B\}^\Z\times \Z\times\big\{\tfrac{1}{L},\tfrac{2}{L},\dots,1\big\}^2
\times \{1,2\}\colon\\
&\hspace{6cm}
\qquad \qquad\qquad\qquad |\Delta\Pi|\leq M,\,  k_{\Delta\Pi,\chi}= 0\big\}.
\end{align}
Thus, the set of all possible values of $\Theta$ is $\cV_M=\cup_{L\geq 1} \cV_{L,M}$, 
which we partition into $\cV_M=\cV_{\AB,M}\cup\cV_{\nAB,M}$ (see Fig.~\ref{fig6}) with
\begin{align}
\label{sset2}
\nonumber \cV_{\AB,M}
&=\cup_{L\in \N}\  \cV_{\AB,L,M}\\
\nonumber
&= \big\{(\chi,\Delta\Pi,b_0,b_1,x)\in \{A,B\}^\Z\times \Z\times 
(\mathbb{Q}_{(0,1]})^2\times\{1\}\colon\,\,
|\Delta\Pi|\leq M,\, k_{\Delta\Pi,\chi}\neq 0\big\},\\
\nonumber \cV_{\nAB,M}&=\cup_{L\in \N}\  \cV_{\nAB,L,M}\\
&= \big\{(\chi,\Delta\Pi,b_0,b_1,x)\in \{A,B\}^\Z\times \Z\times 
(\mathbb{Q}_{(0,1]})^2\times\{1,2\}\colon\,
|\Delta\Pi|\leq M,\, k_{\Delta\Pi,\chi}= 0\big\},
\end{align}
where, for all $I\subset \R$, we set $\mathbb{Q}_{I}=I\cap \mathbb{Q}$. We define the 
closure of $\cV_M$ as $\overline\cV_M=\overline\cV_{\AB,M}\cup\overline\cV_{\nAB,M}$ with
\begin{align}
\label{sset3}
\nonumber &\overline\cV_{\AB,M}
= \big\{(\chi,\Delta\Pi,b_0,b_1,x)\in \{A,B\}^\Z\times \Z\times 
[0,1]^2\times\{1\}\colon\,|\Delta\Pi|\leq M,\, k_{\Delta\Pi,\chi}\neq 0\big\},\\
&\overline\cV_{\nAB,M}=\big\{(\chi,\Delta\Pi,b_0,b_1,x)\in \{A,B\}^\Z\times\Z
\times [0,1]^2\times\{1,2\}\colon\,|\Delta\Pi|\leq M,\, k_{\Delta\Pi,\chi}= 0\big\}.
\end{align}

\begin{figure}[htbp]
\begin{center}
\includegraphics[width=.42\textwidth]{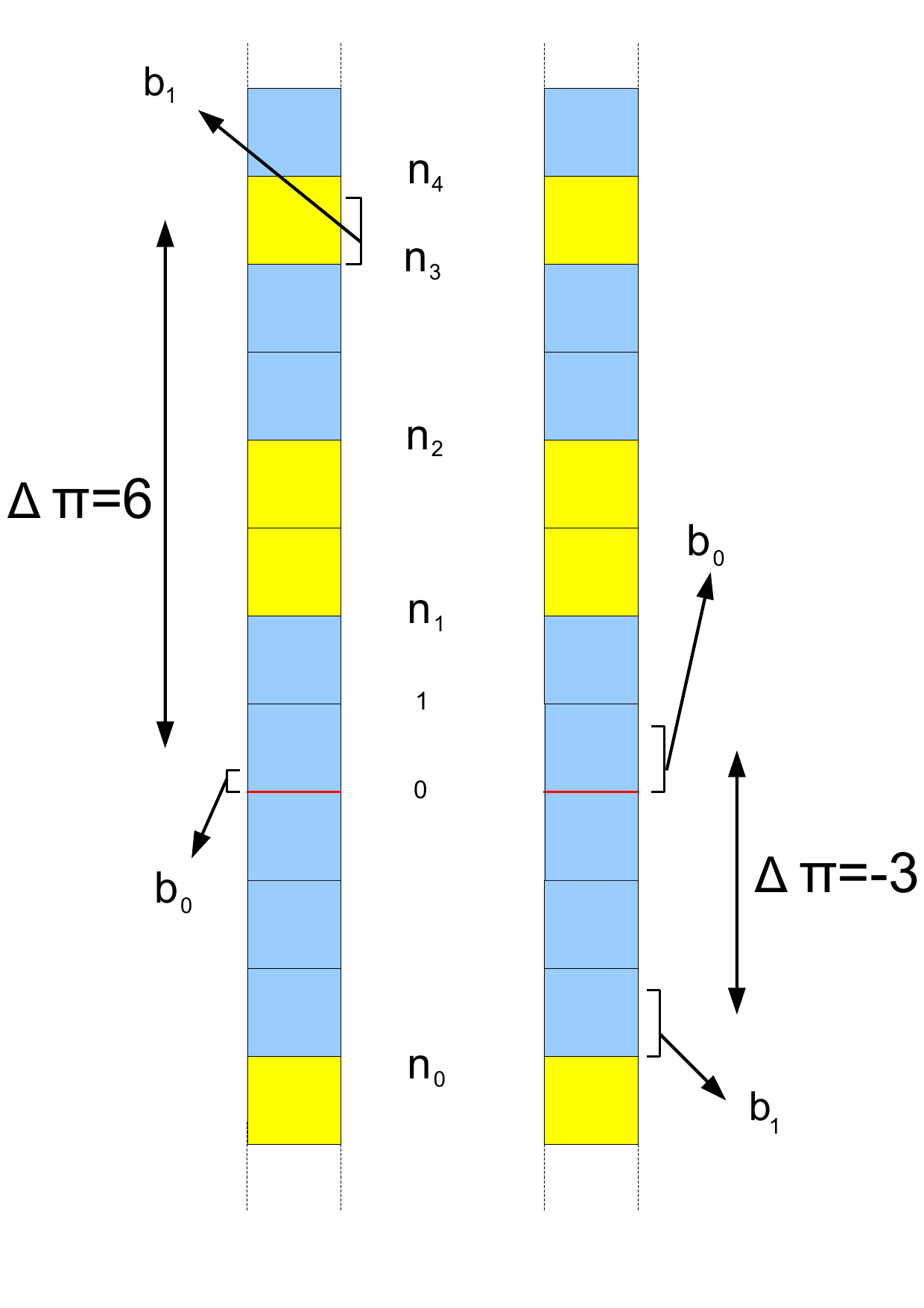}
\end{center}
\vspace{-1cm}
\caption{Labelling of coarse-grained paths and columns. On the left the type of the 
column is in $\cV_{\AB,M}$, on the right it is in $\cV_{\nAB,M}$ (with $M\geq 6$).}
\label{fig6}
\end{figure}

\medskip\noindent 
{\bf Time spent in columns.} 
We pick $L,M\in \N$, $\Theta=(\chi,\Delta\Pi,b_0,b_1,x)\in \cV_{L,M}$ and we specify the 
total number of steps that a trajectory crossing the column $\cC_{0,L}$ of type 
$\Theta$ is allowed to make. For $\Theta=(\chi,\Delta\Pi,b_0,b_1,1)$, set
\be{deft}
t_{\Theta}= 1+\mathrm{sign} (\Delta \Pi)\,
(\Delta \Pi+ b_1-b_0) \,\ind_{\{\Delta \Pi\neq 0\}} + |b_1-b_0|
\,\ind_{\{\Delta \Pi=0\}},
\ee
so that a trajectory $\pi$ crossing a column of width $L$ from $(0,b_0 L)$ to 
$(L,(\Delta \Pi+b_1)L)$ makes a total of $uL$ steps with $u\in t_{\Theta}
+\tfrac{2\N}{L}$.
For $\Theta=(\chi,\Delta\Pi,b_0,b_1,2)$ in turn, recall \eqref{efn} and let
\be{deftt}
t_{\Theta}= 1+\mathrm{min}\{2 n_1-b_0-b_1-\Delta \Pi, 2 |n_0| +b_0+b_1+\Delta \Pi\},
\ee
so that  a trajectory $\pi$ crossing a column of width $L$ and type $\Theta\in 
\cV_{\nAB,L,M}$ from $(0,b_0 L)$ to $(L,(\Delta \Pi+b_1)L)$ and reaching an $AB$-interface 
makes a total of $uL$ steps with $u\in t_{\Theta}+\tfrac{2\N}{L}$.

\begin{figure}[htbp]
\begin{center}
\includegraphics[width=.3\textwidth]{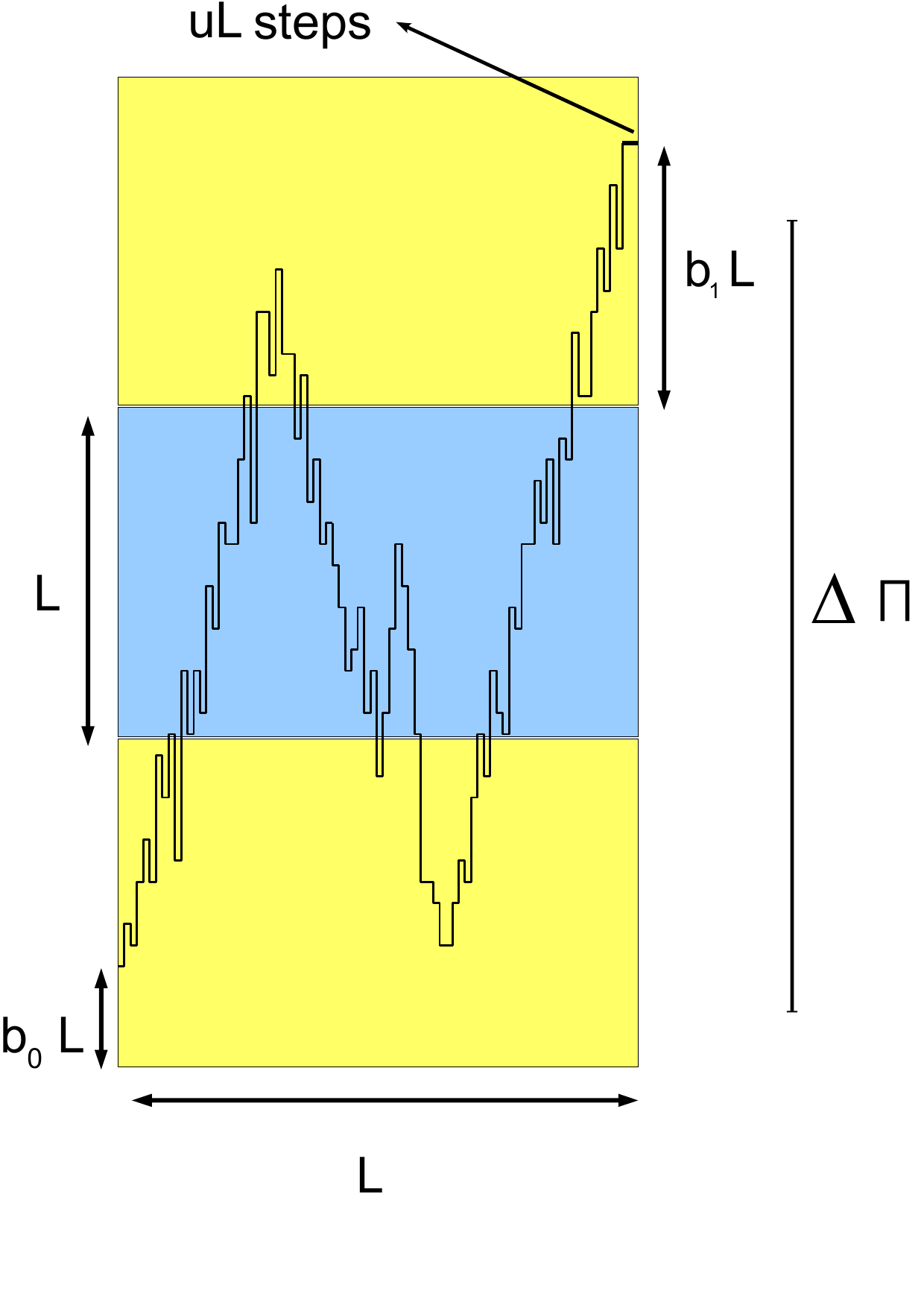}
\end{center}
\vspace{-1cm}
\caption{Example of a $uL$-step path inside a column of type $(\chi,\Delta\Pi,b_0,b_1,1)
\in \cV_{\AB,L}$ with disorder $\chi=(\dots,\chi(0),\chi(1),\chi(2),\dots)=(\dots,A,B,
A,\dots)$, vertical displacement $\Delta\Pi=2$, entrance height $b_0$ and exit height 
$b_1$.}
\label{fig5}
\end{figure}

\begin{figure}[htbp]
\begin{center}
\includegraphics[width=.3\textwidth]{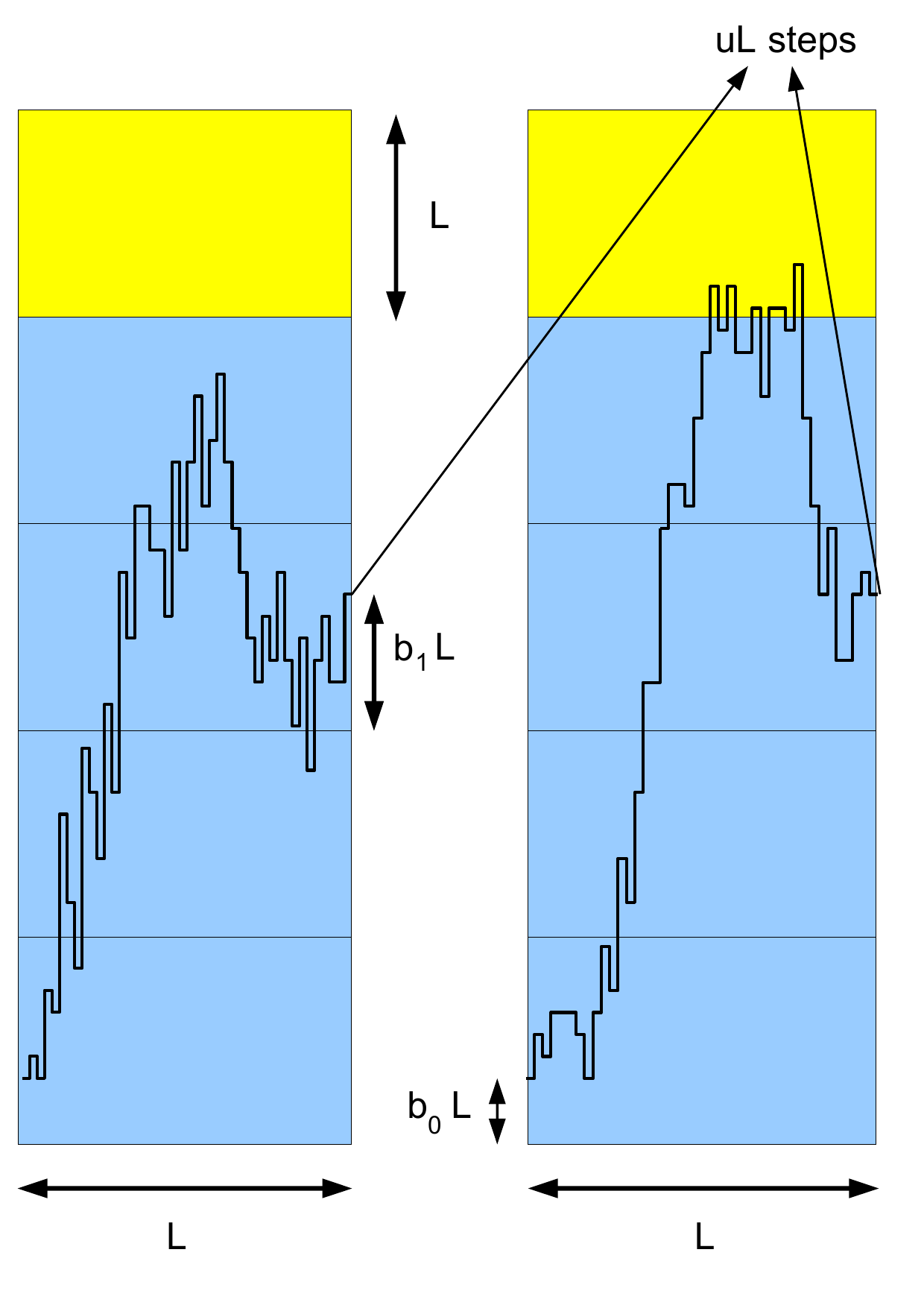}
\end{center}
\vspace{-1cm}
\caption{Two examples of a $uL$-step path inside a column of type $(\chi,\Delta\Pi,b_0,b_1,1)
\in \cV_{\nAB,L}$ (left picture) and $(\chi,\Delta\Pi,b_0,b_1,2)\in \cV_{\nAB,L}$ 
(right picture) with disorder $\chi=(\dots,\chi(0),\chi(1),\chi(2),\chi(3),\chi(4),
\dots)=(\dots,B,B,B,B,A,\dots)$, vertical displacement $\Delta\Pi=2$, entrance
height $b_0$ and exit height $b_1$.}
\label{block2}
\end{figure}

At this stage, we can fully determine the set $\cW_{\Theta,u,L}$ consisting of the 
$uL$-step trajectories $\pi$ that are considered in a column of width $L$ and type 
$\Theta$. To that end, for $\Theta\in \cV_{\AB,L,M}$ we map the trajectories $\pi\in
\cW_{L}(u,\Delta\Pi+b_1-b_0)$ onto $\cC_{0,L}$ such that $\pi$ enters $\cC_{0,L}$ 
at $(0,b_0 L)$ and exits $\cC_{0,L}$ at $(L,(\Delta\Pi+b_1)L)$ (see Fig.~\ref{fig5}), 
and for $\Theta\in\cV_{\nAB,L,M}$ we remove, dependencing on $x\in\{1,2\}$, those 
trajectories that reach or do not reach an $AB$-interface in the column (see 
Fig.~\ref{block2}). Thus, for  $\Theta\in \cV_{\AB,L,M}$ and $u\in t_{\Theta}+
\tfrac{2\N}{L}$, we let
\be{ww}
\cW_{\Theta,u,L}=\big\{\pi=(0,b_0 L)+\widetilde\pi\colon\,
\widetilde\pi\in \cW_{L}(u,\Delta\Pi+b_1-b_0) \big\},
\ee
and, for $\Theta\in \cV_{\nAB,L,M}$ and $u\in t_{\Theta}+\tfrac{2\N}{L}$,
\begin{align}
\label{ww2}
\nonumber \cW_{\Theta,u,L}
&=\big\{\pi\in(0,b_0 L)+ \cW_{L}(u,\Delta\Pi+b_1-b_0)\colon\,
\text{$\pi$ reaches no $AB$-interface} \big\}\ \text{if}\ x_\Theta=1,\\
\cW_{\Theta,u,L}&=\big\{\pi\in (0,b_0 L)+\cW_{L}(u,\Delta\Pi+b_1-b_0)\colon\, 
\text{$\pi$ reaches an $AB$-interface} \big\}\ \text{if}\  x_\Theta=2,
\end{align}
with $x_\Theta$ the last coordinate of $\Theta \in \cV_M$. Next, we set
\begin{align}
\label{setp}
\nonumber \cV_{L,M}^* &= \Big\{(\Theta,u)\in \cV_{L,M} \times [0,\infty)\colon\, 
u\in t_{\Theta}+\tfrac{2\N}{L}\Big\},\\
\nonumber \cV_M^* 
&=\big\{(\Theta,u)\in \cV_M \times \mathbb{Q}_{[1,\infty)}\colon\,
u\geq  t_{\Theta}\big\},\\
\overline \cV_M^{*}
&=\big\{(\Theta,u)\in \overline\cV_M\times [1,\infty) \colon\,
u\geq  t_{\Theta}\big\},
\end{align}
which we partition into $\cV^{*}_{\AB,L,M}\cup\cV^{*}_{\nAB,L,M}$, $\cV^*_{\AB,M}\cup
\cV^*_{\nAB,M}$ and $\overline\cV^{*}_{\AB,M}\cup\overline\cV^{*}_{\nAB,M}$. Note that 
for every $(\Theta,u)\in\cV^*_M$ there are infinitely many $L\in \N$ such that 
$(\Theta,u)\in \cV_{L,M}^*$, because $(\Theta,u)\in \cV_{qL,M}^*$ for all $q\in \N$ 
as soon as $(\Theta,u)\in \cV_{L,M}^*$.

\medskip\noindent 
{\bf Restriction on the number of steps per column.} 
In what follows we abbreviate
\be{defhe}
\EIGH = \{(M,m)\in\N\times \N\colon\,m\geq M+2\},
\ee
and, for $(M,m)\in \EIGH$, we consider the situation where the number of steps 
$uL$ made by a trajectory $\pi$ in a column of width $L\in \N$ is bounded by $m L$. 
Thus, we restrict the set $\cV_{L,M}$ to the subset $\cV_{L,M}^{\,m}$ containing only 
those types of columns $\Theta$ that can be crossed in less than $m L$ steps, i.e., 
\be{}
\cV_{L,M}^{\,m}=\{\Theta\in \cV_{L,M}\colon\, t_\Theta\leq m\}.
\ee
Note that the latter restriction only conconcerns those $\Theta$ satisfying $x_\Theta=2$. 
When $x_\Theta=1$ a quick look at \eqref{deft} suffices to state that $t_\Theta\leq M+2
\leq m$. Thus, we set $\cV_{L,M}^{\,m}=\cV_{\AB,L,M}^{\,m}\cup\cV_{\nAB,L,M}^{\,m}$ with 
$\cV_{\AB,L,M}^{\,m}=\cV_{\AB,L,M}$ and with
\begin{align}
\label{set2alt1}
\nonumber \cV_{\nAB,L,M}^{\,m}
= \Big\{\Theta\in \{A,B\}^{\Z}\times \Z \times
\big\{\tfrac{1}{L},\tfrac{2}{L},\dots,1\big\}^2&\times \{1,2\}\colon\\
& |\Delta\Pi| \leq M,\  k_{\Theta}= 0\ \ \text{and}\  t_\Theta\leq m\Big\}.
\end{align}
The sets $\cV_{M}^{\,m}=\cV_{\AB,M}^{\,m}\cup\cV_{\nAB,M}^{\,m}$ and $\overline\cV_{M}^{\,m}
=\overline\cV_{\AB,M}^{\,m}\cup\overline\cV_{\nAB,M}^{\,m}$ are obtained by mimicking 
(\ref{sset2}--\ref{sset3}). In the same spirit, we restrict $\cV_{L,M}^{*}$ to 
\be{setpalt}
\cV_{L,M}^{*,\,m}=\{(\Theta,u)\in \cV_{L,M}^{*}\colon\, \Theta\in \cV_{L,M}^{\,m}, u\leq m\}
\ee 
and $\cV_{L,M}^*=\cV_{\AB,L,M}^*\cup \cV_{\nAB,L,M}^*$ with
\be{setpaltalt}
\begin{aligned}
\cV_{\AB,L,M}^{*,\,m} &= \Big\{(\Theta,u)&\in \cV_{\AB,L,M}^{\,m} \times [1,m]\colon\, 
u\in t_{\Theta}+\tfrac{2\N}{L}\Big\},\\
\cV_{\nAB,L,M}^{*\,m} &= \Big\{(\Theta,u)&\in \cV_{\nAB,L,M}^{\,m}\times [1,m]\colon\, 
u\in t_{\Theta}+\tfrac{2\N}{L} \Big\}.
\end{aligned}
\ee
We set also  $\cV_M^{*,\,m}=\cV^{*,\,m}_{\AB,M}\cup \cV^{*,\,m}_{\nAB,M}$ with 
$\cV_{\AB,M}^{*,\,m}=\cup_{L\in \N}\cV^{*,\,m}_{\AB, L,M}$ and $\cV^{*,\,m}_{\nAB,M}
=\cup_{L\in \N} \cV^{*,\,m}_{\nAB,L,M}$, and rewrite these as
\begin{align}
\label{set2alt2}
\nonumber \cV^{*,\,m}_{\AB,M} 
=\big\{(\Theta,u)&\in \cV_{\AB,M}^{\,m} \times \mathbb{Q}_{[1,m]}\colon\,
u\geq  t_{\Theta}\big\},\\
\cV_{\nAB,M}^{*,\,m}= \big\{(\Theta,u)&\in \cV_{\nAB,M}^{\,m} \times  \mathbb{Q}_{[1,m]}\colon\, 
u\geq t_{\Theta}\big\}.
\end{align}
We further set  $\overline{\cV}^{\,*}_M = \overline{\cV}^{\,*,\,m}_{\AB,M}\cup
\overline{\cV}^{\,*,\,m}_{\nAB,M}$ with 
\be{set2alt3}
\begin{aligned}
\overline\cV^{\,*,\,m}_{\AB,M} 
&=\big\{(\Theta,u)\in \overline\cV_{\AB,M}^{\,m} \times [1,m]\colon\,
u\geq  t_{\Theta}\big\},\\
\overline\cV_{\nAB,M}^{\,*,\,m} 
&= \Big\{(\Theta,u) \in \overline\cV_{\nAB,M}^{\,m} \times [1,m]\colon\, 
u\geq  t_{\Theta}\Big\}.
\end{aligned}
\ee

\medskip\noindent 
{\bf Existence and uniform convergence of free energy per column.} 
Recall \eqref{ww}, \eqref{ww2} and, for $L\in \N$, $\omega\in \{A,B\}^\N$ and $(\Theta,u)
\in\cV^{\,*}_{L,M}$, we associate with each $\pi \in \cW_{\Theta,u,L}$ the energy
\be{Hamiltonian1}
H_{uL,L}^{\omega,\chi}(\pi)
=  \sum_{i=1}^{uL} \big(\beta\, 1\left\{\omega_i=B\right\}
-\alpha\, 1\left\{\omega_i=A\right\}\big)\,
1\Big\{\chi^{L}_{(\pi_{i-1},\pi_i)}=B\Big\},
\ee
where $\chi^{L}_{(\pi_{i-1},\pi_i)}$ indicates the label of the block containing 
$(\pi_{i-1},\pi_i)$ in a column with disorder $\chi$ of width $L$. (Recall that
the disorder in the block is part of the type of the block.) The latter allows us 
to define the quenched free energy per monomer in a column of type $\Theta$ and size 
$L$ as
\be{partfunc2}
\psi^{\omega}_L(\Theta,u)
= \frac{1}{u L} \log Z^{\omega}_L(\Theta,u) 
\quad \text{with} \quad Z^{\omega}_L(\Theta,u)
=\sum_{\pi \in \cW_{\Theta,u,L}} e^{\,H_{uL,L}^{\omega,\chi}(\pi)}.
\ee
Abbreviate $\psi_L(\Theta,u)=\E[\psi^\omega_L(\Theta,u)]$, and note that for $M\in \N$,
$m\geq M+2$ and $(\Theta,u)\in \cV_{L,M}^{\,*,\,m}$ all $\pi\in \cW_{\Theta,u,L}$ 
necessarily remain in the blocks $\Lambda_L(0,i)$ with $i\in \{-m+1,\dots,m-1\}$. 
Consequently, the dependence on $\chi$ of $\psi^{\omega}_L(\Theta,u)$ is restricted 
to those coordinates of $\chi$ indexed by $\{-m+1,\dots,m-1\}$. The following proposition 
will be proven in Section \ref{proofprop1}.

\begin{proposition}
\label{convfree1}
For every $M\in\N$ and $(\Theta,u)\in\cV_M^*$ there exists a $\psi(\Theta,u)\in\R$ such that 
\be{conve}
\lim_{ {L\to \infty} \atop {(\Theta,u)\in\cV_{L,M}^*} } \psi^{\omega}_L(\Theta,u)
= \psi(\Theta,u) = \psi(\Theta,u;\alpha,\beta) \quad \omega-a.s.
\ee
Moreover, for every $(M,m)\in \EIGH$ the convergence is uniform in $(\Theta,u)\in
\cV^{*,\,m}_M$.
\ep

\medskip\noindent
{\bf Uniform bound on the free energies.}
Pick $(\alpha,\beta)\in \CONE$, $n\in \N$, $\omega\in \{A,B\}^\N$, $\Omega\in
\{A,B\}^{\N_0\times\Z}$, and let $\bar\cW_n$ be any non-empty subset of $\cW_n$ 
(recall \eqref{defw}). Note that the quenched free energies per monomer introduced 
until now are all of the form
\be{unifb}
\psi_n=\tfrac1n\log \sum_{\pi\in\bar\cW_n} e^{\,H_n(\pi)},
\ee
where $H_n(\pi)$ may depend on $\omega$ and $\Omega$ and satisfies $-\alpha n\leq 
H_n(\pi)\leq \alpha n$ for all $\pi\in \bar\cW_n$ (recall that $|\beta|\leq \alpha$ 
in $\CONE$). Since $1\leq|\bar\cW_n|\leq |\cW_n|\leq 3^n$, we have
\be{boundel}
|\psi_n|\leq \log 3+\alpha =^\mathrm{def} C_{\text{uf}}(\alpha).
\ee
The uniformity of this bound in $n$, $\omega$ and $\Omega$ allows us to average over 
$\omega$ and/or $\Omega$ or to let $n\to \infty$.


\subsubsection{Variational formulas for the free energy in a single column}
\label{newsec}

We next show how the free energies per column can be expressed in terms of a 
variational formula involving the path entropy and the single interface free energy 
defined in Sections ~\ref{pathentr} and  \ref{interf}. Throughout this section $M \in \N$ 
is fixed. 

For $\Theta\in  \overline\cV_{M}$ we need to specify $l_{A,\Theta}$ and $l_{B,\Theta}$, 
the minimal vertical distances the copolymer must cross in blocks of type $A$ and $B$, 
respectively, when crossing a column of type $\Theta$.

\medskip\noindent 
{\bf Vertical distance to be crossed  in columns of class $\AB$.}
Pick $\Theta\in \overline\cV_{\AB,M}$ and put
\begin{align}
\label{defl1}
\nonumber l_1&=1_{\{\Delta \Pi>0\}} 
(n_1-b_0)+1_{\{\Delta \Pi<0\}} (b_0-n_0),\\
\nonumber l_j&=1_{\{\Delta \Pi>0\}} 
(n_j-n_{j-1})+1_{\{\Delta \Pi<0\}} (n_{-j+2}-n_{-j+1}) \quad \text{for} \quad 
j\in \{2,\dots,|k_\Theta|\},\\
l_{|k_\Theta|+1}&=1_{\{\Delta \Pi>0\}} 
(\Delta\Pi+b_1-n_{k_\Theta})+1_{\{\Delta \Pi<0\}} (n_{k_\Theta+1}-\Delta\Pi-b_1),
\end{align}
i.e., $l_1$ is the vertical distance between the entrance point and the first 
interface, $l_{i}$ is the vertical distance between the $i$-th interface and 
the $(i+1)$-th interface, and $l_{|k_\Theta|+1}$ is the vertical distance between 
the last interface and the exit point.

Recall that $\Theta=(\chi,\Delta \Pi,b_0,b_1,x)$, and let $l_{A,\Theta}$ and $l_{B,\Theta}$ 
correspond  to the minimal vertical distance the copolymer must cross in blocks of type $A$ 
and $B$, respectively, in a column with disorder $\chi$ when going from $(0,b_0)$ 
to $(1,\Delta\Pi+b_1)$, i.e., 
\begin{align}
\label{bl}
\nonumber
l_{A,\Theta}
&=1_{\{\Delta \Pi>0\}} \sum_{j=1}^{|k_\Theta|+1} l_j 1_{\{\chi(n_{j-1})=A\}} 
+1_{\{\Delta \Pi<0\}} \sum_{j=1}^{|k_\Theta|+1} l_j 1_{\{\chi(n_{-j+1})=A\}}, \\
l_{B,\Theta}
&=1_{\{\Delta \Pi>0\}} \sum_{j=1}^{|k_\Theta|+1} l_j 1_{\{\chi(n_{j-1})=B\}} 
+1_{\{\Delta \Pi<0\}} \sum_{j=1}^{|k_\Theta|+1} l_j 1_{\{\chi(n_{-j+1})=B\}}.
\end{align}

\medskip\noindent 
{\bf Vertical distance to be crossed  in columns of class $\nAB$.} 
Depending on $\chi$ and $\Delta\Pi$, we further partition 
$\overline\cV_{\nAB,M}$ into four parts
\be{partcv}
\overline \cV_{\nAB,A,1,M} \cup \overline \cV_{\nAB,A,2,M}
\cup \overline \cV_{\nAB,B,1,M} \cup \overline \cV_{\nAB,B,2,M},
\ee
where $\overline \cV_{\nAB,A,x,M}$ and $\overline \cV_{\nAB,B,x,M}$ contain those 
columns with label $x$ for which all the blocks between the entrance and the exit block 
are of type $A$ and $B$, respectively. Pick $\Theta\in \cV_{\nAB,M}$. In this case, there 
is no $AB$-interface between $b_0$ and $\Delta\Pi+b_1$, which means that $\Delta\Pi
< n_1$ if $\Delta\Pi\geq 0$ and $\Delta\Pi\geq n_0$ if $\Delta\Pi<0$ ($n_0$ and $n_1$ 
being defined in \eqref{efn}).

For $\Theta\in \overline \cV_{\nAB,A,1,M}$ we have $l_{B,\Theta}=0$, whereas $l_{A,\Theta}$ 
is the vertical distance between the entrance point $(0,b_0)$ and the exit point $(1,\Delta\Pi+b_1)$, 
i.e.,
\begin{align}
l_{A,\Theta} &= 1_{\{\Delta \Pi\geq 0\}} (\Delta \Pi-b_0+b_1)
+ 1_{\{\Delta \Pi<0\}} (|\Delta \Pi|+b_0-b_1)+ 1_{\{\Delta \Pi=0\}} |b_1-b_0|,
\end{align} 
and similarly for $\Theta\in \overline \cV_{\nAB,B,1,M}$ we have obviously $l_{A,\Theta}=0$ and 
\begin{align}
l_{B,\Theta} &= 1_{\{\Delta \Pi\geq 0\}} (\Delta \Pi-b_0+b_1)
+ 1_{\{\Delta \Pi<0\}} (|\Delta \Pi|+b_0-b_1)+ 1_{\{\Delta \Pi=0\}} |b_1-b_0|.
\end{align} 

For $\Theta\in \overline \cV_{\nAB,A,2,M}$, in turn, we have  $l_{B,\Theta}=0$ and $l_{A,\Theta}$ 
is the minimal vertical distance a trajectory has to cross in a column with disorder $\chi$, starting 
from $(0,b_0)$, to reach the closest $AB$-interface before exiting at $(1,\Delta\Pi+b_1)$, i.e., 
\begin{align}
l_{A,\Theta} &= 1_{\{\Delta \Pi\geq 0\}} (\Delta \Pi-b_0+b_1)
+ 1_{\{\Delta \Pi<0\}} (|\Delta \Pi|+b_0-b_1)+ 1_{\{\Delta \Pi=0\}} |b_1-b_0|,
\end{align} 
and similarly for $\Theta\in \overline \cV_{\nAB,B,2,M}$ we have  $l_{A,\Theta}=0$ and 
\begin{align}
l_{B,\Theta} &= 1_{\{\Delta \Pi\geq 0\}} (\Delta \Pi-b_0+b_1)
+ 1_{\{\Delta \Pi<0\}} (|\Delta \Pi|+b_0-b_1)+ 1_{\{\Delta \Pi=0\}} |b_1-b_0|.
\end{align}

\medskip\noindent 
{\bf Variational formula for the free energy in a column.} 
We abbreviate $(h)=(h_A,h_B,h_\cI)$ and $(a)=(a_A,a_B,a_\cI)$. Note that the quantity 
$h_x$ indicates the fraction of horizontal steps made by the copolymer in solvent $x$ 
for $x\in \{A,B\}$ and along $AB$-interfaces for $x=\cI$. Similarly, $a_x$ indicates 
the total number of steps made by the copolymer in solvent $x$ for $x\in\{A,B\}$ and 
along $AB$-interfaces for $x=\cI$. For $(l_A,l_B)\in [0,\infty)^2$ and $u\geq l_A+l_B+1$, 
we put 
\begin{align}
\label{defnu}
\nonumber
\cL(l_A,l_B; u) = \big\{(h),(a)\in [0,1]^3 \times [0,\infty)^3\colon\, 
&h_A+h_B+h_\cI=1,\, a_A+a_B+a_\cI=u\\
&a_A\geq h_A+l_A,\,a_B\geq h_B+l_B,\,a_\cI\geq h_\cI\big\}.
\end{align}
For $l_A\in[0,\infty)$ and $u\geq 1+l_A$, we set
\be{defnu2}
\begin{aligned}
\cL_{\nAB,A,2}(l_A;u) 
&=\big\{(h),(a)\in \cL(l_A,0; u)\colon\,h_B=a_B=0\big\},\\
\cL_{\nAB,A,1}(l_A;u)
&=\big\{(h),(a)\in \cL(l_A,0; u)\colon\,h_B=a_B=h_\cI=a_\cI=0\big\},
\end{aligned}
\ee
and, for $l_B\in[0,\infty)$ and $u\geq 1+l_B$, we set
\be{defnu3}
\begin{aligned}
\cL_{\nAB,B,2}(l_B;u)
&=\big\{(h),(a)\in \cL(0,l_B; u)\colon\,h_A=a_A=0\big\},\\
\cL_{\nAB,B,1}(l_B;u)
&=\big\{(h),(a)\in \cL(0,l_B; u)\colon\,h_A=a_A=h_\cI=a_\cI=0\big\}.
\end{aligned}
\ee

The following proposition will be proved in Section~\ref{proofprop1}. The free energy 
per step in a single column is given by the following variational formula.

\begin{proposition}
\label{energ}
For all $\Theta\in \overline \cV_M$ and $u\geq t_\Theta$,
\begin{align}
\label{Bloc of type I}
\psi(\Theta,u;\alpha,\beta)
&= \sup_{(h),(a) \in \cL(\Theta;\,u)}
\frac{a_A\,\tilde{\kappa}\big(\tfrac{a_A}{h_A},
\tfrac{l_A}{h_A}\big)+a_B\,\big[\tilde{\kappa}\big(\tfrac{a_B}{h_B},
\tfrac{l_B}{h_B}\big)+\tfrac{\beta-\alpha}{2}\big]
+a_\cI\,\phi_\cI(\tfrac{a_\cI}{h_\cI})}{u},
\end{align}
with
\be{setof}
\begin{array}{lll}
\cL_{\Theta,u}
&= \cL(l_A,l_B;u) &\text{if } \Theta\in \overline\cV_{\AB,M},\\
\cL_{\Theta,u}
&= \cL_{\nAB,k,x}(l_k;u) &\text{if } \Theta\in \overline\cV_{\nAB,k,x,M},
\,k\in \{A,B\} \text{ and } x\in \{1,2\}.
\end{array}
\ee
\end{proposition}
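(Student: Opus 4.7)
The plan is to prove Proposition~\ref{energ} by matching lower and upper bounds on the quenched free energy per column $\psi(\Theta,u;\alpha,\beta)$, both yielding the variational formula on the right-hand side of \eqref{Bloc of type I}. I focus on the case $\Theta\in\overline\cV_{\AB,M}$; the four cases $\Theta\in\overline\cV_{\nAB,k,x,M}$ ($k\in\{A,B\}$, $x\in\{1,2\}$) require only obvious modifications of the constraint set $\cL_{\Theta,u}$, since each amounts to forcing some of $(h_A,h_B,h_\cI,a_A,a_B,a_\cI)$ to vanish in the construction.

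For the lower bound, fix $((h),(a))\in\cL(l_A,l_B;u)$ with $h_A,h_B,h_\cI>0$ (boundary cases follow by continuity of the integrand). I partition the horizontal interval $[0,L]$ into three stretches of lengths $\approx h_A L$, $\approx h_B L$, $\approx h_\cI L$, distributed so that each $A$-stretch lies in an $A$-block of $\cC_{0,L}$, each $B$-stretch lies in a $B$-block, and each $\cI$-stretch straddles one of the interfaces $\{y=n_jL\}$ inside the column. Within the $A$-stretch, Proposition~\ref{lementr} provides $\exp(a_AL\,[\tilde\kappa(a_A/h_A,l_A/h_A)+o(1)])$ directed self-avoiding paths of total length $a_AL$, horizontal span $h_AL$ and vertical span $l_AL$, each contributing zero to the Hamiltonian. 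Within the $B$-stretch, the analogous construction yields $\exp(a_BL\,[\tilde\kappa(a_B/h_B,l_B/h_B)+o(1)])$ paths whose Hamiltonian equals $a_BL(\beta-\alpha)/2+o(L)$ $\omega$-a.s., by the strong law of large numbers applied to the $\omega$-labels along these steps. Within the $\cI$-stretch, Proposition~\ref{l:feinflim} yields a partition function $\exp(a_\cI L\,[\phi_\cI(a_\cI/h_\cI)+o(1)])$. Concatenating the three pieces, bridging them with an $o(L)$ number of extra steps to reach the prescribed entrance $(0,b_0L)$ and exit $(L,(\Delta\Pi+b_1)L)$, adding up the energies and entropies, dividing by $uL$ and letting $L\to\infty$ produces the lower bound; the supremum over $((h),(a))$ then gives one direction of \eqref{Bloc of type I}.

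For the upper bound, fix $\gep>0$ and decompose every $\pi\in\cW_{\Theta,u,L}$ deterministically: declare a horizontal slab of vertical width $2\gep L$ around each interface $\{y=n_jL\}$ of the column to be the \emph{interface region}, and declare the intersection of $\pi$ with the remainder of each $A$-block (resp.\ $B$-block) to be its $A$-part (resp.\ $B$-part). Let $(H_A,H_B,H_\cI)$ and $(A_A,A_B,A_\cI)$ denote the horizontal and total step counts of $\pi$ in each part, so that $(H/L,A/L)$ lies within $O(\gep)$ of $\cL(l_A,l_B;u)$. There are polynomially many such profiles in $L$, so
\[
Z_L^\omega(\Theta,u)\;\leq\;L^{O(1)}\,\max_{(H,A)}\,Z_L^{\omega,A}(H_A,A_A)\,Z_L^{\omega,B}(H_B,A_B)\,Z_L^{\omega,\cI}(H_\cI,A_\cI),
\]
where each factor enumerates paths confined to the corresponding region with the prescribed step counts. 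Proposition~\ref{lementr} bounds the first two factors by $\exp(A_xL\,[\tilde\kappa+o(1)])$ (the $B$-factor acquiring an extra $A_BL(\beta-\alpha)/2+o(L)$ from the $\omega$-energy), while Proposition~\ref{l:feinflim} applied to each of the at most $2M+1$ excursions inside an interface slab bounds the $\cI$-factor by $\exp(A_\cI L\,[\phi_\cI+o(1)])$. Taking logarithms, dividing by $uL$, and passing to the limit $L\to\infty$ followed by $\gep\downarrow 0$ yields the matching upper bound.

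The main obstacle is the interface term. One must define the interface region so that each excursion inside a slab can indeed be counted by the single linear interface partition function $Z_{L,\mu}^{\omega,\cI}$ of Section~\ref{interf}, and show that the errors introduced by the slab parameter $\gep$ vanish as $\gep\downarrow 0$ uniformly in $L$. For the lower bound one must check that concatenating the three stretches and enforcing the prescribed entrance and exit points cost only $o(L)$ both in entropy and in Hamiltonian; for the upper bound one needs the uniform-in-$L$ convergences asserted in Propositions~\ref{lementr} and~\ref{l:feinflim}, together with the uniformity over column types claimed in Proposition~\ref{convfree1}. A secondary subtlety is the matching of the vertical-distance budgets $l_{A,\Theta}$ and $l_{B,\Theta}$ to the constraints $a_x\geq h_x+l_x$: the slack in these inequalities is precisely the entropic freedom that the supremum in \eqref{Bloc of type I} extracts.
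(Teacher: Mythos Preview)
Your overall two-sided strategy is correct in spirit, and the lower bound is essentially along the right lines---though ``three stretches'' oversimplifies: the $A$-part will in general consist of several $A$-block crossings, and the aggregate entropy $a_A\tilde\kappa(a_A/h_A,l_A/h_A)$ is recovered by distributing $(h_A,a_A)$ among these crossings proportionally to their heights, so that every piece has the same aspect ratio.

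The upper bound, however, has a genuine gap. A directed self-avoiding path of $uL$ steps can enter and leave a fixed slab of width $2\gep L$ as many as $O(L)$ times: a zigzag that dips two lattice units below the upper slab boundary at every second horizontal position already produces $L/2$ separate excursions with $u=3$. Thus the claim of ``at most $2M+1$ excursions'' is false, the sum over ways to split $(H_\cI,A_\cI)$ into excursion pieces is exponential rather than polynomial in $L$, and the factorisation $Z_L^\omega\leq L^{O(1)}\max Z^A Z^B Z^\cI$ cannot be justified. Moreover, many of these slab excursions never touch the interface, so Proposition~\ref{l:feinflim} does not apply to them.

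The paper's remedy is to decompose not via slabs but via the ordered sequence $J(\pi)=(J_1,\dots,J_r)$ of interfaces actually hit by $\pi$. Since neighbouring interfaces are at vertical distance $\geq L$, each transition costs $\geq L$ steps, so $r\leq u\leq m$, a bound independent of $L$. For fixed $j$ the path splits into $r+1$ between-interface pieces (each confined to a single block, handled by $\tilde\kappa$, with the vertical confinement removed via Lemma~\ref{conunifalt1}) and $r$ along-interface pieces (each with both endpoints on the interface, handled by $\phi_\cI$). Concentration of measure removes the $\omega$-dependence; concavity of $(u,l)\mapsto u\tilde\kappa(u,l)$ and of $\mu\mapsto\mu\phi_\cI(\mu)$ then collapses the many pieces into the three aggregate terms of the variational formula; finally one checks that the optimal interface sequence $j$ is the monotone one, whose block heights give exactly $l_{A,\Theta}$ and $l_{B,\Theta}$.
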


The importance of Proposition~\ref{energ} lies in the fact that it \emph{expresses the 
free energy in a single column in terms of the path entropy in a single column $\tilde\kappa$ 
and the free energy along a single linear interface $\phi_\cI$}, which were defined in 
Sections~\ref{pathentr}--\ref{interf} and are well understood.


\subsection{Mesoscopic percolation frequencies}
\label{Percofreq}

In Section~\ref{cgp}, we associate with each path $\pi\in \cW_L$ a coarse-grained path 
that records the mesoscopic displacement of $\pi$ in each column. In Section \ref{cgp1}, 
we define a set of probability laws providing the frequencies with which each type of column 
can be crossed by the copolymer. This set will be used in Section~\ref{proofofgene} to state 
and prove the column-based variational formula. Finally, in Section~\ref{cgpl}, we introduce 
a set of probability laws providing the fractions of horizontal steps that the copolymer can 
make when travelling inside each solvent with a given slope or along an $AB$ interface. 
This latter subset appears in the slope-based variational formula.


\subsubsection{Coarse-grained paths}
\label{cgp}

For $x\in \N_0\times\Z$ and $n\in \N$, let $c_{x,n}$ denote the center of the block
$\Lambda_{L_n}(x)$ defined in \eqref{blocks}, i.e., 
\be{defc}
c_{x,n}=x L_n+(\tfrac12,\tfrac12) L_n,
\ee
and abbreviate
\be{defc2}
(\N_0\times\Z)_n=\{c_{x,n}\colon\, x\in \N_0\times\Z\}.
\ee
Let $\widehat{\cW}$ be the set of \emph{coarse-grained paths} on $(\N_0\times\Z)_n$ 
that start at $c_{0,n}$, are self-avoiding and are allowed to jump up, down and 
to the right between neighboring sites of $(\N_0\times\Z)_n$, i.e., the increments 
of $\widehat{\Pi} = (\widehat{\Pi}_j)_{j\in\N_0} \in \widehat{\cW}$ are $(0,L_n), 
(0,-L_n)$ and $(L_n,0)$. (These paths are the coarse-grained counterparts of the 
paths $\pi$ introduced in \eqref{defw}.) For $l\in \N\cup \{\infty\}$, let 
$\widehat{\cW}_l$ be the set of $l$-step coarse-grained paths.

Recall, for $\pi\in \cW_{n}$, the definitions of $N_\pi$ and $(v_j(\pi))_{j\leq N_\pi-1}$ 
given below \eqref{deftau}.  With $\pi$ we associate a coarse-grained path 
$\widehat{\Pi}\in \widehat{\cW}_{N_\pi}$ that describes how $\pi$ moves with 
respect to the blocks. The construction of $\widehat{\Pi}$ is done as follows: 
$\widehat{\Pi}_0=c_{(0,0)}$, $\widehat{\Pi}$ moves vertically until it reaches 
$c_{(0,v_0)}$, moves one step to the right to $c_{(1,v_0)}$, moves vertically 
until it reaches $c_{(1,v_1)}$, moves one step to the right to $c_{(2,v_1)}$, and 
so on. The vertical increment of $\widehat{\Pi}$ in the $j$-th column is 
$\Delta\widehat{\Pi}_j=(v_j-v_{j-1}) L_n$ (see Figs.~\ref{fig6}--\ref{block2}).

\begin{figure}[htbp]
\begin{center}
\includegraphics[width=.42\textwidth]{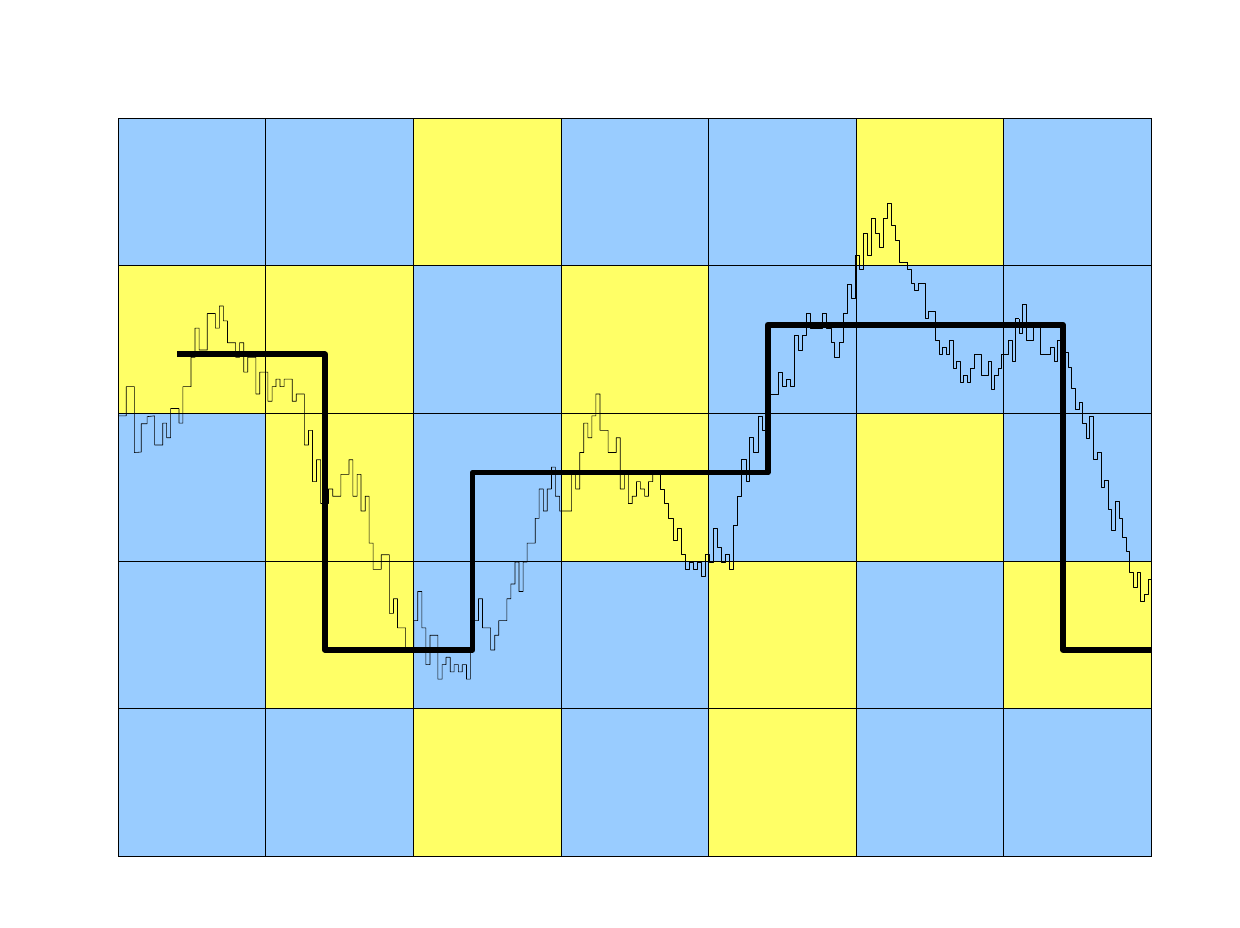}
\end{center}
\vspace{-.5cm}
\caption{Example of a coarse-grained path.}
\label{fig4}
\end{figure}

To characterize a path $\pi$, we will often use the sequence of vertical 
increments of its associated coarse-grained path $\widehat{\Pi}$, modified in 
such a way that it does not depend on $L_n$ anymore. To that end, with every 
$\pi\in \cW_n$ we associate $\Pi=(\Pi_k)_{k=0}^{N_\pi-1}$ such that 
$\Pi_0=0$ and,
\begin{align}
\label{defpi}
\Pi_k=\sum_{j=0}^{k-1} \Delta\Pi_j\quad \text{with} \quad  
\Delta\Pi_j=\frac{1}{L_n} \Delta \widehat{\Pi}_j, \qquad
j = 0,\dots,N_\pi-1.
\end{align}
Pick $M\in \N$ and note that $\pi\in\cW_{n,M}$ if and only if $|\Delta \Pi_j|\leq M$ for 
all $j\in \{0,\dots, N_\pi-1\}$.


\subsubsection{Percolation frequencies along coarse-grained paths.}
\label{cgp1}

Given $M\in \N$, we denote by $\cM_1(\overline\cV_M)$ the set of probability measures 
on $\overline\cV_M$. Pick $\Omega \in \{A,B\}^{\N_0\times\Z}$, $\Pi\in \Z^{\N_0}$ such 
that $\Pi_0=0$ and $|\Delta\Pi_i|\leq M$ for all $i\geq 0$ and $b=(b_j)_{j\in \N_0}\in 
(\mathbb{Q}_{(0,1]})^{\N_0}$. Set $\Theta_{\text{traj}}=(\Xi_j)_{j\in \N_0}$ with
\be{defxii}
\Xi_j=\big(\Delta \Pi_j,b_j,b_{j+1}\big), \qquad j\in \N_0,
\ee 
let
\be{defX}
\cX_{\,\Pi,\Omega}=\big\{x\in \{1,2\}^{\N_0}\colon\,
(\Omega(i,\Pi_i+\cdot),\Xi_i,x_i)\in  \cV_M \,\,\, \forall\,i\in \N_0\big\},
\ee
and for $x\in \cX_{\,\Pi,\Omega}$ set
\be{defth}
\Theta_j=\big(\Omega(j,\Pi_j+\cdot),
\Delta \Pi_j,b_j,b_{j+1},x_j\big), \qquad j\in \N_0.
\ee
With the help of \eqref{defth}, we can define the empirical distribution
\begin{equation}
\label{defrho}
\rho_N(\Omega,\Pi,b,x)(\Theta) = \frac{1}{N} \sum_{j=0}^{N-1} 
1_{\{\Theta_j=\Theta\}},
\quad N\in\N,\,\Theta\in \overline\cV_M.
\end{equation}

In Appendix~\ref{B.2}, we define in \eqref{dist} a distance $d$ that turns 
$\overline\cV_M$ into a Polish space. Thus, the weak convergence in 
$\cM_1(\overline\cV_M)$ is metrizable and $\cM_1(\overline\cV_M)$ is 
Polish as well.

\begin{definition}
\label{RMNdef}
For $\Omega \in \{A,B\}^{\N_0\times\Z}$ and $M\in \N$, let 
\be{RMNdefalt}
\begin{aligned}
\cR^\Omega_{M,N} &= \big\{\rho_N(\Omega,\Pi,b,x)\ \text{with}\  
b=(b_j)_{j\in \N_0} \in (\mathbb{Q}_{(0,1]})^{\N_0},\\
&\qquad \Pi=(\Pi_j)_{j\in\N_0} \in \{0\}\times \Z^{\N} \colon\,
|\Delta\Pi_j|\leq M\,\ \ \forall\,j\in\N_0,\\
&\qquad x=(x_j)_{j\in \N_0} \in \{1,2\}^{N_0}\colon\, \big(\Omega(j,\Pi_j+\cdot),
\Delta \Pi_j,b_j,b_{j+1},x_j\big)\in \cV_M\big\}
\end{aligned} 
\ee
and let $\cR^\Omega_M$ be the set of all accumulation points of those sequences  
$(\rho_N)_{N\in\N}$ satisfying $\rho_N\in  \cR^\Omega_{M,N}$ for all $N\in\N$, 
i.e., 
\be{RMdef}
\cR^\Omega_M = \bigcap_{N'\in\N} \mathrm{closure}\Big( \bigcup_{N \geq N'}\, 
\cR^\Omega_{M,N}\Big),
\ee
both of which are subsets of $\cM_1(\overline\cV_M)$.
\end{definition}

\begin{proposition}
\label{properc}
For every $p \in (0,1)$ and $M\in \N$ there exists a closed set $\cR_{p,M}\subsetneq
\cM_1(\overline\cV_M)$ such that 
\be{mesoperc}
\cR_{M}^\Omega=\cR_{p,M} \text{ for } \P\text{-a.e.}\,\Omega.
\ee
\end{proposition}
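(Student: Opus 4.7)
The plan is to establish the proposition via a two-step argument: first, an ergodicity argument showing that the random closed set $\cR_M^\Omega$ is $\P_\Omega$-a.s.\ constant; second, an elementary marginal-based obstruction showing that the common deterministic set $\cR_{p,M}$ is a proper subset of $\cM_1(\overline\cV_M)$. The closedness of $\cR_{p,M}$ comes for free from \eqref{RMdef}.

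First, I would verify that $\cR_M^\Omega$ is invariant under the horizontal shift $\theta$ acting on $\Omega$ by $(\theta\Omega)(j,k)=\Omega(j+1,k)$. Fix a path $\Pi\in\{0\}\times\Z^\N$ with $|\Delta\Pi_j|\leq M$ and admissible labels $b,x$ producing an empirical measure on $\Omega$. Dropping the first column and renumbering gives an admissible triple $(\Pi',b',x')$ for $\theta\Omega$, and the associated empirical measures $\rho_N$ and $\rho_{N-1}$ differ by $O(1/N)$ in total variation. Conversely, one may prepend a single admissible column to any path on $\theta\Omega$. Since $\cR_M^\Omega$ is defined as an accumulation set along $N\to\infty$, altering a finite prefix has no effect, whence $\cR_M^{\theta\Omega}=\cR_M^\Omega$.

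Next, I would endow the space of closed subsets of the Polish space $\cM_1(\overline\cV_M)$ with the Fell topology (so that it is itself Polish, equivalently a Borel space) and check that $\Omega\mapsto\cR_M^\Omega$ is measurable: the sets $\{\Omega:\cR_M^\Omega\cap U\neq\emptyset\}$ for $U$ open and $\{\Omega:\cR_M^\Omega\subseteq V\}$ for $V$ closed are measurable because they can be expressed in terms of the countable family of rational coarse-grained paths $(\Pi,b,x)$ together with rational-length prefixes of $\Omega$. Because $\P_\Omega$ is the Bernoulli$(p)$ product measure on $\{A,B\}^{\N_0\times\Z}$, the horizontal shift $\theta$ is measure-preserving and ergodic; combined with $\theta$-invariance of $\cR_M^\Omega$, this yields the existence of a deterministic closed set $\cR_{p,M}$ with $\cR_M^\Omega=\cR_{p,M}$ for $\P_\Omega$-a.e.\ $\Omega$. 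In practice one can bypass the Fell-topology machinery by using a countable base $\{U_k\}$ of $\cM_1(\overline\cV_M)$: each event $\{\cR_M^\Omega\cap U_k\neq\emptyset\}$ is $\theta$-invariant, hence has probability $0$ or $1$, and intersecting over $k$ produces the deterministic set $\cR_{p,M}$.

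Finally, to see $\cR_{p,M}\subsetneq\cM_1(\overline\cV_M)$, I would exhibit an explicit obstruction on the $\chi$-marginal of any $\rho\in\cR_M^\Omega$. For any admissible $(\Pi,b,x)$, the $j$-th coordinate of the sequence $(\Theta_j)$ has $\chi$-component $\Omega(j,\Pi_j+\cdot)$; since the columns $\{\Omega(j,\cdot)\}_{j\geq 0}$ are i.i.d.\ with law $\mathrm{Ber}(p)^{\otimes\Z}$, and the vertical index $\Pi_j$ depends only on $\Omega$ restricted to columns $0,\dots,j-1$, the law of large numbers forces the $\chi$-marginal of every accumulation point $\rho$ to coincide (up to a random vertical shift of bounded range) with $\mathrm{Ber}(p)^{\otimes\Z}$. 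Any probability measure on $\overline\cV_M$ whose $\chi$-marginal is, say, a Dirac at the all-$A$ sequence therefore lies outside $\cR_{p,M}$, giving strict inclusion.

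The main obstacle is the measurability/ergodicity step: formally checking that $\Omega\mapsto\cR_M^\Omega$ is a measurable random closed set so that the ergodic $0$--$1$ law applies. I expect this to be the longest technical piece, whereas horizontal-shift invariance and the marginal obstruction are short. Everything else is a consequence of the ergodicity of $\theta$ under the product measure $\P_\Omega$.
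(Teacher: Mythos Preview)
Your approach via ergodicity under the horizontal shift is more elaborate than needed; the paper's proof uses Kolmogorov's zero--one law directly, observing that $\cR_M^\Omega$ is unchanged when finitely many coordinates of $\Omega$ are altered (because only finitely many terms $\Theta_j$ in any empirical average are affected), hence lies in the tail $\sigma$-algebra of the i.i.d.\ field $\Omega$. This bypasses both the Fell-topology measurability discussion and the shift-invariance verification.

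More importantly, your shift-invariance argument has a gap in the ``dropping'' direction. Given $(\Pi,b,x)$ on $\Omega$ with $\Pi_0=0$, dropping the first column and renumbering yields a putative path $\Pi'_j=\Pi_{j+1}$ on $\theta\Omega$, but then $\Pi'_0=\Pi_1$, which need not be $0$ and so violates the admissibility constraint $\Pi'\in\{0\}\times\Z^\N$ in \eqref{RMNdefalt}. If you recenter to $\Pi'_j=\Pi_{j+1}-\Pi_1$, the $\chi$-component of $\Theta'_j$ becomes $\Omega(j+1,\Pi_{j+1}-\Pi_1+\cdot)$, a vertical translate of the $\chi$-component of $\Theta_{j+1}$; the empirical measures then do \emph{not} differ by $O(1/N)$. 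The ``prepending'' direction is fine and gives $\cR_M^{\theta\Omega}\subseteq\cR_M^\Omega$; together with the measure-preservingness of $\theta$ this one-sided inclusion still forces each event $\{\cR_M^\Omega\cap U\neq\emptyset\}$ to have probability $0$ or $1$, so your conclusion survives --- but the route through tail-measurability is both shorter and avoids the pitfall.

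Your strict-inclusion argument via the $\chi$-marginal is correct and more explicit than the paper's one-line remark. The observation that a.s.\ no column $\Omega(j,\cdot)$ is an all-$A$ sequence, so that $\delta_{\Theta_0}$ with $\chi_{\Theta_0}\equiv A$ cannot lie in $\cR_M^\Omega$, already suffices; the stronger claim that the full $\chi$-marginal equals $\mathrm{Ber}(p)^{\otimes\Z}$ is neither needed nor immediately justified, since $\Pi_j$ is chosen freely by the path rather than as a predictable function of $\Omega$.
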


\bpr
Note that, for every $\Omega\in \{A,B\}^{\N_0\times\Z}$, the set $\cR_{M}^\Omega$ 
does not change when finitely many variables in $\Omega$ are changed. Therefore 
$\cR_{M}^\Omega$ is measurable with respect to the tail $\sigma$-algebra of 
$\Omega$. Since $\Omega$ is an i.i.d.\ random field, the claim follows from 
Kolmogorov's zero-one law. Because of the constraint on the vertical displacement, 
$\cR_{p,M}$ does not coincide with $\cM_1(\overline\cV_M)$.
\epr

Each probability measure $\rho\in \cR_{p,M}$ is associated with a strategy of displacement 
of the copolymer on the mesoscopic scale. As mentioned above, the growth rate of the 
square blocks in \eqref{speed} ensures that no entropy is carried by the mesoscopic 
displacement, and this  justifies the optimization over $\cR_{p,M}$ in the column-based 
variational formula.


\subsubsection{Fractions of horizontal steps per slope}
\label{cgpl}

In this section, we introduce $\bar{\cR}_{p,M}$ as the counterpart of $\cR_{p,M}$ for the 
slope-based variational formula. To that aim, we define 
\begin{align}
\label{setE}
\cE=\big\{(h_{A,\Theta},h_{B,\Theta},
h_{\cI,\Theta})_{\Theta\in \overline{\cV}_M}\in ([0,1]^3)^{\overline\cV_M}\colon
&\  h_{A,\Theta}+h_{B,\Theta}+h_{\cI,\Theta}=1\,\,\forall\,\Theta,\\
\nonumber 
&\ \Theta \mapsto h_{k,\Theta}\ \text{Borel}\,\,\forall\,k\in \{A,B,\cI\},\\
\nonumber &\ h_{k,\Theta}>0\,\,\text{if}\,\,l_{k,\Theta}>0\,\,\forall\,k\in \{A,B\},\\
\nonumber &h_{k,\Theta}=1\ \, \text{if} \ \, \Theta\in \cV_{\text{nint},k,1,M},\\
\nonumber &h_{\cI,\Theta}+h_{k,\Theta}=1\ \, \text{if} \ \, 
\Theta\in \cV_{\text{nint},k,2,M}\big\}.
\end{align}
With each $\rho\in \cR_{p,M}$ and $h\in \cE$ associate $G_{\rho,h}\in\cM_1\big(\R_+
\cup\R_+\cup\{\cI\}\big)$, defined by 
\begin{align}
\label{Gdef}
G_{\rho,h,A}(dl)
&=\int_{\bar\cV_M} h_{A,\Theta}\,1\Big\{\tfrac{l_{A,\Theta}}{h_{A,\Theta}}\in dl\Big\}\,
\rho(d\Theta),\\
\nonumber 
G_{\rho,h,B}(dl)
&=\int_{\bar\cV_M} h_{B,\Theta}\,1\Big\{\tfrac{l_{B,\Theta}}{h_{B,\Theta}}\in dl\Big\}\,
\rho(d\Theta),\\
\nonumber 
G_{\rho,h,\cI}
&=\int_{\bar\cV_M} h_{\cI,\Theta}\,\rho(d\Theta),
\end{align}
where $l_{k,\Theta}/h_{k,\Theta}=0$ by convention if $h_{k,\Theta}=0$ for $\Theta\in 
\overline \cV_M$ and $k\in \{A,B\}$. The set $\bar{\cR}_{p,M}$ in \eqref{genevar} is 
defined as
\begin{align}
\label{setbar}
\bar{\cR}_{p,M}=\text{Closure}\  \Big\{\bar\rho\in \cM_1\big(\R_+\cup \R_+\cup\{\cI\}\big) 
\colon\, \exists\,\rho&\in\cR_{p,M}, h \in \cE \colon\,\bar\rho=G_{\rho,h}\Big\},
\end{align}
and as the $M$-restriction is relaxed the set $\bar{\cR}_p$ in \eqref{genevar} is defined as 
 \begin{align}
\label{setbarp}
\bar{\cR}_{p}=\cup_{M\geq 1} \bar{\cR}_{p,M}.
\end{align}
For 
$\bar\rho\in\bar{\cR}_{p}$, let $\bar\rho_A$, $\bar\rho_B$ and $\bar\rho_\cI$ denote the 
restriction of $\bar\rho$ to $\R_+$, $\R_+$ and $\{\cI\}$, respectively, as in \eqref{varformod}.
The measures  $\bar\rho_A(dl)$, $\bar{\rho}_B(dl)$ represent the fraction of horizontal steps 
made by the copolymer when it moves at slope $l$ in solvent $A$, respectively, $B$. The 
number $\bar\rho_\cI$ represents the fraction of horizontal steps made by the copolymer
when it moves along the $AB$-interface.


\subsection{Positivity of the free energy}
\label{Positivity of the free energy}

It is easy to prove that for all $p\in (0,1)$, $M\in \N$ and $(\alpha,\beta)\in \CONE$ the 
two variational formulas (that is the slope-based variational formula stated in \eqref{genevar} but with 
the supremum taken over  $\bar \cR_{p,M}$ instead of $\bar\cR_{p}$
and the column-based variational formula stated in \eqref{genevarold} with the supremum taken over  $\cR_{p,M}$ instead of $\cR_{p}$) are strictly positive, i.e., 
\be{posi}
f(\alpha,\beta;M,p)>0.
\ee


To prove that the variational formula in \eqref{genevar} is strictly positive, we define 
$\bar\rho_{\text{hor}}\in \cM_1\big(\R_+\cup \R_+\cup\{\cI\}\big)$ as
\be{defrhoh}
\bar\rho_{\text{hor}} = p^2 \delta_{A,0}(dl)+ (1-p)^2 \delta_{B,0}(dl)
+ 2 p(1-p) \delta_\cI. 
\ee
When moving along the $x$-axis, the pairs of blocks appearing above and below the $x$-axis 
have density $p^2$ for type $AA$, density $(1-p)^2$ for type $BB$, and density $2p(1-p)$ 
for types $AB$ and $BA$. Consequently, $\bar\rho_{\text{hor}}$ belongs to $\bar \cR_{p}$ 
and \eqref{genevar} implies that, for any choice of $v_A,v_B\geq 1$, the variational formula 
in \eqref{genevar} is at least
\be{bouninf}
\frac{[p^2+2 p (1-p)]\,v_{A}\,\tilde{\kappa}(v_{A},0)
+(1-p)^2\,v_{B}\,[\tilde{\kappa}(v_{B},0)+\tfrac{\beta-\alpha}{2}]}{[p^2+2 p (1-p)]\,v_{A}
+(1-p)^2\,v_{B}}.
\ee
Thus, it suffices to pick $v_B=1$, to recall that $\lim_{u\to\infty} u\tilde\kappa(u,0)
=\infty$ (Lemma~\ref{l:lemconv2}(iv)), and to choose $v_A$ large enough so that
\eqref{bouninf} becomes strictly positive.

To prove that the variational formula in \eqref{genevarold} is strictly positive, we can 
argue similarly, taking both sequences $(\Pi_i)_{i\in \N_0}$ and $(b_i)_{i\in \N_0}$
constant and equal to $0$.


\section{Proof of Propositions~\ref{convfree1}--\ref{energ}}
\label{proofprop1}

In this section we prove Propositions~\ref{convfree1} and \ref{energ}, 
which were stated in Sections~\ref{frenp1} and \ref{newsec} and contain the 
precise definition of the key ingredients of the variational formula in 
Theorem~\ref{varformula}. In Section~\ref{proofofgene} we will use these 
propositions to prove Theorem~\ref{varformula}. 

In Section \ref{colcross} we associate with each trajectory $\pi$ in a column 
a sequence recording the indices of the $AB$-interfaces successively visited 
by $\pi$. The latter allows us to state a key proposition, Proposition~\ref{convunifr} 
below, from which Propositions~\ref{convfree1} and \ref{energ}  are 
straightforward consequences. In Section~\ref{convunifra} we give an outline of 
the proof of Proposition~\ref{convunifr}, in Sections~\ref{s1}--\ref{s3} we 
provide the details.


\subsection{Column crossing characteristic}
\label{colcross}


\subsubsection{The order of the visits to the interfaces}

Pick $(M,m)\in \EIGH$. To prove  Propositions~\ref{convfree1} and \ref{energ}, instead of considering 
$(\Theta,u)\in\cV^{*,\,m}_M$, we will restrict to $(\Theta,u)\in\cV^{*,\,m}_{\text{int},M}$. 
Our proof can be easily extended to $(\Theta,u)\in \cV^{*,\,m}_{\text{nint},M}$.

Pick $(\Theta,u)\in \cV^{*,\,m}_{\AB,M}$, recall \eqref{efn} and set $\cJ_{\Theta,u}
=\{\cN_{\Theta,u}^{\downarrow},\dots,\cN_{\Theta,u}^{\uparrow}\},$ with
\begin{align}
\label{defntheta}
\cN^{\uparrow}_{\Theta,u}
= &\max\{i\geq 1\colon n_i\leq u\}\quad \text{and}
\quad \cN^{\uparrow}_{\Theta,u}=0\quad  \text{if}\quad  n_1>u.  \\
\nonumber\cN^{\downarrow}_{\Theta,u}
=&\min \{i\leq 0\colon |n_i|\leq u\} \quad \text{and}
\quad \cN^{\downarrow}_{\Theta,u}=1\quad  \text{if}\quad  |n_0|>u.
\end{align}
Next pick $L\in \N$ so that $(\Theta,u)\in \cV^{*}_{\AB,L,M}$ and recall that for 
$j\in \cJ_{\Theta,u}$ the $j$-th interface of the $\Theta$-column is $\cI_j=\{0,\dots,L\}
\times \{n_j L\}$. Note also that $\pi \in \cW_{\Theta,u,L}$ makes $uL$ steps inside 
the column and therefore can not reach the $AB$-interfaces labelled outside 
$\{\cN^{\downarrow}_{\Theta,u},\dots,\cN^{\uparrow}_{\Theta,u}\}$.

First, we associate with each trajectory $\pi\in \cW_{\Theta,u,L}$ the sequence 
$J(\pi)$ that records the indices of the interfaces that are successively visited by 
$\pi$. Next, we pick $\pi\in \cW_{\Theta,u,L}$, and define $\tau_1, J_1$ as
\begin{equation}
\tau_1=\inf\{i\in\N\colon\, \exists j\in \cJ_{\Theta,u} 
\colon\,\pi_i\in \cI_j \}, \qquad \pi_{\tau_1}\in \cI_{J_1},
\end{equation} 
so that $J_1=0$ (respectively, $J_1=1$) if the first interface reached by $\pi$ 
is $\cI_0$ (respectively, $\cI_1$). For $i\in\N\setminus\{1\}$, we define $\tau_i,J_i$ 
as 
\begin{equation}
\tau_i=\inf\big\{t>\tau_{i-1}\colon\,\exists 
j\in \cJ_{\Theta,u}\setminus\{ J_{i-1}\}, \pi_i\in \cI_j\big\}, 
\qquad \pi_{\tau_i}\in \cI_{J_i},
\end{equation} 
so that the increments of $J(\pi)$ are restricted to $-1$ or $1$. The length of 
$J(\pi)$ is denoted by $m(\pi)$ and corresponds to the number of jumps made by 
$\pi$ between neighboring interfaces before time $uL$, i.e., 
$J(\pi)=(J_i)_{i=1}^{m(\pi)}$ with 
\be{add6}
m(\pi)=\max\{i\in\N\colon\,\tau_i\leq uL\}.
\ee
Note that $(\Theta,u)\in \cV_{\AB,M}^{*,\,m}$ necessarily implies $k_{\Theta}\leq 
m(\pi)\leq u\leq m$. Set
\be{add7}
\cS_r=\{j=(j_i)_{i=1}^r\in \Z^\N\colon\, j_1\in \{0,1\},\,  
j_{i+1}-j_i\in \{-1,1\}\,\,\forall\,1\leq i\leq r-1\}, \qquad r\in \N, 
\ee
and, for $\Theta\in \cV$, $r\in \{1,\dots, m\}$ and $j\in \cS_r$, define
\begin{align}
\label{defl}
\nonumber l_1&=1_{\{j_1=1\}} (n_{1}-b_0)+1_{\{j_1= 0\}} (b_0-n_{0}),\\
\nonumber l_i&=|n_{j_i}-n_{j_{i-1}}| \text{ for } i\in \{2,\dots,r\},\\
l_{r+1}&=1_{\{j_r=k_\Theta+1\}} (n_{k_\Theta+1}-\Delta \Pi-b_1)
+1_{\{j_r= k_\Theta\}} (\Delta \Pi +b_1-n_{k_\Theta}), 
\end{align}
so that $(l_i)_{i\in \{1,\dots,r+1\}}$ depends on $\Theta$ and $j$. Set
\begin{align}
\label{setofA}
\cA_{\Theta,j}
&=\{i\in \{1,\dots,r+1\}\colon\,\text{$A$ between}\,\cI_{j_{i-1}}\,
\text{and}\,\cI_{j_{i}}\},\\
\nonumber \cB_{\Theta,j}
&=\{i\in \{1,\dots,r+1\}\colon\,\text{$B$ between}\,\cI_{j_{i-1}}\, 
\text{and} \ \cI_{j_{i}}\},
\end{align}
and set $l_{\Theta,j}=(l_{A,\Theta,j},l_{B,\Theta,j})$ with
\begin{align}
\label{bl1}
l_{A,\Theta,j}
&={\textstyle \sum_{i\in \cA_{\Theta,j}}} l_i,\,\,l_{B,\Theta,j}
={\textstyle \sum_{i\in \cB_{\Theta,j}}} l_i.
\end{align}
For $L\in \N$ and  $(\Theta,u)\in \cV^{*,\,m}_{\AB,L,M}$, we denote by $\cS_{\Theta,u,L}$ 
the set $\{J(\pi), \pi\in \cW_{\Theta,u,L}\}$. It is not difficult to see that a sequence 
$j\in \cS_r$ belongs to $\cS_{\Theta,u,L}$ if and only if it satisfies the two following 
conditions. First, $j_r \in \{k_\Theta,k_\Theta +1\}$, since $j_r$ is the index of the 
interface last visited before the $\Theta$-column is exited. Second, $u\geq 1+l_{A,\Theta,j}
+l_{B,\Theta,j}$ because the number of steps taken by a trajectory $\pi\in \cW_{\Theta,u,L}$
satisfying $J(\pi)=j$ must be large enough to ensures that all interfaces $\cI_{j_s}$, 
$s\in \{1,\dots,r\}$, can be visited by $\pi$ before time $uL$. Consequently, 
$\cS_{\Theta,u,L}$ does not depend on $L$ and can be written as $\cS_{\Theta,u}
= \cup_{r=1}^{m}\cS_{\Theta,u,r}$, where 
\be{redefS}
\cS_{\Theta,u,r}=\{j\in \cS_r\colon j_r\in \{k_\Theta,k_\Theta+1\}, 
u \geq 1+l_{A,\Theta,j}+l_{B,\Theta,j}\}.
\ee
Thus, we partition $\cW_{\Theta,u,L}$ according to the value taken by $J(\pi)$, i.e.,
\be{zto}
\cW_{\Theta,u,L}=\bigcup_{r=1}^{m} \ \bigcup_{j\in \cS_{\Theta,u,r}} \ \cW_{\Theta,u,L,j}, 
\ee
where $\cW_{\Theta,u,L,j}$ contains those trajectories $\pi\in \cW_{\Theta,u,L}$  
for which $J(\pi)=j$. 

Next, for $j \in \cS_{\Theta,u}$, we define (recall \eqref{Hamiltonian1})
\be{partfunc3}
\psi^{\omega}_L(\Theta,u,j)
= \frac{1}{u L} \log Z^{\omega}_L(\Theta,u,j), \qquad 
\psi_L(\Theta,u,j)=\E\big[\psi^{\omega}_L(\Theta,u,j)\big],
\ee
with
\be{partfunc3alt}
Z^{\omega}_L(\Theta,u,j)
=\sum_{\pi \in \cW_{\Theta,u,L,j}} e^{H_{uL,L}^{\omega,\chi}(\pi)}.
\ee
For each $L\in \N$ satisfying $(\Theta,u)\in \cV^{*,\,m}_{\AB,L,M}$ and each $j\in 
\cS_{\Theta,u}$, the quantity $l_{A,\Theta,j} L$ (respectively, $l_{B,\Theta,j} L$) 
corresponds to the minimal vertical distance a trajectory $\pi\in \cW_{\Theta,u,L,j}$ 
has to cross in solvent $A$ (respectively, $B$).


\subsubsection{Key proposition}

For simplicity we give the proof for the case $(\Theta,u) \in \cV_{\AB,M}^{*,\,m}$. 
The extension to $(\Theta,u) \in \cV_{\nAB,M}^{*,\,m}$ is straightforward.

Recalling \eqref{Bloc of type I} and \eqref{bl1}, we define the free energy associated 
with $\Theta,u,j$ as 
\begin{align}
\label{Bloc of type I1}
\psi(\Theta,u,j)
&=\psi_{\AB}(u,l_{\Theta,j})\\ \nonumber 
&=\sup_{(h),(u) \in \cL(l_{\Theta,j};\, u)}
\frac{u_A\, \tilde{\kappa}\big(\tfrac{u_A}{h_A},
\tfrac{l_{A,\Theta,j}}{h_A}\big)+u_B\,
\big[\tilde{\kappa}\big(\tfrac{u_B}{h_B},
\tfrac{l_{B,\Theta,j}}{h_B}\big)
+\tfrac{\beta-\alpha}{2}\big]+u_I\, \phi(\tfrac{u^I}{h^I})}{u}.
\end{align}
Proposition~\ref{convunifr} below states that $\lim_{L\to\infty} \psi_L(\Theta,u,j) 
=\psi(\Theta,u,j)$ uniformly in $(\Theta,u) \in \cV_{\AB,M}^{*,\,m}$ and $j\in \cS_{\Theta,u}$.  

\begin{proposition}
\label{convunifr}
For every $M,m\in \N$ such that $m\geq M+2$ and every $\gep>0$ there exists an 
$L_\gep\in \N$ such that 
\be{une int}
\big|\psi_L(\Theta,u,j)-\psi(\Theta,u,j)\big|
\leq \gep \quad \forall\,(\Theta,u)\in \cV^{*,\,m}_{\AB,L,M},\  \,
j\in \cS_{\Theta,u},\ \, L\geq L_\gep.
\ee
\end{proposition}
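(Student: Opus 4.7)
The plan is to prove matching upper and lower bounds for $\psi_L(\Theta,u,j)$, both converging to the variational expression $\psi(\Theta,u,j)=\psi_{\AB}(u,l_{\Theta,j})$ in \eqref{Bloc of type I1}, with errors controlled uniformly over the parameter set in \eqref{une int}. The starting point is a canonical decomposition of each $\pi\in\cW_{\Theta,u,L,j}$ into successive \emph{strip-crossings} between consecutive interface visits $\cI_{J_i}$ and $\cI_{J_{i+1}}$, each of which traverses a strip of solvent type $A$ or $B$ and vertical thickness $l_i L$ given by \eqref{defl}, together with \emph{interface excursions} near an $AB$-interface between two consecutive strip-crossings. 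Each strip-crossing piece is labelled by a pair $(h_iL,a_iL)$ recording its numbers of horizontal and total steps, and similarly $(h_\cI L,a_\cI L)$ is recorded for the interface excursions.

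For the upper bound I would first sum over the discrete allocations $\{(h_i,a_i)\}_i$ and $(h_\cI,a_\cI)$, discretized on a $1/\sqrt{L}$-grid so that the combinatorial overhead is only polynomial in $L$. Each strip-crossing contribution is bounded by $|\cW_L(a_i/h_i,l_i/h_i)|$ times a deterministic Hamiltonian weight; by Proposition~\ref{lementr} this grows like $\exp(a_iL\,\tilde\kappa(a_i/h_i,l_i/h_i)+o(L))$, pure entropy in $A$-strips and shifted by $(\beta-\alpha)/2$ in $B$-strips thanks to the transformation leading to \eqref{Hamiltonianalt} and the law of large numbers applied to $\omega$ restricted to the strip. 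Interface excursions are controlled by the linear-interface partition function \eqref{Zinf} and Proposition~\ref{l:feinflim}. Concavity of $u\mapsto u\tilde\kappa(u,l)$ (Lemma~\ref{l:lemconv2}(i)) allows one to regroup the per-strip contributions into a single $A$-term and a single $B$-term with aggregated parameters $(h_A,a_A)$ and $(h_B,a_B)$, producing precisely the functional inside the supremum in \eqref{Bloc of type I1}. Passing to the supremum over the compact set $\cL(l_{\Theta,j};u)$ finishes the upper bound. For the lower bound I would fix an $\eta$-optimal $(h),(a)\in\cL(l_{\Theta,j};u)$, distribute the budgets $(h_A,a_A)$, $(h_B,a_B)$, $(h_\cI,a_\cI)$ uniformly across the corresponding strips and interface visits, and concatenate near-optimal subtrajectories provided by Propositions~\ref{lementr} and~\ref{l:feinflim} inside each block; the product of cardinalities yields $\psi_L(\Theta,u,j)\geq\psi(\Theta,u,j)-\eta-o(1)$.

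Uniformity in $(\Theta,u,j)$ comes from a compactness argument. For fixed $(M,m)$, the parameter $\Theta$ is determined, modulo irrelevant coordinates of $\chi$, by $\chi$ restricted to $\{-m+1,\dots,m-1\}$, by $\Delta\Pi\in\{-M,\dots,M\}$ and by the continuous pair $(b_0,b_1)\in[0,1]^2$; $u\in[1,m]$ is continuous and $j\in\cS_{\Theta,u}$ ranges over a set of cardinality at most $2^m$. On this compactification $\psi(\Theta,u,j)$ depends continuously on $(b_0,b_1,u)$ through $l_{\Theta,j}$ and on the continuous functions $\tilde\kappa$ and $\phi_\cI$; combining this continuity with the pointwise, and by Propositions~\ref{lementr}--\ref{l:feinflim} locally uniform, convergences of $\tilde\kappa_L$ and $\phi^\cI_L$, a standard $\gep/3$ argument yields \eqref{une int}.

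The main obstacle is the treatment of interface excursions. The free energy $\phi_\cI$ is defined for a \emph{single} linear interface extending over all of $\Z$, but inside the column the copolymer's interface excursions live between two nearby interfaces at distance of order $L$. One must therefore argue that the two-interface-constrained interface free energy differs from $\phi_\cI$ by $o(1)$ as $L\to\infty$. This requires a tightness estimate on the vertical range of excursions in the localized single-interface model underlying \eqref{fesainf}, together with a coarse-graining argument showing that excursions large enough to see the neighbouring interface contribute only a vanishing fraction of the total length. Relatedly, one must control the uniform continuity of $\phi_\cI(u;\alpha,\beta)$ in $u$ near the boundary $u=1$; this is where the strict concavity provided by Lemma~\ref{assu} enters, ensuring that the optimizers stay in the interior of $\cL(l_{\Theta,j};u)$ and that the Riemann-type approximation of the supremum is stable.
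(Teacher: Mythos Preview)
Your overall architecture matches the paper's: decompose each $\pi\in\cW_{\Theta,u,L,j}$ into strip-crossings (between consecutive interface visits) and interface excursions, record horizontal/total step counts $(d_i,t_i)$ and $(d_i^\cI,t_i^\cI)$, replace finite-size entropies and interface free energies by their limits, regroup via concavity of $(a,b)\mapsto a\tilde\kappa(a,b)$ and $\mu\mapsto\mu\phi_\cI(\mu)$, and pass to the supremum over $\cL(l_{\Theta,j};u)$. The paper organizes this into a chain $\psi_1\prec\psi_2\prec\psi_3\prec\psi_4$ through two intermediate free energies (Sections~\ref{s1}--\ref{s3}).

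Two places where your proposal drifts from the paper and is either imprecise or unnecessarily heavy:

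\emph{Removal of $\omega$.} Invoking the law of large numbers for the $B$-strip Hamiltonian is not enough. The decomposition positions $T_{i-1},V_i$ depend on the allocation, so you need the bound \emph{simultaneously} for all shifts $s$ and all lengths $t\geq\gep n$. The paper (Step~1, Section~\ref{s1}) conditions on the complement of an exceptional set $\cA_{\gep,n}\cup\cB_{\gep,n}$ on which every shifted partial sum $H_t^{\theta^s(\omega)}(B)$ and every shifted interface free energy $\phi_d^{\theta^s(\omega)}$ is uniformly close to its mean; this requires Cram\'er's theorem together with the concentration inequality~\eqref{concmesut}, and a union bound over polynomially many $(s,t,d)$.

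\emph{The vertical-constraint issue.} You flag the two-interface constraint on the interface pieces as the main obstacle and propose to resolve it via tightness of excursions in the localized single-interface model, plus strict concavity of $\mu\mapsto\mu\phi_\cI(\mu)$ (Lemma~\ref{assu}) to keep optimizers in the interior of $\cL$. Neither ingredient is used in the paper's proof of this proposition. The constraint is dispatched by the purely combinatorial Lemma~\ref{conunifalt1}: since neighbouring interfaces are at distance $\geq L$, the vertical restriction costs only a polynomial factor, hence $o(1)$ in the free energy, uniformly (see the remark after~\eqref{defA}). Degenerate allocations with some $d_i$ or $d_i^\cI$ small are handled by the case analysis in Lemma~\ref{tesla3}: either the corresponding $t_i$ is also small (bounded contribution since $\tilde\kappa,\phi_\cI\leq C_{\text{uf}}$), or the ratio $t_i/d_i$ is large and both $\tilde\kappa_L$ and $\phi_L$ are $\leq\eta$ there (Lemmas~\ref{l:lemconv2}(iii) and~\ref{l:lemconv}(ii)). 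Only \emph{ordinary} concavity enters, in the regrouping step~\eqref{defAA2}; strict concavity plays no role here. Incidentally, there is no need for a $1/\sqrt L$-grid: the natural $1/L$-lattice on $(d_i,t_i)$ already has cardinality polynomial in $L$ because $r\leq m$.
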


\medskip\noindent 
{\bf Proof of Propositions~\ref{convfree1} and \ref{energ}  subject 
to Proposition~\ref{convunifr}}. Pick $\gep>0$, $L\in \N$ and $(\Theta,u)\in
\cV^{*,\,m}_{\AB,L,M}$. Recall \eqref{bl} and note that $l_A(\Theta)L$ and $l_B(\Theta)L$ 
are the minimal vertical distances the trajectories of $\cW_{\Theta,u,L}$ have to 
cross in blocks of type $A$, respectively, $B$. For simplicity, in what follows 
the $\Theta$-dependence of $l_A$ and $l_B$ will be suppressed. In other words, 
$l_A$ and $l_B$ are the two coordinates of $l_{\Theta,f}$ (recall \eqref{bl1}) 
with $f=(1,2,\dots,|k_{\Theta}|)$ when $\Delta\Pi\geq 0$ and $f=(0,-1,\dots,
-|k_{\Theta}|+1)$ when $\Delta\Pi< 0$, so \eqref{Bloc of type I} and 
\eqref{Bloc of type I1} imply
\be{pisdefalt}
\psi_{\AB}(u,l_A,l_B)= \psi(\Theta,u,f).
\ee
Hence Propositions~\ref{convfree1} and \ref{energ} will be proven once we show 
that $\lim_{L\to\infty} \psi_L(\Theta,u)=\psi(\Theta,u,f)$ uniformly in $(\Theta,u)
\in\cV^{*,\,m}_{\AB,L,M}$. Moreover, a look at \eqref{Bloc of type I1}, \eqref{pisdefalt}
and \eqref{Bloc of type I} allows us to assert that for every $j\in \cS_{\Theta,u}$ 
we have $\psi(\Theta,u,j)\leq \psi(\Theta,u,f)$. The latter is a consequence of 
the fact that $l\mapsto \tilde{\kappa}(u,l)$ decreases on $[0,u-1]$ (see 
Lemma~\ref{l:lemconv2}(ii) in Appendix~\ref{Path entropies}) and that 
\begin{align}
\label{add8}
\nonumber 
l_{A}&=l_{A,\Theta,f}=\min\{l_{A,\Theta,j} \colon\, j\in \cS_{\Theta,u}\},\\
l_{B}&=l_{B,\Theta,f}=\min\{l_{B,\Theta,j} \colon\, j\in \cS_{\Theta,u}\}.
\end{align}

By applying Proposition~\ref{convunifr} we have, for $L\geq L_\gep$, 
\begin{align}
\label{eq:restore}
\nonumber 
\psi_L(\Theta,u,j)
&\leq \psi(\Theta,u,f)+\gep \qquad \forall\,(\Theta,u)\in \cV^{*,\,m}_{\AB,L,M},\ 
\forall\,j\in \cS_{\Theta,u},\\
\psi_L(\Theta,u,f)
&\geq \psi(\Theta,u,f)-\gep \qquad \forall\,(\Theta,u)\in \cV^{*,\,m}_{\AB,L,M}.
\end{align} 
The second inequality in \eqref{eq:restore} allows us to write, for $L\geq L_\gep$, 
\be{inegleft}
\psi(\Theta,u,f)-\gep\leq \psi_L(\Theta,u,f)\leq \psi_L(\Theta,u) \qquad \forall\,
(\Theta,u)\in \cV^{*,\,m}_{\AB,L,M}.
\ee
To obtain the upper bound we introduce
\be{cAdef}
\cA_{L,\gep}=\Big\{\omega\colon\, |\psi^\omega_L(\Theta,u,j)-\psi_L(\Theta,u,j)|
\leq \gep \ \quad \forall\, 
(\Theta,u)\in \cV^{*,\,m}_{\AB,L,M},\, \forall\,j\in \cS_{\Theta,u}\Big\},
\ee
so that 
\begin{align}
\label{refre}
\psi_L(\Theta,u)
&\leq \E\big[ 1_{\cA^c_{L,\gep}}\, \psi^\omega_L(\Theta,u)\big]
+ \E\big[ 1_{\cA_{L,\gep}}\, \psi^\omega_L(\Theta,u)\big]\\
\nonumber 
&\leq C_{\text{uf}}(\alpha)\,\P(\cA_{L,\gep}^c)+\tfrac{1}{uL}
\E\Big[ 1_{\cA_{L,\gep}}\,\log {\textstyle \sum_{j\in  \cS_{\Theta,u}}}\, 
e^{uL (\psi_L(\Theta,u,j)+\gep)}\Big],
\end{align}
where we use \eqref{boundel} to bound the first term in the right-hand side, 
and the definition of $\cA_{L,\gep}$ to bound the second term. Next, with the help 
of the first inequality in \eqref{eq:restore} we can rewrite \eqref{refre} for 
$L\geq L_\gep$ and $(\Theta,u)\in \cV^{*,\,m}_{\AB,L,M}$ in the form
\begin{align}
\label{restora}
\psi_L(\Theta,u)&\leq  C_{\text{uf}}(\alpha)\, \P(\cA_{L,\gep}^c) 
+\tfrac{1}{uL}\log |\cup_{r=1}^{m} \cS_{r}|+ \psi(\Theta,u,f)+2\gep.
\end{align} 
At this stage we want to prove that $\lim_{L\to\infty} \P(\cA^c_{L,\gep})=0$. To 
that end, we use the concentration of measure property in \eqref{concmesut} in
Appendix~\ref{Ann2} with $l=uL$, $\Gamma=\cW_{\Theta,u,L,j}$, $\eta=\gep uL$, 
$\xi_i=-\alpha 1\{\omega_i=A\} +\beta 1\{\omega_i=B\}$ for all $i\in\N$ and 
$T(x,y)=1\{\chi^{L_n}_{(x,y)}=B\}$. We then obtain that there exist $C_{1},C_{2}>0$ 
such that, for all $L\in \N$,  $(\Theta,u)\in\cV^{*,\,m}_{\AB,L,M}$ and 
$j\in\cS_{\Theta,u}$, 
\be{rec222}
\P\big( |\psi^\omega_L(\Theta,u,j)-\psi_L(\Theta,u,j)|> \gep\big)
\leq C_{1} \, e^{-C_{2}\, \gep^2\, uL}.
\ee
The latter inequality, combined with the fact that $|\cV^{*,\,m}_{\AB,L,M}|$ grows 
polynomialy in $L$, allows us to assert that $\lim_{L\to\infty} \P(\cA^c_{L,\gep})=0$. 
Next, we note that $|\cup_{r=1}^{m} \cS_{r}|<\infty$, so that for $L_\gep$ large enough 
we obtain from \eqref{restora} that, for $L\geq L_\gep$,
\be{restora1}
\psi_L(\Theta,u) \leq \psi(\Theta,u,f)+3\gep \qquad \forall\,
(\Theta,u)\in \cV^{*,\,m}_{\AB,L,M}.
\ee 
Now \eqref{inegleft} and \eqref{restora1} are sufficient to complete the proof of
Propositions~\ref{convfree1}--\ref{energ} in the case $(\Theta,u) \in \cV_{\AB,M}^{*,\,m}$. 
As mentioned earlier, the proof for the case $(\Theta,u) \in \cV_{\nAB,M}^{*,\,m}$ is 
entirely similar.
\hspace*{\fill}$\square$


\subsection{Structure of the proof of Proposition \ref{convunifr}}
\label{convunifra}

{\bf Intermediate column free energies.}
Let 
\be{add9}
G_M^{\,m}=\big\{(L,\Theta,u,j)\colon\,
(\Theta,u)\in \cV^{*,\,m}_{\AB,L,M},\,  j\in \cS_{\Theta,u}\big\},
\ee 
and define the following order relation. 

\begin{definition}
For $g,\widetilde{g}\colon\,G_M^{\,m}\mapsto \R$, write $g\prec \widetilde{g}$ when  
for every $\gep>0$ there exists an $L_\gep\in \N$ such that
\be{reparco}
g(L,\Theta,u,j)\leq \widetilde{g}(L,\Theta,u,j)
+\gep \qquad \forall\,(L,\Theta,u,j)\in G_M^{\,m}\colon\, L\geq L_\gep.
\ee
\end{definition}

Recall \eqref{partfunc3} and  \eqref{Bloc of type I1}, set
\be{psidefvar}
\psi_1(L,\Theta,u,j) = \psi_L(\Theta,u,j), \qquad
\psi_4(L,\Theta,u,j)=\psi(\Theta,u,j),
\ee 
and note that the proof of Proposition~\ref{convunifr} will be complete 
once we show that $\psi_1\prec \psi_4$ and $\psi_4\prec \psi_1$. In what 
follows, we will focus on $\psi_1\prec\psi_4$. Each step of the proof can 
be adapted to obtain $\psi_4\prec\psi_1$ without additional difficulty. 

In the proof we need to define two intermediate free energies $\psi_2$ 
and $\psi_3$, in addition to $\psi_1$ and $\psi_4$ above. Our proof is divided 
into 3 steps, organized in Sections \ref{s1}--\ref{s3}, and consists of showing 
that $\psi_1\prec \psi_2\prec \psi_3\prec \psi_4$. 

\medskip\noindent 
{\bf Additional notation.} 
Before stating Step 1, we need some further notation. First, we partition 
$\cW_{\Theta,u,L,j}$ according to the total number of steps and the number 
of horizontal steps made by a trajectory along and in between $AB$-interfaces. 
To that end, we assume that $j\in \cS_{\Theta,u,r}$ with $r\in \{1,\dots,m\}$, we 
recall \eqref{defl} and we let 
\begin{align}
\label{setindice}
\nonumber 
\cD_{\Theta,L,j}
&=\big\{(d_i,t_i)_{i=1}^{r+1}\colon\, d_i\in\N\,\,\text{and}\,\,
t_i\in d_i+l_i L+2\N_0\,\,\forall\,1\leq i\leq r+1\big\},\\
\cD_{r}^\cI&=\big\{(d_i^\cI,t_i^\cI)_{i=1}^r\colon\, 
d_i^\cI \in\N\,\,\text{and}\,\,t_i^\cI\in d_i^\cI+2\N_0\,\,\forall\, 1\leq i\leq r\big\},
\end{align}
where $d_i,t_i$ denote the number of horizontal steps and the total number of 
steps made by the trajectory between the $(i-1)$-th and $i$-th interfaces, and
$d_i^\cI,t_i^\cI$ denote the number of horizontal steps and the total number of 
steps made by the trajectory along the $i$-th interface. For $(d,t)\in 
\cD_{\Theta,L,j}$, $(d^\cI,t^\cI)\in \cD_{r}^\cI$ and $1\leq i\leq r$, we set 
$T_0=0$ and
\begin{align}
\label{defU}
\nonumber 
V_{i}
&=\sum_{j=1}^i t_j+\sum_{j=1}^{i-1} t_j^{\cI}, \qquad i = 1,\dots,r,\\
T_{i}
&=\sum_{j=1}^{i} t_j+\sum_{j=1}^{i} t_j^{\cI}, \qquad i = 1,\dots,r,
\end{align}
so that $V_i$, respectively, $T_i$ indicates the number of steps made by the trajectory 
when reaching, respectively, leaving the $i$-th interface. 

Next, we let $\theta\colon\,\R^\N\mapsto\R^\N$ be the left-shift acting on infinite 
sequences of real numbers and, for $u\in \N$ and $\omega\in \{A,B\}^\N$, we put
\be{defB}
H_u^\omega(B)=\sum_{i=1}^u \big[\beta\,
1_{\{\omega_i=B\}}-\alpha\,1_{\{\omega_i=A\}}\big].
\ee
Finally, we recall that 
\begin{equation}
\label{psi1}
\psi_1(L,\Theta,u,j)=\tfrac{1}{ uL}\,\mathbb{E}[\log Z_1^{\omega}(L,\Theta,u,j)],
\end{equation}
where the partition function defined in \eqref{partfunc2} has been renamed $Z_1$ and
can be written in the form
\be{defZ}
Z^{\omega}_1(L,\Theta,u,j)=\sum_{(d,t)\in \cD_{\Theta,L,j}}\, 
\sum_{(d^\cI,t^\cI)\in \cD_{r}^\cI} A_1\,B_1\,C_1,
\ee
where (recall \eqref{setofA} and \eqref{feinf})
\begin{align}
\label{defA}
A_1 &=\prod_{i\in \cA_{\Theta,j}}\, 
e^{t_{i}\, \tilde{\kappa}_{d_i}\big(\tfrac{t_{i}}{d_i},\,
\tfrac{l_{i} L}{d_i}\big)}\,  
\prod_{i\in \cB_{\Theta,j}}\,e^{t_{i}\,
\tilde{\kappa}_{d_i}\big(\tfrac{t_{i}}{d_i},\,
\tfrac{ l_{i} L}{d_i}\big)}\,e^{H^{\theta^{T_{i-1}}(w)}_{t_{i}}(B)},\\
\nonumber 
B _1&=\prod_{i=1}^{r}\,e^{t_{i}^\cI\,\phi^{\theta^{V_{i}}(w)}_{d^{\cI}_{i}}
\big(\tfrac{t^{\cI}_{i}} {d_{i}^\cI}\big)},\\
\nonumber 
C _1&= \ind_{\big\{\sum_{i=1}^{r+1} d_i+\sum_{i=1}^{r} d_i^\cI=L\big\}}\, 
\ind_{\big\{\sum_{i=1}^{r+1} t_i+\sum_{i=1}^{r} t_i^\cI=u L\big\}}.
\end{align}
It is important to note that a simplification has been made in the term $A_1$ in 
\eqref{defA}. Indeed, this term is not $\tilde{\kappa}_{d_i}(\cdot,\cdot)$ 
defined in \eqref{ttrajblock}, since the latter does not take into account 
the vertical restrictions on the path when it moves from one interface to 
the next. However, the fact that two neighboring $AB$-interfaces are necessarily 
separated by a distance at least $L$ allows us to apply Lemma~\ref{conunifalt1} 
in Appendix~\ref{A.3}, which ensures that these vertical restrictions can be
removed at the cost of a negligible error.

To show that $\psi_1\prec\psi_2\prec\psi_3\prec\psi_4$, we fix $(M,m)\in \EIGH$ and $\gep>0$, 
and we show that there exists an $L_\gep\in \N$s such that $\psi_k(L,\Theta,u,j)
\leq \psi_{k+1}(L,\Theta,u,j)+\gep$ for all $(L,\Theta,u,j)\in G_M^{\,m}$ and 
$L\geq L_\gep$. The latter will complete the proof of Proposition~\ref{convunifr}.


\subsubsection{Step 1}
\label{s1}

In this step, we remove the $\omega$-dependence from $Z_1^{\,\omega}(L,\Theta,u,j)$. 
To that aim,  we put
\be{psiLTdef}
\psi_2(L,\Theta,u,j)=\frac{1}{ uL} \log Z_2(L,\Theta,u,j)
\ee 
with
\be{Ztilde}
Z_2(L,\Theta,u,j) = \sum_{(d,t)\in \cD_{\Theta,L,j}}\, 
\sum_{(d^\cI,t^\cI)\in \cD_{r}^\cI} A_2 \  B_2 \  C_2,
\ee
where
\begin{align}
\label{defA1}
A_2 &= \prod_{i\in \cA_{\Theta,j}}\,e^{t_{i}\, \tilde{\kappa}_{d_i}
\big(\tfrac{t_{i}}{d_i},\,\tfrac{ l_{i} L}{d_i}\big)}
\prod_{i\in \cB_{\Theta,j}}\,e^{t_{i}\,\tilde{\kappa}_{d_i}
\big(\tfrac{t_{i}}{d_i},
\,\tfrac{ l_{i} L}{d_i}\big)}\,e^{\tfrac{\beta-\alpha}{2}\, t_{i}},\\
\nonumber 
B_2 &= \prod_{i=1}^{r}\,e^{t_{i}^\cI\,\phi_{d^\cI_{i}}
\Big(\tfrac{t^\cI_{i}}{d^\cI_{i}}\Big)},\\ 
\nonumber 
C_2 &= C_1.
\end{align}
Next, for $n\in \N$ we define 
\begin{align}
\label{subset1}
\nonumber 
\cA_{\gep,n} &= \Big\{\exists\,0\leq t,s\leq n\colon\, t\geq \gep n, 
\,\big|H_t^{\theta^s(\omega)}(B)-\tfrac{\beta-\alpha}{2} t\big|>\gep t\Big\},\\
\cB_{\gep,n} &= \Big\{\exists\,0\leq t,d,s\leq n\colon\,t\in d+2\N_0,\, 
t\geq \gep n,\,\big|\phi^{\theta^s(w)}_d(\tfrac{t}{d})-\phi_d 
(\tfrac{t}{d})\big|>\gep \Big\}.
\end{align}
By applying Cram\'er's theorem for i.i.d.\ random variables (see e.g.\ den 
Hollander~\cite{dH00}, Chapter 1), we obtain that there exist $C_{1}(\gep),C_{2}(\gep)>0$ 
such that
\be{rec}
\P\big(\big|H_t^{\theta^s(w)}(B)-\tfrac{\beta-\alpha}{2} t \big|>\gep t\big)
\leq C_{1}(\gep)\, e^{-C_2(\gep) t}, \qquad t,s\in \N.
\ee
By using the concentration of measure property in \eqref{concmesut} in Appendix~\ref{Ann2} 
with $l=t$, $\Gamma=\cW^\cI_{d}(\tfrac{t}{d})$, $T(x,y)=1\{(x,y)< 0\}$, 
$\eta=\gep t$ and $\xi_i=-\alpha 1\{\omega_i=A\} +\beta 1\{\omega_i=B\}$ for 
all $i\in\N$, we find that there exist $C_{1},C_{2}>0$ such that
\be{rec22}
\P\big(\big|\phi^{\theta^s(w)}_d(\tfrac{t}{d})-\phi_d(\tfrac{t}{d})\big|>\gep \big|\big)
\leq C_{1} \, e^{-C_{2}\, \gep^2 t}, \qquad t,d,s \in\N,\,t\in d+2\N_0. 
\ee
With the help of \eqref{boundel} and \eqref{psi1} we may write, for $(L,\Theta,u,j)
\in G_M^{\,m}$,
\be{boundZ}
\psi_1(L,\Theta,u,j)\leq C_{\text{uf}}(\alpha)\,
\P\big(\cA_{\gep,m L}\cup \cB_{\gep,m L}\big)
+\tfrac{1}{uL}\,\E\big[1_{\{\cA^c_{\gep,m L}\cap\cB^c_{\gep,m L}\}}\,
\log Z_1^\omega(L,\Theta,u,j)\big].
\ee
With the help of \eqref{rec} and \eqref{rec22}, we get that $\P(\cA_{\gep,m L})\to 0$ 
and $\P(\cB_{\gep,m L})\to 0$ as $L\to \infty$. Moreover, from (\eqref{defZ}-\eqref{subset1}) 
it follows that, for $(L,\Theta,u,j)\in G_M^{\,m}$ and $\omega\in\cA^c_{\gep,m L}\cap
\cB^c_{\gep,ML}$, 
\be{boundZalt}
Z^\omega_1(L,\Theta,u,j)\leq Z_2(L,\Theta,u,j)\,e^{\gep u L}.
\ee
The latter completes the proof of $\psi_1\prec\psi_2$. 


\subsubsection{Step 2}

In this step, we concatenate the pieces of trajectories that travel in $A$-blocks, 
respectively, $B$-blocks, respectively, along the $AB$-interfaces and replace the 
finite-size entropies and free energies by their infinite-size counterparts. Recall 
the definition of $l_{A,\Theta,j}$ and $l_{B,\Theta,j}$ in \eqref{bl1} and define, for 
$(L,\Theta,u,j)\in G_M^{\,m}$, the sets  
\begin{align}
\cJ_{\Theta,L,j}
&=\Big\{\big(a_A,h_A,a_B,h_B\big)\in \N^4\colon\, a_A\in l_{A,\Theta,j} L+h_A+2\N_0,\, 
a_B\in l_{B,\Theta,j} L+h_B+2 \N_0\Big\},\\
\nonumber 
\cJ^\cI
&=\Big\{\big(a^\cI,h^\cI\big)\in \N^2\colon\, a^\cI\in h^\cI+ 2\N_0\Big\},
\end{align}
and put $\psi_3(L,\Theta,u,j)=\frac{1}{ uL} \log Z_3(L,\Theta,u,j)$ with
\begin{align}
\label{fatZ}
Z_3(L,\Theta,u,j)=\sum_{(a,h)\in \cJ_{\Theta,L,j}} 
&\sum_{(a^\cI,h^\cI)\in \cJ^\cI} A_3\,B_3\,C_3,
\end{align}
where
\begin{align}
\label{defAhat1}
\nonumber 
A_3 &= e^{a_A\,\tilde{\kappa}\Big(\tfrac{a_A}{h_A},\,\tfrac{l_{A,\Theta,j} L}{h_A}\Big)}\, 
e^{a_B\, \tilde{\kappa}\Big(\tfrac{a_B}{h_B},\,\tfrac{l_{B,\Theta,j} L}{h_B}\Big)}\, 
e^{\tfrac{\beta-\alpha}{2}\, a_B},\\
\nonumber 
B_3 &= e^{a^\cI\, \phi \big(\tfrac{a^\cI}{h^\cI}\big)},\\
C_3 &= 1_{\{a_A+a_B+a^\cI=uL\}}\,1_{\{h_A+h_B+h^\cI=L\}}.
\end{align}
In order to establish a link between $\psi_2$ and $\psi_3$ we define, for $(a,h)\in
\cJ_{\Theta,L,j}$ and $(a^\cI,h^\cI)\in \cJ^\cI$,
\begin{align}
\label{recod}
\nonumber 
\cP_{(a,h)} &= \big\{(t,d)\in \cD_{\Theta,L,j}\colon\,
\textstyle\sum_{i\in \cA_{\Theta,j}} (t_{i},d_{i})=(a_A,h_A),\, 
\sum_{i\in \cB_{\Theta,j}} (t_{i},d_{i})=(a_B,h_B)\big\},\\
\cQ_{(a^\cI,h^\cI)}&=\big\{(t^\cI,d^\cI)\in \cD_{r}^\cI\colon\,
\textstyle \sum_{i=1}^{r}(t^\cI_{i},d^\cI_{i})=(a^\cI,h^\cI)\big\}.
\end{align}
Then we can rewrite $Z_2$ as 
\begin{align}
\label{fatZT}
Z_2(L,\Theta,u,j)=\sum_{(a,h)\in \cJ_{\Theta,L,j}} 
&\sum_{(a^\cI,h^\cI)\in \cJ^\cI} C_3
\sum_{(t,d)\in \cP_{(a,h)}}
\sum_{(t^\cI,d^\cI)\in \cQ_{(a^\cI,h^\cI)}} A_2\,B_2.
\end{align}

To prove that $\psi_2\prec \psi_3$, we need the following lemma.

\begin{lemma}
\label{tesla3}
For every $\eta>0$ there exists an $L_\eta\in \N$ such that, for every $(L,\Theta,u,j)
\in G_M^{\,m}$ with $L\geq L_\eta$ and every $(d,t)\in \cD_{\Theta,L,j}$ and 
$(d^\cI,t^\cI)\in \cD_{r}^\cI$ satisfying $\sum_{i=1}^{r+1} d_i+\sum_{i=1}^{r} 
d_i^\cI=L$ and $\sum_{i=1}^{r+1} t_i+\sum_{i=1}^{r} t_i^\cI=u L$,
\begin{align}
\label{necbound}
t_i\,\tilde{\kappa}\big(\tfrac{t_i}{d_i},\,\tfrac{l_i L}{d_i}\big)-\eta uL 
&\leq t_i\,\tilde{\kappa}_{d_i}\big(\tfrac{t_i}{d_i},\,\tfrac{l_i L}{d_i}\big)
\leq  t_i\,\tilde{\kappa}\big(\tfrac{t_i}{d_i},\,\tfrac{l_i L}{d_i}\big)
+\eta uL \quad i = 1,\dots,r+1,\\
\nonumber
t_i^\cI \phi(\tfrac{t_i^\cI}{d_i^\cI})-\eta uL 
&\leq t_i^\cI \phi_{d^\cI_i}(\tfrac{t_i^\cI}{d_i^\cI})
\leq t_i^\cI \phi(\tfrac{t_i^\cI}{d_i^\cI})+\eta uL \quad i = 1,\dots,r.
\end{align}
\end{lemma}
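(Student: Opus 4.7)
The plan is to promote the pointwise convergences of Propositions~\ref{lementr} and~\ref{l:feinflim} to a quantitative uniform estimate over all decompositions $(d,t)\in\cD_{\Theta,L,j}$ and $(d^\cI,t^\cI)\in\cD^\cI_r$. The slack on the right-hand side of \eqref{necbound} is $\eta uL$ rather than $\eta t_i$, which is what makes a two-regime argument on the horizontal length $d_i$ feasible. One direction of each inequality is free: since $\tilde\kappa=\sup_L\tilde\kappa_L$ by \eqref{conventr}, we have $t_i\tilde\kappa_{d_i}\le t_i\tilde\kappa$ with no error at all, and the analogous one-sided bound for $\phi$ follows from super-additivity of the interface partition function under concatenation of trajectories. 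The substantive task is therefore the reverse direction $t_i(\tilde\kappa-\tilde\kappa_{d_i})\le\eta uL$, together with its $\phi$ analogue.

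Fix a threshold $\rho>0$ to be tuned against $\eta,m,\alpha$, and split each index according to whether $d_i\ge\rho L$ (\emph{long regime}) or $d_i<\rho L$ (\emph{short regime}). In the long regime the bounds $t_i\le uL\le mL$ and $l_iL\le mL$ force $(t_i/d_i,\,l_iL/d_i)$ into the fixed compact set $K_\rho=\{(u',l')\in\cH\colon u'\le m/\rho,\,|l'|\le m/\rho\}$. Combining the monotone convergence $\tilde\kappa_L\uparrow\tilde\kappa$ from~\eqref{conventr} with the continuity of $\tilde\kappa$ on $\cH$ (Lemma~\ref{l:lemconv2}) and Dini's theorem gives uniform convergence on $K_\rho$, so for $L\ge L_0(\rho,\eta,m)$ one has $|\tilde\kappa_{d_i}-\tilde\kappa|\le\eta/m$ throughout $K_\rho$; multiplying by $t_i\le mL$ yields the required $\eta uL$ bound. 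The same Dini argument applied to~\eqref{fesainf} on the compact interval $[1,m/\rho]$, together with the continuity of $\phi$ (Lemma~\ref{assu}), handles the interface free energies in the long regime.

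The short regime is where the main obstacle lies. When $d_i$ is of order one while $t_i$ and $l_iL$ are of order $L$, the parameters $(t_i/d_i,\,l_iL/d_i)$ escape every compact subset of $\cH$, so uniform convergence on compacts is not directly available; the crude bound $|\tilde\kappa|\le\log 3$ only yields an error of order $t_i$, which can be as large as $mL$ and is not absorbable into $\eta uL$ by shrinking $\rho$ alone. I would close this case by invoking a quantitative finite-size entropy estimate of the kind developed in Appendix~\ref{Path entropies} (specifically Lemma~\ref{conunifalt1}), which delivers an error bound of the form $u\bigl(\tilde\kappa(u',l')-\tilde\kappa_d(u',l')\bigr)\le C(1+l')\,\gep(d)$ with $\gep(d)\downarrow 0$ as $d\to\infty$, valid uniformly throughout $\cH$ rather than only on compacts. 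After multiplication by $t_i=d_iu'$ this gives a per-index error of order $(d_i+l_iL)\,\gep(d_i)$, which in the short regime is at most $C(\rho+m)L\,\gep(1)$ and can be driven below $\eta uL$ by first choosing $\rho$ small and then $L_\eta$ large. The interface term in the short regime is treated identically, using the a priori bound $|\phi_L|,|\phi|\le C_{\text{uf}}(\alpha)$ from~\eqref{boundel} together with the analogous uniform finite-size bound for $\phi_L$ available in the setup of Proposition~\ref{l:feinflim}.
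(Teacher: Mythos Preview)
Your long-regime argument is essentially the paper's: once $d_i\ge\rho L$ one invokes the \emph{uniform} convergence of $\tilde\kappa_L$ to $\tilde\kappa$ (this is exactly Lemma~\ref{conunif1}, so you do not need Dini, which is awkward here anyway because $\tilde\kappa_L$ lives only on the lattice $\cH_L$).

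The short regime, however, has a real gap. Lemma~\ref{conunifalt1} does not say what you attribute to it: that lemma compares the entropy with and without the vertical restrictions $B_0,B_1$, and says nothing about the finite-size discrepancy $\tilde\kappa-\tilde\kappa_d$. No lemma in Appendix~\ref{Path entropies} produces a bound of the shape $u'(\tilde\kappa-\tilde\kappa_d)\le C(1+l')\gep(d)$ with $\gep(d)\to 0$; and even if it did, your closure would still fail, because in the short regime $d_i$ can equal $1$ for arbitrarily large $L$, so $\gep(d_i)=\gep(1)$ is just a constant and the term $C(\rho+m)L\,\gep(1)$ cannot be pushed below $\eta uL$ by choosing $\rho$ small or $L$ large.

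The argument the paper actually uses in the short regime is different and simpler. With $d_i\le r_\eta L$ one splits on the \emph{vertical} variable: if $t_i\le\eta uL/(2C_{\text{uf}})$, then the crude bound $0\le\tilde\kappa_{d_i},\tilde\kappa\le C_{\text{uf}}$ already gives $|t_i\tilde\kappa_{d_i}-t_i\tilde\kappa|\le\eta uL$; if instead $t_i>\eta uL/(2C_{\text{uf}})$, then (since $u\ge1$) the ratio $t_i/d_i$ is forced to exceed a fixed threshold $v_\eta$ depending only on $\eta$, and one invokes the decay $\tilde\kappa_L(u',l'),\,\tilde\kappa(u',l')\le\eta$ for $u'\ge v_\eta$ uniformly in $l'$ and in $L$ (Lemma~\ref{convularge} and Lemma~\ref{l:lemconv2}(ii--iii)), so that again $|t_i\tilde\kappa_{d_i}-t_i\tilde\kappa|\le t_i\eta\le\eta uL$. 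The interface term is handled identically using Lemma~\ref{l:feinflim1} and Lemma~\ref{l:lemconv}(ii). The point you missed is that small $d_i$ together with non-negligible $t_i$ forces a large \emph{aspect ratio} $t_i/d_i$, and it is this ratio that makes the entropy per step small --- you do not need any refined finite-size estimate at all.
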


\begin{proof}  
By using Lemmas~\ref{conunif1} and \ref{l:feinflim1} in Appendix~\ref{Path entropies},  
we have that there exists a $\tilde{L}_\eta\in \N$ such that, for $L\geq \tilde{L}_\eta$, 
$(u,l)\in \cH_L$ and $\mu\in 1+\frac{2\N}{L}$, 
\be{tesla}
|\tilde{\kappa}_L(u,l)-\tilde{\kappa}(u,l)| 
\leq \eta, \qquad |\phi^\cI_L(\mu)- \phi^\cI(\mu)|\leq \eta.
\ee 
Moreover, Lemmas~\ref{lementr}, \ref{l:lemconv2}(ii--iii), \ref{l:lemconv}(ii) and
\ref{l:feinflim1} ensure that there exists a $v_\eta>1$ such that, for $L\geq 1$, 
$(u,l)\in \cH_L$ with $u\geq v_\eta$ and $\mu\in 1+\frac{2\N}{L}$ with $\mu\geq v_\eta$,
\be{tesla2}
0\leq \tilde{\kappa}_L(u,l)\leq \eta, \qquad 
0\leq  \phi_L(\mu)\leq \eta.
\ee 
Note that the two inequalities in \eqref{tesla2} remain valid when $L=\infty$. Next, 
we set $r_\eta=\eta/(2 v_\eta C_{\text{uf}})$ and $L_\eta=\tilde{L}_\eta/r_\eta$, and 
we consider $L\geq L_\eta$. Because of the left-hand side of \eqref{tesla}, the two 
inequalities in the first line of \eqref{necbound} hold when $d_i \geq r_\eta L\geq 
\tilde{L}_\eta$. We deal with the case $d_i \leq r_\eta L$ by considering first the 
case $t_i\leq \eta u L/2 C_{\text{uf}}$, which is easy because $\tilde{\kappa}_{d_i}$ 
and $\tilde{\kappa}$ are uniformly bounded by $C_{\text{uf}}$ (see \eqref{boundel}). 
The case $t_i\geq \eta u L/2 C_{\text{uf}}$ gives $t_i/d_i\geq u v_\eta\geq v_\eta$, 
which by the left-hand side of \eqref{tesla2} completes the proof of the first line 
in \eqref{necbound}. The same observations applied to $t_i^\cI, d_i^\cI$ combined 
with the right-hand side of \eqref{tesla} and \eqref{tesla2} provide the two inequalities 
in the second line in \eqref{necbound}.
\end{proof}

To prove that $\psi_2\prec\psi_3$, we apply Lemma~\ref{tesla3} with $\eta=\gep/(2m+1)$ 
and we use \eqref{defA1} to obtain, for $L\geq L_{\gep/(2m+1)}$, $(d,t)\in\cD_{\Theta,L,j}$ 
and $(d^\cI,t^\cI)\in \cD_{r}^\cI$, 
\begin{align}
\label{defAA1}
A_2 &\leq \prod_{i\in \cA_{\Theta,j}} e^{ t_{i}\, \tilde{\kappa}
\big(\tfrac{t_{i}}{d_i}, \,\tfrac{ l_{i} L}{d_i}\big) 
+ \tfrac{\gep uL}{2m+1}} \prod_{i\in \cB_{\Theta,j}} 
\,e^{t_{i}\,\tilde{\kappa}\big(\tfrac{t_{i}}{d_i}, 
\,\tfrac{ l_{i} L}{d_i}\big) + t_i\,\tfrac{\beta-\alpha}{2} 
+ \tfrac{\gep uL}{2m+1}},\\
\nonumber 
B_2 &\leq \prod_{i=1}^{r}\, e^{t_{i}^\cI\,\phi\Big(\tfrac{t^\cI_{i}}{d^\cI_{i}}\Big)
+ \tfrac{\gep uL}{2m+1}}.
\end{align}
Next, we pick $(a,h)\in \cJ_{\Theta,L,j}$, $(a^\cI,h^\cI)\in \cJ^\cI$, $(t,d)\in
\cP_{(a,h)}$ and $(t^\cI,d^\cI)\in \cQ_{(a^\cI,h^\cI)}$, and we use the concavity 
of $(a,b)\mapsto a \tilde{\kappa}(a,b)$ and $\mu\mapsto \phi^\cI(\mu)$ (see 
Lemma~\ref{l:lemconv2} in Appendix~\ref{Path entropies} and Lemma~\ref{l:lemconv}
in Appendix~\ref{B}) to rewrite \eqref{defAA1} as
\begin{align}
\label{defAA2}
A_2 &\leq e^{a_A\,\tilde{\kappa}\big(\tfrac{a_A}{h_A},\,
\tfrac{l_{A,\Theta,j} L}{h_A}\big) + a_B \, \tilde{\kappa}\big(\tfrac{a_B}{h_B},\,
\tfrac{l_{B,\Theta,j} L}{h_B}\big) + \tfrac{\beta-\alpha}{2} a_B 
+ \tfrac{\gep (r+1)uL}{2m+1}} 
= A_3\,e^{\tfrac{\gep (r+1) u L}{2m+1}},\\
\nonumber
B_2 &\leq e^{a^\cI\, \phi^\cI\big(\tfrac{a^\cI}{h^\cI}\big)
+ \tfrac{\gep r uL}{2m+1}} = B_3\, e^{\tfrac{\gep r uL}{2m+1}}.
\end{align}
Moreover, $r$, which is the number of $AB$ interfaces crossed by the trajectories in 
$\cW_{\Theta,u,j,L}$, is at most $m$ (see \eqref{zto}), so that \eqref{defAA2} 
allows us to rewrite \eqref{fatZT} as
\begin{align}
\label{fatZTU}
Z_2(L,\Theta,u,j) \leq e^{\gep u L} \sum_{(a,h)\in \cJ_{\Theta,L,j}} 
&\sum_{(a^\cI,h^\cI)\in \cJ^\cI} C_3\,|\cP_{(a,h)}|\, |\cQ_{(a^\cI,h^\cI)}|\, A_3\,B_3.
\end{align}
Finally, it turns out that $|\cP_{(a,h)}|\leq (uL)^{8r}$ and $|\cQ_{(a^\cI,h^\cI)}|
\leq (uL)^{8r}$. Therefore, since $r\leq m$, \eqref{fatZ} and \eqref{fatZTU} allow 
us to write, for $(L,\Theta,u,j)\in G_M^{\,m}$ and $L\geq L_{\gep/2m+1}$,
\be{add10}
Z_2(L,\Theta,u,j)\leq (mL)^{16 m} Z_3(L,\Theta,u,j).
\ee
The latter is sufficient to conclude that $\psi_2\prec\psi_3$.


\subsubsection{Step 3}
\label{s3}

For every $(L,\Theta,u,j)\in G_M^{\,m}$ we have, by the definition of $\cL(l_{A,\Theta,j},
l_{B,\Theta,j};u)$ in \eqref{defnu}, that $(a,h)\in \cJ_{\Theta,L,j}$ and $(a^\cI,h^\cI)\in \cJ^\cI$ 
satisfying $a_A+a_B+a^\cI=uL$ and $h_A+h_B+h^\cI=L$ also satisfy  
\be{innu}
\Big(\big(\tfrac{a_A}{L},\tfrac{a_B}{L}, 
\tfrac{a^\cI}{L}\big),\big(\tfrac{h_A}{L},\tfrac{h_B}{L},
\tfrac{h^\cI}{L}\big)\Big)\in \cL(l_{A,\Theta,j},l_{B,\Theta,j};u).
\ee
Hence, \eqref{innu} and the definition of $\psi_\cI$ in \eqref{Bloc of type I} ensure 
that, for this choice of $(a,h)$ and $(a^\cI,h^\cI)$,
\be{add11}
A_3 B_3\leq e^{uL \psi_\cI(u,\,l_{A,\Theta,j},\,l_{B,\Theta,j})}.
\ee
Because of $C_3$, the summation in \eqref{fatZ} is restricted to those $(a,h)\in
\cJ_{\Theta,L,j}$ and $(a^\cI,h^\cI)\in \cJ^\cI$ for which $a_A,a_B,a^\cI \leq uL$ 
and $h_A,h_B,h^\cI\leq L$. Hence, the summation is restricted to a set of cardinality 
at most $(uL)^3 L^3$. Consequently, for all $(L,\Theta,u,j)\in G_M^{\,m}$ we have
\begin{align}
\label{fatZalt}
Z_3(L,\Theta,u,j)&=\sum_{(a,h)\in \cJ_{\Theta,L,j}} 
\sum_{(a^\cI,h^\cI)\in \cJ^\cI}  A_4\, B_4\, C_4\leq (m L)^3 L^3\, 
e^{u\,L\, \psi_\cI(u,\,l_{A,\Theta,j},\,l_{B,\Theta,j})}.
\end{align}
The latter implies that $\psi_3\prec \psi_4$ since $\psi_4=\psi_\cI(u,\,l_{A,\Theta,j},
\,l_{B,\Theta,j})$ by definition (recall \eqref{Bloc of type I1} and \eqref{psidefvar}).


\section{Column-based variational formula}
\label{proofofgene}

To derive the \emph{slope-based variational formula} that is the cornerstone of our analysis, 
we state and prove in this section an auxiliary variational formula for the quenched free energy 
per step that involves the fraction of the time spent by the copolymer in each type of block 
columns and the free energy per step of the copolymer in a given block column. This auxiliary 
variational formula will be used in Section~\ref{varfo2} in combination with Proposition~\ref{energ} 
to complete the proof of the \emph{slope-based variational formula}.

With each $\Theta\in \overline\cV_M$ we associate a quantity $u_\Theta \in [t_\Theta,
\infty)$ indicating how many \emph{steps on scale} $L_n$ the copolymer makes in columns 
of type $\Theta$, where $t_\Theta$ is the minimal number of steps required to cross a 
column of type $\Theta$. These numbers are gathered into the set 
\be{BVdef}
\cB_{\overline\cV_M}=\big\{(u_\Theta)_{\Theta\in \overline\cV_M}\in\R^{\overline\cV_M}
\colon\,u_\Theta\geq t_\Theta\,\,\forall\,\Theta \in \overline\cV_M,\,
\Theta \mapsto u_\Theta \mbox{ continuous}\big\},
\ee 
where the continuity in $\Theta$ is with respect to the distance $d_M$ defined in \eqref{dist} 
in Appendix~\ref{B.2}. We recall Proposition~\ref{energ}, which identifies the free energy per 
step $\psi(\Theta,u_\Theta;\alpha,\beta)$ associated with the copolymer when crossing a 
column of type $\Theta$ in $u_\Theta$ steps, and we recall that the set $\cR_{p,M}$ introduced 
in Section \ref{cgp1} gathers the \emph{frequencies with which different types of columns can 
be visited by the copolymer}.

\begin{theorem}(column-based variational formula) 
\label{varformula}
For every $(\alpha,\beta)\in\CONE$, and $p \in (0,1)$ the free energy in 
{\rm \eqref{felimdeff}} exists for $\P$-a.e.\ $(\omega,\Omega)$ and in 
$L^1(\P)$, and is given by
\be{genevarold}
f(\alpha,\beta;p) 
= \sup_{M\geq 1} \, \sup_{\rho\in \cR_{p,M}}\,\sup_{(u_\Theta)_{\Theta\in \overline\cV_M}\,
\in\,\cB_{\,\overline\cV_M}}\,\,\frac{N(\rho,u)}{D(\rho,u)},
\ee
where 
\begin{align}
\label{genevar2}
\nonumber
N(\rho,u) &= \int_{\overline\cV_M}\,u_\Theta\,\psi(\Theta,u_{\Theta};\alpha,\beta)\,
\rho(d\Theta),\\
D(\rho,u) &= \int_{\overline\cV_M}\,u_\Theta\,\rho(d\Theta),
\end{align}
with the convention that $N(\rho,u)/D(\rho,u)=-\infty$ when $D(\rho,u)=\infty$.
\end{theorem}

The present section is technically involved because it goes through a sequence of approximation
steps in which the self-averaging of the free energy with respect to $\omega$ and $\Omega$ 
in the limit as $n\to\infty$ is proven, and the various ingredients of the variational 
formula in Theorem~\ref{varformula} that were constructed in Section~\ref{keyingr} are 
put together.

In Section~\ref{strprTh} we introduce additional notation and state Propositions~\ref{pr:formimp}, 
\ref{pr:formimpp}, \ref{pr:varar} and \ref{pr:formimppp} from which Theorem~\ref{varformula} is a straightforward 
consequence. Proposition~\ref{pr:formimp}, which deals with $(M,m)\in\EIGH$, is proven in 
Section~\ref{prooffor} and the details of the proof are worked out in 
Sections~\ref{s22}--\ref{saltisgp}, organized into 5 Steps that link intermediate free 
energies. We pass to the limit $m\to\infty$ with Propositions~\ref{pr:formimpp} and 
\ref{pr:varar} which are proven in Section~\ref{sMinf} and \ref{svarar}, respectively.  
Finally, we pass to the limit $M\to \infty$ with Proposition \ref{pr:formimppp} which is proven in Section 
\ref{Minfinity}.


\subsection{Proof of Theorem~\ref{varformula}}
\label{strprTh}


\subsubsection{Additional notation}

Pick $(M,m)\in \EIGH$ and recall that $\Omega$ and $\omega$ are independent, i.e., 
$\P=\P_\omega \times \P_\Omega$. For $\Omega\in \{A,B\}^{\N_0\times\Z}$, $\omega\in 
\{A,B\}^\N$, $n\in \N$ and $(\alpha,\beta)\in \CONE$, define
\begin{equation}
\label{intermp}
f^{\omega,\Omega}_{1,n}(M,m;\alpha,\beta)
= \tfrac{1}{n} \log Z_{1,n,L_n}^{\,\omega,\Omega}(M,m)
\quad\text{with}\quad
Z_{1,n,L_n}^{\,\omega,\Omega}(M,m)
=\sum_{\pi \in \cW_{n,M,}^{\,m}} e^{\,H_{n,L_n}^{\omega,\Omega}(\pi)},
\ee 
where $\cW_{n,M}^{\,m}$ contains those paths in $\cW_{n,M}$ that, in each column, make 
at most $m L_n$ steps. We also restrict the set $\cR_{p,M}$ in \eqref{RMNdef} to those 
limiting empirical measures whose support is included in $\overline \cV_{M}^{\,m}$, i.e., 
those measures charging the types of column that can be crossed in less than $m L_n$ 
steps only. To that aim we recall \eqref{RMNdefalt} and define, for $\Omega\in \{A,B\}^{\N_0
\times\Z}$ and $N\in \N$,
\be{RMNNdefalt}
\begin{aligned}
\cR^{\Omega,m}_{M,N} &= \big\{\rho_N(\Omega,\Pi,b,x)\ \text{with}\  
b=(b_j)_{j\in \N_0} \in (\mathbb{Q}_{(0,1]})^{\N_0},\\
&\qquad \Pi=(\Pi_j)_{j\in\N_0} \in \{0\}\times \Z^{\N} \colon\,
|\Delta\Pi_j|\leq M\,\ \ \forall\,j\in\N_0,\\
&\qquad x=(x_j)_{j\in \N_0} \in \{1,2\}^{N_0}\colon\, \big(\Omega(j,\Pi_j+\cdot),
\Delta \Pi_j,b_j,b_{j+1},x_j\big)\in \cV_M^{m}\big\}
\end{aligned} 
\ee
which is a  subset of $\cR_{M,N}^{\Omega}$ and allows us to define 
\be{RMmdef}
\cR^{\Omega,m}_M = \mathrm{closure}\Big(\cap_{N'\in\N} \cup_{N \geq N'}\, 
\cR^{\Omega,m}_{M,N}\Big),
\ee
which, for $\P$-a.e. $\Omega$ is equal to $\cR_{p,M}^{m}\subsetneq \cR_{p,M}$.

At this stage, we further define, 
\be{modvaralt}
f(M,m;\alpha,\beta)
= \sup_{\rho\in \cR_{p,M}^{\,m}}\,
\sup_{(u_\Theta)_{\Theta\in \overline\cV_M^{\,m}}\in\cB_{\,\overline\cV_M^{\,m}}}
V(\rho,u),
\ee
where
\be{ddeff}
V(\rho,u)=\frac{\int_{\overline \cV_M^{\,m}}\,u_\Theta\,
\psi(\Theta,u_{\Theta};\alpha,\beta)\,\rho(d\Theta)}
{\int_{\overline \cV_M^{\,m}}\,u_\Theta\, \rho(d\Theta)},
\ee
where (recall \eqref{set2alt1})
\be{defbm}
\cB_{\overline \cV_M^{\,m}}
= \Big\{(u_\Theta)_{\Theta\in \overline \cV_M^{\,m}}\in\R^{\overline \cV_M^{\,m}}
\colon \Theta \mapsto u_\Theta\in \cC^0\big(\overline \cV_M^{\,m},\R\big),\,
t_\Theta \leq u_\Theta\leq m\,\,\forall\,\Theta\in \overline \cV_M^{\,m} \Big\},
\ee
and where $\overline{\cV}_M^{\,m}$ is endowed with the distance $d_M$ defined in 
\eqref{dist} in Appendix~\ref{B.2}.

Let $\cW^{*,m}_{n,M}\subset\cW_{n,M}^{\,m}$ be the subset consisting of those paths 
whose endpoint lies at the boundary between two columns of blocks, i.e., satisfies
$\pi_{n,1}\in \N L_n$. Recall \eqref{intermp}, and define $Z^{*,\omega,\Omega}_{n,L_n}
(M)$ and $f^{*,\omega,\Omega}_{1,n}(M,m;\alpha,\beta)$ as the counterparts of 
$Z^{\omega,\Omega}_{n,L_n}(M,m)$ and  $f^{\omega,\Omega}_{1,n}(M,m;\alpha,\beta)$ 
when $\cW_{n,M}^{\,m}$ is replaced by $\cW_{n,M}^{*,m}$. Then there exists a constant 
$c>0$, depending on $\alpha$ and $\beta$ only, such that
\be{inegW}
\begin{aligned}
&Z^{\omega,\Omega}_{1,n,L_n}(M,m) e^{-c L_n}
\leq Z^{*,\omega,\Omega}_{1,n,L_n}(M,m)
\leq Z^{\omega,\Omega}_{1,n,L_n}(M,m),\\  
&n\in \N, \,\omega \in \{A,B\}^\N,\, \Omega \in \{A,B\}^{\N_0\times \Z}.
\end{aligned}
\ee
The left-hand side of the latter inequality is obtained by changing the last $L_n$ 
steps of each trajectory in $\cW_{n,M}^{\,m}$ to make sure that the endpoint falls in 
$L_n\N$. The energetic and entropic cost of this change are obviously $O(L_n)$. 
By assumption, $\lim_{n\to\infty} L_n/n=0$, which together with \eqref{inegW} 
implies that the limits of $f^{\omega,\Omega}_{1,n}(M,m;\alpha,\beta)$ and 
$f^{*,\omega,\Omega}_{1,n}(M,m;\alpha,\beta)$ as $n\to\infty$ are the same. In the 
sequel we will therefore restrict the summation in the partition function to 
$\cW_{n,M}^{*,m}$ and drop the $*$ from the notations.

Finally, let
\be{add12}
\begin{aligned}
f_{1,n}^{\Omega}(M,m;\alpha,\beta)
&= \E_\omega\big[f^{\omega,\Omega}_{1,n}(M,m;\alpha,\beta)\big],\\
f_{1,n}(M,m;\alpha,\beta)
&= \E_{\omega,\Omega}\big[f^{\omega,\Omega}_{1,n}(M,m;\alpha,\beta)\big],
\end{aligned}
\ee
and recall \eqref{partfunc} and \eqref{partfunc11} to set 
\be{}
f_n^\Omega(\alpha,\beta)=\E_\omega[f_n^{\omega,\Omega}(\alpha,\beta)],
\qquad f_n^\Omega(M;\alpha,\beta)
=\E_\omega[f_n^{\omega,\Omega}(M;\alpha,\beta)].
\ee


\subsubsection{Key Propositions}
\label{Key Propositions}

Theorem~\ref{varformula} is a consequence of Propositions \ref{pr:formimpp},
\ref{pr:varar} and \ref{pr:formimppp}   stated below and proven in Sections~\ref{step1}--\ref{step3}, 
Section~\ref{svarar} and Sections~\ref{step11}--\ref{step77}, respectively.

Proposition \ref{pr:formimp}, that is stated first is required to prove Proposition \ref{pr:formimpp}
and will be proven in Sections~\ref{s22}--\ref{s23alt}.

\bp{pr:formimp}
For all $(M,m)\in \EIGH$, 
\be{formimp}
\lim_{n\to \infty} f^{\Omega}_{1,n}(M,m;\alpha,\beta)
=f(M,m;\alpha,\beta)\quad \text{ for } \P-a.e.\,\Omega.
\ee
\ep
\bp{pr:formimpp}
For all $M\in \N$,
\be{formimpp}
\lim_{n\to \infty} f^{\Omega}_{n}(M;\alpha,\beta)
=\sup_{m\geq M+2} f(M,m;\alpha,\beta)\quad \text{ for } \P-a.e.\,\Omega.
\ee
\ep

\bp{pr:varar}
For all $M\in \N$,
\be{varar}
\sup_{m\geq M+2} f(M,m;\alpha,\beta)=\sup_{\rho\in \cR_{p,M}}\,
\sup_{(u_\Theta)_{\Theta\in \overline\cV_M}\, 
\in\,\cB_{\,\overline\cV_M}}   V(\rho,u),
\ee
\ep
where, in the righthand side of \eqref{varar}, we recognize the variational formula of 
Theorem \ref{varformula} and with $\cB_{\overline\cV_M}$ defined in \eqref{set2}.

\bp{pr:formimppp}
\be{formimppp}
\limsup_{n\to \infty} f^{\Omega}_{n}(\alpha,\beta)
\leq  \sup_{M\geq 1}  \lim_{n\to \infty} f^{\Omega}_{n}(M;\alpha,\beta)\quad \text{ for } \P-a.e.\,\Omega.
\ee
\ep

\medskip\noindent 
{\bf Proof of Theorem~\ref{varformula} subject to Propositions 
\ref{pr:formimpp}, \ref{pr:varar} and \ref{pr:formimppp}.}
With Propositions \ref{pr:formimpp}, \ref{pr:varar} and  \ref{pr:formimppp} in hand, the proof of Theorem~\ref{varformula} 
will be complete once we show that 
\be{rstt} \lim_{n\to \infty} |f_{n}^{\omega,\Omega}(\alpha,\beta)
-f_{n}^{\Omega}(\alpha,\beta)|=0
\quad \text{ for } \P-a.e.\,\,(\omega,\Omega).
\ee 
To that aim, we note that for all $n\in \N$ the $\Omega$-dependence of $f_{n}^{\omega,\Omega}
(\alpha,\beta)$ is restricted to $\big\{\Omega_x\colon\,x\in G_n\big\}$ with $G_n
=\{0,\dots,\frac{n}{L_n}\}\times\{-\frac{n}{L_n},\dots,\frac{n}{L_n}\}$. Thus, for $n\in\N$ 
and $\gep>0$ we set 
\be{defac}
A_{\gep,n}=\{| f_{n}^{\omega,\Omega}(\alpha,\beta)- f_{n}^{\Omega}(\alpha,\beta)|>\gep)\},
\ee
and by independence of $\omega$ and $\Omega$ we can write
\begin{align}
\label{concv}
{\textstyle \nonumber\P_{\omega,\Omega}(A_{\gep,n})}
&{\textstyle =\sum_{\Upsilon\in \{A,B\}^{G_n}} 
\P_{\omega,\Omega}(A_{\gep,n}\cap \{\Omega_{G_n}=\Upsilon\})}\\
&{\textstyle =\sum_{\Upsilon\in \{A,B\}^{G_n}} 
\P_{\omega}(| f_{n}^{\omega,\Upsilon}(\alpha,\beta)
- f_{n}^{\Upsilon}(\alpha,\beta)|>\gep)\ \P_{\Omega}(\{\Omega_{G_n}=\Upsilon\})}.
\end{align}
At this stage, for each $n\in \N$ we can apply the concentration inequality $\eqref{concmesut}$ 
in Appendix \ref{Ann2} with $\Gamma=\cW_{n}$, $l=n$, $\eta=\gep n$,
\be{addd25}
\xi_i = -\alpha\, 1\{\omega_i=A\} +\beta\, 
1\{\omega_i=B\}, \qquad i\in\N,
\ee 
and with $T(x,y)$ indicating in which block step $(x,y)$ lies in. Therefore, there exist 
$C_1, C_2>0$ such that for all $n\in \N$ and all $\Upsilon\in \{A,B\}^{G_n}$ we have
\be{resconc}
\P_{\omega}(| f_{n}^{\omega,\Upsilon}(M,m;\alpha,\beta)
- f_{n}^{\Upsilon}(M,m;\alpha,\beta)|>\gep) \leq C_1 e^{-C_2 \gep^2 n},
\ee
which, together with \eqref{concv} yields $\P_{\omega,\Omega}(A_{\gep,n})\leq 
C_1 e^{-C_2 \gep^2 n}$ for all $n\in \N$. By using the Borel-Cantelli Lemma, we 
obtain \eqref{rstt}.
\hspace*{\fill}$\square$


\subsection{Proof of Proposition \ref{pr:formimp}}
\label{prooffor} 
Pick $(M,m)\in \EIGH$ and $(\alpha,\beta)\in \CONE$. In Steps 1--2 in 
Sections~\ref{s22}--\ref{s23} we introduce an intermediate free energy 
$f_{3,n}^{\Omega}(M,m;\alpha,\beta)$ and show that
\begin{align}
\label{limsup1}
\lim_{n\to \infty} |f_{1,n}^{\Omega}(M,m;\alpha,\beta)
- f_{3,n}^{\Omega}(M,m;\alpha,\beta)|=0 \qquad \forall\,
\Omega\in \{A,B\}^{\N_0\times \Z}.
\end{align}
Next, in Steps 3--4 in Sections~\ref{s24}--\ref{s24alt} we show that 
\be{limsup3}
\limsup_{n\to \infty} f_{3,n}^{\Omega}(M,m;\alpha,\beta)
= f(M,m;\alpha,\beta)
\qquad \text{for}\,\,\P-a.e.\,\, \Omega,
\ee
while in Step 5 in Section~\ref{s23alt} we prove that 
\be{convco1}
\liminf_{n\to \infty} f_{3,n}^{\Omega}(M,m;\alpha,\beta)
= \limsup_{n\to \infty} f_{3,n}^{\Omega}(M,m;\alpha,\beta) 
\qquad \text{for}\,\,\P-a.e.\,\,\Omega.
\ee
Combing (\ref{limsup1}--\ref{convco1}) we get
\be{fin}
\liminf_{n\to \infty} f_{1,n}^{\Omega}(M,m;\alpha,\beta)
= \limsup_{n\to \infty} f_{1,n}^{\Omega}(M,m;\alpha,\beta)=f(M,m;\alpha,\beta)
\qquad \text{for}\,\,\P-a.e.\,\, \Omega,
\ee
which completes the proof of Proposition~\ref{pr:formimp}.

In the proof we need the following order relation. 

\begin{definition}
For $g,\widetilde{g}\colon\,\N^3\times \CONE\mapsto\R$, write $g\prec \widetilde{g}$ 
if for all $(M,m)\in \EIGH$, $(\alpha,\beta)\in \CONE$ and $\gep>0$ there exists an 
$n_{\gep}\in \N$ such that
\be{add13}
g(n,M,m; \alpha,\beta) \leq \widetilde{g}(n,M,m; \alpha,\beta)
+\gep \qquad \forall\,n\geq n_{\gep}.
\ee
\end{definition}

\noindent
The proof of \eqref{limsup1} will be complete once we show that $f_1^\Omega \prec
f_3^\Omega$ and $f_3^\Omega \prec f_1^\Omega$ for all $\Omega\in \{A,B\}^{\N_0
\times \Z}$. We will focus on $f_1^\Omega\prec f_3^\Omega$, since the proof of 
the latter can be easily adapted to obtain $f_3^\Omega \prec f_1^\Omega$. To prove
$f_1^\Omega \prec f_3^\Omega$ we introduce another intermediate free energy 
$f_2^\Omega$, and we show that $f_1^\Omega\prec f_2^\Omega$ and $f_2^\Omega \prec 
f_3^\Omega$. 

For $L\in \N$, let
\begin{equation}
\label{defDD2}
\cD_L^M=\left\{\Xi
=(\Delta\Pi,b_0,b_1)\in \{-M,\dots,M\}\times 
\{\tfrac1L,\tfrac2L,\dots,1\}^2 \right\}.	
\end{equation}
For $L,N\in\N$, let 
\begin{align}
\label{defD2}
\widetilde{\cD}_{L,N}^M 
&= \Big\{\Theta_{\text{traj}}=(\Xi_i)_{i\in \{0,\dots, N-1\}}
\in (\cD_L^M)^N\colon\, b_{0,0}=\tfrac1L,\,
b_{0,i}=b_{1,i-1}\,\,\forall\,1\leq i\leq N-1\Big\},
\end{align}
and with each $\Theta_{\text{traj}}\in\widetilde{\cD}_{L,N}^M$ associate the sequence 
$(\Pi_i)_{i=0}^N$ defined by $\Pi_0=0$ and $\Pi_i=\sum_{j=0}^{i-1} \Delta\Pi_j$ 
for $1\leq i\leq N$. Next, for $\Omega\in \{A,B\}^{\N_0\times \Z}$ and $\Theta_{\text{traj}}
\in \widetilde{\cD}_{L,N}^M$, set
\be{defXalt}
\cX_{\Theta_{\text{traj}},\Omega}^{M,m}
=\big\{x\in \{1,2\}^{\{0,\dots,N-1\}}\colon 
(\Omega(i,\Pi_i+\cdot),\Xi_i,x_i)\in  \cV_{M}^{m} \,\,\forall\,0\leq i\leq N-1\big\},
\ee
and, for $x\in \cX_{\Theta_{\text{traj}},\Omega}^{M,m}$, set
\be{tthe}
\Theta_i=(\Omega(i,\Pi_i+\cdot),\Xi_i,x_i) \quad \text{for}\quad i\in \{0,\dots,N-1\}
\ee 
and 
\begin{equation}
\begin{aligned}
\label{defU2}
\cU_{\,\Theta_{\text{traj}},x,n}^{\,M,m,L} &= \Big\{u=(u_i)_{ i\in \{0,\dots, N-1\}}
\in [1,m]^{N}\colon u_i\in  t_{\Theta_i}+\tfrac{2\N}{L}
\ \,\,\forall\,0\leq i\leq N-1,\,\sum_{i=0}^{N-1} u_i=\tfrac{n}{L}\Big\}.
\end{aligned}
\end{equation}
Note that $\cU_{\,\Theta_{\text{traj}},x,n}^{\,M,m,L}$  is empty when $N\notin
\big[\frac{n}{m L},\frac{n}{L}\big]$.

For $\pi\in \cW_{n,M}^{\,m}$, we let $N_\pi$ be the number of columns crossed by $\pi$ 
after $n$ steps. We denote by $(u_0(\pi),\dots,u_{N_\pi-1}(\pi))$ the time spent 
by $\pi$ in each column divided by $L_n$, and we set $\widetilde{u}_0(\pi)=0$ 
and $\widetilde{u}_{j}(\pi)=\sum_{k=0}^{j-1} u_k(\pi)$ for $1\leq j\leq N_\pi$. With 
these notations, the partition function in \eqref{intermp} can be rewritten as
\begin{equation}
\label{A11}
Z_{1,n,L_n}^{\,\omega,\Omega}(M,m)=\sum_{N=n/m L_n}^{n/L_n}\,
\,\sum_{\Theta_{\text{traj}}\in \widetilde{\cD}_{L_n,N}^M }\,
\sum_{x\in \cX_{\Theta_{\text{traj}},\Omega}^{M,m} }
\,\sum_{u\in \,\cU_{\,\Theta_{\text{traj}},x,n}^{\,M,m,L_n}}   
\,A_1,
\end{equation}
with (recall \eqref{partfunc2})
\begin{align}
A_1=\prod_{i=0}^{N-1}\, Z^{\theta^{\widetilde{u}_{i} L_n}(\omega)}_{L_{n}}
(\Omega(i,\Pi_{i}+\cdot),\Xi_i,x_i, u_i).
\end{align}


\subsubsection{Step 1}
\label{s22}

In this step we average over the disorder $\omega$ in each column. To that end, we set
\be{interm3}
f_{2,n}^\Omega(M,m;\alpha,\beta) = \tfrac{1}{n} \log Z_{2,n,L_n}^{\Omega}(M,m)
\ee
with
\be{inteerm3}
Z_{2,n,L_n}^{\Omega}(M,m)=\sum_{N=n/m L_n}^{n/L_n} \ 
\,\sum_{\Theta_{\text{traj}}\in \widetilde{\cD}_{L_n,N}^M }\,
\sum_{x\in \cX_{\Theta_{\text{traj}},\Omega}^{M,m}}
\,\sum_{u\in \,\cU_{\,\Theta_{\text{traj}},x,n}^{\,M,m,L_n}}  
\,A_2,
\ee
where
\begin{align}
\label{defa24}
A_2 &=\prod_{i=0}^{N-1} e^{\E_\omega\big[\log  
Z^{\theta^{\widetilde{u}_{i}}(\omega)}_{L_{n}}
(\Omega(i,\Pi_{i}+\cdot),\Xi_i, x_i, u_i)\big]}
= \prod_{i=0}^{N-1}\, 
e^{u_i L_n \psi_{L_n}(\Omega(i,\Pi_{i}+\cdot),\Xi_i,x_i, u_i)}.
\end{align}
Note that the $\omega$-dependence has been removed from $Z^\Omega_{2,n,L_n}(M,m)$.

To prove that $f_1^\Omega\prec f_2^\Omega$, we need to show that for all $\gep>0$ 
there exists an $n_{\gep}\in \N$ such that, for $n\geq n_{\gep}$ and all $\Omega$,
\be{Eform}
\E_\omega\big[\log Z_{1,n,L_n}^{\,\omega,\Omega}(M,m)\big]
\leq \log Z_{2,n,L_n}^{\Omega}(M,m)+\gep n.
\ee
To this end, we rewrite $Z_{1,n,L_n}^{\,\omega,\Omega}(M,m)$ as
\be{rewr}
Z_{1,n,L_n}^{\,\omega,\Omega}(M,m)=\sum_{N=n/m L_n}^{n/L_n}
\sum_{\Theta_{\text{traj}}\in \widetilde{\cD}_{L_n,N}^M } 
\,\sum_{x\in \cX_{\Theta_{\text{traj}},\Omega}^{M,m} }
\,\sum_{u\in \,\cU_{\Theta_{\text{traj}},x,n}^{\,M,m,L_n}}
A_2\,\frac{A_1}{A_2},
\ee
where we note that
\be{A23}
\frac{A_1}{A_2} 
=\prod_{i=0}^{N-1}\,
e^{u_i L_n \big[\psi^{\theta^{\widetilde{u}_{i}L_n}(\omega)}_{L_n}
(\Omega(i,\Pi_{i}+\cdot),\Xi_i,x_i,u_i)
-\psi_{L_n}(\Omega(i,\Pi_{i}+\cdot),\Xi_i,x_i,u_i)\big]}.
\ee
In order to average over $\omega$, we apply a concentration of measure inequality. 
Set 
\be{add23}
\mathcal{K}_n = \bigcup_{N=n/m L_n}^{n/L_n}
\bigcup_{\Theta_{\text{traj}}\in \widetilde{\cD}_{L_n,N}^M} 
\,\bigcup_{x\in \cX_{\Theta_{\text{traj}},\Omega}^{M,m} }
\,\bigcup_{u\in \,\cU_{\Theta_{\text{traj}},x,n}^{\,M,m,L_n}}
\Big\{ |\log A_1 -\log A_2| \geq \gep n\Big\},
\ee
and note that $\omega\in \mathcal{K}_n^c$ implies that $Z_{1,n,L_n}^{\,\omega,\Omega}(M,m)
\leq e^{\gep n} Z_{2,n,L_n}^{\Omega}(M,m)$. Consequently, we can write
\begin{align}
\label{borne}
\nonumber 
\E_\omega\big[\log Z_{1,n,L_n}^{\,\omega,\Omega}(M,m)\big]
&= \E_\omega\big[\log Z_{1,n,L_n}^{\,\omega,\Omega}(M,m)
\,1_{\{\mathcal{K}_n\}} \big]+\E_\omega\big[\log Z_{1,n,L_n}^{\,\omega,\Omega}(M,m)
\,1_{\{\mathcal{K}_n^c\}}\big]\\
&\leq \E_\omega\big[\log Z_{1,n,L_n}^{\,\omega,\Omega}(M,m)
\,1_{\{\mathcal{K}_n\}} \big]+ \log Z_{2,n,L_n}^{\,\Omega}(M,m)\,
+ \gep n.
\end{align}
We can now use the uniform bound in \eqref{boundel} to control the first term in 
the right-hand side of \eqref{borne}, to obtain  
\begin{align}
\E_\omega\big[\log Z_{1,n,L_n}^{\,\omega,\Omega}(M,m)\big]
\leq &  \log Z_{2,n,L_n}^{\,\Omega}(M,m) +\gep n
+ C_{\text{uf}}(\alpha)\,n\,\P_\omega(\mathcal{K}_n).
\end{align}
Therefore the proof of this step will be complete once we show that 
$\P_\omega(\mathcal{K}_n)$ vanishes as $n\to\infty$.

\begin{lemma}
There exist $C_1,C_2>0$ such that, for all $\gep>0$, $n\in\N$, $N\in\big\{\tfrac{n}{m L_n},
\dots,\frac{n}{L_n}\big\}$, $\Omega\in \{A,B\}^{\N_0\times\Z}$, $\Theta_{\mathrm{traj}}
\in \widetilde{\cD}_{L_n,N}^M$, $x\in \cX_{\Theta_{\mathrm{traj}},\Omega}^{M,m}$ and 
$u\in \,\cU_{\Theta_{\mathrm{traj}},x,n}^{\,M,m,L_n}$,
\begin{equation}
\P_{\omega}(|\log  A_1-\log A_2|
\geq \gep n)\leq C_1 e^{-C_2\gep^2 n}.
\end{equation} 
\end{lemma}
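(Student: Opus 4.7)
The plan is to recognize that $\log A_1$ is the logarithm of a partition function over a restricted path set, that $\E_\omega[\log A_1]=\log A_2$ by shift-invariance, and then to apply the concentration inequality \eqref{concmesut} of Appendix~\ref{Ann2} in exactly the same form as in \eqref{resconc}. Given $(N,\Theta_{\mathrm{traj}},x,u)$, let $\bar{\cW}_n(\Theta_{\mathrm{traj}},x,u)\subset\cW_{n,M}^{\,m}$ denote the set of $n$-step paths that spend exactly $u_i L_n$ steps in the $i$-th column, enter at height $b_{0,i}L_n$, exit at height $(\Delta\Pi_i+b_{1,i})L_n$, and satisfy the $AB$-interface-visit constraint encoded by $x_i$. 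Because the coordinates of $\omega$ relevant to column $i$, namely $\omega_{\widetilde u_i L_n+1},\dots,\omega_{\widetilde u_{i+1}L_n}$, are disjoint across $i$ and the restriction of $\Omega$ to column $i$ is also column-separable, the product defining $A_1$ unfolds into
\[
A_1 \;=\; \sum_{\pi\in\bar{\cW}_n(\Theta_{\mathrm{traj}},x,u)} e^{H^{\omega,\Omega}_{n,L_n}(\pi)}.
\]

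To identify the expectation, write $\log A_1=\sum_{i=0}^{N-1} u_i L_n\,\psi^{\theta^{\widetilde u_i L_n}(\omega)}_{L_n}(\Omega(i,\Pi_i+\cdot),\Xi_i,x_i,u_i)$. By shift-invariance of the i.i.d.\ law $\P_\omega$, each term has expectation $u_i L_n\,\psi_{L_n}(\Omega(i,\Pi_i+\cdot),\Xi_i,x_i,u_i)$, so summing over $i$ gives $\E_\omega[\log A_1]=\log A_2$. The key estimate is then an application of \eqref{concmesut} with $\Gamma=\bar{\cW}_n(\Theta_{\mathrm{traj}},x,u)$, $l=n$, $\eta=\gep n$, $\xi_i=\beta\,1\{\omega_i=B\}-\alpha\,1\{\omega_i=A\}$ (so $|\xi_i|\leq\alpha$ on $\CONE$), and $T(x,y)=1\{\Omega^{L_n}_{(x,y)}=B\}$. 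This is literally the same application as in \eqref{resconc}, yielding
\[
\P_\omega\!\left(\big|\log A_1-\E_\omega[\log A_1]\big|\geq \gep n\right)\leq C_1\,e^{-C_2\gep^2 n},
\]
with constants $C_1,C_2>0$ that depend only on the uniform bound $\alpha$ on the $|\xi_i|$ and on the length $l=n$ of the disorder sequence, but not on the particular set $\Gamma$ nor on $n,N,\Omega,\Theta_{\mathrm{traj}},x,u$. Combined with the mean identity, this is the claim.

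The only substantive checkpoint is the unfolding in the first paragraph: one must verify that the paths in $\cW_{n,M}^{\,m}$ with prescribed column-crossing data are in weight-preserving bijection with $N$-tuples of in-column paths counted by the column partition functions \eqref{partfunc2} that make up the product $A_1$. Given the definitions of $\cW_{\Theta,u,L}$ in \eqref{ww}--\eqref{ww2} and the column-additivity of the Hamiltonian \eqref{Hamiltonian1}, this is a direct but tedious bookkeeping exercise, and no further probabilistic input is needed beyond the concentration inequality already invoked earlier in the paper. I do not anticipate the bookkeeping to present a genuine obstacle; the only thing to watch is that the uniformity of $C_1,C_2$ in the external parameters follows from the uniformity of the bound $|\xi_i|\leq\alpha$, which is built into the restriction to $\CONE$.
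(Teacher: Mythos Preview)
Your proof is correct and follows essentially the same route as the paper: identify $\log A_1$ as $F_n(\xi_1,\dots,\xi_n)$ for the restricted path set $\Gamma=\bar{\cW}_n(\Theta_{\mathrm{traj}},x,u)$, observe that $\E_\omega[\log A_1]=\log A_2$, and apply the concentration inequality \eqref{concmesut} with $l=n$, $\eta=\gep n$. The paper's version is slightly terser about the unfolding and the mean identification, but the argument is the same (the exponent $\gep^3$ that appears in the paper's display \eqref{rec2} is a typo for $\gep^2$, as your application of \eqref{concmesut} correctly yields).
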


\begin{proof}
Pick $\Theta_{\text{traj}}\in \widetilde{\cD}_{L_n,N}^M$, $x\in \cX_{\Theta_{\text{traj},
\Omega}}^{M,m}$ and $u\in\,\cU_{\Theta_{\text{traj}},x,n}^{\,M,m,L_n}$, and consider the 
subset $\Gamma$ of $\cW_{n,M}^{\,m}$ consisting of those paths of length $n$ that first 
cross the $(\Omega(0,\cdot),\Xi_{0},x_0)$ column such that $\pi_0=(0,1)$ and 
$\pi_{\widetilde{u}_{1}L_n}=(1,\Pi_{1}+b_{1,0}) L_n$, then cross the $(\Omega(1,\cdot),
\Xi_{1},x_1)$ column such that $\pi_{\widetilde{u}_{1}L_n+1}=(1+1/L_n,\Pi_{1}+b_{1,0})
L_n$ and $\pi_{\widetilde{u}_{2}L_n}=(2,\Pi_{2}+b_{1,1}) L_n$, and so on. We can apply 
the concentration of measure inequality stated in \eqref{concmesut} to the set 
$\Gamma$ defined above, with $l=n$, $\eta=\gep n$,
\be{add25}
\xi_i = -\alpha\, 1\{\omega_i=A\} +\beta\, 
1\{\omega_i=B\}, \qquad i\in\N,
\ee  
and with $T(x,y)$ indicating in which block step $(x,y)$ lies in. After noting that 
$\E_\omega(\log A_1)= \log A_2$, we  obtain that there exist $C_{1},C_{2}>0$ such 
that, for all $n\in\N$, $N\in\big\{\tfrac{n}{m L_n},\dots,\frac{n}{L_n}\big\}$, 
$\Omega\in \{A,B\}^{\N_0\times\Z}$, $\Theta_{\mathrm{traj}}\in \widetilde{\cD}_{L_n,N}^M$, 
$x\in \cX_{\Theta_{\text{traj},\Omega}}^{M,m}$ and $u\in \,\cU_{\Theta_{\mathrm{traj}},
x,n}^{\,M,m,L_n}$,
\be{rec2}
\P\big(|\log  A_1 -\log A_2| \geq \gep\,n \big) \leq C_{1}\,e^{-C_{2}\,\gep^3\, n}.
\ee
\end{proof}

It now suffices to remark that 
\be{setgr}
\big|\{(N,\Theta_{\text{traj}},x,u)\colon\,N\in \{\tfrac{n}{m L_n},
\dots,\tfrac{n}{L_n}\}, \Theta_{\text{traj}}\in
\widetilde{\cD}_{L_n,N}^M,\, x\in \cX_{\Theta_{\text{traj},\Omega}}^{M,m}, 
u\in \,\cU_{\Theta_{\mathrm{traj}},x,n}^{\,M,m,L_n}\}\big|
\ee 
grows subexponentially in $n$ to obtain that $f_1^\Omega\prec f_2^\Omega$ for all $\Omega$.


\subsubsection{Step 2}
\label{s23}

In this step we replace the finite-size free energy $\psi_{L_n}$ by its limit $\psi$. 
To do so we introduce a third intermediate free energy, 
\be{interm1}
f_{3,n}^\Omega(M,m;\alpha,\beta) =
\E\big[\tfrac{1}{n} \log Z_{3,n,L_n}^{\Omega}(M,m)\big],
\ee
where
\be{inteerm1}
Z_{3,n,L_n}^{\Omega}(M,m)=\sum_{N=n/m L_n}^{n/L_n}\  
\sum_{\Theta_{\text{traj}}\in \widetilde{\cD}_{L_n,N}^M } \,
\sum_{x\in \cX_{\Theta_{\text{traj}},\Omega}^{M,m} }
\,\sum_{u\in \,\cU_{\Theta_{\text{traj}},x,n}^{\,M,m,L_n}}A_3
\ee
with
\begin{align}
\label{defa23}
A_{3} &=\prod_{i=0}^{N-1}\, 
e^{u_i L_n \psi(\Omega(i,\Pi_{i}+\cdot),\Xi_i,x_i,u_i)}.
\end{align}
For all $\Omega$,
\be{a3}
\frac{A_2}{A_3} = \prod_{i=0}^{N-1}\,
e^{u_i L_n \big[\psi_{L_n}(\Omega(i,\Pi_{i}+\cdot),\Xi_i,x_i,u_i)
-\psi(\Omega(i,\Pi_{i}+\cdot),\Xi_i,x_i,u_i)\big]},
\ee 
and, for all $i\in \{0,\dots,N-1\}$, we have 
$(\Omega(i,\Pi_{i}+\cdot),\Xi_i,x_i,u_i)\in \cV^{*,\,m}_M$, so that 
Proposition~\ref{convfree1} can be applied.


\subsubsection{Step 3}
\label{s24}

In this step we want the variational formula \eqref{modvaralt} to appear. Recall \eqref{defrho} 
and define, for $n\in \N$, $(M,m)\in \EIGH$, $N\in \{\frac{n}{m L_n},\dots,
\frac{n}{L_n}\}$, $\Theta_{\text{traj}}\in \widetilde{\cD}_{L_n,N}^M$ and $x\in
\cX_{\Theta_{\text{traj}},\Omega}^{M,m}$,
\be{defthalt}
\Theta_j=(\Omega(j,\Pi_{j}+\cdot),\Xi_j,x_j),
\qquad j = 0,\dots,N-1,
\ee
and
\begin{equation}
\label{defrho2}
\rho^\Omega_{\Theta_{\text{traj}},x}\big(\Theta,\Theta^{'}\big)
=\frac{1}{N} \sum_{j=1}^{N} 
1_{\big\{(\Theta_{j-1},\Theta_{j})=(\Theta,\Theta^{'})\big\}},
\end{equation}
and, for $u\in \,\cU_{\Theta_{\text{traj}},x,n}^{\,M,m,L_n}$,
\be{defHam}
H^\Omega(\Theta_{\text{traj}},x,u)=\sum_{j=0}^{N-1} u_j\,\psi(\Theta_j, u_{j}). 
\ee
In terms of these quantities we can rewrite $Z_{3,n,L_n}^\Omega(M,m)$ in \eqref{inteerm1} 
as
\begin{align}
\label{variaf2}
Z_{3,n,L_n}^{\Omega}(M,m)
&=\sum_{N=n/m L_n}^{n/L_n} \sum_{\Theta_{\text{traj}}\in \widetilde{\cD}_{L_n,N}^M } 
 \,\sum_{x\in \cX_{\Theta_{\text{traj}},\Omega}^{M,m} }
\,\sum_{u\in \,\cU_{\Theta_{\text{traj}},x,n}^{\,M,m,L_n}}
e^{L_n\,  H^\Omega(\Theta_{\text{traj}},x,u) }.
\end{align}
For $n\in \N$, denote by
\be{}
N_n^{\Omega}, \qquad \Theta_{\text{traj},n}^{\Omega}
\in \widetilde{\cD}_{L_n,N^{\Omega}_n}^M, 
\qquad x_n^\Omega\in \cX_{\Theta_{\text{traj},n}^\Omega,\Omega}^{M,m},
\qquad u^\Omega_n \in\cU_{\Theta_{\text{traj},n}^\Omega,x_n^\Omega,n}^{\,M,m,L_n},
\ee 
the indices in the summation set of \eqref{variaf2} that maximize $H^\Omega(\Theta_{
\text{traj}},x,u)$. For ease of notation we put  
\be{redef}
\Theta_{\text{traj},n}^{\Omega}
=(\Xi_{j}^n)_{j=0}^{N_n^{\Omega}-1},\quad x^\Omega_n
=(x_j^n)_{j=0}^{N_n^{\Omega}-1}, \quad u^\Omega_n
=(u_j^n)_{j=0}^{N_n^{\Omega}-1},
\ee
and
\be{carsum}
c_n =\big|\{(N,\Theta_{\text{traj}},x,u)\colon\, 
\tfrac{n}{m L_n} \leq N\leq \tfrac{n}{L_n},\, \Theta_{\text{traj}}\in
\widetilde{\cD}^M_{L_n,N},\, x\in \cX_{\Theta_{\text{traj}},\Omega}^{M,m},\,
u\in \,\cU_{\Theta_{\text{traj}},x,n}^{\,M,m,L_n}\}\big|.
\ee
Then we can estimate
\be{impo1alt}
\frac1n \log Z_{3,n,L_n}^{\Omega}(M,m)\leq \frac1n \log c_n
+ \tfrac{L_n}{n}\sum_{j=0}^{N_n^{\Omega}-1} u_j^n\, 
\psi(\Theta_j^n, u_{j}^n). 
\ee

We next note that $u\mapsto u \psi(\Theta,u)$ is concave for all $\Theta\in 
\overline{\cV}_M$ (see Lemma~\ref{concav}). Hence, after setting 
\begin{align}
v_\Theta^n=\sum_{j=0}^{N_n^{\Omega}-1} 1_{\{\Theta_j^n=\Theta\}}\, 
u_j^n, \qquad  d_\Theta^n=\sum_{j=0}^{N_n^{\Omega}-1} 
1_{\{\Theta_j^n=\Theta\}}, \qquad \Theta\in \overline\cV_M^{\,m},
\end{align}
we can estimate 
\be{utilconcav}
\sum_{j=0}^{N_n^{\Omega}-1} 1_{\{\Theta_j^n=\Theta\}}\, 
u_{j}^n\, \psi(\Theta_j^n, u_{j}^n)
\leq v_\Theta^n\,\psi\big(\Theta, \tfrac{ v_\Theta^n}{d_\Theta^n}\big) 
\quad \text{for} \quad \Theta\in \overline\cV_M^{\,m}\colon d_\Theta^n\geq 1.
\ee 
Next, we recall \eqref{defrho2} and we set $\rho_{n}=\rho_{\Theta_{\text{traj},
n}^{\Omega},x_n^\Omega}^\Omega$, so that $\rho_{n,1}(\Theta)=d_\Theta^n/N_n^\Omega$ 
for all $\Theta\in \overline{\cV}_M^{\,m}$. Since $\{\Theta\in \overline{\cV}_M^{\,m}\colon\,
d_\Theta^n\geq 1\}$ is a finite subset of $\overline{\cV}_M^{\,m}$, we can easily extend 
$\Theta\mapsto v_\Theta^n/d_\Theta^n$ from $\{\Theta\in \overline{\cV}_M\colon\,
d_\Theta^n\geq 1\}$ to $\overline{\cV}_M^{\,m}$ as a continuous function. Moreover, 
$\sum_{j=0}^{N_n^\Omega-1} u_j^n=n/L_n$ implies that $N_n^\Omega \int_{\overline\cV_M^{\,m}}
v_\Theta^n/d_\Theta^n\,\rho_{n,1} (d\Theta)=n/L_n,$ which, together with
\eqref{impo1alt} and \eqref{utilconcav} gives 
\begin{align}\label{immp}
\tfrac1n \log Z_{3,n,L_n}^{\Omega}(M,m)
&\leq \sup_{u \in \cB_{\,\overline \cV_M^{\,m}}}\frac{\int_{\overline \cV_M^{\,m}} 
u_{\Theta}\, \psi(\Theta,u_\Theta)\,
\rho_{n}(d\Theta)}{\int_{\overline\cV_M^{\,m}} u_{\Theta}\,
\rho_{n}(d\Theta)}+ o(1), \qquad n\to\infty,
\end{align}
where we use that $\lim_{n\to\infty} \frac1n \log c_n=0$. In what follows, we abbreviate 
the first term in the right-hand side of the last display by $l_n$. We want to show 
that $\limsup_{n\to \infty} \tfrac1n \log Z_{3,n,L_n}^{\Omega}(M,m)$ $\leq f(M,m;\alpha,\beta)$. 
To that end, we assume that $\tfrac1n \log Z_{3,n,L_n}^{\Omega}(M,m)$ converges to some
$t\in \R$ and we prove that $t\leq f(M,m;\alpha,\beta)$. Since $(l_n)_{n\in\N}$ is bounded 
and $\overline \cV_M^{\,m}$ is compact, it follows from the definition of $l_n$ that along 
an appropriate subsequence both $l_n \to l_\infty \geq t$ and $\rho_n\to\rho_\infty\in
\cR_{p,M}^{\,m}$ as $n\to\infty$.  Hence, the proof will be complete once we show that 
\be{eqfin}
l_\infty\leq \sup_{u\in \cB_{\,\overline \cV_M^{\,m}}} V(\rho_{\infty},u),
\ee
because the right-hand side in \eqref{eqfin} is bounded from above by $f(M,m;\alpha,\beta)$. 

Recall \eqref{deft} and, for $\Theta\in \overline\cV_M^{\,m}$ and $y\in \R$, define
\be{defdef}
u_\Theta^{M,m}(y)= \left\{
\begin{array}{ll}
\vspace{.1cm}
t_\Theta
& \mbox{if } \partial^+_u (u\,\psi(\Theta,u))(t_\Theta) \leq y, \\
\vspace{.1cm}
m   
& \mbox{if } \partial^-_u (u\,\psi(\Theta,u))(m) \geq y,\\
z
&\mbox{otherwise, with } z \mbox{ such that } \partial^-_u (u\,\psi(\Theta,u))(z) 
\geq y \geq \partial^+_u (u\,\psi(\Theta,u))(z), 
\end{array}
\right.
\ee
where $z$ is unique by strict concavity of $u\to u\psi(\Theta,u)$ (see Lemma~\ref{B.2}). 

\begin{lemma}
\label{conti}
(i) For all $y\in \R$ and $(M,m)\in \EIGH$, $\Theta \mapsto u_\Theta^{M,m}(y)$ is 
continuous on $(\overline \cV_M^{\,m},d_M)$, where $d_M$ is defined in \eqref{dist} in 
Appendix~{\rm \ref{B}}.\\
(ii) For all $(M,m)\in\EIGH$ and $\Theta\in\overline{\cV}_M^{\,m}$, $y \mapsto
u_\Theta^{M,m}(y)$ is continuous on $\R$.
\end{lemma}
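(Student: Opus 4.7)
The plan is to recast $u_\Theta^{M,m}(y)$ as the unique maximizer of a strictly concave one-dimensional program and then invoke standard stability properties of argmaxes. Set $\Phi_\Theta(u) = u\,\psi(\Theta,u)$, which is continuous and strictly concave on $[t_\Theta,m]$ (the strict concavity being the one already invoked to ensure uniqueness of $z$ in \eqref{defdef}). A direct inspection of the three cases in \eqref{defdef} identifies $u_\Theta^{M,m}(y)$ with the unique maximizer on $[t_\Theta,m]$ of $u \mapsto \Phi_\Theta(u) - y u$: the first case is the condition that the right derivative of this objective at $t_\Theta$ is non-positive, the second that the left derivative at $m$ is non-negative, and the third is the first-order subdifferential optimality condition at an interior point.

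For part (ii), fix $\Theta$. Take $y_n \to y$ and let $u^\ast$ be any subsequential limit of $u_\Theta^{M,m}(y_n)$ in the compact interval $[t_\Theta,m]$. Passing $n\to\infty$ in the optimality inequality
\begin{equation*}
\Phi_\Theta\bigl(u_\Theta^{M,m}(y_n)\bigr) - y_n\,u_\Theta^{M,m}(y_n) \;\geq\; \Phi_\Theta(u) - y_n\,u \qquad \forall\, u\in [t_\Theta,m],
\end{equation*}
and using continuity of $\Phi_\Theta$ on $[t_\Theta,m]$, one concludes that $u^\ast$ is itself a maximizer of $\Phi_\Theta - y\,\mathrm{id}$. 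Strict concavity forces $u^\ast = u_\Theta^{M,m}(y)$, so the whole sequence converges and (ii) follows.

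Part (i) follows the same template, but requires two continuity inputs with respect to the metric $d_M$ on $\overline\cV_M^{\,m}$: (a) $\Theta\mapsto t_\Theta$ is continuous, which can be read off the explicit formulas \eqref{deft}--\eqref{deftt} (since $d_M$ forces eventual agreement of the integer-valued coordinate $\Delta\Pi$, convergence of $b_0,b_1$, and eventual agreement of the finitely many coordinates of $\chi$ that determine $n_0$ and $n_1$ within the relevant window $\{-m+1,\dots,m-1\}$); and (b) joint continuity of $(\Theta,u)\mapsto\Phi_\Theta(u)$ on $\overline\cV_M^{\,*,m}$, which follows from the variational formula of Proposition~\ref{energ} together with continuity of $\tilde\kappa$ and $\phi_\cI$ (Propositions~\ref{lementr} and \ref{l:feinflim}, extended via Lemmas~\ref{l:lemconv2} and \ref{assu}) and continuity of $\Theta\mapsto(l_{A,\Theta},l_{B,\Theta})$ from \eqref{bl}. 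Granted (a) and (b), let $\Theta_n \to \Theta$ and let $u^\ast$ be a subsequential limit of $u_{\Theta_n}^{M,m}(y)$; it lies in $[t_\Theta,m]$ by (a), and passing to the limit in the optimality inequality for $\Phi_{\Theta_n}$ on $[t_{\Theta_n},m]$ using (b) yields that $u^\ast$ maximizes $\Phi_\Theta - y\,\mathrm{id}$ on $[t_\Theta,m]$, hence equals $u_\Theta^{M,m}(y)$ by strict concavity.

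The main obstacle is verifying the joint continuity (b) uniformly across the stratification of $\overline\cV_M^{\,m}$ into the four classes in \eqref{partcv} and the $\AB$/$\nAB$ split: a sequence $\Theta_n\in\overline\cV_{\AB,M}^{\,m}$ may converge to a $\Theta\in\overline\cV_{\nAB,M}^{\,m}$ when the first $AB$-interface crossed in the limit escapes outside the active vertical window. One must therefore check that the constraint set $\cL_{\Theta,u}$ in \eqref{setof} varies continuously (in a Painlev\'e--Kuratowski sense) across this boundary and that the integrand in \eqref{Bloc of type I} behaves continuously on the degenerate faces $h_A=0$ or $h_B=0$. This reduces to known boundary properties of $\tilde\kappa$ and $\phi_\cI$ and to the fact that the metric $d_M$ is tailored to control precisely these degenerations.
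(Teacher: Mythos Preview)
Your argument is correct and cleaner than the paper's. The key observation---that $u_\Theta^{M,m}(y)$ is the unique maximizer of the strictly concave function $u\mapsto\Phi_\Theta(u)-yu$ on $[t_\Theta,m]$---lets you run a standard argmax-stability argument (subsequential limits plus uniqueness). The paper instead argues each part by contradiction, bounding one-sided derivatives by chord slopes (e.g.\
$\partial_u^+(u\psi(\Theta_n,u))(u_{\Theta_n}^{M,m}(y))>\tfrac{u_{\Theta_\infty}^{M,m}(y)\psi(\Theta_n,u_{\Theta_\infty}^{M,m}(y))-u_2\psi(\Theta_n,u_2)}{u_{\Theta_\infty}^{M,m}(y)-u_2}$)
and passing to the limit to contradict the defining inequalities in \eqref{defdef}. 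The two routes use the same ingredients (strict concavity of $u\mapsto u\psi(\Theta,u)$, continuity of $\Theta\mapsto t_\Theta$, and joint continuity of $(\Theta,u)\mapsto u\psi(\Theta,u)$); yours packages them more transparently.

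One simplification: the ``main obstacle'' you raise---joint continuity of $(\Theta,u)\mapsto u\psi(\Theta,u)$ across the $\AB/\nAB$ stratification---is exactly the content of Lemma~\ref{concavt}, which the paper proves independently in Appendix~\ref{B}. You should simply cite that lemma for input (b) rather than re-derive it from Proposition~\ref{energ}. Moreover, the boundary-crossing scenario you worry about is milder than stated: under $d_M$ the coordinates $x$, $\Delta\Pi$ and the finitely many relevant entries of $\chi$ eventually stabilize along any convergent sequence in $\overline\cV_M^{\,m}$, so a sequence cannot drift between strata in the limit.
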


\begin{proof}
The proof uses the strict concavity of $u\to u\psi(\Theta,u)$ (see Lemma~\ref{B.2}). 

\medskip\noindent
(i) The proof is by contradiction. Pick $y\in \R$, and pick a sequence $(\Theta_n)_{n
\in \N}$ in $\overline\cV_M^{\,m}$ such that $\lim_{n\to\infty} \Theta_n=\Theta_\infty
\in \overline\cV_M^{\,m}$. Suppose that $u_{\Theta_n}^{M,m}(y)$ does not tend to 
$u_{\Theta_\infty}^{M,m}(y)$ as $n\to \infty$. Then, by choosing an appropriate 
subsequence, we may assume that $\lim_{n\to \infty} u_{\Theta_n}^{M,m}(y) = u_1 
\in [t_{\Theta_\infty},m]$ with $u_1<u_{\Theta_\infty}^{M,m}(y)$. The case 
$u_1>u_{\Theta_\infty}^{M,m}(y)$ can be handled similarly.

Pick $u_2\in (u_1,u_{\Theta_\infty}^{M,m}(y))$. For $n$ large enough, we have 
$u_{\Theta_n}^{M,m}(y)< u_2<u_{\Theta_\infty}^{M,m}(y)$. By the definition of 
$u_{\Theta_n}^{M,m}(y)$ in \eqref{defdef} and the strict concavity of $u\mapsto
u \psi(\Theta_n,u)$ we have, for $n$ large enough,
\be{inegstrict}
\partial^+_u (u\,\psi(\Theta_n,u))(u_{\Theta_n}^{M,m}(y))
> \frac{u_{\Theta_\infty}^{M,m}(y)\psi(\Theta_n,u_{\Theta_\infty}^{M,m}(y))
-u_2\psi(\Theta_n,u_2)}{u_{\Theta_\infty}^{M,m}(y)-u_2}.
\ee
Let $n\to \infty$ in \eqref{inegstrict} and use the strict concavity once again, 
to get
\begin{align}
\label{inegstrict1}
\liminf_{n\to \infty} \partial^+_u (u\,\psi(\Theta_n,u))
(u_{\Theta_n}^{M,m}(y))&>\partial^-_u (u\,\psi(\Theta_\infty,u))
(u_{\Theta_\infty}^{M,m}(y)).
\end{align}
If $u_{\Theta_\infty}^{M,m}(y)\in (t_{\Theta_\infty},m]$, then \eqref{defdef} implies 
that the right-hand side of \eqref{inegstrict1} is not smaller than $y$. Hence 
\eqref{inegstrict1} yields that $\partial^+_u (u\,\psi(\Theta_n,u))(u_{\Theta_n}^{M,m}(y))
>y$ for $n$ large enough, which implies that $u_{\Theta_n}^{M,m}(y)=m$ by \eqref{defdef}. 
However, the latter inequality contradicts the fact that $u_{\Theta_n}^{M,m}(y)<u_2
<u_{\Theta_\infty}^{M,m}(y)$ for $n$ large enough. If $u_{\Theta_\infty}^{M,m}(y)
= t_{\Theta_\infty}$, then we note that $\lim_{n\to \infty} t_{\Theta_n}
=t_{\Theta_\infty}$, which again contradicts that $t_{\Theta_n}\leq u_{\Theta_n}^{M,m}(y) 
< u_2<u_{\Theta_\infty}^{M,m}(y)$ for $n$ large enough.

\medskip\noindent
(ii) The proof is again by contradiction. Pick $\Theta \in \overline{\cV}_M^{\,m}$, and pick
an infinite sequence $(y_n)_{n\in \N}$ such that $\lim_{n\to\infty} y_n=y_\infty\in \R$ 
and such that $u_{\Theta}^{M,m}(y_n)$ does not converge to $u_{\Theta}^{M,m}(y_\infty)$. Then, 
by choosing an appropriate subsequence, we may assume that there exists a $u_1 < 
u_{\Theta}^{M,m}(y_\infty)$ such that $\lim_{n\to\infty}u_{\Theta}^{M,m}(y_n)=u_1$. The case
$u_1>u_{\Theta}^{M,m}(y_\infty)$ can be treated similarly. 

Pick $u_2,u_3\in (u_1, u_{\Theta}^{M,m}(y_\infty))$ such that $u_2<u_3$. Then, for $n$ large 
enough, we have 
\be{relat}
t_\Theta\leq u_{\Theta}^{M,m}(y_n)<u_2<u_3<u_{\Theta}^{M,m}(y_\infty)\leq m.
\ee
Combining  \eqref{defdef} and \eqref{relat} with the strict concavity of $u\mapsto 
u\psi(\Theta,u)$ we get, for $n$ large enough, 
\be{relat1}
y_n>\partial^+_u (u\,\psi(\Theta,u))(u_2) >\partial^-_u (u\,\psi(\Theta,u))(u_3)
> y_\infty,
\ee
which contradicts $\lim_{n\to \infty} y_n=y_\infty$.
\end{proof}

We resume the line of proof. Recall that $\rho_{n,1}$, $n\in \N$, charges finitely 
many $\Theta\in\overline{\cV}_M^{\,m}$. Therefore the continuity and the strict concavity 
of $u\mapsto u\psi(\Theta,u)$ on $[t_\Theta,m]$ for all $\Theta\in \overline{\cV}_M^{\,m}$ 
(see Lemma \ref{concav}) imply that the supremum in \eqref{immp} is attained at some
$u_{n}^{M,m}\in \cB_{\,\overline{\cV}_M^{\,m}}$ that satisfies $u_{n}^{M,m}(\Theta) 
= u_\Theta^{M,m}(l_n)$ for $\Theta\in \overline{\cV}_M^{\,m}$. Set $u^{M,m}_{\infty}
(\Theta) = u^{M,m}_{\Theta}(l_\infty)$ for $\Theta\in \overline{\cV}_M^{\,m}$ and note 
that $(l_n)_{n\in \N}$ may be assumed to be monotone, say, non-decreasing. Then the 
concavity of $u\mapsto u\psi(\Theta,u)$ for $\Theta\in \overline{\cV}_M^{\,m}$ implies 
that $(u_n^{M,m})_{n\in\N}$ is a non-increasing sequence of functions on $\overline
\cV_M^{\,m}$. Moreover, $\overline{\cV}_M^{\,m}$ is a compact set and, by Lemma
\ref{conti}(ii), $\lim_{n\to \infty} u_n^{M,m}(\Theta)=u_\infty^{M,m}(\Theta)$ 
for $\Theta\in\overline{\cV}_M^{\,m}$. Therefore Dini's theorem implies that 
$\lim_{n\to\infty} u_n^{M,m}=u_\infty^{M,m}$ uniformly on $\overline{\cV}_M^{\,m}$. 
We estimate
\begin{align}
\label{inegd}
\nonumber&\left|l_n-\int_{\overline\cV_M^{\,m}}  
u^{M,m}_{\infty}(\Theta)\,\psi(\Theta,u^{M,m}_{\infty}(\Theta))
\rho_{\infty}(d\Theta)\right|\\
 &\qquad\leq \int_{\overline\cV_M^{\,m}} \Big|u^{M,m}_{n}(\Theta)\,
\psi(\Theta,u^{M,m}_{n}(\Theta))-u^{M,m}_{\infty}(\Theta)\,
\psi(\Theta,u^{M,m}_{\infty}(\Theta))\Big|\,\rho_{n}(d\Theta)\\
\nonumber&\qquad +\Big|\int_{\overline\cV_M^{\,m}} u^{M,m}_{\infty}(\Theta)\, 
\psi(\Theta,u^{M,m}_{\infty}(\Theta))\,\rho_{n}(d\Theta)
-\int_{\overline\cV_M^{m}} u^{M,m}_{\infty}(\Theta)\,
\psi(\Theta,u^{M,m}_{\infty}(\Theta))\, \rho_{\infty}(d\Theta)\Big|.
\end{align}
The second term in the right-hand side of \eqref{inegd} tends to zero as $n\to\infty$ 
because, by Lemma \ref{conti}(i), $\Theta \mapsto u_{\infty}^{M,m}(\Theta)$ is continuous 
on $\overline{\cV}_M^{\,m}$ and because $\rho_n$ converges in law to $\rho_\infty$ as 
$n\to\infty$. The first term in the right-hand side of \eqref{inegd} tends to zero as 
well, because $(\Theta,u)\mapsto u\psi(\Theta,u)$ is uniformly continuous on 
$\overline{\cV}^{\,*,m}_M$ (see Lemma~\ref{concavt}) and because we have proved above 
that $u^{M,m}_{n}$ converges to $u^{M,m}_{\infty}$ uniformly on $\overline\cV_M^{\,m}$. 
This proves \eqref{eqfin}, and so Step 3 is complete.


\subsubsection{Step 4}
\label{s24alt}

In this step  we prove that 
\be{limsuplar}
\limsup_{n\to \infty} f_{3,n}^{\Omega}(M,m;\alpha,\beta)\geq f(M,m;\alpha,\beta)\, 
\text{ for } \P-a.e.\ \Omega.
\ee
Note that the proof will be complete once we show that 
\be{pr}
\limsup_{n\to \infty} f_{3,n}^\Omega(M,m,
\alpha,\beta)\geq V(\rho,u) \ \text{for} \ \rho\in \cR_{p,M}^m,  
u\in \cB_{\,\overline \cV_M^{\,m}}. 
\ee
Pick $\Omega\in \{A,B\}^{\N_0\times \Z}$, $\rho\in \cR^{\Omega,m}_{p,M}$ and $u\in
\cB_{\,\overline \cV_M^{\,m}}$. By the definition of $\cR_{p,M}^{\Omega,m}$, there exists a 
strictly increasing subsequence $(n_k)_{k\in \N}\in \N^\N$ such that, for all 
$k\in \N$, there exists an 
\be{Nkexist}
N_k\in\left\{\frac{n_k}{m L_{n_k}},\dots,
\frac{n_k}{L_{n_k}}\right\},
\ee
a $\Theta_{\text{traj}}^k\in \widetilde{\cD}_{L_{n_k},N_k}^M$ and a $x^k\in
\cX_{\Theta_{\text{traj}}^k,\Omega}^{M,m}$ such that $\rho_k=^\mathrm{def}
\rho_{\Theta_{\text{traj}}^k,x^k}^\Omega$ (see \eqref{defrho2}) converges in 
law to $\rho$ as $k\to\infty$. Recall \eqref{defD2}, and note that 
\begin{align}
\Xi_j^k
=\big(\Delta\Pi^k_j,\,b^k_j,\,b_{j+1}^k\big),
\quad \text{$j=0,\dots,N_k-1$},
\end{align}
with $\Delta\Pi^k_j\in \{-M,\dots,M\}$ and $b_j^k\in (0,1]
\cap\frac{\N}{L_{n_k}}$ for $j=0,\dots,N_k$. For ease of notation we define 
\be{notat}
\Theta_{j}^k
=\big(\Omega(j,\Pi^k_j+\cdot),\Xi_j^k,x_j^k\big)\quad \text{with} \quad 
\Pi_j^k=\sum_{i=0}^{j-1} \Delta\Pi_i^k, \qquad  j=0,\dots,N_k-1,
\ee
and
\be{defv}
v_k=N_k \int_{\Theta\in \cV_M^{\,m}} u_\Theta\,\rho_{k,1}(d\Theta)
= \sum_{j=0}^{N_k-1} u_{\Theta_j^k},
\ee
where we recall that $u=(u_\Theta)_{\Theta\in \overline\cV_M^{\,m}}$ was fixed at the 
beginning of the section.

Next, we recall that $\lim_{n\to\infty} L_n/n=0$ and that $L_n$ is non-decreasing (see 
\eqref{speed}). Thus, $L_n$ is constant on intervals. On those intervals, $n/L_n$ takes 
constant increments. The latter implies that there exists an $\widetilde{n}_k\in \N$ satisfying
\be{deftil}
0\leq v_k-\tfrac{\widetilde{n}_k}{L_{\widetilde{n}_k}} 
\leq \tfrac{1}{L_{\widetilde{n}_k}}
\quad \text {and therefore}\quad 
0\leq  v_k L_{\widetilde{n}_k}-\widetilde{n}_k \leq 1.
\ee
Next, for $j=0,\dots,N_k-1$ we pick $\overline{b_j^k}\in (0,1] \cap 
\frac{N}{L_{\widetilde{n}_k}}$ such that $|\overline{b_j^k}-b_j^k|\leq 
\tfrac{1}{L_{\widetilde{n}_k}}$, define
\be{defbar}
\overline{\Xi_j^k} = \big(\Delta \Pi^k_j,
\overline{b_j^k},\overline{b_{j+1}^k}\,\big), 
\quad \overline{\Theta_j^k}
= \big(\Omega(j,\Pi^k_j+\cdot),\overline{\Xi_j^k},x_j^k\,\big),
\ee
and pick
\be{defs}
s_j^k\in t_{\,\overline{\Theta_j^k}}+\frac{2\N}{L_{\widetilde{n}_k}}\quad
\text{such that} \quad |s_j^k-u_{\Theta_j^k}|\leq 2/L_{\widetilde{n}_k}.
\ee
We use \eqref{defv} to write
\be{estis}
L_{\widetilde{n}_k} \sum_{j=0}^{N_k-1} s^k_j=L_{\widetilde{n}_k} 
\bigg(v_k +\sum_{j=0}^{N_k-1} (s_j^k-u_{\Theta_j^k})\bigg)
= L_{\widetilde{n}_k} (I+II).
\ee
Next, we note that \eqref{deftil} and \eqref{defs} imply that $|L_{\widetilde{n}_k} 
I-\widetilde{n}_k|\leq 1$ and $|L_{\widetilde{n}_k} II|\leq 2 N_k$. The latter 
in turn implies that, by adding or subtracting at most 3 steps per colum, the 
quantities $s_j^k$ for $j=0,\dots,N_k-1$ can be chosen in such a way that 
$\sum_{j=0}^{N_k-1} s_j^k=\widetilde{n}_k/L_{\widetilde{n}_k}$.

Next, set 
\be{}
\overline{\Theta_{\text{traj}}^k}=(\overline{\Xi_j^k})_{j=0}^{N_k-1}
\in \widetilde{\cD}_{L_{\widetilde{n}_k},N_k }^M,
\quad s^k=(s^k_j)_{j=0}^{N_k-1}
\in \cU_{\overline{\Theta_{\text{traj}}^k},\,
x^k,\widetilde{n}_k}^{M,m,L_{\widetilde{n}_k}},
\ee
and recall \eqref{inteerm1} to get $f_3^\Omega(\widetilde{n}_k,M)\geq R_k$ with
\be{rk}
R_k=\frac{L_{\widetilde{n}_k} \,
H^\Omega\big(\,\overline{\Theta_{\text{traj}}^k},x^k,s^k\,\big)}
{\widetilde{n}_k}= \frac{\sum_{j=0}^{N_k-1}\, s_j^k\,
\psi\Big(\overline{\Theta_j^k},\,s_j^k\Big)}{\sum_{j=0}^{N_k-1} s_j^k}
=\frac{R_{\text{nu}}^k}{R_{\text{de}}^k}.
\ee
Further set
\be{defR} 
R^{'}_k=\frac{R^{' k}_{\text{nu}}}{R^{' k}_{\text{de}}}
=\frac{\int_{\overline\cV_M^{\,m}} u_\Theta\, \psi(\Theta,u_\Theta) 
\rho_{k}(d\Theta)}{\int_{\overline\cV_M^{\,m}} u_\Theta \,\rho_{k}(d\Theta)},
\ee
and note that $\lim_{k\to\infty} R^{'}_k=V(\rho,u)$, since $\lim_{k\to\infty}
\rho_k=\rho$ by assumption and $\Theta\mapsto u_{\Theta}$ is continuous on 
$\cV_M^{\,m}$. We note that $R^{'}_k$ can be rewritten in the form 
\be{defRalt} 
R^{'}_k=\frac{R^{' k}_{\text{nu}}}{R^{' k}_{\text{de}}}
= \frac{\sum_{j=0}^{N_k-1}\, u_{\Theta_j^k}\,
\psi\big(\Theta_j^k,\,u_{\Theta_j^k}\big)}
{\sum_{j=0}^{N_k-1}\, u_{\Theta_j^k}}.
\ee
Now recall that $\lim_{k\to \infty} n_k=\infty$. Since $N_k\geq n_k/M L_{n_k}$, it 
follows that $\lim_{k\to \infty} N_k=\infty$ as well. Moreover, $N_k\leq\widetilde{n}_k
/L_{\widetilde{n}_k}$ with $\lim_{k\to\infty} \widetilde{n}_k=\infty$. Therefore 
(\ref{defv}--\ref{deftil}) allow us to conclude that $R_{\text{de}}^k
=\widetilde{n}_k/L_{\widetilde{n}_k}=R^{'k}_{\text{de}}[1+o(1)]$.

Next, note that $\cH_M$ is compact, and that $(\Theta,u)\mapsto u \psi(\Theta,u)$ is 
continuous on $\cH_M$ and therefore is uniformly continuous. Consequently, for 
all $\gep>0$ there exists an $\eta>0$ such that, for all $(\Theta,u),(\Theta^{'},u^{'})
\in \cH_M$ satisfying $|\Theta-\Theta^{'}|\leq \eta$ and $|u-u^{'}|\leq \eta$,
\be{unifcont}
|u\psi(\Theta,u)-u^{'}\psi(\Theta^{'},u^{'})|\leq \gep.
\ee
We recall \eqref{defbar}, which implies that $d_M(\overline{\Theta_j^k},\Theta_j)
\leq 2/L_{\widetilde{n}_k}$ for all $j\in \{0,\dots,N_k-1\}$, we choose $k$ large enough to 
ensure that $2/L_{\widetilde{n}_k}\leq \eta$, and we use \eqref{unifcont}, to obtain
\be{equa} 
R^{k}_{\text{nu}}=\sum_{j=0}^{N_k-1}\, 
s_j^k\, \psi\Big(\overline{\Theta_j^k},\,s_j^k\Big)
= \sum_{j=0}^{N_k-1}\, u_{\Theta_j^k}\, 
\psi\big(\Theta_j^k,\,u_{\Theta_j^k}\big)+T=R^{' k}_{\text{nu}}+T,
\ee
with $|T| \leq \gep N_k$. Since $\lim_{k\to\infty} R^{'}_k=V(\rho,u)$ and 
$\sum_{j=0}^{N_k-1} u_{\Theta_j^k}=v_k\geq \widetilde{n}_k/L_{\widetilde{n}_k}$ 
(see \eqref{deftil}), if $ V(\rho,u)\neq 0$, then $\big|R^{' k}_{\text{nu}}\big|
\geq \mathrm{Cst.}\,\,\widetilde{n}_k/L_{\widetilde{n}_k}$, whereas $|T|\leq \gep N_k
\leq  \gep \widetilde{n}_k/L_{\widetilde{n}_k} $ for $k$ large enough. Hence 
$T=o(R^{' k}_{\text{nu}})$ and 
\be{fina}
\frac{R^{k}_{\text{nu}}}{R^{k}_{\text{de}}}
= \frac{R^{' k}_{\text{nu}}\, [1+o(1)]}{R^{'k}_{\text{de}}\,
[1+o(1)]}\to V(\rho,u), \qquad k\to \infty. 
\ee
Finally, if $ V(\rho,u)= 0$, then $R^{' k}_{\text{nu}}=o(R^{'k}_{\text{de}})$ and 
$T=o(R^{'k}_{\text{de}})$, so that $R_k$ tends to $0$. This completes the proof
of Step 4.


\subsubsection{Step 5}
\label{s23alt}

In this step we prove \eqref{convco1}, suppressing the $(\alpha,\beta)$-dependence 
from the notation. For $\Omega\in \{A,B\}^{\N_0\times \Z^2}$, $n\in \N$, $N\in
\{n/m L_n,\dots,n/L_n\}$ and $r\in \{-N M,\dots,N M\}$, we recall \eqref{defD2} and 
define
\be{defDD2alt}
\widetilde{\cD}_{L,N}^{M,m,r}= \Big\{\Theta_{\text{traj}}
\in \widetilde{\cD}_{L,N}^{M,m}\colon \Pi_N=r\Big\},
\ee
where we recall that $\Pi_N=\sum_{j=0}^{N-1} \Delta\Pi_j$. We set 
\begin{equation}
\label{intermalt}
f_{3,n}^\Omega(M,m,N,r) = \tfrac{1}{n} \log Z_{3,n,L_n}^{\Omega}(N,M,m,r)
\ee
with
\be{iiiter}
Z_{3,n,L_n}^{\Omega}(N,M,m,r)
= \sum_{\Theta_{\text{traj}}\in \widetilde{\cD}_{L_n,N}^{M,m,r} }
\,\sum_{x\in \cX_{\Theta_{\text{traj}},\Omega}^{M,m} }
\,\sum_{u\in \,\cU_{\Theta_{\text{traj}},n}^{\,M,m,L_n}} A_3,
\end{equation}
where $A_3$ is defined in \eqref{defa23}. We further set $f_3(\cdot)=\E_\Omega
\big(f_3^\Omega(\cdot)\big)$.


\subsubsection{Concentration of measure} 

In the first part of this step we prove that  for all $(M,m,\alpha,\beta)\in
\EIGH\times \CONE$ there exist $c_1,c_2>0$ (depending on $(M,m,\alpha,\beta)$ only) 
such that, for all  $n\in \N$,  $N\in \{n/(m L_n),\dots n/L_n\}$ and $r\in
\{-N M,\dots,N M\}$, 
\begin{align}
\label{conco}
&\P_{\Omega}\big(\big|f_{3,n}^{\Omega}(M,m)-f_{3,n}(M,m)\big|>\gep\big)
\leq c_1\  e^{-\frac{c_2 \gep^2 n}{L_n}}, \qquad \\
\nonumber 
&\P_{\Omega}\big(\big|f_{3,n}^{\Omega}(M,m,N,r)-f_{3,n}(M,m,N,r)\big|>\gep\big)
\leq c_1\  e^{-\frac{c_2 \gep^2  n}{L_n}}.
\end{align} 
We only give the proof of the first inequality. The second inequality is proved 
in a similar manner. The proof uses Theorem~\ref{theoco}. Before we start we note 
that, for all $n\in \N$, $(M,m)\in \EIGH$ and $\Omega\in \{A,B\}^{\N_0\times \Z}$, 
$f_{3,n}^\Omega(M,m)$ only depends on 
\be{defC}
\cC_{0,L_n}^\Omega,\dots,\cC_{n/L_n,L_n}^\Omega \quad \text{with}\quad 
\cC_{j,L_n}^\Omega=(\Omega(j,i))_{i=-n/L_n}^{n/L_n}.
\ee
We apply Theorem~\ref{theoco} with $\cS=\{0,\dots,n/L_n\}$, with  $X_i=\{A,B\}^{\{
-\frac{n}{L_n},\dots,\frac{n}{L_n}\}}$ and with $\mu_i$ the uniform measure on 
$X_i$ for all $i\in \cS$. Note that $|f_{3,n}^{\Omega_1}(M,m)-f_{3,n}^{\Omega_2}(M,m)|
\leq 2 C_{\text{uf}}(\alpha) m \tfrac{L_n}{n}$ for all $i\in \cS$ and all 
$\Omega_1,\Omega_2$ satisfying $\cC_{j,n}^{\Omega_1}=\cC_{j,n}^{\Omega_2}$ 
for all $j\neq i$. After we set $c=2 C_{\text{uf}}(\alpha) m$ we can apply 
Theorem~\ref{theoco} with $D=c^2 L_n/n$ to get \eqref{conco}. 

Next, we note that the first inequality in \eqref{conco}, the Borel-Cantelli lemma 
and the fact that $\lim_{n\to \infty} n/L_n \log n = \infty$ (recall \eqref{speed}) imply that, for all 
$(M,m)\in \EIGH$,
\be{limialt}
\lim_{n\to \infty} \Big[f_{3,n}^\Omega(M,m)- f_{3,n}(M,m)\Big]=0 
\quad \text{for } \P-a.e.~\Omega.
\ee 
Therefore \eqref{convco1} will be proved once we show that 
\be{fin1}
\liminf_{n\to \infty} f_{3,n}(M,m)
= \limsup_{n\to \infty} f_{3,n}(M,m). 
\ee
To that end, we first prove that, for all $n\in \N$ and all $(M,m)\in \EIGH$, there exist 
an $N_n\in \{n/m L_n,\dots,n/L_n\}$ and an $r_n\in \{-M N_n,\dots,M N_n\}$ such that 
\be{fin2}
\lim_{n\to \infty} \Big[f_{3,n}(M,m)-f_{3,n}(M,m,N_n,r_n)\Big] = 0.
\ee
The proof of \eqref{fin2} is done as follows. Pick $\gep>0$, and for $\Omega\in
\{A,B\}^{\N_0\times \Z}$, $n\in \N$ and $(M,m)\in \EIGH$, denote by $N_n^\Omega$ 
and $r_n^{\Omega}$ the maximizers of $f^\Omega_{3,n}(M,m,N,r)$. Then 
\be{rrde}
f^\Omega_{3,n} \big(M,m,N_n^{\Omega},r_n^{\Omega}\big)\leq f^\Omega_{3,n}(M,m)
\leq  \tfrac1n \log (\tfrac{n^2}{L_n^2})
+ f^\Omega_{3,n}\big(M,m,N_n^{\Omega},r_n^{\Omega}\big),
\ee
so that, for $n$ large enough and every $\Omega$, 
\be{recty}
0\leq f^\Omega_{3,n}(M,m)-f^\Omega_{3,n}\big(M,m,N_n^{\Omega},r_n^{\Omega}\big)\leq \gep.
\ee
For $n\in \N$, $N\in \{n/m L_n,\dots, n/L_n\}$ and $r\in \{-N M,\dots, N M\},$ we set
\be{defAnNr}
A_{n,N,r}=\{\Omega\colon (N_n^\Omega,r_n^\Omega)=(N,r)\}.
\ee
Next, denote by $N_n,r_n$ the maximizers of $\P(A_{n,N,r})$. Note that \eqref{fin2} 
will be proved once we show that, for all $\gep>0$, $|f_{3,n}(M,m)-f_{3,n}(M,m,N_n,r_n)|\leq
\gep$ for $n$ large enough. Further note that $\P(A_{n,N_n,r_n})\geq L_n^2/n^2$ for 
all $n\in \N$. For every $\Omega$ we can therefore estimate
\be{iimp1}
|f_{3,n}(M,m)-f_{3,n}(M,m,N_n,r_n)|\leq I+II+III
\ee 
with
\begin{align}
\label{iimp}
&I=|f_{3,n}(M,m)-f_{3,n}^\Omega(M,m)|,\\
\nonumber & II=|f_{3,n}^\Omega(M,m)-f_{3,n}^\Omega(M,m,N_n,r_n)|, \\
\nonumber &III=|f_{3,n}^\Omega(M,m,N_n,r_n)-f_{3,n}(M,m,N_n,r_n)|.
\end{align}
Hence, the proof of \eqref{fin2} will be complete once we show that, for $n$ large 
enough, there exists an $\Omega_{\gep,n}$ for which $I,II$ and $III$ in \eqref{iimp} 
are bounded from above by $\gep/3$.

To that end, note that, because of \eqref{conco}, the probabilities $\P(\{I> \gep/3\})$ 
and $\P(\{III> \gep/3\})$ are bounded from above by $c_1 e^{-c_2 \gep^2 n/9 L_n}$, 
while 
\be{boundA}
\P(\{II>\gep\})\leq \P(A_{n,N_n,r_n}^c)\leq 1-(L_n^2/n^2), \qquad n\in \N.
\ee
Since $\lim_{n\to \infty} n/L_n \log n=\infty$, we have $\P(\{I,II,III \leq \gep/3\})>0$ 
for $n$ large enough. Consequently, the set $\{I,II,III\leq \gep/3\}$ is non-empty 
and \eqref{fin2} is proven.


\subsubsection{Convergence}\label{saltisgp}

It remains to prove \eqref{fin1}. Assume that there exist two strictly increasing subsequences 
$(n_k)_{k\in\N}$ and $(t_k)_{k\in\N}$ and two limits $l_2>l_1$ such that $\lim_{k\to\infty} 
f_{3,n_k}(M,m)=l_2$ and $\lim_{k\to\infty}$ $f_{3,t_k}(M,m) =l_1$. By using \eqref{fin2}, we 
have that for every $k\in\N$ there exist $N_k\in\{n_k/m L_{n_k},$ $\dots,n_k/L_{n_k}\}$ and 
$r_k\in \{-M N_k,\dots,M N_k\}$ such that $\lim_{k\to\infty}$ $f_{3,n_k}(M,m,N_k,r_k)
=l_2$. Denote by 
\be{tdt}
(\Theta_{\text{traj,max}}^{k,\Omega},
x_{\text{max}}^{k,\Omega},u_{\text{max}}^{k,\Omega})\in
\widetilde{\cD}_{L_{n_k},N_k}^{M,r_k}\times 
\cX_{\Theta_{\text{traj,max}}^{k,\Omega},\Omega}^{M,m}
\times  \cU_{\Theta_{\text{traj,max}}^{k,\Omega},
x_{\text{max}}^{k,\Omega},n_k}^{\,M,m,L_n}
\ee
the maximizer of $H^\Omega(\Theta_{\text{traj}},x,u)$. We recall that $\Theta_{\text{traj}},
x$ and $u$ take their values in sets that grow subexponentially fast in $n_k$, and therefore
\be{equan}
\lim_{k\to \infty} \tfrac{L_{n_k}}{n_k}\,
\E_\Omega\big[H^\Omega(\Theta_{\text{traj,max}}^{k,\Omega},x_{\text{max}}^{k,\Omega},
u_{\text{max}}^{k,\Omega})\big]=l_2.
\ee
Since $l_2>l_1$, we can use \eqref{equan} and the fact that $\lim_{k\to \infty} n_k/L_{n_k}
=\infty$ to obtain, for $k$ large enough, 
\be{equan1}
\E_\Omega\big[H^\Omega(\Theta_{\text{traj,max}}^{k,\Omega},
x_{\text{max}}^{k,\Omega},u_{\text{max}}^{k,\Omega})\big]
+(\beta-\alpha)\geq \tfrac{n_k}{L_{n_k}} \big(l_1+\tfrac{l_2-l_1}{2}\big).
\ee
(The term $\beta-\alpha$ in the left-hand side of \eqref{equan1} is introduced for 
later convenience only.) Next, pick $k_0\in \N$ satisfying \eqref{equan1}, whose value 
will be specified later. Similarly to what we did in \eqref{defs} and \eqref{estis}, 
for $\Omega\in \{A,B\}^{\N_0\times \Z}$ and $k\in \N$ we associate with 
\be{asso}
\Theta_{\text{traj,max}}^{k_0,\Omega}
=\big(\Delta\Pi_{\,j}^{k_0,\Omega}, b^{k_0,\Omega}_{\,0,j}, 
b_{\,1,j}^{k_0,\Omega}\big)_{j=0}^{N_{k_0}-1}\in 
\widetilde{\cD}_{L_{n_{k_0}},N_{k_0}}^{M,r_{k_0}}
\ee
and 
\be{assocc}
x_{\text{max}}^{k_0,\Omega}=\big(x_{\,j}^{k_0,\Omega}\big)_{j=0}^{N_{k_0}-1}
\in \cX_{\Theta_{\text{traj,max}}^{k_0,\Omega},\Omega}^{M,m}
\ee
and 
\be{assoc}
u_{\text{max}}^{k_0,\Omega}=\big(u_{\,j}^{k_0,\Omega}\big)_{j=0}^{N_{k_0}-1}
\in \cU_{\Theta_{\text{traj,max}}^{k_0,\Omega},
x_{\text{max}}^{k_0,\Omega},n_{k_0}}^{M,m,L_{n_{k_0}}}
\ee
the quantities
\be{defTh}
\overline\Theta^{\,k,\Omega}_{\text{traj}}
= \big(\Delta\Pi_{\,j}^{k_0,\Omega}, \overline b_{\,0,j}^{\,k,\,\Omega}, 
\overline b_{\,1,j}^{\,k,\,\Omega}\big)_{j=0}^{N_{k_0}-1}
\in \widetilde{\cD}_{L_{t_{k}},N_{k_0}}^{M,r_{k_0}}
\ee  
and 
\be{assoc1}
\overline u^{\,k,\Omega}=\big(\overline u_{\,j}^{\,k,\Omega}\big)_{j=0}^{N_{k_0}-1}
\in \cU_{\overline\Theta_{\text{traj}}^{\,k,\Omega},
x_{\text{max}}^{k_0,\Omega},*}^{M,m,L_{t_{k}}}
\ee
(where $*$ will be specified later), so that 
\be{supe}
\big|\overline b_{\,0,j}^{\,k,\,\Omega}-b_{\,0,j}^{k_0,\Omega}
\big|\leq \tfrac{1}{L_{t_k}},\ \ 
\big|\overline b_{\,1,j}^{\,k,\,\Omega}-b_{\,1,j}^{k_0,\Omega}\big|
\leq \tfrac{1}{L_{t_k}},
\ \ \ \ \big|\overline u_{\,j}^{\,k,\,\Omega}-u_{\,j}^{k_0,\Omega}\big|
\leq \tfrac{2}{L_{t_k}}, 
\ \ \ j=0,\dots,N_{k_0}-1.
\ee
Next, put $\overline s_{k}^{\,\Omega}=L_{t_k} \sum_{j=0}^{N_{k_0}-1} 
\overline u_j^{\,k,\,\Omega}$, which we substitute for $*$ above. The uniform 
continuity in Lemma~\ref{concavt} allows us to claim that, for $k$ large enough 
and for all $\Omega$,
\be{trt}
\Big|\overline u_j^{\,k,\,\Omega} \ \psi\Big(\overline\Theta_j^{\,k,\,\Omega},
\overline u_j^{\,k,\,\Omega}\Big)
-u_{\,j}^{k_0,\Omega}\  \psi\Big(\Theta_{\,j}^{k_0,\Omega},
u_{\,j}^{k_0,\Omega}\Big)\Big|\leq \tfrac{l_2-l_1}{4},
\ee
where we recall that, as in \eqref{notat}, for all $j=0,\dots,N_{k_0}-1$,  
\begin{align}
\label{restr}
\overline\Theta_j^{\,k,\,\Omega}
&=\Big(\Omega\big(j,\Pi^{k_0,\Omega}_{\,j}+\cdot\big),\,\Delta\Pi_{\,j}^{k_0,\Omega}, 
\,\overline b_{\,0,j}^{\,k,\,\Omega},\, 
\overline b_{\,1,j}^{\,k,\,\Omega},\,x_j^{k_0,\Omega}\Big),\\
\nonumber \Theta_{\,j}^{k_0,\Omega}
&=\Big(\Omega\big(j,\Pi^{k_0,\Omega}_{\,j}+\cdot\big),\,\Delta\Pi_j^{k_0,\Omega}, 
\,b_{\,0,j}^{k_0,\Omega},\, b_{\,1,j}^{k_0,\Omega},\,x_j^{k_0,\Omega}\Big).
\end{align}
Recall \eqref{defHam}. An immediate consequence of \eqref{trt} is that 
\be{reft}
\big|H^\Omega(\overline\Theta_{\text{traj}}^{\,k,\Omega},
x_{\text{max}}^{k_0,\Omega},\overline u^{\,k,\Omega})
-H^\Omega(\Theta_{\text{traj,max}}^{k_0,\Omega},
x_{\text{max}}^{k_0,\Omega},u_{\text{max}}^{k_0,\Omega})\big|
\leq N_{k_0} \tfrac{l_2-l_1}{4}.
\ee
Hence we can use \eqref{equan1}, \eqref{reft} and the fact that $N_{k_0}\leq 
n_{k_0}/L_{n_{k_0}}$, to conclude that, for $k$ large enough,
\be{equan2}
\E_\Omega\big[H^\Omega(\overline\Theta_{\text{traj}}^{\,k,\Omega},
x_{\text{max}}^{k_0,\Omega},\overline u^{\,k,\Omega})\big]+(\beta-\alpha)
\geq \tfrac{n_{k_0}}{L_{n_{k_0}}}\big(l_1+\tfrac{l_2-l_1}{4}\big).
\ee

At this stage we add a column at the end of the group of $N_{k_0}$ columns in such 
a way that the conditions $\widehat b^{k,\Omega}_{1, N_{k_0}-1}=\widehat 
b^{k,\Omega}_{0, N_{k_0}}$ and $\widehat b^{k,\Omega}_{1, N_{k_0}}=1/L_{t_k}$ 
are satisfied. We put 
\be{defsk}
\widehat{\Xi}_{N_{k_0}}^{k,\,\Omega}
=\big(\Delta \Pi_{N_{{k_0}}}^{k_0,\Omega},\widehat b_{0,N_{k_0}}^{k,\,\Omega},
\widehat b_{1,N_{k_0}}^{k,\,\Omega}\big)=\big(0,\widehat b_{1,N_{k_0}-1}^{k,\,\Omega},
\tfrac{1}{L_{t_k}}\big),
\ee
and we let $\widehat{\Theta}_{\text{traj}}^{k,\,\Omega}\in \widetilde{\cD}_{L_{t_k},
\,N_{k_0}+1}^{M,\,r_{k_0}}$ be the concatenation of $\overline \Theta_{\text{traj}}^{k,
\Omega}$ (see \eqref{defTh}) and $\widehat{\Xi}_{N_{k_0}}^{k,\Omega}$. We let 
$\widehat{x}^{k_0,\Omega}\in\cX_{\widehat\Theta_{\text{traj}}^{k,\Omega},\Omega}^{M,m}$ 
be the concatenation of $x_{\text{max}}^{k_0,\Omega}$ and $0$. We further let 
\be{defshat}
\widehat{s}_k^{\,\Omega}=\overline s_k^{\,\Omega}+\Big[1+b_{1,N_{k_0}-1}^{k,\Omega}
-\tfrac{1}{L_{t_k}}\Big] L_{t_k},
\ee 
and we let $\widehat{u}^{k,\Omega}\in\cU_{\,\widehat{\Theta}_{\text{traj}}^{k,\
\Omega},\, \widehat x^{k_0,\Omega},\,\widehat{s}_k^{\,\Omega}}^{M,m,\,L_{t_k}}$ be 
the concatenation of $\overline u^{\,k,\Omega}$ (see \eqref{assoc1}) and
\be{defunk0}
\widehat u_{N_{k_0}}^{k,\Omega}=1+(b_{1,N_{k_0}-1}^{k,\Omega}-\tfrac{1}{L_{t_k}}).
\ee
Next, we note that the right-most inequality in \eqref{supe}, together with the fact 
that
\be{}
\sum_{j=0}^{N_{k_{0}}-1} u_{\,j}^{k_0,\Omega} = n_{k_0}/L_{n_{k_0}},
\ee 
allow us to asset that $|\overline s_k^{\,\Omega}-L_{t_k} n_{k_0}/L_{n_{k_0}}|
\leq 2 N_{k_0}$. Therefore the definition of $\widehat{s}_k^{\,\Omega}$ in 
\eqref{defshat} implies that  
\be{impo*}
\widehat{s}_k^{\,\Omega}=L_{t_k} \frac{n_{k_0}}{L_{n_{k_0}}}
+\widehat{m}_k^\Omega \quad \text{with} \quad |\widehat{m}_k^{\Omega}|
\leq 2 N_{k_0}+2 L_{t_k}.
\ee
Moreover,
\be{httraj}
H^\Omega\big(\widehat{\Theta}_{\text{traj}}^{k,\Omega},
\widehat x^{k_0,\Omega},\widehat{u}^{k,\Omega}\big)
\geq H^\Omega \big(\overline\Theta_{\text{traj}}^{\,k,\Omega},
x_{\text{max}}^{k_0,\Omega}, \overline u^{\,k,\Omega}\big)
+(\beta-\alpha),
\ee
because $\widehat u_{N_{k_0}}^{k,\Omega}\leq 2$ by definition (see \eqref{defunk0}) 
and the free energies per columns are all bounded from below by $(\beta-\alpha)/2$. 
Hence, \eqref{equan2} and \eqref{httraj} give that for all $\Omega$ there exist a
\be{redefTh}
\widehat{\Theta}^{k,\Omega}_{\text{traj}}\in
\widetilde{\cD}_{L_{t_{k}},\,N_{k_0}+1}^{\,M,\,r_{k_0}}
\colon\,b_{1,N_{k_0}}=\tfrac{1}{L_{t_k}},
\ee
an $\widehat{x}^{k_0,\Omega}\in\cX_{\widehat\Theta_{\text{traj}}^{k,\Omega},\Omega}^{M,m}$
and a $\widehat{u}^{k,\Omega}\in\cU_{\,\widehat{\Theta}_{\text{traj}}^{k,\,\Omega},\,
\widehat{x}^{k_0,\Omega},\,\widehat{s}_k^{\,\Omega}}^{M,m,\,L_{t_k}}$ such that, 
for $k$ large enough,
\be{rth}
\E_{\,\Omega}\big[H(\widehat{\Theta}_{\text{traj}}^{k,\Omega},
\widehat x^{k_0,\Omega},
\widehat{u}^{k,\Omega})\big]\geq \tfrac{n_{k_0}}{L_{n_{k_0}}}
\big(l_1+\tfrac{l_2-l_1}{4}).
\ee

Next, we subdivide the disorder $\Omega$ into groups of $N_{k_0}+1$ consecutive columns 
that are successively translated by $r_{k_0}$ in the vertical direction, i.e.,
$\Omega=(\Omega_1,\Omega_2,\dots)$ with (recall \eqref{add1})
\be{defO}
\Omega_j=\big(\Omega(i,\,(j-1)\, r_{k_0}+\cdot)\big)_{i=(j-1)
(N_{k_0}+1)}^{\,j (N_{k_0}+1)-1},
\ee
and we let $q_k^\Omega$  be the unique integer satisfying 
\be{defqo}
\widehat{s}_k^{\, \Omega_1}+\widehat{s}_k^{\, \Omega_2}+ \dots +
\widehat{s}_k^{\,\Omega_{q_k}}\leq t_k
<\widehat{s}_k^{\,\Omega_1}+\dots+\widehat{s}_k^{\,\Omega_{q_k+1}},
\ee
where we suppress the $\Omega$-dependence of $q_k$. We recall that
\begin{equation}
\label{interm1alt}
f_{3,t_k}^\Omega(M,m) =
\E\Bigg[\frac{1}{t_k} \log \sum_{N=t_k/m L_{t_k}}^{t_k/L_{t_k}}\  
\sum_{\Theta_{\text{traj}}\in \widetilde{\cD}_{L_{t_k},N}^{M}}\
\,\sum_{x\in \cX_{\Theta_{\text{traj}},\Omega}^{M,m} } 
\,\sum_{u\in\,\cU_{\,\Theta_{\text{traj}},\,x,\,t_k}^{M,m,\,L_{t_k}}} e^{L_{t_k}\,
H^\Omega(\Theta_{\text{traj}},x,u)}\Bigg],
\end{equation}
set $\widetilde{t}_k^{\,\Omega}=\widehat{s}_k^{\,\Omega_1}+\widehat{s}_k^{\,\Omega_2}
+\dots+\widehat{s}_k^{\,\Omega_{q_k}}$, and concatenate 
\be{conca}
\widehat{\Theta}^{k,\Omega}_{\text{traj,tot}}
=\Big(\widehat{\Theta}^{k,\Omega_1}_{\text{traj}},
\widehat{\Theta}^{k,\Omega_2}_{\text{traj}},\dots,
\widehat{\Theta}^{k,\Omega_{q_k}}_{\text{traj}}\Big) 
\in \widetilde{\cD}_{L_{t_k},\,q_k (N_{k_0}+1),\,}^{M,}
\ee
and
\be{concacat}
\widehat{x}^{k,\Omega}_{\text{tot}}=\big(\widehat x^{k_0,\Omega_1},
\widehat x^{k_0,\Omega_2},\dots,\widehat x^{k_0,\Omega_{q_k}}\big) 
\in \cX_{\widehat{\Theta}^{k,\Omega}_{\text{traj,tot}}
\Omega}^{M,m}.
\ee
and 
\be{concaca}
\widehat{u}^{k,\Omega}_{\text{tot}}=\big(\widehat{u}^{k,\Omega_1},
\widehat{u}^{k,\Omega_2},\dots,\widehat{u}^{k,\Omega_{q_k}}\big) 
\in \cU_{\widehat{\Theta}^{k,\Omega}_{\text{traj,tot}},
\widehat{x}^{k,\Omega}_{\text{tot}},
\widetilde{t}_k^{\,\Omega}}^{M,m,L_{t_k}}.
\ee
It still remains to complete $\widehat{\Theta}^{k,\Omega}_{\text{traj,tot}}$, 
$\widehat{x}^{k,\Omega}_{\text{tot}}$ and $\widehat{u}^{k,\Omega}_{\text{tot}}$ 
such that the latter becomes an element of $\cU_{\widehat{\Theta}^{k,
\Omega}_{\text{traj,tot}},\widehat{x}^{k,\Omega}_{\text{tot}},t_k}^{M,m,L_{t_k}}$. 
To that end, we recall \eqref{defqo}, which gives $t_k-\widetilde{t}_k^{\,\Omega}
\leq \widehat{s}_k^{\Omega_{q_k+1}}$. Then, using \eqref{impo*}, we have that 
there exists a $c>0$ such that 
\be{bor}
t_k-\widetilde{t}_k^{\,\Omega}\leq c L_{t_k} \tfrac{n_{k_0}}{L_{n_{k_0}}}.
\ee
Therefore we can complete $\widehat{\Theta}^{k,\Omega}_{\text{traj,tot}}$, 
$\widehat{x}^{k,\Omega}_{\text{tot}}$ and $\widehat{u}^{k,\Omega}_{\text{tot}}$ 
with
\be{}
\Theta_{\text{rest}}\in \cD_{L_{t_k},\,g_k^\Omega}^M,
\qquad  x_{\text{rest}}\in\cX_{\Theta_{\text{rest}},\Omega}^{M,m},
\qquad u_{\text{rest}}\in\cU_{\Theta_{\text{rest}}, x_{\text{rest}}, t_k
-\widetilde{t}_k^{\,\Omega}}^{M,m,L_{t_k}},
\ee 
such that, by \eqref{bor}, the number of columns $g_{k}^\Omega$ involved in 
$\Theta_{\text{rest}}$ satisfies $g_k^\Omega\leq c n_{k_0}/L_{n_{k_0}}$. 
Henceforth $\widehat{\Theta}^{k,\Omega}_{\text{traj,tot}}$, $\widehat{x}^{k,\Omega}_{
\text{tot}}$ and $\widehat{u}^{k,\Omega}_{\text{tot}}$ stand for the quantities defined 
in \eqref{conca} and \eqref{concaca}, and concatenated with $\Theta_{\text{rest}}, 
x_{\text{rest}}$ and $u_{\text{rest}}$ so that they become elements of 
\be{}
\cD_{L_{t_k},\,q_k (N_{k_0}+1)+g_k^\Omega}^M,
\qquad \cX_{\widehat{\Theta}^{k,\Omega}_{\text{traj,tot}},\Omega}^{M,m},
\qquad\cU_{\widehat{\Theta}^{k,\Omega}_{\text{traj,tot}},
\widehat{x}^{k,\Omega}_{\text{tot}},
t_k}^{M,m,L_{t_k}},
\ee 
respectively. By restricting the summation in \eqref{interm1} to $\widehat{\Theta}^{k,
\Omega}_{\text{traj,tot}}$, $\widehat{x}^{k,\Omega}_{\text{tot}}$ and 
$\widehat{u}^{k,\Omega}_{\text{tot}}$, we get 
\be{f3s}
f_{3,t_k}(M,m)\geq \frac{L_{t_k}}{t_k} \E_{\Omega}\bigg[\sum_{j=1}^{q_k} H^{\Omega_j}
(\widehat{\Theta}^{k,\Omega_j}_{\text{traj}},
\widehat x^{k_0,\Omega_j},\widehat{u}^{k,\Omega_j})
+ H(\Theta_{\text{rest}}, x_{\text{rest}}, u_{\text{rest}})\bigg],
\ee
where the term $H(\Theta_{\text{rest}},x_{\text{rest}}, u_{\text{rest}})$ is 
negligible because, by \eqref{bor},  $(t_k-\widetilde{t}_k^{\,\Omega})/t_k$ 
vanishes as $k\to \infty$,  while all free energies per column are bounded from 
below by $(\beta-\alpha)/2$. Pick $\gep>0$ and recall \eqref{impo*}. Choose 
$k_0$ such that $2 L_{n_{k_0}}/n_{k_0} \leq \gep/2$ and note that, for $k$ 
large enough, 
\be{impo1}
\widehat{s}_k^{\,\Omega}\in \Big[L_{t_k} 
\tfrac{n_{k_0}}{L_{n_{k_0}}}(1-\gep),L_{t_k} 
\tfrac{n_{k_0}}{L_{n_{k_0}}}(1+\gep)\Big].
\ee
By \eqref{defqo}, we therefore have 
\be{impo2}
q_k\in \Big[\tfrac{t_k L_{n_{k_0}}}{L_{t_k} n_{k_0}} \tfrac{1}{1+\gep}, 
\tfrac{t_k L_{n_{k_0}}}{L_{t_k} n_{k_0}} \tfrac{1}{1-\gep}\Big]=[a,b].
\ee
Recalling \eqref{f3s}, we obtain 
\be{f3s1}
f_{3,t_k}(M,m)\geq \frac{L_{t_k}}{t_k} \E_{\Omega}\bigg[\sum_{j=1}^{a} 
H^{\Omega_j}(\widehat{\Theta}^{k,\Omega_j}_{\text{traj}}, \widehat x^{k_0,\Omega_j},
\widehat{u}^{k,\Omega_j})-\sum_{j=a}^{b} \Big| H^{\Omega_j}
(\widehat{\Theta}^{k,\Omega_j}_{\text{traj}},
\widehat x^{k_0,\Omega_j},\widehat{u}^{k,\Omega_j})\Big|\bigg],
\ee
and, consequently,
\be{f3s2}
f_{3,t_k}(M,m)\geq \tfrac{L_{n_{k_0}}}{n_{k_0} (1+\gep)}\,
\E_{\,\Omega}\Big[H^{\Omega}(\widehat{\Theta}^{k,\Omega}_{\text{traj}},
\widehat x^{k_0,\Omega},\widehat{u}^{k,\Omega})\Big]-\frac{L_{t_k}}{t_k} 
(b-a) (N_{k_0}+1) m \tfrac{\beta-\alpha}{2}, 
\ee
and, by \eqref{rth},
\be{f3s2alt}
f_{3,t_k}(M,m)\geq \tfrac{l_1+\tfrac{l_2-l_1}{4}}{1+\gep} 
-(\tfrac{1}{1-\gep}-\tfrac{1}{1+\gep}) (b-a) m \tfrac{\beta-\alpha}{2}.
\ee
After taking $\gep$ small enough, we may conclude that $\liminf_{k\to\infty}
f_{3,t_k}(M,m) >l_1$, which completes the proof.


\subsection{Proof of Proposition \ref{pr:formimpp}}
\label{sMinf}

Pick $(M,m)\in \EIGH$ and note that, for every $n\in \N$, the set $\cW_{n,M}^{\,m}$ 
is contained in $\cW_{n,M}$. Thus, by using Proposition \ref{pr:formimp} we obtain 
\begin{align}
\label{limi}
\nonumber \liminf_{n\to \infty} f^{\Omega}_{1,n}(M;\alpha,\beta)
&\geq \sup_{m\geq M+2} \liminf_{n\to \infty} f^{\Omega}_{1,n}(M,m;\alpha,\beta)\\
&= \sup_{m\geq M+2} f(M,m;\alpha,\beta)\quad \text{ for } \P-a.e.\,\Omega.
\end{align}
Therefore, the proof of Proposition \ref{pr:formimpp} will be complete once we show that
\begin{align}
\label{lims}
\limsup_{n\to \infty} f^{\Omega}_{1,n}(M;\alpha,\beta)
\leq \sup_{m\geq M+2} \limsup_{n\to \infty} 
f^{\Omega}_{1,n}(M,m;\alpha,\beta)\quad \text{ for } \P-a.e.\,\Omega.
\end{align}
We will not prove \eqref{lims} in full detail, but only give the main steps in the proof. 
The proof consists in showing that, for $m$ large enough, the pieces of the trajectory 
in a column that exeed $m L_n$ steps do not contribute substantially to the free energy.

Recall (\ref{defDD2}--\ref{A11}) and use \eqref{A11} with $m=\infty$, i.e.,
\begin{align}
\label{variaf22}
Z_{n,L_n}^{\omega,\Omega}(M)
&=\sum_{N=1}^{n/L_n} \sum_{\Theta_{\text{traj}}\in \widetilde{\cD}_{L_n,N}^{M} } 
 \,\sum_{x\in \cX_{\Theta_{\text{traj}},\Omega}^{M,\infty} }
\,\sum_{u\in \,\cU_{\Theta_{\text{traj}},x,n}^{\,M,\infty,L_n}}  A_1.
\end{align} 
With each $(N,\Theta_{\text{traj}},x,u)$ in \eqref{variaf22}, we associate the trajectories
obtained by concatenating $N$ shorter trajectories $(\pi_i)_{i\in \{0,\dots,N-1\}}$ chosen 
in $(\cW_{\Theta_i,u_i,L_n})_{i\in \{0,\dots,N-1\}}$, respectively. Thus, the quantity $A_1$ 
in \eqref{variaf22} corresponds to the restriction of the partition function to the 
trajectories associated with $(N,\Theta_{\text{traj}},x,u)$. In order to discriminate between 
the columns in which more than $m L_n$ steps are taken and those in which less are taken, 
we rewrite $A_1$ as $A_2 \widetilde{A}_2$ with
\begin{align}\label{dftn}
 A_2&=\prod_{i\in V_{u,m} }\, 
Z_{L_n}^{\omega_{I_i}}(\Theta_i,u_i),
\qquad 
\widetilde{A}_2=\prod_{i\in \widetilde V_{u,m} }\, 
Z_{L_n}^{\omega_{I_i}}(\Theta_i,u_i),
\end{align}
with $\widetilde{u}_{i}=\sum_{k=0}^{i-1} u_k$, $\Theta_i=(\Omega(i,\Pi_{i}+\cdot),\Xi_i,x_i)$ 
and $I_i=\{\widetilde{u}_{i} L_n,\dots,\widetilde{u}_{i+1} L_n-1\}$ for $i\in \{0,\dots,N-1\}$, 
with $\omega_{I}=(\omega_i)_{i\in I}$ for $I\subset \N$, where $\{0,\dots, N-1\}$ is partitioned 
into 
\be{defI}
\widetilde V_{u,m}\cup V_{u,m}\quad \text{with}\quad 
\widetilde V_{u,m}=\{i\in \{0,\dots,N-1\}\colon\; u_i>m\}.
\ee
For all $(N,\Theta_{\text{traj}},x,u)$, we rewrite $\widetilde V_{u,m}$ in the form of 
an increasing sequence $\{i_1,\dots,i_{\widetilde k}\}$ and we drop the $(u,m)$-dependence 
of $\widetilde{k}$ for simplicity. We also set $\widetilde{u}=u_{i_1}+\dots+u_{i_{
\widetilde{k}}}$, which is the total number of steps taken by a trajectory associated 
with $(N,\Theta_{\text{traj}},x,u)$ in those columns where more than $m L_n$ steps are 
taken. Finally, for $s\in \{1,\dots,\widetilde{k}\}$ we partition $I_{i_s}$ into
\begin{align}
\label{labint}
J_{i_s}\cup \widetilde{J}_{i_s} \quad \text{with} 
\quad J_{i_s}
&=\{\widetilde{u}_{i_s} L_n,\dots,(\widetilde{u}_{i_s}+M+2) L_n\}, \\ \nonumber
\quad \widetilde{J}_{i_s}
&=\{(\widetilde{u}_{i_s}+M+2) L_n+1,\dots,\widetilde{u}_{i_{s}+1} L_n-1\},
\end{align}
and we partition $\{1,\dots,n\}$ into 
\begin{align}
\label{twoeq}
& J\cup \widetilde{J} \quad \text{with} \quad 
\widetilde{J}=\cup_{s=1}^{\widetilde{k}} \widetilde{J}_{i_s}, 
\qquad J=\{1,\dots,n\}\setminus \widetilde{J},
\end{align}
so that $\widetilde{J}$ contains the label of the steps constituting the pieces of 
trajectory exeeding $(M+2)L_n$ steps in those columns where more than $m L_n$ steps 
are taken.  

 
\subsubsection{Step 1}
\label{step1}  
In this step we replace the pieces of trajectories in the columns indexed in 
$\widetilde V_{u,m}$  by shorter trajectories of length $(M+2) L_n$. To that aim, 
for every $(N,\Theta_{\text{traj}},x,u)$ we set 
\begin{align}
\widehat{A}_2=\prod_{i\in \widetilde V_{u,m} }\, 
Z_{L_n}^{\,\omega_{J_i}}(\Theta^{'}_i,M+2)
\end{align}
with $\Theta^{'}_i=(\Omega(i,\Pi_{i}+\cdot),\Xi_i,1)$. We will show that for all 
$\gep>0$ and for  $m$ large enough, the event 
\be{event}
B_n=\{\omega\colon\, \widetilde{A}_2\leq 
\widehat{A}_2\, e^{3\gep n} \ \text{for all}\ (N,\Theta_{\text{traj}},x,u)\}
\ee
satisfies $\P_{\omega}(B_n)\to 1$ as $n\to \infty$.

Pick, for each $s\in \{1,\dots,\widetilde{k}\}$, a trajectory $\pi_s$ in the set 
$\cW_{\Theta_{i_s},u_{i_s},L_n}$. By concatenating them we obtain a trajectory in 
$\cW_{\widetilde u L_n}$ satisfying $\pi_{\widetilde u L_n,1}=\widetilde{k}L_n$. 
Thus, the total entropy carried by those pieces of trajectories crossing the columns 
indexed in $\{i_1,\dots,i_ {\widetilde{k}}\}$ is bounded above by
\be{boundentro}
{\textstyle \prod_{s=1}^{\widetilde{k}}\, 
|\cW_{\Theta_{i_s},u_{i_s},L_n}|
\leq \big|\{\pi\in \cW_{\widetilde{u}L_n}\colon\,
\pi_{\widetilde{u}L_n,1}=\widetilde{k}L_n\}\big|.}
\ee
Since $\widetilde{u}/\widetilde{k}\geq m$, we can use Lemma \ref{convularge} in 
Appendix \ref{Path entropies} to assert that, for $m$ large enough, the right-hand side 
of \eqref{boundentro} is bounded above by $e^{\gep n}$.

Moreover, we note that an $\widetilde{u} L_n$-step trajectory satisfying $\pi_{\widetilde{u}
L_n,1}=\widetilde{k}L_n$ makes at most $\widetilde{k} L_n+ \widetilde{u}$ excursions in 
the $B$ solvent because such an excursion requires at least one horizontal step or at 
least $L_n$ vertical steps. Therefore, by using the inequalities $\widetilde{k} L_n
\leq n/m$ and $\widetilde{u}\leq n/L_n$ we obtain that, for $n$ large enough, the sum 
of the Hamiltonians associated with $(\pi_1,\dots,\pi_{\widetilde{k}})$ is bounded from 
above, uniformly in $(N,\Theta_{\text{traj}},x,u)$ and $(\pi_1,\dots,\pi_{\widetilde{k}})$, 
by
\be{upb}
{\textstyle \sum_{s=1}^{\widetilde{k}} 
H_{u_{i_s} L_n,L_n}^{\omega_{I_{i_s}},\Omega(i_s,\Pi_{i_s}+\cdot)}
(\pi_s)\leq \max\{\sum_{i\in I} \xi_i\colon\, I\in \cup_{r=1}^{2n/m}\cE_{n,r}\}},
\ee
with $\cE_{n,r}$ defined in \eqref{add26} in Appendix \ref{Computation} and $\xi_i=\beta 
1_{\{\omega_i=A\}}-\alpha 1_{\{\omega_i=B\}}$ for $i\in \N$. At this stage we use the 
definition in \eqref{add28} and note that, for all $\omega \in \cQ_{n,m}^{\gep/\beta,
(\alpha-\beta)/2+\gep}$, the right-hand side in \eqref{upb} is smaller than $\gep n$.
Consequently, for $m$ and $n$ large enough we have that, for all $\omega \in 
\cQ_{n,m}^{\gep/\beta,(\alpha-\beta)/2+\gep}$, 
\be{indif}
\widetilde{A}_2\leq e^{2\gep n}\quad \text{for all}\quad (N,\Theta_{\text{traj}},x,u).
\ee

Recalling \eqref{boundel} and noting that $\widetilde{k} L_n\leq n/m$, we can write
\be{boundbe}
\widehat{A}_2\geq e^{-\widetilde{k} (M+2) L_n C_{\text{uf}}(\alpha)}
\geq e^{-n \tfrac{M+2}{m}  C_{\text{uf}}(\alpha)},
\ee
and therefore, for $m$ large enough, for all $n$ and all $(N,\Theta_{\text{traj}},x,u)$ 
we have $ \widehat{A}_2\geq e^{-\gep n}$.

Finally, use \eqref{indif} and \eqref{boundbe} to conclude that, for $m$ and $n$ large 
enough, $\cQ_{n,m}^{\gep/\beta,(\alpha-\beta)/2+\gep}$ is a subset of $B_n$. Thus, Lemma
\ref{lele} ensures that, for $m$ large enough, $\lim_{n\to\infty} P_\omega(B_n)=1$.


\subsubsection{Step 2}
\label{step2}

Let $(\widetilde{w}_i)_{i\in\N}$ be an i.i.d.\ sequence of Bernouilli trials, independent 
of $\omega,\Omega$. For $(N,\Theta_{\text{traj}},x,u)$ we set $\widehat{u}=\widetilde{u}
-\widetilde{k} (M+2)$. In Step 1 we have removed $\widehat{u}L_n$ steps from the trajectories
associated with $(N,\Theta_{\text{traj}},x,u)$ so that they have become trajectories 
associated with $(N,\Theta_{\text{traj}},x^{'},u)$. In this step, we will concatenate 
the trajectories associated with $(N,\Theta_{\text{traj}},x^{'},u)$ with an 
$\widehat{u} L_n$-step trajectory to recover a trajectory that belongs to 
$\cW_{n,M}^{\,m}$.
 
For $\Omega\in \{A,B\}^{\N_0\times \Z}$, $t,N\in \N$ and $k\in \Z$, let
\be{mesemp}
P_{A}^\Omega(N,k)(t)=\frac{1}{t} \sum_{j=0}^{t-1} 1_{\{\Omega(N+j,k)=A\}}
\ee
be the proportion of $A$-blocks on the $k^{\text{th}}$ line and between the $N^{\text{th}}$ 
and the $(N+t-1)^{\text{th}}$ column of $\Omega$. Pick $\eta>0$ and $j\in \N$, and set 
\be{defsn}
S_{\eta,j}=\bigcup_{N=0}^{j}\bigcup_{k=-j}^{j} 
\bigcup_{t\geq \eta j}\Big\{ P_{A}^\Omega(N,k)(t)\leq \frac{p}{2}\Big\}.
\ee
By a straightforward application of Cramer's Theorem for i.i.d.\ random variables, we have
that $\sum_{j\in\N} P_\Omega(S_{\eta,j})<\infty$. Therefore, using the Borel-Cantelli Lemma, 
it follows that for $\P_\Omega$-a.e.\ $\Omega$, there exists a $j_\eta(\Omega)\in \N$ such 
that $\Omega\notin S_{\eta,j}$ as soon as $j\geq j_\eta(\Omega)$. In what follows, we consider 
$\eta=\gep/\alpha m$ and we take $n$ large enough so that  $n/L_n\geq j_{\gep/\alpha m}
(\Omega)$, and therefore $\Omega\notin S_{\frac{n}{L_n},\frac{\gep}{\alpha m}}$. 

Pick $(N,\Theta,x,u)$ and consider one trajectory $\widehat \pi$, of length $\widehat{u} L_n$, 
starting from $(N,\Pi_N+b_N)L_n$, staying in the coarsed-grained line at height $\Pi_N$, 
crossing the $B$-blocks in a straight line and the $A$-blocks in $m L_n$ steps. The number 
of columns crossed by $\widehat{\pi}$ is denoted by $\widehat{N}$ and satisfies $\widehat{N}
\geq \widehat{u}/m$. If $\widehat{u} L_n \leq \gep n/\alpha$, then the Hamiltonian associated 
with $\widehat{\pi}$ is clearly larger than $-\gep n$. If $\widehat{u} L_n \geq \gep n/\alpha$ 
in turn, then
\be{hampi}
H_{\widehat{u} L_n,L_n}^{\,\widetilde{w},
\Omega(N+\cdot,\Pi_N)}(\widehat{\pi})
\geq  -\alpha L_n  \widehat{N} \big[1-P_{A}^\Omega(N,\Pi_N)(\widehat{N})\big].
\ee
Since $N\leq n/L_n$, $|\Pi_N|\leq n/L_n$ and $\widehat{N}\geq \gep n/(\alpha m L_n)$, we can 
use the fact that $\Omega\notin S_{\frac{\gep}{\alpha m},\frac{n}{L_n}}$ to obtain 
\be{boundP}
P_{A}^\Omega(N,\Pi_N)(\widehat{N})\geq \frac{p}{2}.
\ee
At this point it remains to bound $\widehat{N}$ from above, which is done by noting that
\be{boundN}
\widehat{N} \big[m P_{A}^\Omega(N,\Pi_N)(\widehat{N})
+1-P_{A}^\Omega(N,\Pi_N)(\widehat{N})\big]
=\widehat{u}\leq \tfrac{n}{L_n}.
\ee
Hence, using \eqref{boundP} and \eqref{boundN}, we obtain $\widehat{N}\leq 2 n/p m L_n$
and therefore the right-hand side of \eqref{hampi} is bounded from below by $-\alpha (2-p)
n/p m$, which for $m$ large enough is larger than $-\gep n$.

Thus, for $n$ and $m$ large enough and for all $(N,\Theta,x,u)$, we have a trajectory 
$\widehat{\pi}$ at which the Hamiltonian is bounded from below by $-\gep n$ that can 
be concatenated with all trajectories associated with $(N,\Theta,x{'},u)$ to obtain a 
trajectory in $\cW_{n,M}^{\,m}$. Consequently, recalling \eqref{labint}, for $n$ and 
$m$ large enough we have 
\be{boundA2}
A_2\widehat{A_2}\leq e^{\gep n} 
Z_{\,n,L_n}^{(\omega_{J}\,,\,\widetilde \omega), \Omega} (M,m)
\qquad \forall\, (N,\Theta,x,u).
\ee


\subsubsection{ Step 3}
\label{step3}

In this step, we average over the microscopic disorders $\omega,\widetilde{\omega}$. 
Use \eqref{boundA2} to note that, for $n$ and $m$ large enough and all $\omega\in B_n$, 
we have
\begin{align}
\label{variaf222}
Z_{n,L_n}^{\omega,\Omega}(M)
&\leq e^{4\gep n} \sum_{N=1}^{n/L_n} 
\sum_{\Theta_{\text{traj}}\in \widetilde{\cD}_{L_n,N}^{M} } 
 \,\sum_{x\in \cX_{\Theta_{\text{traj}},\Omega}^{M,\infty} }
\,\sum_{u\in \,\cU_{\Theta_{\text{traj}},x,n}^{\,M,\infty,L_n}} 
Z_{\,n,L_n}^{(\omega_{J}\,,\,\widetilde{\omega}), \Omega} (M,m).
\end{align}  
We use \eqref{concmesut} to claim that there exists $C_1,C_2>0$ so that for all $n\in \N$, all 
$m\in \N$ and all $J$, 
\be{inegco}
\P_{\omega,\widetilde{\omega}}\Big(\Big|\tfrac1n
\log Z_{n,L_n}^{(\omega_{J}\,,\,\widetilde{\omega}),\Omega}(M,m)
-f_{1,n}^{\Omega}(M,m)\Big|\geq \gep \Big)\leq C_1 e^{-C_2 \gep^2 n}.
\ee 
We set also
\be{inegco2}
D_{n}=\bigcap_{(N,\Theta_{\text{traj}},x,u)} 
\Big\{\Big|\tfrac1n\log Z_{n,L_n}^{(\omega_{J}\,,
\,\widetilde{\omega}),\Omega}(M,m)-f_{1,n}^{\Omega}(M,m)\Big|\leq \gep\Big\},
\ee
recall the definition of $c_n$ in \eqref{carsum} (used with $(M,\infty)$), and use 
\eqref{inegco} and the fact that $c_n$ grows subexponentially, to obtain $\lim_{n\to\infty} 
\P_{\omega,\widetilde{\omega}}(D_n^c)= 0$. For all $(\omega,\widetilde{\omega})$ satisfying 
$\omega\in B_n$ and $(\omega,\widetilde{\omega})\in D_n$, we can rewrite \eqref{variaf222} as 
\begin{align}
\label{variafin}
Z_{n,L_n}^{\omega,\Omega}(M)
&\leq c_n\ e^{n  f_{1,n}^{\Omega}(M,m) +5\gep n}.
\end{align}  
As a consequence, recalling \eqref{boundel}, for $m$ large enough we have 
\be{endres}
f^{\Omega}_{n}(M;\alpha,\beta)\leq \P(B_n^c\cup D_n^c) 
\, C_{\text{uf}}(\alpha)+ \frac{\log c_n}{n}+\frac1n
\E\Big(1_{\{B_n\cup D_n\}}  \big(n f_{1,n}^{\Omega}(M,m) +5\gep n\big)\Big).
\ee
Since $\P(B_n^c\cup D_n^c)$ and $(\log c_n)/n$ vanish when $n\to \infty$, it suffices to 
apply Proposition \ref{pr:formimp} and to let $\gep\to 0$ to obtain \eqref{lims}. This 
completes the proof of Proposition \ref{pr:formimpp}.


\subsection{Proof of Proposition \ref{pr:varar}}
\label{svarar}

Note that, for all $m\geq M+2$, we have $\cR_{p,M}^{m}\subset  \cR_{p,M}$. Moreover, any 
$(u_\Theta)_{\Theta\in \overline\cV_M^{\,m}}\in \cB_{\overline \cV_M^{\,m}}$  can be 
extended to $\overline\cV_M$ so that it belongs to $\cB_{\overline \cV_M}$. Thus, 
\be{}
\sup_{m\geq M+2} f(M,m;\alpha,\beta)\leq \sup_{\rho\in \cR_{p,M}} 
\sup_{(u)\in \cB_{\overline\cV_M} }  V(\rho,u).
\ee
As a consequence, it suffices to show that for all $\rho\in \cR_{p,M}$ and $(u_\Theta)_{
\Theta\in \overline\cV_M}\in \cB_{\overline \cV_M}$,
\be{finalst}
V(\rho,u)\leq\sup_{m\geq M+2} \sup_{\rho\in \cR_{p,M}^{m}} 
\sup_{(u)\in \cB_{\overline\cV_M^{\,m}} } V(\rho,u).
\ee
If $\int_{\overline \cV_M} u_\Theta\,\rho(d\Theta)=\infty$, then \eqref{finalst} is 
trivially satisfied since $V(\rho,u)=-\infty$. Thus, we can assume that $\rho(\overline
\cV_M\setminus D_M)=1$, where $D_M=\{\Theta\in\overline \cV_M\colon\,\chi_\Theta
\in\{A^{\Z},B^{\Z}\}, x_\Theta=2\}$. Since $\int_{\overline \cV_M} u_\Theta\,\rho(d\Theta)
<\infty$ and since (recall \eqref{boundel}) $\psi(\Theta,u)$ is uniformly bounded by 
$C_{\text{uf}}(\alpha)$ on $(\Theta,u)\in \overline \cV_M^{\,*}$, we have by dominated 
convergence that for all $\gep>0$ there exists an $m_0\geq M+2$ such that, for all 
$m\geq m_0$, 
\be{mamj}
V(\rho,u)\leq \frac{\int_{\overline\cV_M^{\,m}} 
u_\Theta \psi(\Theta,u_\Theta) \rho(d\Theta)}{\int_{\overline\cV_M^{\,m}} 
u_\Theta  \rho(d\Theta)}+\tfrac{\gep}{2}.
\ee
Since $\rho(\overline \cV_M\setminus D_M)=1$ and since $\cup_{m\geq M+2} \overline 
\cV_M^{\,m}= \overline \cV_M\setminus D_M$, we have $\lim_{m\to\infty}\rho(\overline
\cV_M^{\,m})=1$. Moreover, for all $m\geq m_0$ there exists a $\widehat{\rho}_{m}\in
\cR_{p,M}^{m}$ such that $\widehat\rho_{m}=\rho_{m}+\overline{\rho}_{m}$, with $\rho_{m}$ 
the restriction of $\rho$ to $\overline \cV_M^{\,m}$ and $\overline \rho_m$ charging only 
those $\Theta$ satisfying $x_\Theta=1$. Since all $\Theta \in \overline\cV_M$ with 
$x_\Theta=1$ also belong to $\overline \cV_M^{\,M+2}$, we can state that $\overline
\rho_m$ only charges $\overline \cV_M^{\,M+2}$ . Therefore
\be{malmj}
V(\widehat\rho_m,u) 
=\frac{\int_{\overline\cV_M^{\,m}} u_\Theta \psi(\Theta,u_\Theta) \rho(d\Theta)
+\int_{\overline \cV_M^{\,M+2}} u_\Theta \psi(\Theta,u_\Theta) 
\overline \rho_m(d\Theta)}{\int_{\overline\cV_M^{\,m}} 
u_\Theta  \rho(d\Theta)+\int_{\overline \cV_M^{\,M+2}} u_\Theta \overline\rho_m(d\Theta)}.
\ee
Since $\Theta\mapsto u_\Theta$ is continuous on $\overline\cV_M$, there exists an $R>0$ 
such that $u_\Theta\leq R$ for all $\Theta\in \overline\cV_M^{M+2}$. Therefore we can 
use \eqref{mamj} and \eqref{malmj} to obtain, for $m\geq m_0$,
\be{malmj2}
V(\widehat\rho_m,u)\geq (V(\rho,u)-\tfrac{\gep}{2}) 
\frac{\int_{\overline\cV_M^{\,m}} u_\Theta \rho(d\Theta)}{\int_{\overline\cV_M^{\,m}} 
u_\Theta  \rho(d\Theta)+\int_{\overline  \cV_M^{\,M+2}} 
u_\Theta \overline\rho_m(d\Theta)}
- R\, C_{\text{uf}}(\alpha)\, (1-\rho(\overline\cV_M^{\,m})).
\ee
The fact that $\overline \rho_m(\overline \cV_M^{\, M+2})=\rho(\overline\cV_M\setminus
\overline\cV_M^{\,m})$ for all $m\geq m_0$ implies that $\lim_{m\to\infty}\overline 
\rho_m(\cV_M^{M+2})=0$. Consequently, the right-hand side in \eqref{malmj2} tends 
to $V(\rho,u)-\gep/2$ as $m\to\infty$. Thus, there exists a $m_1\geq m_0$  such 
that $V(\widehat{\rho}_{m_1},u)\geq V(\rho,u)-\gep$. Finally, we note that there exists 
a $m_2\geq m_1+1$ such that $u_\Theta\leq m_2$ for all $\Theta\in \overline\cV_M^{\,m_1}$, 
which allows us to extend $(u_\Theta)_{\Theta\in \overline \cV_M^{\,m_1}}$ to 
$\overline \cV_M^{\,m_2}$ such that $(u_\Theta)_{\Theta\in \overline \cV_M^{\,m_2}}\in 
\cB_{\overline \cV_M^{\,m_2}}$. It suffices to note that $\widehat{\rho}_{m_1} \in 
\cR_{p,M}^{m_1}\subset  \cR_{p,M}^{m_2}$ to conclude that 
\be{finaalt}
V(\rho,u)\leq f(M,m_2;\,\alpha,\beta)+\gep.
\ee


\subsection{Proof of Proposition \ref{pr:formimppp}}
\label{Minfinity}

It remains to remove the $M$-truncation from the variational formula in 
Proposition~\ref{pr:varar}. To that aim it suffices to show that 
\begin{align}
\label{limss}
\limsup_{n\to \infty} f^{\Omega}_{n}(\alpha,\beta)
\leq \sup_{M\geq 1 } \limsup_{n\to \infty} 
f^{\Omega}_{n}(M;\alpha,\beta)\quad \text{ for } \P-a.e.\,\Omega.
\end{align}

The proof of \eqref{limss} is similar to that of \eqref{lims} in Section~\ref{sMinf}.  In the latter, the pieces 
of path inside the columns where too many steps $(\geq m L_n)$ were taken were replaced by a shorter 
path. However, the mesoscopic strategy of displacement was not changed. This is a major difference with 
the proof of Proposition~\ref{pr:formimppp} below, since we need to compare the contribution to the partition 
function of groups of trajectories that do not follow the same mesoscopic strategy of displacement. 

For $\pi\in \cW_{n}$, we recall that $N_\pi$ is the number of columns crossed by $\pi$ after $n$ steps. We 
recall (\ref{variaf22}--\ref{twoeq}) and use the same notations with $M=\infty$ to rewrite the full partition function  
as
\begin{align}
\label{variaf22*}
Z_{n,L_n}^{\omega,\Omega}&=\sum_{N=1}^{n/L_n} \sum_{\Theta_{\text{traj}}\in \widetilde{\cD}_{L_n,N}^{\infty} } 
 \,\sum_{x\in \cX_{\Theta_{\text{traj}},\Omega}^{\infty,\infty} }
\,\sum_{u\in \,\cU_{\Theta_{\text{traj}},x,n}^{\infty,\infty, L_n}}  A_1.
\end{align} 

We pick $N \in \{1,\dots,n/L_n\}$, and with each $\Theta_{\text{traj}}\in \widetilde{\cD}_{L_n,N}^{\, \infty}$ and 
$x\in \cX_{\Theta_{\text{traj}},\Omega}^{\, \infty,\infty}$ we associate an auxiliary mesocopic strategy denoted 
by $\widetilde \Theta_{\text{traj}}\in \widetilde{\cD}_{L_n,N}^{M}$ and $\widetilde x\in \cX_{\widetilde 
\Theta_{\text{traj}},\Omega}^{\, M,\infty}$ that is built as follows. Let $i_1$ be the index of the first column in 
which the mesoscopic displacement of $\Theta_{\text{traj}}$ is strictly larger than $M$, i.e., $(|\Delta \Pi _{i_1}|>M)$. 
Until $i_1$,  both strategies $(\Theta_{\text{traj}},x)$ and $(\widetilde \Theta_{\text{traj}},\widetilde x)$ are equal, 
i.e., 
\be{}
\widetilde  \Theta_i=(\Omega(i,\widetilde \Pi_{i}+\cdot),\widetilde \Xi_i,\widetilde x_i)
=(\Omega(i, \Pi_{i}+\cdot),\Xi_i,x_i)=\Theta_i\quad  \text{for} \ i\leq i_1-1.
\ee
The mesoscopic displacement $\Delta \Pi_{i_1}$ of $ \Theta_{\text{traj}}$ is large and $\widetilde  \Theta_{\text{traj}}$ 
starts making mesoscopic steps of size $M$ to catch up with $\Theta_{\text{traj}}$ as soon as possible. This takes 
$r_1\in \N$ columns indexed in $\{i_1,\dots, i_1+r_1-1\}$ for which $|\Delta \widetilde \Pi_i|=M, \widetilde x_i=1$, 
except for the  very last column ($i=i_1+r_1-1$), which is used to end the catch up between $(\widetilde 
\Theta_{\text{traj}},\widetilde x)$ and $(\Theta_{\text{traj}},x)$. We note that there may be other columns 
among $\{i_1,\dots, i_1+r_1-1\}$ in which the mesoscopic displacement of  $\Theta_{\text{traj}}$ is $>M$. 

After $(\widetilde \Theta_{\text{traj}},\widetilde x)$ catches up with  $(\Theta_{\text{traj}},x)$, it remains equal to 
$(\Theta_{\text{traj}},x)$ until a new column appears (indexed by $i_2\geq i_1+r_1$) with a large mesoscopic 
displacement, i.e., $|\Delta \Pi_{i_2}|>M$. Thus, $\widetilde  \Theta_i=\Theta_i$ for  $i \in \{i_1+r_1,\dots i_2-1\}$,
and so on. The resulting $\widetilde \Theta_{\text{traj}}$ and $\widetilde x$ belong to $\widetilde{\cD}_{L_n,N}^{M}$
and  $\cX_{\widetilde \Theta_{\text{traj}},\Omega}^{\, M,\infty}$, respectively, and $ \Theta_i=\widetilde \Theta_i$, 
except on $k$ groups of consecutive columns denoted by $\{i_1,\dots,i_1+r_1-1\}, \dots,\{i_{k},\dots, i_k+r_k-1\}$ 
and referred to as the catch-up columns in what follows. For simplicity, the dependence in $ \Theta_{\text{traj}}$ 
of $k, i_1, r_1, \dots, i_k,r_k$ is omitted.

We can give a crude upper bound on the number of  columns on which $\widetilde \Theta_{\text{traj}}$ differs 
from $\widetilde \Theta_{\text{traj}}$. The sum of the absolute values of the large mesoscopic jumps (i.e., 
$\sum_{i=1}^N |\Delta \Pi_i| 1\{|\Delta \Pi_i|>M\}$) performed by $ \Theta_{\text{traj}}$ indeed cannot exceed 
$n/L_n$. Moreover, the number of columns in which the mesoscopic displacement is larger than $M/2$ is 
bounded above by $2n/M L_n$,  and in each catch-up column where the mesoscopic displacement of 
$\Theta_{\text{traj}}$ is smaller than $M/2$,  $\widetilde \Theta_{\text{traj}}$ scores at least $M/2$ blocks in 
its race against $\Theta_{\text{traj}}$. Therefore, the number of catch-up columns $r_1+\dots+r_k$ is bounded 
above by  $4n/ML_n$.

In order to discriminate between the catch-up columns and the columns on which $\Theta_{\text{traj}}$ and 
$\widetilde \Theta_{\text{traj}}$ are equal, we keep the notations of (\ref{defI}--\ref{twoeq}) and we rewrite 
$A_1$ as $A_2 \widetilde{A}_2$ with
\begin{align}
\label{spint}
A_2 &=\prod_{i\in V_{\Pi,M} }\, 
Z_{L_n}^{\omega_{I_i}}(\Theta_i,u_i),
\qquad 
\widetilde{A}_2=\prod_{i\in \widetilde V_{\Pi,M} }\, 
Z_{L_n}^{\omega_{I_i}}(\Theta_i,u_i),
\end{align}
where $\{0,\dots, N-1\}$ is partitioned into $\widetilde V_{\Pi,M}\cup V_{\Pi,M}$ and $\widetilde V_{\Pi,M}
=\cup_{s=1}^k \{i_s,\dots,i_{s}+r_s-1\}$ gathers the indices of the $k$ groups of catch-up columns. 

We also set $\bar{u}_s= u_{i_s}+\dots+u_{i_{s}+r_s-1}$, which is the total number of steps taken by a trajectory 
associated with $(N,\Theta_{\text{traj}},x,u)$ in the $s$-th group of  catch-up columns. Finally, for each $j\in 
\widetilde V_{\Pi,M}$ we let $v_j L_n$ be the minimal number of steps that are required to cross a column of 
type $\widetilde \Theta_j$. Even though it is not necessarily true that $v_j\leq u_j$ for all $j\in \widetilde V_{\Pi,M}$, 
it is true by construction that for $s\in \{1,\dots,k\}$ we have $\bar u_s\geq v_{i_s}+\dots+v_{i_s+r_s-1} = \bar v_s$. 
For each  $s\in \{1,\dots,k\}$ and each $t\in \{0,\dots,r_s-1\}$, we define 
\be{Jil}
J_{i_s+t}=\{(\widetilde{u}_{i_s}+v_{i_s}+\dots+v_{i_s+t-1}) L_n, (\widetilde{u}_{i_s}+v_{i_s}+\dots+v_{i_s+t}) L_n-1\},
\ee
so that we partition $I_{i_s}\cup\dots\cup I_{i_s+r_s}$ into
\begin{align}
\label{labint1}
K_s\cup \widetilde{K}_{s} \quad \text{with} 
\quad K_{s}
&=\{\widetilde{u}_{i_s} L_n,\dots,(\widetilde{u}_{i_s}+v_{i_s}+\dots+v_{i_s+r_s-1}) L_n\}, \\ \nonumber
\quad \widetilde{K}_{s}
&=\{(\widetilde{u}_{i_s}+v_{i_s}+\dots+v_{i_s+r_s-1}) L_n+1,\dots,\widetilde{u}_{i_{s}+r_s} L_n-1\},
\end{align}
and we partition $\{1,\dots,n\}$ into 
\begin{align}
\label{twoeq1}
& T\cup \widetilde{T} \quad \text{with} \quad 
\widetilde{T}=\cup_{s=1}^{k} \widetilde K_s,
\qquad T=\{1,\dots,n\}\setminus \widetilde{T}.
\end{align}


\subsubsection{Step 1}
\label{step11} 
 
In this step, we aim at replacing the Hamiltonian in the catch-up columns by an auxiliary coarse-grained 
version of the Hamiltonian, which simply assigns an energetic penalty $\frac{\beta-\alpha}{2}$ to each 
monomers placed in solvent $B$. To that aim, for $\chi\in \{A,B\}^\Z$ and $\pi\in \cW_{u L}$ such that 
$\pi_{u L,1}=L$, we set  
\be{alterh}
\widehat H_{u L,L}^{\chi}
(\pi)= \tfrac{\beta-\alpha}{2} \sum_{i=1}^{u L} 1\{\chi^{L}_{(\pi_{i-1},\pi_i)}=B\}.  
\ee
and we recall that $\chi^{L}_{(\pi_{i-1},\pi_i)}$ denotes the label of the block the step $(\pi_{i-1},\pi_i)$
lies in. With the help of \eqref{alterh} and recalling \eqref{partfunc2}, we define the partition function
associated with those trajectories crossing a block-column of type $\Theta=(\chi,\Xi,x)$ in $uL$ steps as
\be{alt ham}
 \quad \widehat Z_L(\Theta,u)
=\sum_{\pi \in \cW_{\Theta,u,L}} e^{\,\widehat H_{uL,L}^{\, \chi}(\pi)},
\ee
and we note that $\widehat Z_L(\Theta,u)$ does not depend on the microscopic disorder $\omega$canymore.
Thus, we can set 
\begin{align}
\label{sstg}
\widehat{A}_2=\prod_{i\in \widetilde V_{\Pi,M} }\, 
\widehat Z_{L_n}( \Theta_i,u_i).
\end{align}
In the rest of this proof, we will often state results that hold uniformly on $(N,\Theta_{\text{traj}},x,u)$ without 
recalling that $N\in \{1,\dots,n/L_n\}$, $\Theta_{\text{traj}}\in \widetilde{\cD}_{L_n,N}^{\infty}$,  $x\in 
\cX_{\Theta_{\text{traj}},\Omega}^{\infty,\infty}$ and $u\in \,\cU_{\Theta_{\text{traj}},x,n}^{\infty,\infty, L_n}$. 

Our aim is to show that, for all $\gep>0$, $M$ large enough and $\Omega\in \{A,B\}^{\N_{0}\times \Z}$, the set
\be{eventt}
B^1_{n,M}=\{\omega\colon\, \widetilde{A}_2
\leq \widehat{A}_2\, e^{\gep n} \ \text{for all}\ (N,\Theta_{\text{traj}},x,u)\}
\ee
satisfies $\lim_{n\to\infty} \P_{\omega}(B^1_{n,M}) = 1$. We consider a given $(N,\Theta_{\text{traj}},x,u)$, and 
we set  $\widetilde u=\bar u_1+\dots+\bar u_s$. We then pick for each $i \in \widetilde{V}_{\Pi,M}$ a trajectory 
$\pi_i$ in the set $\cW_{\Theta_{i},u_{i},L_n}$.  By concatenating these trajectories, we obtain a trajectory 
$\widehat \pi \in \cW_{\widetilde u L_n}$ satisfying $\widehat \pi_{\widetilde u L_n,1}=(r_1+\dots+r_k)L_n$. 
The difference between the Hamiltonian associated with $\widehat \pi$ in $\widetilde A_2$ and the one associated 
with $\widehat \pi$ in $\widehat A_2$ equals
\begin{align}
\label{upb*}
{\textstyle \sum_{s=1}^{k} \sum_{x=0}^{r_s-1}}
 H_{u_{i_s+x} L_n,L_n}^{\omega_{I_{i_s+x}},\, \Omega(i_s+x,\Pi_{i_s+x}+\cdot)}
&(\pi_{i_s+x})-
\widehat H_{u_{i_s+x} L_n,L_n}^{\, \Omega(i_s+x,\Pi_{i_s+x}+\cdot)}.
(\pi_{i_s+x}).
\end{align}
Either $\widehat \pi$ takes in $B$ a number of steps that is $\leq \gep n/(2 \alpha)$ and the Hamiltonian difference 
in \eqref{upb*} is bounded above by $\gep n$, or the number of steps in $B$ is larger than $\gep n/ 2\alpha$. In the 
latter case, since  $\widehat \pi_{\widetilde{u} L_n,1}=(r_1+\dots+r_k)L_n$, $\pi$ makes at most $(r_1+\dots+r_k) 
L_n+ \widetilde{u}$ excursions in $B$ because each such excursion requires at least one horizontal step or at 
least $L_n$ vertical steps. Therefore, by using the inequalities $(r_1+\dots+r_k) L_n \leq 4n/M$ and $\widetilde{u}
\leq n/L_n$, we can claim that, as soon as $L_n\geq M$,  $\widehat \pi$ does not perform more than $5n/M$ 
excursions in $B$, and hence
\begin{align}
\label{upb2}
{\textstyle \sum_{s=1}^{k} \sum_{x=0}^{r_s-1}}
 H_{u_{i_s+x} L_n,L_n}^{\omega_{I_{i_s+x}},\, \Omega(i_s+x,\Pi_{i_s+x}+\cdot)}
&(\pi_{i_s+x})-
\widehat H_{u_{i_s+x} L_n,L_n}^{\, \Omega(i_s+x,\Pi_{i_s+x}+\cdot)}
(\pi_{i_s+x})
\\
\nonumber 
&\leq \max\big\{\, \textstyle \sum_{i\in I} \ \big(\xi_i-\tfrac{\beta-\alpha}{2}\big)\colon\, 
I\in \cup_{r=1}^{5n/M}\cE_{n,r}\big\},
\end{align}
with $\cE_{n,r}$ defined in \eqref{add26} in Appendix~\ref{Computation} and $\xi_i=\beta 1_{\{\omega_i=A\}}
-\alpha 1_{\{\omega_i=B\}}$ for $i\in \N$. At this point we use the definition in \eqref{add28} and note that, for 
all $\omega \in \cQ_{n,M/5}^{\gep/(2\alpha),\, \gep}$, the right-hand side in \eqref{upb2} is smaller than $\gep n$.
Consequently, for $M$ and $n$ large enough we have that, for all $\omega \in \cQ_{n,M/5}^{\gep/2\alpha,\gep}$, 
\be{indif*}
\frac{\widetilde{A}_2}{\widehat A_2}\leq e^{\gep n}\quad \text{for all}\quad (N,\Theta_{\text{traj}},x,u).
\ee
It remains to use \eqref{indif*} and \eqref{eventt} to conclude that, for $M$ and $n$ large 
enough, $\cQ_{n,M/5}^{\gep/2\alpha,\gep}$ is a subset of $B^1_{n,M}$. Thus, Lemma
\ref{lele} ensures that, for $M$ large enough, $\lim_{n\to\infty} P_\omega(B^1_{n,M})=1$,
which completes Step 1.


\subsubsection{Step 2}
\label{step22}  

In this step we further simplify the expression of $\widehat A_2$ introduced in \eqref{sstg} by setting 
\begin{align}
\label{ssttt}
\widehat{A}_3=\prod_{i\in \widetilde V_{\Pi,M} } e^{\frac{\beta-\alpha}{2}\,  \mathfrak{N}_B(\Theta_i) L_n},
\end{align}
where $\mathfrak{N}_B(\Theta_i)$ is the number of $B$-blocks located in between the entrance block and the exit 
block that have to be crossed entirely in the vertical direction by any trajectory that crosses a block of type $\Theta_i$. 
We note that $\mathfrak{N}_B(\Theta_i)$ only depends on $\Delta \Pi_i$ (the mesoscopic displacement in the column) 
and on  the disorder in the column seen from the entrance block $\Omega(i,\Pi_i+\cdot)$. Our aim is to show that, for 
$\gep >0$ and for $M$ and $n$ large enough, we have for all $\Omega\in \{A,B\}^{\N_0\times \Z}$ that $\widehat{A}_2
<\widehat{A}_3 e^{\gep n}$ uniformly in $(N,\Theta,x,u)$.

For a given $(N,\Theta,x,u)$ we pick, for each $i \in \widetilde{V}_{\Pi,M}$, a trajectory $\pi_i \in \cW_{\Theta_{i},u_{i},L_n}$. 
We recall that $\beta-\alpha\leq 0$, since $(\alpha,\beta)\in \CONE$. In $\widehat A_2$ the Hamiltonian associated 
with $(\pi_i)_{i\in \widetilde{V}_{\Pi,M}}$ is bounded above by 
\be{bbel}
\sum_{i\in \widetilde V_{\Pi,M}} \widehat H_{u_{i} L_n,L_n}^{\, \Omega(i,\Pi_{i}+\cdot)}
(\pi_{i})\leq  \tfrac{\beta-\alpha}{2} L_n \sum_{i\in \widetilde V_{\Pi,M}} \mathfrak{N}_B(\Theta_i)
\ee
because, for each $i\in \widetilde V_{\Pi,M}$, $\pi_i$ must cross vertically at least $\mathfrak{N}_B(\Theta_i)$ blocks of 
type $B$.  In the right-hand side of \eqref{bbel}, we recognise the exponential factor in \eqref{ssttt}, and therefore this 
step will be complete once we control the entropy carried by those pieces of trajectories that cross the columns indexed 
in $\widetilde V_{\Pi, M}$. To that aim, we recall that  $\widetilde u=\bar u_1+\dots+\bar u_s$ and we note that, by 
concatenating the paths $(\pi_i)_{i\in \widetilde V_{\Pi,M}}$, we obtain a trajectory $\widehat \pi \in \cW_{\widetilde u L_n}$ 
satisfying $\pi_{\widetilde u L_n,1}=(r_1+\dots+r_k)L_n$. Thus,
\be{boundentro*}
{\textstyle \prod_{i\in \widetilde{V}_{\Pi,M}}\, 
|\cW_{\Theta_{i},u_{i},L_n}|
\leq \big|\{\pi\in \cW_{\widetilde{u}L_n}\colon\,
\pi_{\widetilde{u}L_n,1}=(r_1+\dots+r_k)L_n\}\big|,}
\ee
and either $\widetilde u\leq \gep n/\log(3) L_n$ and the right-hand side in \eqref{boundentro*} is smaller than $e^{\gep n}$,
or $\widetilde u\geq  \gep n/\log(3) L_n$ and the crude bound $r_1+\dots+r_k\leq 4n/M L_n$ allows us to write 
$\widetilde{u}/(r_1+\dots,r_k)\geq M\gep/(4 \log(3))$, and we can use Lemma~\ref{convularge} in Appendix~\ref{Path entropies} 
to assert that, for $M$ large enough, the right-hand side of \eqref{boundentro*} is bounded above by $e^{\gep n}$.


\subsubsection{Step 3}
\label{step33}  

In this step, we link each mesoscopic strategy $\Theta_{\text{traj}}$ to its auxiliary counterpart $\widetilde\Theta_{\text{traj}}$ 
by replacing $\widehat A_3$ in \eqref{ssttt} by
\be{sstttt}
\widehat{A}_4=\prod_{i\in \widetilde V_{\Pi,M} }\, 
e^{\frac{\beta-\alpha}{2} \, \mathfrak{N}_B(\widetilde \Theta_i) L_n}.
\ee
As in the previous step, $\mathfrak{N}_B(\widetilde \Theta_i)$ is the number of $B$-blocks located in between the entrance 
and the exit blocks that have to be crossed entirely in the vertical direction by any trajectory crossing the $\widetilde\Theta_i$ 
column. Our aim is to show that for all $\gep>0$ there exists an $M\in \N$ such that, for $\P-a.e.\,\Omega$ and $n$ is large 
enough, that  $\widehat A_3\leq \widehat A_4 e^{\gep n}$ for all $(N,\Theta,x,u)$. To that aim, it suffices to prove that for all 
$\gep>0$ there exists an $M\in \N$ such that, for $\P-a.e.\,\Omega$ and $n$ large enough, that 
\be{}
\sum_{i\in \widetilde{V}_{\Pi,M}} \mathfrak{N}_B(\Theta_i)
\geq \sum_{i\in \widetilde{V}_{\Pi,M}} \mathfrak{N}_{B}(\widetilde \Theta_i)
-\frac{\gep n}{L_n}, \ \text{for all}\ (N,\Theta_{\text{traj}},x,u).
\ee
We set 
\begin{align}
R_{n,M}=\bigg\{\Omega\in \{A,B\}^{\N_0\times \Z} \colon\, \exists 
& N\in \{1,\dots,\tfrac{n}{L_n}\}, \  \exists \Theta_{\text{traj}}\in \widetilde{\cD}_{L_n,N}^{\, \infty}, \\
\nonumber & \text{and}\ \sum_{i\in \widetilde{V}_{u,M}} \mathfrak{N}_B(\Theta_i)
\leq  \sum_{i\in \widetilde{V}_{u,M}} \mathfrak{N}_B(\widetilde \Theta_i)-\gep \tfrac{n}{L_n}\bigg\}
\end{align}
and we aim at showing that, for $M$ large enough, $\sum_{n\geq 1} \P_{\Omega}(R_{n,M}\neq \emptyset)<\infty$.

We need to simplify the expression for $R_{n,M}$. As explained earlier, for each $(N,\Theta_{\text{traj}},x,u)$, the 
location of the catch-up columns $\widetilde V_{\Pi,M}$ only depends on $\Pi=(\Pi_i)_{i=0}^{N}$ and the subsequence 
$(\mathfrak{N}_B(\Theta_i))_{i\in \widetilde V_{\Pi,M}}$ only depends on $\Omega$ and $(\Pi_i,\Pi_{i+1})_{i\in \widetilde 
V_{\Pi,M}} $. Moreover, in the catch-up columns, the associated mesoscopic strategy of displacement
$(\widetilde \Pi_i,\widetilde \Pi_{i+1})_{i\in \widetilde V_{\Pi,M}} $ is completely determined by  $(\Pi_i,\Pi_{i+1})_{i\in \widetilde 
V_{\Pi,M}}$ and  $(\mathfrak{N}_B(\widetilde \Theta_i))_{i\in \widetilde V_{\Pi,M}}$ only depends on $\Omega$ and 
$(\widetilde \Pi_i,\widetilde \Pi_{i+1})_{i\in \widetilde V_{\Pi,M}}$. As a consequence, for all $i\in \widetilde V_{\Pi,M}$ we 
can rewrite $\mathfrak{N}_B(\Theta_i)$ and $\mathfrak{N}_B(\widetilde \Theta_i)$ as $\mathfrak{N}_B(\Omega(i,\Pi_i+\cdot), 
\Delta \Pi_i)$ and $\mathfrak{N}_B(\Omega(i,\widetilde \Pi_i+\cdot), \Delta \widetilde \Pi_i)$, respectively, and we obtain 
\be{rnm}
R_{n,M}\subset\bigcup_{N=1}^{n/L_n} \bigcup_{k=1}^{4n/M L_n}
\bigcup_{V \subset \{0,\dots,N-1\} \colon |V|=k} \ \ \bigcup_{\bar Y\in \mathfrak{V}_{N,V}}  \bar R(N,k,V,\bar Y),
\ee
where 
\begin{align}
\label{defvn}
\nonumber \mathfrak{V}_{N,V}=\bigg\{
\bar Y:= (Y_{i}^0,Y_{i}^1&)_{i\in V}\in (\Z^2)^{V}\colon\, Y_{i}^{1}=Y_{i+1}^{0} \ 
\text{if}\ (i,i+1)\in V^2 \, \text{and}\ \exists\,   \Pi\in \{0\}\times \Z^N \colon\\
&\sum_{i=0}^{N-1} |\Delta \Pi_i|\leq \tfrac{n}{L_n}, \, \widetilde V_{\Pi,M}=V, \, 
(\Pi_{i},\Pi_{i+1})_{i\in V}=(Y_{i}^0,Y_{i}^1)_{i\in V} \bigg\},
\end{align}
where
\begin{align}
\bar R(N,k,V,\bar Y)=\bigg\{&\Omega\in \{A,B\}^{\N_0\times \Z} \colon\\
\nonumber & \sum_{i\in V} \mathfrak{N}_B(\Omega(i,Y_i+\cdot), \Delta Y_i)
\leq  \sum_{i\in V}\mathfrak{N}_B(\Omega(i,\widetilde Y_i+\cdot), \Delta \widetilde Y_i)-\gep \tfrac{n}{L_n}\bigg\}
\end{align}
and where, with each $\bar Y\in \mathfrak{V}_{N,V}$, we associate $\widetilde Y= (\widetilde \Pi_i,\widetilde 
\Pi_{i+1})_{i\in V}$ with $\widetilde \Pi$ the mesoscopic displacement strategy associated with the $\Pi$, which 
in \eqref{defvn} guarantees that $\bar Y\in   \mathfrak{V}_{N,V}$.

For $N\in \{1,\dots,n/L_n\}$, $k\in \{1,\dots,4n/M L_n\}$, $V \subset \{0,\dots,N-1\} \colon |V|=k$ and 
$\bar Y\in  \mathfrak{V}_{N,V}$, we let $\mathfrak{N}(\bar Y)$ be the number of blocks that have to be crossed 
entirely in the vertical direction in the catch-up columns (i.e., those columns indexed in $V$). By construction, 
we note that $\mathfrak{N}(\bar Y)\geq \mathfrak{N}(\widetilde Y)$, so that the number of blocks that have to be 
crossed vertically in the catch-up columns for the mesoscopic strategy of displacement $\bar Y$ is not smaller 
than its counterpart for the auxiliary mesoscopic strategy of displacement $\widetilde Y$. We then note that 
$\Omega\in R_{n,M}$ necessarily implies that there exists  $N,k,V, \bar Y$ such that  $\mathfrak{N}(\widetilde  Y)
\geq \gep n/L_n$, and therefore $\mathfrak{N}(\bar Y)\geq \gep n/L_n$ (since it is always the case that  
$\mathfrak{N}(\bar Y)\geq \mathfrak{N}(\widetilde Y)\geq \sum_{i\in V} \mathfrak{N}_B(\Omega(i,\widetilde Y_i+\cdot), 
\Delta \widetilde Y_i)$. As a consequence, we can bound  $\P_{\Omega}(R_{n,M}\neq \emptyset)$ as follows:
\begin{align}
\label{bsupR}
 \P_{\Omega}(R_{n,M}\neq & \emptyset)\\ \nonumber  
 \leq & \sum_{N=1}^{n/L_n} 
\sum_{k=1}^{4n/M L_n} \ \ \ \sum_{V \subset \{0,\dots,N-1\} \colon |V|=k}\ \  \  
\sum_{\bar Y\in \mathfrak{V}_{N,V}\colon \mathfrak{N}(\bar Y)\geq \gep n/L_n}   \P_\Omega\big(\bar R(N,k,V,\bar Y)\big).
 \end{align}
By a standard application of Cramer's Theorem we obtain that the probability under the sum of the right-hand side
in \eqref{bsupR} is uniformly bounded by $e^{-c_\gep n/L_n}$ with $c_\gep>0$.  At this stage we note that, uniformly 
in $N$ and $k$, we can bound $| V \subset \{0,\dots,N-1\} \colon |V|=k|$ from above by $\binom{n/L_n}{4n/M L_n}$, 
which for $M$ large enough has an exponential growth rate that is smaller than $c_\gep$. Moreover, uniformly in 
$N,k,V$ 
\be{bcmv}
|\mathfrak{V}_{N,V}|\leq 2^k \binom{n/L_n}{2k} \Big(\frac{n}{k L_n}\Big)^k\leq  2^{\frac{4n}{M L_n}} 
\binom{n/L_n}{8n/M L_n} \Big(\frac{M}{8}\Big)^{\frac{8n}{M L_n}}.
\ee
The upper bound in \eqref{bcmv} can be understood as follows. First, in each catch-up columns we have to choose 
the length of the mesoscopic displacement and this gives rise to the term $\binom{n/L_n}{2k}$, since the sum 
of all mesoscopic increments is bounded above by $n/L_n$. Next, we have to choose the sign of these $k$ 
increments and this gives a factor $2^k$. Finally, in each catch-up columns we have to choose the height of the 
entrance block ($\Pi_i$). Once again, the fact that the sum of all mesoscopic displacement is smaller than $n/L_n$
tells us that the difference between the height of the exit block of a given catch-up column and the height of the 
entrance block of the following catch-up column is bounded by the sum of the absolute value of the mesoscopic 
increments that have been made in between these two columns. But once again, since the sum of these mesoscopic 
displacements in absolute value is smaller than $n/L_{n}$ the number of choices for the heights of all entrance blocks 
in catch-up columns is bounded above by $(n/k L_n)^k$. This completes the proof because when $M$ is chosen 
large enough the exponential growth rate in the right-hand side of \eqref{bcmv} is smaller than $c_\gep$.  


\subsubsection{Step 4}
\label{step44} 

In this step, we recall the coarse-grained version of the Hamiltonian defined in \eqref{alterh} and we use it to introduce, 
in the catch-up columns, those trajectories moving according to the auxiliary mesoscopic strategy, i.e., for 
$(N,\Theta_{\text{traj}},x,u)$ we set  
\begin{align}
\label{defA5*}
\widehat{A}_5=\prod_{i\in \widetilde V_{\Pi,M} }\, 
\widehat Z_{L_n}(\widetilde \Theta_i,v_i)
\end{align}
with $\widetilde \Theta_i=(\Omega(i,\widetilde \Pi_{i}+\cdot),\widetilde \Xi_i,1)$. Our aim is to prove that, for $\gep >0$ 
and $M$ large enough, we have for all $\Omega\in \{A,B\}^{\N_0\times \Z}$ and all $n\in \N$ that $\widehat{A}_4\leq
\widehat{A}_5 e^{\gep n}$ uniformly in $(N,\Theta,x,u)$.

For each $i\in \widetilde V_{\Pi,M}$, we pick $\pi_i\in \cW_{\widetilde \Theta_i, v_i,L_n}$.  Since $v_i L_n$ is the minimal 
number of steps required to cross the column indexed by $i$, and since $\mathfrak{N}_B(\widetilde\Theta_i)$ is the 
number of $B$-blocks that have to be crossed vertically by any trajectory crossing a block column of type $\widetilde \Theta_i$,
we can assert that the number of steps performed by $\pi_i$ in the $B$-blocks belongs to $\{\mathfrak{N}_B(\widetilde\Theta_i) 
L_n,\dots,\mathfrak{N}_B(\widetilde\Theta_i) L_n+3 L_n\}$. Therefore, recalling the definition \eqref{alterh} and the crude bound 
$r_1+\dots+r_k\leq 4n/M L_n$, we can assert that, for all $(N,\Theta_{\text{traj}},x,u)$ and all $(\pi_i)_{i\in \widetilde V_{\Pi,M}}
\in  \bigotimes_{i\in \widetilde V_{\Pi,M}} \cW_{\widetilde \Theta_i, v_i,L_n}$,
\be{upppb}
 \sum_{s=1}^{k} \sum_{x=0}^{r_s-1}\widehat H_{v_{i_s+x} L_n,L_n}^{\, 
 \Omega(i_s+x,\widetilde \Pi_{i_s+x}+\cdot)}(\pi_{i_s+x})\geq  \tfrac{\beta-\alpha}{2}\, 
 \sum_{i\in \widetilde{V}_{\Pi,M}} \mathfrak{N}_{B}(\widetilde \Theta_i) L_n- 12 \tfrac{\alpha n}{M L_n}.
\ee
Thus, it suffices to choose $M$ so large that $12 \alpha/M\leq \gep$ to complete the proof of the step.


\subsubsection{Step 5}
\label{step55}  

In this step we replace, for each of the pieces of trajectories crossing the catch-up column, the coarse-grained Hamiltonian 
$\widehat H$ by the original Hamiltonian. Thus, we set 
\begin{align}
\label{defA6*}
\widehat{A}_6=\prod_{i\in \widetilde V_{\Pi,M} }\, 
Z_{L_n}^{\,\omega_{J_i}}(\widetilde \Theta_i,v_i).
\end{align}
Our aim is to show that, for $\gep >0$ and $M$ large enough, we have for all $\Omega\in \{A,B\}^{\N_0\times \Z}$ that 
\be{event*}
B^2_{n,M}=\{\omega\colon\, \widehat{A}_5\leq 
\widehat{A}_6\, e^{\gep n} \ \text{for all}\ (N,\Theta_{\text{traj}},x,u)\}
\ee
satisfies $\P_{\omega}(B^2_{n,M})\to 1$ as $n\to \infty$. We will not give the details of the proof, because it is completely 
similar to that of Step 1. The only difference is that we replace $\widetilde u=\bar u_1+\dots+\bar u_r$ by $\widetilde v
=\bar v_1+\dots+\bar v_r$.


\subsubsection{Step 6}
\label{step66}

Let $(\widetilde{w}_i)_{i\in\N}$ be an i.i.d.\ sequence of Bernouilli trials, independent of $\omega,\Omega$. For 
$(N,\Theta_{\text{traj}},x,u)$ we set $\widehat{u}=\sum_{s=1}^k \bar u_s-\bar v_s$. By changing the $u_i$ into $v_i$ 
in those catch-up columns, we remove $\widehat{u}L_n$ steps from the trajectories associated with $(N,
\Theta_{\text{traj}},x,u)$, so that they have become trajectories associated with $(N,\widetilde \Theta_{\text{traj}},
\widetilde x,v)$. In this step, we will concatenate the trajectories associated with $(N,\widetilde \Theta_{\text{traj}},
\widetilde x,v)$ with an $\widehat{u} L_n$-step trajectory to recover a trajectory that belongs to $\cW_{n,M}$.

Therefore, our aim is to show that for all $\gep>0$ there exists an $M\in \N$ such that, for $\P-a.e.\,\Omega$ and 
for all $\omega,\widetilde \omega \in \{A,B\}^{\N}$, we have for $n$ large enough that  $\widetilde A_2\, 
\widehat A_6\leq  e^{\gep n} Z_{\,n,L_n}^{(\omega_{J}\,,\,\widetilde \omega), \Omega} (M)$ for all $(N,\Theta,x,u)$. 
The proof is completely similar to that of Step 2 in the proof of Proposition~\ref{pr:formimpp} (see
Section~\ref{step2}). For this reason, we will not repeat the details.


\subsubsection{ Step 7}
\label{step77}

In this step, we average over the microscopic disorders $\omega,\widetilde{\omega}$. We recall \eqref{eventt} 
and \eqref{event}, and we set $B_{n,M}=B^{1}_{n,M}\cap B^2_{n,M}$.  With the help of Steps 1--6 above we 
can state that for every $\gep>0$ there exists an $M\in \N$ such that, for  $\P-a.e.\,\Omega$ and for 
$\omega\in B_{n,M}$ and $\widetilde \omega\in \{A,B\}^\N$, we have (recall \eqref{spint})
\be{upfin}
A_1 = A_2 \widetilde A_2 \leq  e^{6\gep n} Z_{\,n,L_n}^{(\omega_{J}\,,\,\widetilde \omega), \Omega} (M)
\quad \text{for all $(N,\Theta,x,u)$}.
\ee 
Next, we recall \eqref{partfunc11} and  \eqref{variaf22}, and we use \eqref{upfin} to state that for all $\gep>0$ 
there exists an $M\in \N$ such that, for $\P-a.e.\,\Omega$ and $n$ large enough and for $\omega\in B_{n,M}$ 
and $\widetilde \omega\in \{A,B\}^\N$, 
\begin{align}
\label{variaf222*}
Z_{n,L_n}^{\omega,\Omega}
&\leq e^{6\gep n} \sum_{N=1}^{n/L_n} 
\sum_{\Theta_{\text{traj}}\in \widetilde{\cD}_{L_n,N}^{\infty} } 
 \,\sum_{x\in \cX_{\Theta_{\text{traj}},\Omega}^{\infty,\infty} }
\,\sum_{u\in \,\cU_{\Theta_{\text{traj}},x,n}^{\,\infty,\infty,L_n}} 
Z_{\,n,L_n}^{(\omega_{J}\,,\,\widetilde{\omega}), \Omega} (M).
\end{align}  
We use \eqref{concmesut} to claim that there exists $C_1,C_2>0$ such that, for all for all $\Omega$, all 
$n\in \N$, all $M\in \N$ and all $J$, 
\be{inegco*}
\P_{\omega,\widetilde{\omega}}\Big(\Big|\tfrac1n
\log Z_{n,L_n}^{(\omega_{J}\,,\,\widetilde{\omega}),\Omega}(M)
-f_{n}^{\Omega}(M)\Big|\geq \gep \Big)\leq C_1 e^{-C_2 \gep^2 n}.
\ee 
We set also
\be{inegco2*}
D_{n,M}=\bigcap_{(N,\Theta_{\text{traj}},x,u)} 
\Big\{\Big|\tfrac1n\log Z_{n,L_n}^{(\omega_{J}\,,
\,\widetilde{\omega}),\Omega}(M)-f_{n}^{\Omega}(M)\Big|\leq \gep\Big\},
\ee
recall the definition of $c_n$ in \eqref{carsum} (with $M=m=\infty)$), and use \eqref{inegco*} and the fact 
that $c_n$ grows subexponentially in $n$, to obtain $\lim_{n\to\infty} \P_{\omega,\widetilde{\omega}}(D_{n,M}^{c})= 0$. 
For all $(\omega,\widetilde{\omega})$ satisfying $\omega\in B_{n,M}$ and $(\omega,\widetilde{\omega})\in D_{n,M}$, 
we can rewrite \eqref{variaf222*} as 
\begin{align}
\label{variafin*}
Z_{n,L_n}^{\omega,\Omega}
&\leq c_n\ e^{n  f_{n}^{\Omega}(M) +7\gep n}.
\end{align}  
Consequently, recalling \eqref{boundel}, for $M$ large enough we have 
\be{endres*}
f^{\Omega}_{n}(\alpha,\beta)\leq \P(B_{n,M}^{c} \cup D_{n,M}^{c}) 
\, C_{\text{uf}}(\alpha)+ \frac{\log c_n}{n}+\frac1n
\E\Big(1_{\{B_{n,M}\cup D_{n,M}\}}  \big(n f_{n}^{\Omega}(M) +7\gep n\big)\Big).
\ee
Since $\P(B_{n,M}^c\cup D_{n,M}^c)$ and $\lim_{n\to\infty} (\log c_n)/n=0$, the proof of Proposition~\ref{pr:formimppp} 
is complete.


\section{Proof of Theorem \ref{varformula2}: slope-based variational formula}
\label{varfo2}

We are now ready to show how the variational formula in \eqref{genevarold} can be 
transformed into the variational formula in \eqref{genevar}. We recall that, by the 
definition of $\bar \cR_p$ in \eqref{setbarp}, the variational formula in \eqref{genevar} 
can also be written as
\be{genevarte}
f(\alpha,\beta;p) =\sup_{M\geq 1} \, \sup_{\bar{\rho}\in \bar{\cR}_{p,M}}\,
\sup_{v\,\in\,\bar{\cB}}\,\,\frac{\bar N(\bar{\rho},v)}{\bar D(\bar{\rho},v)}.
\ee

Let $\cF_{\overline\cV_M}$ and $\bar{\cF}$ be the counterparts of $\cB_{\overline\cV_M}$ 
and $\bar{\cB}$ for Borel functions instead of continuous functions, i.e., 
\be{BVdef2}
\cF_{\overline\cV_M}=\big\{(u_\Theta)_{\Theta\in \overline\cV_M}\in\R^{\overline\cV_M}
\colon\,u_\Theta\geq t_\Theta\,\,\forall\,\Theta \in \overline\cV_M,\,
\Theta \mapsto u_\Theta \mbox{ Borel}\big\}
\ee 
and
\be{newB1}
\bar{\cF}=\{v=(v_A,v_B,v_\cI)\in \bar\cD\times\bar\cD\times[1,\infty)\},
\ee
where
\be{newC1}
\bar\cD=\left\{l \mapsto v_l \text{ on } [0,\infty) \text{ Lebesgue measurable with } 
v_l\geq 1+l\quad \forall\,l \geq 0\right\}.
\ee
The proof of Theorem \ref{varformula2} is divided into 4 steps, organized as 
Sections~\ref{Step1}--\ref{Step4}. In Step 1 we show that the supremum over 
$\cB_{\overline \cV_M}$ in \eqref{genevarold} may be extended to $\cF_{\overline\cV_M}$, 
i.e.,
\begin{align}
\label{ineg3}
\sup_{\rho\in \cR_{p,M}}\,\sup_{(u_\Theta)_{\Theta\in \overline\cV_M}\,
\in\,\cB_{\,\overline\cV_M}}\,\,\frac{N(\rho,u)}{D(\rho,u)}
=\sup_{\rho\in \cR_{p,M}}\,\sup_{(u_\Theta)_{\Theta\in \overline\cV_M}\,
\in\,\cF_{\,\overline\cV_M}}\,\,\frac{N(\rho,u)}{D(\rho,u)}.
\end{align}
In Step 2 we show that the supremum over $\bar \cB$ in \eqref{genevar} may be extended
to $\bar\cF$, i.e.,
\begin{align}
\label{ineg4}
\sup_{\bar{\rho}\in \bar{\cR}_{p,M}}\,
\sup_{v\in\,\bar{\cB}}\,\,\frac{\bar N(\bar{\rho},v)}{\bar D(\bar{\rho},v)}
=\sup_{\bar{\rho}\in \bar{\cR}_{p,M}}\,\sup_{v\in\bar{\cF}}\,\,
\frac{\bar N(\bar\rho,v)}{\bar D(\bar\rho,v)}.
\end{align}
Then, the proof of Theorem \ref{varformula2} is achieved with the help of 
 Steps 3 and 4 which, combined with Theorem~\ref{varformula}, allow us to show 
\begin{align}
\label{ineg1}
&f(\alpha,\beta;p) \geq 
\sup_{M\geq 1}  \sup_{\bar{\rho}\in \bar{\cR}_{p,M}}\,\sup_{v\in\bar{\cF}}\,
\frac{\bar N(\bar\rho,v)}{\bar D(\bar\rho,v)},\\
\label{ineg2}
&f(\alpha,\beta;p) \leq
\sup_{M\geq 1} \sup_{\bar{\rho}\in \bar{\cR}_{p,M}}\,\sup_{v\in\bar{\cF}}\,
\frac{\bar N(\bar\rho,v)}{\bar D(\bar\rho,v)}.
\end{align}
Along the way we will need a few technical facts, which are collected in 
Appendices~\ref{B}--\ref{appA}. 


\subsection{Step 1: extension of the variational formula}
\label{Step1}

For $c\in (0,\infty)$, let $u(c)=(u_\Theta(c))_{\Theta\in \overline\cV_M}$ be the counterpart 
of the function $v(c)$ introduced in (\ref{defucp1}-\ref{defucp3}). For $\Theta\in \overline\cV_M$ 
and $c\in (0,\infty)$, set 
\be{defucp}
u_\Theta(c)= \left\{
\begin{array}{ll}
\vspace{.1cm}
t_\Theta
& \mbox{if } \partial^+_u (u\,\psi(\Theta,u))(t_\Theta) \leq c, \\
\vspace{.1cm}
z
&\mbox{otherwise, with } z \mbox{ such that } \partial^-_u (u\,\psi(\Theta,u))(z) 
\geq c \geq \partial^+_u (u\,\psi(\Theta,u))(z),
\end{array}
\right.
\ee
where  $z$ exists and is finite by Lemma~\ref{fep1} in Appendix~\ref{B}, and is unique 
by the strict concavity of $u\to u\psi(\Theta,u)$ for $\Theta\in \overline \cV_M$ (see
Lemma~\ref{concav} in Appendix~\ref{B}). The fine properties of $\Theta\mapsto u_\Theta(c)$ 
are given in Lemma \ref{reguc} in Appendix \ref{appBB}.

For $(\alpha,\beta)\in \CONE$ and $\rho\in \cM_1(\overline\cV_M)$ such that $\int_{\overline{\cV}_M} 
t_\Theta\,  \rho(d\Theta)<\infty$, set
\be{defgg}
g(\rho; \alpha,\beta)=\sup_{u\in  \cF_{\overline{\cV}_M}}\,\frac{ N(\rho,u)}{ D(\rho,u)},
\ee
with the convention that  $N(\rho,u)/ D(\rho,u)=-\infty$ when $D(\rho,u)=\infty$. The equality in
\eqref{ineg3} is a straightforward consequence of the following lemma.

\bl{maximo}
For $(\alpha,\beta)\in \CONE$ and $\rho\in \cM_1(\overline\cV_M)$ such that  $g(\rho; \alpha,\beta)>0$, 
\be{tery}
g(\rho;\alpha,\beta)=\frac{ N(\rho,\bar u)}{ D(\rho,\bar u)} \quad \text{with }  
\bar u=u(g(\rho;\alpha,\beta)).
\ee
Moreover, $u= \bar u$ for $\rho$-a.e.\  $\Theta\in \overline \cV_{M}$ for all $u\in \cF_{\overline{\cV}_M}$ 
satisfying $g(\rho;\alpha,\beta)=\frac{N(\rho,u)}{D(\rho,u)}$. 
\el

\begin{proof}
The following lemma will be needed in the proof.

\bl{maximo*}
For $(\alpha,\beta)\in \CONE$ and $\gep>0$ there exists a $t_\gep>0$ such that, for all  $\rho\in 
\cM_1(\overline\cV_M)$ and all $u\in \cF_{\overline{\cV}_M}$ satisfying $D(\rho,u)\in (t_\gep,\infty)$,
\be{tery*}
\frac{N(\rho, u)}{D(\rho, u)} \leq \gep.
\ee
\el

\begin{proof}
Pick $\gep>0$. By Lemma \ref{boundunif}, there exists a $C_\gep>0$ such that $\psi(\Theta,u)\leq \gep/2$ 
for $\Theta\in\overline{\cV}_M$ and $u\geq \max \{C_\gep, t_\Theta\}$. For $R\in (0,\infty)$, set $B^-(R)
=\{\Theta\in\overline{\cV}_M\colon u_\Theta\leq R\}$ and $B^+(R)=\{\Theta\in \overline{\cV}_M\colon 
u_\Theta> R\}$, and write
\be{subdi}
\frac{ N(\rho, u)}{ D(\rho, u)}=\frac{\int_{B^-(C_\gep)} u_\Theta \psi(\Theta,u_\Theta) \rho(d\Theta)}{ D(\rho, u)}
+\frac{\int_{B^+(C_\gep)} u_\Theta \psi(\Theta,u_\Theta) \rho(d\Theta)}{ D(\rho, u)}.
\ee
By the definition of $C_\gep$ and since $u_\Theta\geq t_\Theta$ for all $\Theta\in \overline{\cV}_M$, we 
can bound the second term in the right-hand side of \eqref{subdi} by $\gep/2>0$. The first term in the 
right-hand side of \eqref{subdi} in turn can be bounded from above by $C_\gep C_{\text{uf}}(\alpha)
/D(\rho, u)$ (recall \eqref{boundel}). Consequently, it suffices to choose $t_\gep= 2 C_\gep 
C_{\text{uf}}(\alpha)/\gep$ to complete the proof.
\end{proof}

We resume the proof of Lemma \ref{maximo}. By assumption, we know that $g(\rho)>0$, which entails 
that $\int_{\overline \cV_M} t_\Theta \rho(d\Theta)<\infty$. Thus, Lemma \ref{reguc}(iv) tells us that 
$D(\rho, u(c))<\infty$ for all $c>0$. We argue by contradiction. Suppose that $\frac{ N(\rho,\bar u)}
{D(\rho,\bar u)}<g(\rho)$, and pick $u\in \cF_{\overline\cV_M}$ such that $D(\rho, u)<\infty$. Write
\be{dec}
\frac{ N(\rho,u)}{ D(\rho,u)}
= \frac{ N(\rho,\bar u)+ [ N(\rho, u)- N(\rho,\bar u)]}{ D(\rho,\bar u)
+[ D(\rho,u)- D(\rho,\bar u)]},
\ee
where
\begin{equation}
\begin{aligned}
N(\rho, u)- N(\rho,\bar u)
&=\int_{\overline{\cV}_M}  u_\Theta \psi(\Theta,u_\Theta)
-\bar u_\Theta \psi(\Theta,\bar u_\Theta)\, \rho(d\Theta).
\end{aligned}
\end{equation}
The strict concavity of $u\mapsto u\,\psi(\Theta,u)$ on $[t_\Theta,\infty)$ for every $\Theta\in
\overline{\cV}_M$, together with the definition of $\bar u$ in \eqref{tery}, allows us to estimate 
\be{refg}
N(\rho, u)- N(\rho,\bar u)\leq  g(\rho) \int_{\overline{\cV}_M}\,
(u_\Theta-\bar u_{\Theta})\,\,\rho(d\Theta).
\ee
Consequently, \eqref{dec} becomes 
\be{uni}
\frac{ N(\rho,u)}{ D(\rho,u)}
\leq\frac{ N(\rho,\bar u)+ g(\rho) [\bar D(\rho,u)-\bar D(\rho,\bar u)]}
{ D(\rho,\bar u)+[ D(\rho,u)- D(\rho,\bar u)]}.
\ee
Define $G=x\mapsto [N(\rho,\bar u)+ g(\bar\rho) x]/[D(\rho,\bar u)+x]$ on $(-D(\rho,\bar u),\infty)$. 
Note that $N(\rho,\bar u)/ D(\rho,\bar u)$ $<g(\rho)$ implies that $G$ is strictly increasing with 
$\lim_{x\to \infty} G(x)=g(\rho)$. Use Lemma \ref{maximo*} to assert that $\N(\rho,u)/D(\rho,u)
\leq \tfrac12 g(\rho)$ when $D(\rho,u) \geq t_{\tfrac12 g(\rho)}$. But then, for all $u$ satisfying 
$D(\rho,u)\leq t_{\frac{g(\rho)}{2}}$, \eqref{uni} gives
\be{uni*}
\frac{ N(\rho,u)}{ D(\rho,u)}
\leq G\bigg(t_{\tfrac{g(\rho)}{2}}-D(\rho,\bar u)\bigg)<g(\rho).
\ee
Consequently,
\be{}
\sup_{u\in \cF_{\overline\cV_M} } \frac{ N(\rho,u)}{ D(\rho,u)}\leq 
\max\bigg\{\tfrac{g(\rho)}{2},  G\Big(t_{\frac{g(\rho)}{2}}-D(\rho,\bar u)\Big)\bigg\}<g(\rho),
\ee 
which is a contradiction, and so $g(\rho)=N(\rho,\bar u)/D(\rho,\bar u)$.

It remains to prove that if $u\in \cF_{\overline{\cV}_M}$ satisfies $g(\rho)=N(\rho,u)/D(\rho,u)$, 
then $u= \bar u$ for $\rho$-a.e.\  $\Theta\in \overline \cV_{M}$.  We proceed again by contradiction, 
i.e., we suppose that a such $u$ is not equal to $\bar u$ for  $\rho$-a.e.\  $\Theta\in \overline \cV_{M}$. 
In this case, both inequalities in \eqref{refg} and \eqref{uni} are  strict, which immediately yields 
that $\frac{N(\rho,u)}{D(\rho,u)}<g(\rho)$.
\end{proof}


\subsection{Step 2: extension of the reduced variational formula}
\label{Step2}

Recall (\ref{defucp1}--\ref{defucp3}) and, for $(\alpha,\beta)\in \CONE$ and 
$\bar \rho\in \cM_1\big(\R_+\cup \R_+\cup\{\cI\}\big)$ such that 
$\int_{0}^\infty (1+l)\,[\bar\rho_A+\bar\rho_B](dl)<\infty$, set 
\be{defhh}
h(\bar\rho; \alpha,\beta)=\sup_{v\in \bar\cF} \, \frac{\bar N(\bar \rho,  v)}{\bar D(\bar\rho, v)}.
\ee 
Recall (\ref{defucp1}-\ref{defucp3}). The equality in \eqref{ineg4} is a straightforward consequence 
of the following lemma.

\bl{reducc}
For $(\alpha,\beta)\in \CONE$ and $\bar \rho\in \cM_1\big(\R_+\cup \R_+\cup\{\cI\}\big)$ such that 
$h(\bar \rho;\,\alpha,\beta)>0$,
\be{tery**}
h(\bar \rho;\alpha,\beta)=\frac{\bar N(\bar \rho,\bar v)}{\bar D(\bar\rho,\bar v)},\quad \text{with}\ 
\bar v=v(h(\bar\rho;\alpha,\beta)).
\ee
For $v\in \bar\cF$ satisfying $h(\bar \rho; \alpha,\beta)=\frac{\bar N(\bar\rho,v)}{\bar D(\bar\rho,v)}$, 
$v= \bar v$ for $\bar \rho$-a.e.\  $(k,l)\in \{A,B\}\times [0,\infty)$ or $k=\cI$.
\el

\begin{proof}
The proof is similar to that of Lemma \ref{maximo}. The counterpart of Lemma \ref{maximo*} is 
obtained by showing that for $(\alpha,\beta)\in \CONE$ and $\gep>0$ there exists a $t_\gep>0$ 
such that, for all $\bar\rho\in \cM_1\big(\R_+\cup \R_+\cup\{\cI\}$ and all $v\in \bar \cF$ satisfying 
$\bar D(\bar \rho,v)\in (t_\gep,\infty)$,
\be{teryext}
\frac{ \bar N(\bar \rho, v)}{ \bar D(\bar \rho, v)}\leq \gep.
\ee
The proof of \eqref{tery**} is similar to the proof of Lemma \ref{maximo*} and relies mainly on 
Lemmas \ref{l:lemconv2}(ii--iii)  and on the limit given in Lemma \ref{l:lemconv}(ii). 

It remains to show that $h(\bar \rho;\alpha,\beta)=\frac{\bar N(\bar \rho,\bar v)}{\bar D(\bar\rho,\bar v)}$
and that $v\in \bar\cF$ satisfying $h(\bar \rho; \alpha,\beta)=\frac{\bar N(\bar\rho,v)}{\bar D(\bar\rho,v)}$
necessarily satisfies $v= \bar v$ for $\bar \rho$-a.e.\  $(k,l)\in \{A,B\}\times [0,\infty)$ or $k=\cI$. The proofs 
are similar to their counterparts in Lemma \ref{maximo} and require the strict concavity of $u\mapsto u 
\tilde\kappa (u,l)$ for $l\in \R$ and of $u\mapsto u \phi_\cI(u)$, as well as the definition of $\bar v$ in 
(\ref{defucp1}--\ref{defucp3}). 
\end{proof}


\subsection{Step 3: lower bound}
\label{Step3}

The inequality in \eqref{ineg1} is a straightforward consequence of the following lemma.

\bl{GGG}
For all $(\alpha,\beta)\in \CONE$, $\bar \rho\in \bar\cR_{p,M}$ and $v=(v_A,v_B,v_\cI)\in\bar{\cF}$ there exists 
$\rho\in \cR_{p,M}$ and $u=(u_\Theta)_{\Theta\in \overline{\cV}_M} \in  \cF_{\,\overline\cV_M}$ satisfying 
\be{rere}
\frac{N(\bar\rho,v)}{D(\bar\rho,v)}\leq \frac{N(\rho,u)}{D(\rho,u)}.
\ee
\el

\begin{proof}
Since $\bar\rho\in\bar\cR_{p,M}$, there exist $\rho\in \cR_{p,M}$ and $h\in \cE$ such that
$\bar\rho=G_{\rho,h}$. For $\Theta\in \overline{\cV}_M$ and $k\in \{A,B\}$, set 
$d_{k,\Theta}=l_{k,\Theta}/h_{k,\Theta}$ if $h_{k,\Theta}>0$ and $d_{k,\Theta}=0$ 
otherwise. Put
\be{defuu}
u_\Theta=h_{A,\Theta}\,v_{A,d_{A,\Theta}}+h_{B,\Theta}\,v_{B,\,d_{A,\Theta}}
+h_{\cI,\Theta}\, v_{\cI},\quad \Theta\in \overline{\cV}_M.
\ee
To prove \eqref{rere},  we recall \eqref{Gdef} and integrate \eqref{defuu} against $\rho$. 
Since $\bar\rho=G_{\rho,h}$, it follows that 
\be{equ}
D(\bar\rho, v) = \int_{\overline{\cV}_M} u_\Theta\, \rho(d\Theta)=D(\rho,u).
\ee
Since $h\in \cE$ we can assert that 
\be{appart}
(h_{A,\Theta},h_{B,\Theta},h_{\cI,\Theta}), (h_{A,\Theta}\,v_{A,\,d_{A,\Theta}}, 
h_{B,\Theta}\,v_{B,\,d_{B,\Theta}},h_{\cI,\Theta}\,v_{\cI})\in \cL(\Theta;\,u_\Theta),
\quad \Theta\in \overline{\cV}_M,
\ee
which, with the help of \eqref{Bloc of type I}, allows us to write
\begin{align}
\label{defu3}
u_\Theta\,\Psi(\Theta,u_\Theta) \geq h_{A,\Theta} \
&v_{A,\,d_{A,\Theta}}\ 
\tilde{\kappa}(v_{A,\,d_{A,\Theta}}, d_{A,\Theta})\\
+\nonumber &h_{B,\Theta}\,v_{B,\,d_{B,\Theta}}\, 
\left [\tilde{\kappa}(v_{B,\,d_{B,\Theta}},d_{B,\Theta})+\tfrac{\beta-\alpha}{2}\right]
+ h_{\cI,\Theta}\, v_{\cI}\,\phi_{\cI}(v_{\cI};\alpha,\beta).
\end{align}
After integrating \eqref{defu3} against $\rho$ and using that $\bar \rho=G_{\rho,u}$,
we obtain 
\begin{align}
\label{fin*}
\int_{\overline{\cV}_M} u_\Theta \psi(\Theta,u_\Theta) \rho(d\Theta)
\geq
\bigg[\int_{0}^{\infty} &v_{A,l}\,\kappa(v_{A,l},l)\,\rho_{A}(dl)\\
\nonumber 
&+ \int_{0}^{\infty} v_{B,l}\,\big[\kappa(v_{B,l},l)+\tfrac{\beta-\alpha}{2}\big]\,
\rho_{B}(dl)+\rho_{\cI}\, v_{\cI}\,\phi_{\cI}(v_{\cI};\alpha,\beta)\bigg].
\end{align}
Thus, \eqref{rere} is immediate from \eqref{equ} and \eqref{fin*}.
\end{proof}


\subsection{Step 4: upper bound}
\label{Step4}

The proof of \eqref{ineg2}  is a straightforward consequence of the following lemma.

\bl{ABC}
For all $(\alpha,\beta)\in \CONE$,  $\rho\in \cR_{p,M}$ and $u\in\cB_{\,\overline\cV_M}$, 
there exist $\bar \rho\in \bar\cR_{p,M}$ and $v\in\bar{\cF}$ such that 
\be{prstep2}
\frac{N(\rho,u)}{D(\rho,u)}\leq \frac{N(\bar\rho,v)}{D(\bar\rho,v)}.
\ee
\el

\begin{proof}
Since $u \in \cB_{\,\overline\cV_M}$, Proposition~\ref{Borel} in Appendix~\ref{appA} allows 
us to state that there exist $h\in \cE$ and $r\in\cU(h)$ such that  
\begin{align}
\label{targ}
u_\Theta\,\psi(\Theta,u_\Theta)=h_{A,\Theta}\, 
r_{A,\Theta}\,\tilde{\kappa}\big(r_{A,\Theta},
\tfrac{l_{A,\Theta}}{h_{A,\Theta}}\big)&+h_{B,\Theta}\,
r_{B,\Theta}\,\big[\tilde{\kappa}\big(r_{B,\Theta},
\tfrac{l_{B,\Theta}}{h_{B,\Theta}}\big)+\tfrac{\beta-\alpha}{2}\big]\\
&\nonumber
+h_{\cI,\Theta}\, r_{\cI,\Theta}\,\phi_\cI(r_{\cI,\Theta}), 
\quad \quad \forall \Theta \in \overline\cV_M,
\end{align}
and 
\be{targ1}
h_{A,\Theta}\, r_{A,\Theta}+ h_{B,\Theta}\, r_{B,\Theta}+h_{\cI,\Theta}\, 
r_{\cI,\Theta}=u_\Theta, \quad \quad \forall \Theta \in \overline\cV_M.
\ee
Define $\rho_{A,h},\rho_{B,h},\rho_{\cI,h}$ to be the probability measures on $\overline
\cV_M$ given by
\be{measure}
\frac{d\rho_{k,h}}{d\rho}(\Theta)
=\frac{h_{k,\Theta}}{\int_{\overline\cV_M} h_{k,\Theta}\,\rho(d\Theta)},
\quad k\in \{A,B,\cI\}. 
\ee
For $l\in\R_+$, set 
\be{defv*}
v_{A,l}=E_{\rho_{A,h}}\big[r_{A,\Theta}\,\big|\,\tfrac{l_{A,\Theta}}{h_{A,\Theta}}=l\,\big],
\quad 
v_{B,l}=E_{\rho_{B,h}}\big[r_{B,\Theta}\,\big|\,\tfrac{l_{B,\Theta}}{h_{B,\Theta}}=l\,\big],
\ee
and 
\be{defvI}v_{\cI}=E_{\rho_{\cI,h}}\big[r_{\cI,\Theta}\,\big].
\ee
The fact that $r\in \cU(h)$ implies that $v_\cI\geq 1$ and $v_{k,l}\geq 1+l$ for $l\in\R_+$ 
and $k\in\{A,B\}$. Moreover, the Borel measurability of $\Theta\mapsto h_{k,\Theta}$ for 
$k\in \{A,B\}$ implies the Lebesgue measurability of $l\mapsto v_{k,l}$ for $k\in\{A,B\}$. 
Therefore, $(v_A,v_B,v_\cI)\in \bar\cF$. 

By the concavity of $a\mapsto a\tilde{\kappa}(a,b)$ and $\mu\mapsto \mu\phi_{\cI}(\mu)$, 
we obtain that 
\begin{align}
\label{rdf}
&E_{\rho_{A,h}}\Big[r_{A,\Theta}\,\tilde{\kappa}(r_{A,\Theta},l)\,\big|\,
\tfrac{l_{A,\Theta}}{h_{A,\Theta}}=l\Big]\leq v_{A,l}\,\widetilde \kappa(v_{A,l},l),\\
\nonumber 
&E_{\rho_{B,h}}\Big[r_{B,\Theta}\,\big(\tilde{\kappa}(r_{B,\Theta},l)
+\tfrac{\beta-\alpha}{2}\big)\,\big|\,\tfrac{l_{B,\Theta}}{h_{B,\Theta}}
=l\Big]\leq v_{B,l}\,\big[\widetilde \kappa(v_{B,l},l)+\tfrac{\beta-\alpha}{2}\big],\\
\nonumber 
&E_{\rho_{\cI,h}}\Big[r_{\cI,\Theta}\,\phi_{\cI}(r_{\cI,\Theta})\,\Big]
\leq v_{\cI}\,\phi_{\cI}(v_\cI). 
\end{align}
Integrate $\eqref{targ}$ against $\rho$, to obtain
\begin{align}
\label{inte}
\int_{\overline\cV_M} u_\Theta\,\psi(\Theta,u_\Theta)\,\rho(d\Theta)
&=  {\textstyle \int_{\overline\cV_M} h_{A,\Theta}\,\rho(d\Theta)} \ 
E_{\rho_{A,h}}\big[r_{A,\Theta}\,\widetilde \kappa\big(r_{A,\Theta},
\tfrac{l_{A,\Theta}}{h_{A,\Theta}}\big)\big]\ \\
\nonumber &\quad + {\textstyle \int_{\overline\cV_M} h_{\cI,\Theta}\,\rho(d\Theta)} \  
E_{\rho_{\cI,h}}\big[r_{\cI,\Theta}\,\phi_{\cI}(r_{\cI,\Theta})\big]\\
\nonumber 
&\quad +{\textstyle \int_{\overline\cV_M} h_{B,\Theta}\,\rho(d\Theta)}\  
E_{\rho_{B,h}}\big[r_{B,\Theta}\,\big(\widetilde \kappa\big(r_{B,\Theta},
\tfrac{l_{B,\Theta}}{h_{B,\Theta}}\big) +\tfrac{\beta-\alpha}{2}\big)\big].
\end{align}
Set $\bar\rho=G_{\rho,h}$. In the right-hand side of \eqref{inte} take the 
conditional expectation with respect to $\tfrac{l_{A,\Theta}}{h_{A,\Theta}}$ and 
$\tfrac{l_{B,\Theta}}{h_{B,\Theta}}$ in the first term and the second term, 
respectively. Then use the inequalities in \eqref{rdf}, to obtain
\begin{align}
\label{inte2}
\int_{\overline\cV_M} u_\Theta\,\psi(\Theta,u_\Theta)\,\rho(d\Theta)
\leq \int_{0}^{\infty} 
&v_{A,l}\, \widetilde \kappa(v_{A,l},l)\,\bar \rho_{A}(dl)\\
\nonumber 
&+\int_{0}^{\infty} v_{B,l}\,\big[\tilde{\kappa}(v_{B,l},l)+\tfrac{\beta-\alpha}{2}\big]\,
\bar\rho_{B}(dl)+\bar\rho_{\cI}\,v_{\cI}\,\phi_{\cI}(v_{\cI},\alpha,\beta).
\end{align}
Similarly, integrate \eqref{targ1} against $\rho$ and take the conditional expectation 
with respect to $\tfrac{l_{A,\Theta}}{h_{A,\Theta}}$ and $\tfrac{l_{B,\Theta}}{h_{B,\Theta}}$, 
to obtain
\be{intete}
\int_{\overline \cV_M} u_\Theta\,\rho(d\Theta)
= \int_{0}^{\infty} v_{A,l}\,\bar \rho_{A}(dl)
+\int_{0}^{\infty} v_{B,l}\,\bar\rho_{B}(dl)+\bar\rho_{\cI}\,v_{\cI}.
\ee
At this point, \eqref{inte} and \eqref{intete} allow us to conclude that $N(\rho,u)/D(\rho,u) 
\leq N(\bar \rho,v)/D(\bar \rho,v)$. Since $v\in \bar\cF$, this completes the proof.
\end{proof}


\section{Phase diagrams: proof of Theorems~\ref{phgene}, \ref{phdgsup} and \ref{phdgsub}}
\label{phdiagsupsub}


\subsection{Proof of Theorem~\ref{phgene}}

We first state and prove a proposition that compares $f$, $f_\cD$ and $f_{\cD_2}$, 
and deals with the regularity and the monotonicity of $f_{\cD}$. Recall the definition 
of $\alpha^*$ in \eqref{alpha}.

\begin{proposition}
\label{propa}
(i) $f(\alpha,\beta)=f_{\cD}(\alpha,\beta)$ for $(\alpha,\beta)\in \CONE\colon\,
\beta\leq 0$.\\
(ii) $x\mapsto f_{\cD}(x,0)$ is continuous, convex and non-increasing on $[0,\infty)$.\\
(iii) $f_{\cD}(x,0)>f_{\cD_2}$ for $x\in [0,\alpha^*)$ and $f_{\cD}(x,0)=f_{\cD_2}$ 
for $x\in [\alpha^*,\infty)$.
\end{proposition}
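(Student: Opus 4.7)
The plan is to attack the three parts in order, with part (i) requiring the key input $\phi_\cI(u;\alpha,\beta)=\tilde\kappa(u,0)$ for $\beta\le 0$, and parts (ii) and (iii) following quickly from this identity and the structure of the variational formula \eqref{genevarD1d}.

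\smallskip
\noindent\textbf{Part (i).} I first establish that $\phi_\cI(\mu;\alpha,\beta)=\tilde\kappa(\mu,0)$ for every $\mu\in[1,\infty)\cap\mathbb{Q}$ and every $(\alpha,\beta)\in\CONE$ with $\beta\le 0$. For the upper bound, note that in $\CONE$ with $\beta\le 0$ we have $\alpha\ge -\beta\ge 0$, so each summand $\beta\,1\{\omega_i=B\}-\alpha\,1\{\omega_i=A\}$ in the Hamiltonian \eqref{Zinf} is non-positive. Hence $H^{\omega,\cI}_L(\pi)\le 0$ for every $\pi\in\cW^\cI_L(\mu)$, giving $Z^{\omega,\cI}_{L,\mu}\le|\cW^\cI_L(\mu)|$ and therefore $\phi^{\omega,\cI}_L(\mu)\le\tilde\kappa_L(\mu,0)$. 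For the matching lower bound, I restrict the sum in $Z^{\omega,\cI}_{L,\mu}$ to paths that stay weakly in the upper half-plane (on which $H^{\omega,\cI}_L=0$); a standard reflection-type argument shows that the number of such paths is at least $c\,|\cW^\cI_L(\mu)|$ up to polynomial corrections, so $\phi^{\omega,\cI}_L(\mu)\ge \tilde\kappa_L(\mu,0)-o(1)$. Passing to the limit via Proposition~\ref{lementr} and Proposition~\ref{l:feinflim} gives the claimed identity. Substituting $\phi_\cI(v_\cI;\alpha,\beta)=\tilde\kappa(v_\cI,0)$ into \eqref{varformod} and reallocating the $\bar\rho_\cI$ mass to an atom at slope $0$ of the A-measure (choosing $v_{A,0}=v_\cI$ on one side, and choosing $v_\cI=v_{A,0}$ on the other) shows that the two variational formulas \eqref{genevar} and \eqref{genevarD1d} yield the same value, proving $f(\alpha,\beta;p)=f_\cD(\alpha,\beta;p)$ on $\{\beta\le 0\}\cap\CONE$.

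\smallskip
\noindent\textbf{Part (ii).} Set $x=\alpha$ (so $\beta=0$). For each fixed $(\bar\rho,v)$ admissible in \eqref{genevarD1d}, the numerator $\bar N_\cD(\bar\rho,v)$ is \emph{affine} in $x$, since the only $x$-dependence sits in the coefficient $\tfrac{\beta-\alpha}{2}=-\tfrac{x}{2}$ multiplying the non-negative quantity $\int_0^\infty v_{B,l}\,\bar\rho_B(dl)\ge 0$; meanwhile $\bar D_\cD(\bar\rho,v)$ does not depend on $x$. Hence, for each admissible $(\bar\rho,v)$ with $\bar D_\cD(\bar\rho,v)<\infty$, the quotient $\bar N_\cD/\bar D_\cD$ is a non-increasing affine function of $x\in[0,\infty)$. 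Taking the supremum yields that $x\mapsto f_\cD(x,0)$ is a supremum of non-increasing affine functions, hence convex and non-increasing on $[0,\infty)$. Continuity on $(0,\infty)$ is then automatic from convexity; continuity at $0$ follows from right-continuity of the convex function $f_\cD(\cdot,0)$ combined with the uniform bound \eqref{boundel} that prevents $+\infty$ at the boundary.

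\smallskip
\noindent\textbf{Part (iii).} The inequality $f_\cD(x,0)\ge f_{\cD_2}(p)$ is immediate from \eqref{genevarD2d}, since the supremum in $f_{\cD_2}$ is taken over a strictly smaller set (those $\bar\rho$ with $\bar\rho_B([0,\infty))=0$). Combining this with the monotonicity proved in (ii), the set $S:=\{x\ge 0\colon f_\cD(x,0)>f_{\cD_2}(p)\}$ is a down-set of the form $[0,\alpha^*)$ or $[0,\alpha^*]$, with $\alpha^*$ defined by \eqref{alpha}. Continuity of $x\mapsto f_\cD(x,0)$ from (ii) shows that $f_\cD(\alpha^*,0)=f_{\cD_2}(p)$ when $\alpha^*<\infty$, ruling out the second option. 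Hence $S=[0,\alpha^*)$, and on $[\alpha^*,\infty)$ monotonicity plus the trivial lower bound forces $f_\cD(x,0)=f_{\cD_2}(p)$.

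\smallskip
\noindent\textbf{Main obstacle.} The delicate step is (i), specifically the identification $\phi_\cI(\mu;\alpha,\beta)=\tilde\kappa(\mu,0)$ for $\beta\le 0$ and the subsequent translation of this pointwise identity between the two variational formulas. The upper bound $\phi_\cI\le\tilde\kappa(\cdot,0)$ is straightforward from the sign of the Hamiltonian, but matching the two suprema in \eqref{genevar} and \eqref{genevarD1d} requires care: in \eqref{genevar} the parameter $v_\cI\ge 1$ is independent of $v_{A,0}$, whereas in \eqref{genevarD1d} the role of $v_\cI$ is played by $v_{A,0}$, which is tied to the continuous function $l\mapsto v_{A,l}\in\cC$. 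I handle this by an approximation in which $v_A$ is modified on a vanishing neighborhood of $0$ so that $v_{A,0}$ takes any prescribed value $\ge 1$, using continuity of $u\mapsto u\tilde\kappa(u,0)$ and the fact that any atom of $\bar\rho_A$ at $0$ can be absorbed into $\bar\rho_\cI\,\delta_0$ without loss, since both contribute to $f_\cD$ through the same functional $u\mapsto u\tilde\kappa(u,0)$.
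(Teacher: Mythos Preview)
Your proof is correct, and parts (i) and (iii) follow the paper's approach closely. The paper is terser: for (i) it simply notes that the Hamiltonian in \eqref{Zinf} is non-positive when $\beta\le 0$, so $\phi_\cI(v;\alpha,\beta)=\tilde\kappa(v,0)$, and then invokes \eqref{genevar} and \eqref{genevarD1d} without further comment. The ``main obstacle'' you flag---that $v_\cI$ is a free scalar in \eqref{genevar} while the corresponding slot in \eqref{genevarD1d} is tied to $v_{A,0}$---is real, and the paper glosses over it; your approximation fix is fine, though a slicker resolution is that the maximizer $\bar v=v(c)$ of Lemma~\ref{reducc} automatically satisfies $\bar v_\cI=\bar v_{A,0}$ once $\phi_\cI(\cdot)=\tilde\kappa(\cdot,0)$, since both are determined by the same equation $\partial_u(u\tilde\kappa(u,0))=c$.

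Part (ii) is where you take a genuinely different route. The paper argues that $(\alpha,\beta)\mapsto f(\alpha,\beta)$ is convex and finite on $\R^2$ (as a pointwise limit of convex functions via \eqref{felimdef}), hence continuous; then part (i) gives $f_\cD(x,0)=f(x,0)$ for $x\ge 0$, so convexity and continuity of $x\mapsto f_\cD(x,0)$ are inherited, with monotonicity read off from \eqref{genevarD1d}. Your argument instead works directly with the variational formula: for each fixed $(\bar\rho,v)$ the quotient $\bar N_\cD/\bar D_\cD$ is non-increasing affine in $x$, so the supremum is convex and non-increasing. Your route is self-contained and does not need (i) as input, but you then have to patch continuity at $x=0$ by hand (your argument there can be made precise: the supremum of affine functions is lower semicontinuous, and non-increasing gives the matching upper bound at $0$). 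The paper's route buys continuity on all of $[0,\infty)$ for free, at the cost of invoking convexity of the finite-volume free energies.
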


\begin{proof}
(i) Note that for $(\alpha,\beta)\in \CONE\colon \beta\leq 0$ and $v\geq 1$ we have 
$\phi^\cI(v,\alpha,\beta)=\tilde\kappa(v,0)$, because the Hamiltonian in \eqref{Zinf} 
is always non-positive. Thus, \eqref{genevar} and \eqref{genevarD1d} imply (i).

\medskip\noindent 
(ii) Since $(\alpha,\beta)\mapsto f(\alpha,\beta)$ is convex on $\R^2$ (being the 
pointwise limit of a sequence of convex functions; see \eqref{felimdeff}) and is 
everywhere finite, it is also continuous. Therefore (i) implies that $x\in[0,\infty)
\mapsto f_{\cD}(x,0)$ is continous and convex. The monotonicity of $x\mapsto f_{\cD}(x,0)$ 
can be read off directly from \eqref{genevarD1d}.

\medskip\noindent 
(iii) It is obvious from \eqref{genevarD1d} and \eqref{genevarD2d} that $f_{\cD}(x,0)
\geq f_{\cD_2}$ for every $x\in [0,\infty)$. Recall \eqref{alpha}. Since $x\mapsto 
f_{\cD}(x,0)$ is continuous and non-increasing, it follows that $f_{\cD}(x,0)>f_{\cD_2}$ 
for $x\in [0,\alpha^*)$ and $f_{\cD}(x,0)=f_{\cD_2}$ for $x\in [\alpha^*,\infty)$. 
\end{proof}

We are now ready to give the proof of Theorem~\ref{phgene}.

\begin{proof}
(a) Pick $\alpha\geq 0$ and note that every element of $J_\alpha$ can be written in the 
form $(\alpha+\beta,\beta)$ (with $\beta\geq -\alpha/2)$, so that $f_\cD$ is constant and
equal to $f_{\cD}(\alpha,0)$ on $J_\alpha$. By the convexity of $(\alpha,\beta)\mapsto
f(\alpha,\beta)$ and by Proposition \ref{propa}(i), we know that $g_\alpha\colon\,\beta
\mapsto f(\alpha+\beta,\beta)-f_{\cD}(\alpha,0)$ is convex and equal to $0$ when $\beta 
\leq 0$. Therefore $g_\alpha$ is non-decreasing, and we can define 
\be{defb}
\beta_c(\alpha)=\inf\{\beta\geq 0\colon\,f(\alpha+\beta,\beta)>f_{\cD}(\alpha,0)\},
\ee
so that $(\alpha+\beta,\beta)\in \cD$ if and only if $\beta\leq \beta_c(\alpha)$.
It remains to check that $\beta_c(\alpha)<\infty$. 

To that aim, pick any $\bar \rho\in \bar \cR_p$ such that $\bar \rho_\cI>0$ and any $v\in 
\bar\cB$ such that $v_\cI>1$ and $\bar D(\bar \rho, v)<\infty$, recall \eqref{varformod}, 
and note that $\lim_{\beta\to\infty} \bar N(\alpha+\beta,\beta;\,\bar\rho,v)=\infty$ 
because $\lim_{\beta\to \infty}\phi_{\cI}(v_\cI;\alpha+\beta,\beta)=\infty$. The last 
observation is obtained by considering a trajectory in $\cW_{v_\cI L}$ that starts at 
$(0,0)$ ends at $(L,0)$, and in between stays in the $A$-solvent except when the 
microscopic disorder $\omega$ has 3 consecutive $B$-monomers, in which case the trajectory 
makes an excursion of size $3$: one step south, one step east and one step north, inside 
the $B$-solvent. Such a trajectory has energy $\beta c L$ for some $c>0$.

\medskip\noindent 
(b) This is a straightforward consequence of the fact that $f_{\cD}(\alpha,\beta)=f_{\cD}
(\alpha-\beta,0)$ for $(\alpha,\beta)\in \CONE$.
\end{proof} 


\subsection{Proof of Theorem \ref{phdgsup}}\label{secpdsup}

\begin{proof}
(a) 
We want to show that $\alpha^*\in (0,\infty)$. To that aim, we first prove that 
$f_{\cD}(0,0)>f_{\cD_2}$, which by the continuity of $x \mapsto f_{\cD}(x,0)$ implies 
that $\alpha^*>0$. It is easy to see that $p \delta_{A,0}(dl)+(1-p) \delta_{B,0}(dl)
\in \bar \cR_p$, since this corresponds to trajectories travelling along the $x$-axis 
while staying on one side. Thus, \eqref{genevarD1d} implies that $f_{\cD}(0,0)\geq 
\tilde\kappa(u^*,0)$, where $u^*$ is the unique maximizer of $u \mapsto\tilde{\kappa}
(u,0)$ on $[1,\infty)$. Moreover, by Lemma~\ref{l:lemconv2}(ii), we have $\tilde{\kappa}
(u,l)\leq\tilde\kappa(u^*,0)$ for every $l\in [0,\infty)$, $u\geq 1+l$ and $(u,l)\neq (u^*,0)$. Since 
$\delta_{A,0}(dl)$ does not belong to $\bar\cR_p$, it follows that $f_{\cD_2}<f_{\cD}(0,0)$, 
and therefore the continuity of $x\mapsto f_{\cD}(x,0)$ implies that $\alpha^*>0$.

It remains to show that $\alpha^*<\infty$. Recall Hypothesis \ref{hyp2}. We argue by 
contradiction. Assume that $f_{\cD}(n,0)>f_{\cD_2}$ for all $n\in\N$. Then Hypothesis \ref{hyp1}
and Lemma \ref{reducc}  tell us that there exists a sequence $(\bar \rho_n)_{n\in \N}$ 
in $\cT_{p}$ such that
\be{assu1}
f(n,0) = f_{\cD}(n,0) 
= \frac{\bar{N}_{\cD}(\bar{\rho}_n,v_n;\,n,0)}{\bar D_{\cD}(\bar{\rho}_n,v_n)}
>f_{\cD_2}>0, \quad n\in \N,
\ee
with $v_n=v(f_\cD(n,0))$, where we recall (\ref{defucp1}--\ref{defucp3}). For simplicity, 
we write $f_2=f_{\cD_2}$ and $\bar v=v(f_2)$ (recall (\ref{defucp1}-\ref{defucp3})) until the end of the proof. Since $f_\cD(n,0)>f_2$ for $n\in \N$,  
Lemma~\ref{regvc}(ii) yields $v_{n,A,l}\leq  \bar v_{A,l}$ for $ l\in [0,\infty), n\in \N$.
Note that Lemma~\ref{regvc} is stated for fixed $(\alpha,\beta)\in \CONE$, which is 
not the case here because $(\alpha,\beta)=(n,0)$. However, in the present setting 
Lemma~\ref{regvc}(ii) remains true for $v_A$ since, by definition, the value taken by 
$v_{A,l}(c)$ for $l\in [0,\infty)$ and $c \in (0,\infty)$ does not depend on $(\alpha,\beta)$.

We can write
\begin{align}
\label{iyt}
f_{\cD}(n,0)-f_{2}
=& \frac{\int_0^{\infty} v_{n,A,l}
[\tilde{\kappa}(v_{n,A,l},l)-f_2](\bar\rho_{n,A}+\bar\rho_{n,\cI}\,\delta_0)(dl)}
{D_\cD(\bar\rho_n,v_n)}\\
\nonumber &\qquad\qquad\qquad \qquad\qquad \qquad
+\frac{\int_0^{\infty} v_{n,B,l} [\tilde{\kappa}(v_{n,B,l},l)
-\frac{n}{2}-f_2] \bar \rho_{n,B}(dl)}{D_\cD(\bar\rho_n,v_n)},
\end{align}
and the concavity of $v\mapsto v\tilde\kappa(v,l)$, together with the fact that $v_{n,A,l}
\leq \bar v_{A,l}$ for all $l\in [0,\infty)$ and $\partial_v (v\tilde\kappa(v,l))(\bar v_{A,l})
=f_2$, implies that
\be{concavt*}
\bar v_{A,l} \tilde{\kappa}(\bar v_{A,l},l)-v_{n,A,l} \tilde{\kappa}(v_{n,A,l},l)
\geq f_2 (\bar v_{A,l}- v_{n,A,l}).
\ee
Since $\tilde\kappa$ is uniformly bounded from above and $v_{n,B,l}\geq 1+l$ for every 
$l\in[0,\infty)$, we can claim that, for $n$ large enough,
\be{bouco}
v_{n,B,l} [\tilde{\kappa}(v_{n,B,l},l)-\tfrac{n}{2}-f_2]
\leq -\tfrac{n}{4} (1+l), \quad l\in [0,\infty).
\ee
Consequently, \eqref{assu1} and (\ref{iyt}--\ref{bouco}) allow us to write 
\be{teri}
\int_0^{\infty} \bar v_{A,l} [
\tilde{\kappa}(\bar v_{A,l},l)-f_2](\bar\rho_{n,A}+\bar\rho_{n,\cI} \delta_0)(dl)  
-\frac{n}{4}\int_0^{\infty} 1+l\,\bar \rho_{n,B}(dl)>0 \ \text{for $n$ large enough},
\ee
which clearly contradicts Hypothesis \ref{hyp2} because $\bar \rho_n\in \cT_p$ for 
$n\in \N$. The proof is therefore complete.

\medskip \noindent 
(b-c) By the definition of $\cD$, $\cD_1$ and $\cD_2$ in \eqref{defD}, \eqref{genevarD1} 
and \eqref{genevarD21}, we know that $\cD=\cD_1\cup \cD_2$ and $\cD_1\cap \cD_2
=\emptyset$. Thus, Theorem~\ref{phgene}(a) implies that (b) and (c) will be proven 
once we show that $J_\alpha\cap \cD_2=\emptyset$ for $\alpha\in [0,\alpha^*)$ and 
$J_\alpha\cap \cD_1=\emptyset$ for $\alpha\in [\alpha^*,\infty)$. Moreover, 
Theorem ~\ref{phgene}(b) tells us that $f_\cD$ is constant and equal to $f_{\cD}(\alpha,0)$
on each $J_\alpha$ with $\alpha\in [0,\infty)$. Consequently it suffices to show that      
$f_{\cD}(\alpha,0)>f_{\cD_2}$ for $\alpha\in [0,\alpha^*)$ and $f_{\cD}(\alpha,0)=f_{\cD_2}$ 
for $\alpha\in [\alpha^*,\infty)$. But this is precisely what Proposition~\ref{propa}(iii) 
states.

\medskip\noindent 
(d) Pick $\alpha\in [0,\infty)$ and assume that Hypothesis \ref{hyp1} holds. Then there
exists a $\bar\rho_\alpha\in\cO_{p,\alpha,0}$ such that $\bar\rho_{\alpha,\cI}>0$. Set
$\bar v=v(f_\cD(\alpha,0))$ and
\be{defbetac}
\tilde \beta_c(\gamma(\alpha)) = \inf\big\{\beta>0\colon\,
\phi_{\cI}(\bar v_{A,0};\beta+\alpha,\beta)
>\tilde \kappa (\bar v_{A,0},0)\big\}.
\ee
The proof will be complete as soon as we show that $\tilde\beta_c(\gamma(\alpha))
=\beta_c(\gamma(\alpha))$ (recall \eqref{intjalpha1}). Note that, by the convexity of 
$\beta\to\phi_{\cI}(\bar v_{A,0};\alpha+\beta,\beta)$, and since $ \phi_{\cI}(\bar v_{A,0}; 
\beta+\alpha,\beta)=\tilde \kappa (\bar v_{A,0},0)$ for $\beta\leq 0$, we necessarily 
have that $\phi_{\cI}(\bar v_{A,0}; \alpha+\beta,\beta)>\tilde \kappa(\bar v_{A,0},0)$ 
for all $\beta>\tilde\beta_c(\gamma(\alpha))$. From Propositions \ref{propa}(i) and 
\ref{maxv}(2), we have that
\be{eqqf}
f(\alpha,0)=f_{\cD}(\alpha,0)=\tfrac{\bar{N}_{\cD}(\bar{\rho}_\alpha,
\bar v)}{\bar{D}_{\cD}(\bar{\rho}_\alpha,\bar v)},
\ee 
and
\be{derc}
\bar{N}_{\cD}(\bar{\rho}_\alpha,\bar v) 
= \int_{0}^{\infty} \bar v_{A,l}\,\tilde\kappa(\bar v_{A,l},l)\,
[\bar{\rho}_{\alpha,A}+\bar{\rho}_{\alpha,\cI}\,\delta_0](dl)
+ \int_{0}^{\infty} \bar v_{B,l}\,\big[\tilde\kappa(\bar v_{B,l},l)
-\tfrac{\alpha}{2}\big]
\,\bar \rho_{\alpha,B}(dl).
\ee
By the definition of $\bar{v}=v(f_{\cD}(\alpha,0))$ in (\ref{defucp1}--\ref{defucp3}), 
we have that $\partial_v( v\,\tilde{\kappa}(v,0))(\bar v_{A,0})=f_{\cD}(\alpha,0)$. 
For notational reasons we suppress the dependence on $\alpha$ of $f_{\cD}$.

First, assume that $\phi_{\cI}(\bar v_{A,0};\beta+\alpha,\beta)=\tilde \kappa (\bar v_{A,0},0)$ 
(we also suppress the dependence on $(\beta+\alpha,\beta)$). Then, since 
$v \to  v \phi_{\cI}(v)$ and $v \to v \tilde{\kappa}(v,0)$ are both concave and $\phi_{\cI}
(v)\geq \tilde{\kappa}(v,0)$ for all $v\geq 1$, we have that $v\to v \phi_{\cI}(v)$ is 
differentiable at $\bar v_{A,0}$ and 
\be{trs}
\partial_v [ v\,\tilde{\kappa}(v,0)](\bar v_{A,0})
= \partial_v [ v\,\phi_{\cI}(v)](\bar v_{A,0})=f_{\cD}.
\ee
Thus, for any $\bar\rho\in \bar\cR_{p}$ and $v\in \bar \cB$, we set $\tilde v\in \bar \cB$ 
such that $\tilde{v}\equiv v$, except for $\tilde{v}_\cI$, which takes the value 
$\bar{v}_{A,0}$. In other words, 
\begin{equation}
\label{ecco}
\begin{aligned}
 \frac{\bar{N}(\bar{\rho}, v)}{\bar{D}(\bar{\rho},v)}
&= \frac{\bar{N}_{\cD}(\bar{\rho}, \tilde v)
+\bar \rho_\cI  [v_{\cI}  \phi_{\cI}(v_{\cI}) 
-\bar v_{A,0} \tilde \kappa(\bar v_{A,0},0) ]}{\bar{D}_\cD(\bar{\rho},\tilde v)
+\bar \rho_\cI  [v_{\cI}  -\bar v_{A,0} ]}\\
&\leq \frac{\bar{N}_{\cD}(\bar{\rho}, \tilde v)
+\bar\rho_\cI f_{\cD} (v_{\cI}-\bar v_{A,0})}{\bar{D}_\cD(\bar{\rho},\tilde v)
+\bar \rho_\cI (v_{\cI}-\bar v_{A,0})},
\end{aligned}
\end{equation}
where we use \eqref{trs}, the concavity of $v\to v \phi_{\cI}(v)$ and the fact that 
$\phi_{\cI}(\bar v_{A,0})=\tilde \kappa (\bar v_{A,0},0)$ by assumption. At this stage 
we recall that, by definition, $\frac{\bar{N}_{\cD}(\bar{\rho}, \tilde v)}{\bar{D}_\cD
(\bar{\rho}, \tilde v)}\leq f_{\cD}$. Hence \eqref{ecco} entails that $\frac{N(\bar{\rho},v)}
{D(\bar{\rho},v)}\leq f_{\cD}$. Thus, $\beta_c(\gamma(\alpha))\geq \tilde \beta_c
(\gamma(\alpha))$.

The other inequality is much easier. Indeed, if we consider $\beta$ such that $\phi_{\cI}
(\bar v_{A,0}; \alpha+\beta,\beta)>\tilde \kappa (\bar v_{A,0},0)$, then $\bar{N}
(\bar{\rho}_\alpha, \bar v)>\bar{N}_{\cD}(\bar{\rho}_\alpha, \bar v)$ because 
$\bar \rho_{\cI,\alpha}>0$. As a consequence, $f(\alpha+\beta,\beta)>f_{\cD}(\alpha,0)$, 
so that $\beta>\beta_c(\gamma(\alpha))$, and therefore $\beta_c(\gamma(\alpha))\leq 
\tilde\beta_c(\gamma(\alpha))$.

\medskip\noindent 
(e) We recall that for $\alpha\in [\alpha^*,\infty)$ we have $\bar{v}=v(f_{D_2})$ and 
therefore $\bar v_{A,0}$ is constant. In (c) we proved that $\beta_c(\gamma(\alpha))
=\tilde\beta_c(\gamma(\alpha))$ on $[\alpha^*,\infty)$. The definition of $\tilde\beta_c
(\gamma(\alpha))$ in \eqref{defbetac} can be extended to $\alpha\in [0,\infty)$. Since 
$\alpha^*>0$, the proof of (d) will be complete once we show that $\alpha\mapsto\tilde 
\beta_c(\gamma(\alpha))$ is concave, continuous and non-decreasing on $(0,\infty)$ 
and that $\lim_{\alpha\to \infty} \tilde\beta_c(\gamma(\alpha))<\infty$.   

By using the same argument as the one we used in the proof of Theorem~\ref{phgene}(a), 
we can claim  that $\lim_{\beta\to \infty} \phi_{\cI}(\bar v_{A,0};\alpha+\beta,\beta)
=\infty$ for every $\alpha\in [0,\infty)$. Consequently, $\tilde \beta_c(\gamma(\alpha))
\in [0,\infty)$ for every $\alpha\in [0,\infty)$. Moreover, the convexity of $(\alpha,\beta)
\mapsto \phi_{\cI}(\bar v_{A,0};\alpha,\beta)$ implies the convexity of $(\alpha,\beta)
\mapsto \phi_{\cI}(\bar v_{A,0};\alpha+\beta,\beta)-\tilde{\kappa}(\bar v_{A,0},0)$, 
which is also non-negative. Therefore, the set $\{(\alpha,\beta)\colon\,\alpha\in 
[0,\infty),\beta\in [-\tfrac{\alpha}{2},\tilde \beta_c(\gamma(\alpha))]\}$ is convex, and 
consequently $\alpha\mapsto \tilde\beta_c(\gamma(\alpha))$ is concave on $[0,\infty)$. This 
concavity yields that $\alpha\mapsto \tilde\beta_c(\gamma(\alpha))$ is continuous on $(0,\infty)$,
and since it is bounded from below by $0$, also that it is non-decreasing. 

It remains to show that $\lim_{\alpha\to \infty} \tilde{\beta}_c(\gamma(\alpha))<\infty$. To that 
aim,  we define $\tilde \beta_c(\infty)$ by choosing  $\alpha=\infty$ in \eqref{defbetac}.
Since  $\phi_{\cI}(\bar v_{A,0};\infty,\beta)\leq \phi_{\cI}(\bar v_{A,0};\alpha+\beta,
\beta)$ for every $\alpha\geq 0$ and $\beta\in [-\tfrac{\alpha}{2},\infty)$, it follows 
that $\tilde \beta_c(\gamma(\alpha))\leq\tilde\beta_c(\infty)$ for every $\alpha\in(0,\infty)$. 
Therefore  it suffices to prove that $\tilde \beta_c(\infty)<\infty$. But this is a 
consequence of the fact that $\lim_{\alpha\to \infty} \phi_{\cI}(\bar v_{A,0};\infty,
\beta)=\infty$. This limit is obtained  by using again the same argument as the one we 
used in the proof of Theorem \ref{phgene}(a).

\medskip\noindent 
(f) This is a straightforward consequence of the fact that $f=f_{\cD}$ on $\cD_1$ and 
$f_{\cD}$ is a function of $\alpha-\beta$.

\medskip\noindent 
(g) This is a direct consequence of the definition of the $\cD_2$-phase in \eqref{genevarD21} 
and the fact that $f_{\cD_2}$ does not depend on $\alpha$ and $\beta$ (see \eqref{genevarD2d}).
\end{proof}


\subsection{Proof of Theorem \ref{phdgsub}}

The proof of Theorem~\ref{phdgsub} has much in common with that of 
Theorem~\ref{phdgsup} in Section~\ref{secpdsup}. For this reason we 
only focus on the points that need to be adapted from the proof of 
Theorem~\ref{phdgsup}.

\begin{proof}
(a) The proof of $\bar\alpha^*\in (0,\infty)$ follows the same scheme as the proof of 
Theorem~\ref{phdgsup}(a). The bound $f_{\cD}(0,0) \geq \tilde\kappa(u^*,0)$ remains 
valid ($u^*$ being the unique maximizer of $u \mapsto\tilde{\kappa}(u,0)$). Moreover,  
$\{\bar \rho\in \bar \cR_p\colon\; K_B(\bar \rho)=K_p\}$ does not contain any element 
of the form $x\delta_{A,0}(dl)+(1-x) \delta_{B,0}(dl)$, since the fraction of horizontal 
steps taken in solvent $B$ can obviously be reduced by allowing the path to sometimes
travel in solvent $A$ with a non-zero slope. This implies that $f_{\cD}(0,0)>f_{\cD_2}(0,0)$, 
and therefore $\bar \alpha^*>0$.

The upper bound is also similar to that of Theorem~\ref{phdgsup}(a). The only difference i
s that $f_{\cD_2}$ depends on $n$, so that we write $f_{2}(n)$ as well as $\bar v_n=v(f_2(n))$. 
Both \eqref{iyt} and \eqref{concavt*} are still true, whereas some attention is needed to adapt 
\eqref{bouco} since $f_2$ depends on $n$. However, it suffices to pick any $\bar \rho\in 
\bar\cR_p\setminus \cT_p$ such that $K_A(\bar \rho)+K_B(\bar \rho)<\infty$ and 
$\bar v^*\in \cB$, and such that $\bar v^*_{k,l}=1+l$ for $(k,l)\in \{A,B\}\times [0,\infty)$ 
and $\bar v^*_{\cI}=1$, to obtain that 
\be{eqqfff}
f_2(n)=f_{\cD}(n,0)\geq \tfrac{\bar{N}_{\cD}(\bar{\rho},
\bar v^*)}{\bar{D}_{\cD}(\bar{\rho},\bar v^*)}\geq c_1-\frac{c_2}{2} n,
\ee
where $c_1\in \R$ and $c_2=\int_0^\infty (1+l)\,\bar \rho_B(dl)/\bar{D}_{\cD}(\bar{\rho},\bar v^*)$.  
Since $p<p_c$ and $\bar \rho\in  \bar \cR_p\setminus \cT_p$, it follows that $K_A(\bar \rho)>0$ 
and $K_B(\bar \rho)>0$, and hence $c_2\in (0,1)$. Thus, \eqref{bouco} still holds with a 
right-hand side of the form $-(\frac14-\frac{c_2}{4}) (1+l)$, which contradicts 
Hypothesis~\ref{hyp3} and completes the proof.

\medskip \noindent 
(b) The proof is literally the same as that of Theorem \ref{phdgsup}(b-c-d).

\medskip \noindent 
(c) This is again a consequence of the fact that $f=f_{\cD}$ on $\cD$ and 
that $f_{\cD}$ is a function of $\alpha-\beta$.
\end{proof}


\begin{appendix}


\section{Uniform convergence of  path entropies}
\label{Path entropies}

In Appendix~\ref{A.1} we state a basic lemma (Lemma~\ref{conunif1}) about uniform 
convergence of path entropies in a single column. This lemma is proved with the
help of three additional lemmas (Lemmas~\ref{convularge}--\ref{convularge3}),
which are proved in Appendix~\ref{A.2}. The latter ends with an elementary lemma 
(Lemma~\ref{l:lemconv2}) that allows us to extend path entropies from rational to 
irrational parameter values. In Appendix~\ref{A.3}, we extend Lemma~\ref{conunif1} 
to entropies associated with sets of paths fullfilling certain restrictions on 
their vertical displacement. 


\subsection{Basic lemma}
\label{A.1}

We recall the definition of $\widetilde{\kappa}_L$, $L\in \N$, in \eqref{ttrajblock} 
and $\widetilde{\kappa}$ in \eqref{conventr}.

\begin{lemma}
\label{conunif1}
For every $\gep>0$ there exists an $L_\gep\in \N$ 
such that 
\begin{align}
\label{unif}
|\tilde{\kappa}_L(u,l)-\tilde{\kappa}(u,l)|
&\leq \gep \text{ for } L\geq L_\gep \text{ and } (u,l)\in \cH_L.
\end{align}
\end{lemma}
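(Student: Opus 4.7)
The plan is to exploit the pointwise convergence and the sup-identification $\tilde\kappa(u,l)=\sup_L\tilde\kappa_L(u,l)$ from Proposition~\ref{lementr}, which already gives $\tilde\kappa_L\le\tilde\kappa$ on $\cH_L$; so only the bound $\tilde\kappa(u,l)-\tilde\kappa_L(u,l)\le\gep$ needs to be shown uniformly. The natural split is between a tail region where $u$ is large (and both entropies are automatically tiny) and a compact region where $u$ stays bounded (and hence, by $|l|\le u-1$, also $l$ does); on the compact region the argument is a density-plus-equicontinuity argument.

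For the tail, I would apply the auxiliary Lemma~\ref{convularge} (the same tool used in Section~\ref{step1}) to pick $u_\gep\ge 1$ such that $\tilde\kappa_L(u,l)\le\gep/2$ for every $L\in\N$ and every $(u,l)\in\cH_L$ with $u\ge u_\gep$. Taking the supremum over $L$ then also yields $\tilde\kappa(u,l)\le\gep/2$, so $|\tilde\kappa_L(u,l)-\tilde\kappa(u,l)|\le\gep$ holds automatically on this region, uniformly in $L$.

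It remains to handle the compact set
\[
C_\gep=\{(u,l)\in\cH\colon 1\le u\le u_\gep,\,|l|\le u_\gep-1\},
\]
on which, by Lemma~\ref{l:lemconv2}, $\tilde\kappa$ is continuous (in fact strictly concave) and hence uniformly continuous. Fix a small $\eta>0$ to be chosen later, pick a finite $\eta$-net $\{(u_i,l_i)\}_{i=1}^{N_\gep}\subset C_\gep\cap\mathbb{Q}^2$, and use the pointwise convergence in \eqref{conventr} to choose $L_\gep'\in\N$ such that, for every $i$ and every $L\ge L_\gep'$ that is a common denominator of $(u_i,l_i)$, $|\tilde\kappa_L(u_i,l_i)-\tilde\kappa(u_i,l_i)|\le\gep/3$. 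Given an arbitrary $(u,l)\in\cH_L\cap C_\gep$, choose the nearest net point $(u_i,l_i)$ and bound $|\tilde\kappa_L(u,l)-\tilde\kappa_{qL}(u_i,l_i)|$ by concatenating/truncating the paths of $\cW_L(u,l)$ by a short bridging piece of length $o(L)$ that realises the small shift in endpoint and length; this interpolation — to be quantified by Lemmas~\ref{convularge2}--\ref{convularge3} — costs at most $\gep/3$ in entropy per step once $\eta$ is small enough and $L$ is large enough, because $u\ge 1$ and the bridge is $O(\eta L)$ steps. Combined with the uniform continuity $|\tilde\kappa(u,l)-\tilde\kappa(u_i,l_i)|\le\gep/3$ on $C_\gep$, this yields the required uniform estimate for all $L\ge L_\gep:=L_\gep'$ chosen appropriately.

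The main obstacle is this last interpolation step: unlike Dini's theorem, one cannot directly compare $\tilde\kappa_L$ at a point of $\cH_L$ to $\tilde\kappa_{L'}$ at a nearby point of $\cH_{L'}$ because the discrete lattices $\cH_L$ shift with $L$. The resolution, to be carried out with the three auxiliary lemmas, is to build an explicit concatenation (a long path in $\cW_L(u_i,l_i)$ glued to a short correction piece, then iterated to produce a path in $\cW_{qL}(u,l)$) whose combinatorial cost per step is controlled uniformly on $C_\gep$. All other ingredients—the tail bound, the pointwise convergence, and the uniform continuity of $\tilde\kappa$—are off-the-shelf once this quantitative concatenation estimate is in hand.
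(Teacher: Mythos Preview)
Your high-level strategy matches the paper's: split into a tail region (large $u$) handled by Lemma~\ref{convularge}, and a bounded region handled by the remaining auxiliary lemmas. The difference is that the paper uses a \emph{three}-way split---large $u$, moderate $u$ (bounded but with $u\ge 1+|l|+\eta$), and $u$ near the boundary $u=1+|l|$---whereas you collapse the last two into a single compact region $C_\gep$ and run a net-plus-concatenation argument there.

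That collapse hides the real difficulty. Your interpolation step, comparing $\tilde\kappa_L(u,l)$ to $\tilde\kappa_{L'}(u_i,l_i)$ by concatenating paths with a short bridge, is exactly the content of Lemma~\ref{convularge2}, and that lemma's proof requires the nearby net point $(u',l')$ to satisfy $u'<u$, $|l'|\le|l|$, and $u-u'\ge |l|-|l'|$ so that the correction piece is a legal path. When $(u,l)$ sits on or very near the boundary $u=1+|l|$, no such $(u',l')\in\cH$ exists in the required direction, and the concatenation argument breaks down. (Equivalently: the Lipschitz control on $G(u,l)=u\tilde\kappa(u,l)$ used in Lemma~\ref{convularge2} is only available on compacta of the \emph{interior} $\cH^0$, since $\partial_u^+G$ blows up at the boundary by Lemma~\ref{l:lemconv2}(vi).) The paper therefore first pushes $u$ upward by a small $\eta$ via Lemma~\ref{convularge3}---which shows $|\tilde\kappa_L(u,l)-\tilde\kappa_L(u+\eta,l)|\le\gep$ uniformly---and only then applies the interior argument. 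You cite Lemma~\ref{convularge3} as a tool for the interpolation, but your write-up does not isolate this boundary regime; as stated, the net argument on all of $C_\gep$ would not go through without that extra step.
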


\begin{proof} 
With the help of Lemma~\ref{convularge} below we get rid of those $(u,l) \in 
\cH\cap\mathbb{Q}^2$ with $u$ large, i.e., we prove that $\lim_{u\to\infty}
\kappa_L(u,l)=0$ uniformly in $L\in \N$ and $(u,l)\in \cH_L$. Lemma~\ref{convularge2} 
in turn deals with the moderate values of $u$, i.e., $u$ bounded away from infinity and 
$1+|l|$. Finally, with~Lemma \ref{convularge3} we take into account the small values 
of $u$, i.e., $u$ close to $1+|l|$. To ease the notation we set, for $\eta\geq 0$ and 
$U>1$, 
\be{defHM}
\cH_{L,\eta,U}=\{(u,l)\in \cH_L\colon  1+|l|+\eta\leq u\leq U\},
\qquad 
\cH_{\eta,U}=\{(u,l)\in \cH\colon  1+|l|+\eta \leq u\leq U\}.
\ee 

\bl{convularge}
For every $\gep>0$ there exists an $U_\gep>1$ such that 
\be{convularge1}
\tfrac{1}{uL}\log \big|\{\pi\in \cW_{uL}\colon \pi_{uL,1}=L\}\big|
\leq \gep \qquad \forall\,L\in \N,\,u\in 1+\tfrac{\N}{L}\colon u\geq U_\gep.
\ee
\el

\bl{convularge2}
For every $\gep>0$, $\eta>0$ and $U>1$ there exists an $L_{\gep,\eta,U}\in \N$ 
such that
\begin{align}
\label{unif1}
|\tilde{\kappa}_L(u,l)-\tilde{\kappa}(u,l)|
&\leq \gep \qquad \forall\,L\geq L_{\gep,\eta,U},\,(u,l)\in \cH_{L,\eta,U}.
\end{align}
\el

\bl{convularge3} 
For every $\gep>0$ there exist $\eta_\gep \in (0,\tfrac12)$ and $L_\gep\in \N$ 
such that
\be{convunf4alt}
|\tilde{\kappa}_L(u,l)-\tilde{\kappa}_{L}(u+\eta,l)|
\leq \gep \qquad \forall\,L\geq L_\gep,\,(u,l)\in \cH_{L},\, 
\eta\in(0,\eta_\gep)\cap \tfrac{2\N}{L}.
\ee
\el

\noindent
Note that, after letting $L\to\infty$ in Lemma \ref{convularge3}, we get 
\be{complco}
|\tilde{\kappa}(u,l)-\tilde{\kappa}(u+\eta,l)|
\leq \gep \qquad \forall\,(u,l)\in \cH\cap\mathbb{Q}^2,\, 
\eta\in(0,\eta_\gep)\cap \mathbb{Q}.
\ee

Pick $\gep>0$ and $\eta_\gep\in (0,\tfrac12)$ as in Lemma~\ref{convularge3}. Note 
that Lemmas~\ref{convularge}--\ref{convularge2} yield that, for $L$ large enough, 
\eqref{unif} holds on $\{(u,l)\in \cH_L\colon u\geq 1+|l|+\frac{\eta_\gep}{2}\}$. 
Next, pick $L\in \N$, $(u,l)\in\cH_{L}\colon  u\leq 1+|l|+\frac{\eta_\gep}{2}$ 
and $\eta_L\in (\frac{\eta_\gep}{2},\eta_\gep)\cap\frac{2\N}{L}$, and write
\be{bounfpr}
|\tilde{\kappa}_L(u,l)-\tilde{\kappa}(u,l)|\leq A+B+C,
\ee
where
\be{abc}
A=|\tilde{\kappa}_L(u,l)-\tilde{\kappa}_L(u+\eta_L,l)|, 
\quad 
B=|\tilde{\kappa}_L(u+\eta_L,l)-\tilde{\kappa}(u+\eta_L,l)|,
\quad 
C=|\tilde{\kappa}(u+\eta_L,l)-\tilde{\kappa}(u,l)|.
\ee
By \eqref{complco}, it follows that $C\leq \gep$. As mentioned above, the fact 
that $(u+\eta_L,l)\in \cH_L$ and $u+\eta_L\geq |l|+\frac{\eta_\gep}{2}$ implies 
that, for $L$ large enough, $B\leq \gep$ uniformly in $(u,l)\in\cH_{L}\colon\,
u\leq 1+|l|+\frac{\eta_\gep}{2}$. Finally, from Lemma~\ref{convularge3} we obtain 
that $A\leq \gep$ for $L$ large enough, uniformly in $(u,l)\in\cH_{L}\colon\,
u\leq 1+|l|+\frac{\eta_\gep}{2}$. This completes the proof of Lemma~\ref{conunif1}.
\end{proof}


\subsection{A generalization of Lemma \ref{conunif1}}
\label{A.3} 

In Section~\ref{proofofgene} we sometimes needed to deal with subsets of trajectories 
of the following form. Recall \eqref{add4}, pick $L\in \N$, $(u,l)\in \cH_L$ and 
$B_0, B_1\in \tfrac{Z}{L}$ such that
\be{condb}
B_1\,\geq 0\vee l\,\geq\, 0\wedge l\,\geq B_0 \quad \text{and}\quad B_1-B_0\geq 1.
\ee 
Denote by $\widetilde\cW_L(u,l,B_0,B_1)$ the subset of $\cW_L(u,l)$ containing those 
trajectories that are constrained to remain above $B_0 L$ and below $B_1L$ (see 
Fig.~\ref{figtratra}), i.e.,  
\begin{align}
\label{trajblockalt}
\widetilde\cW_L(u,l,B_0,B_1) &= \big\{\pi\in \cW_L(u,l) \colon\, 
B_0 L< \pi_{i,2}<B_1 L  \text{ for } i\in \{1,\dots,uL-1\}\big\},
\end{align}
and let 
\be{ttraj}
\widetilde\kappa_L(u,l,B_0,B_1) = \frac{1}{uL} \log |\widetilde\cW_L(u,l,B_0,B_1) |
\ee 
be the entropy per step carried by the trajectories in $\widetilde\cW_L(u,l,B_0,B_1)$. 
With Lemma~\ref{conunifalt1} below we prove that the effect on the entropy of the 
restriction induced by $B_0$ and $B_1$ in the set $\widetilde\cW_L(u,l)$ vanishes 
uniformly as $L\to \infty$.

\begin{figure}[htbp]
\begin{center}
\includegraphics[width=.48\textwidth]{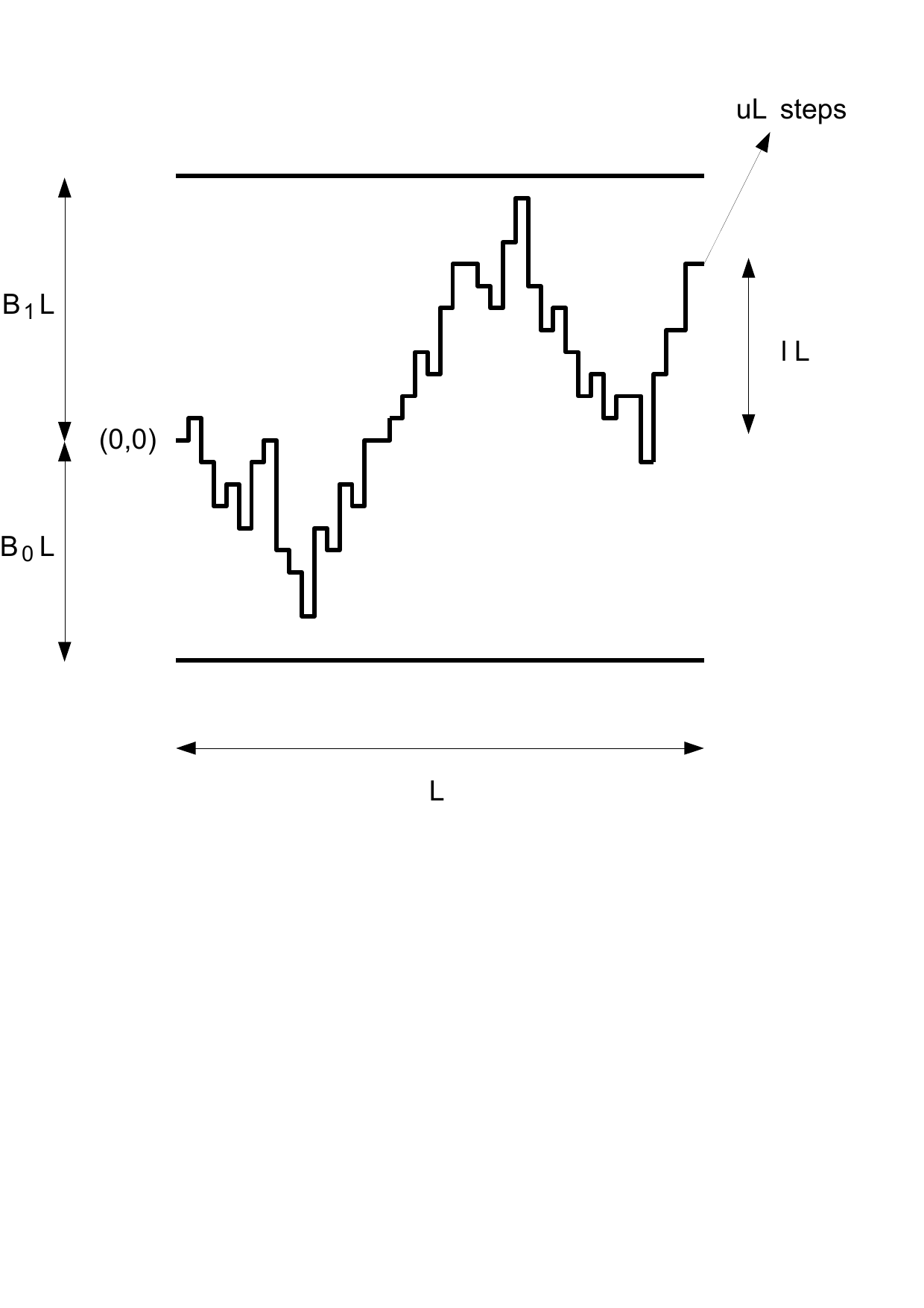}
\end{center}
\vspace{-4.3cm}
\caption{A trajectory in $\widetilde\cW_L(u,l,B_0,B_1)$.}
\label{figtratra}
\end{figure}

\begin{lemma}
\label{conunifalt1}
For every $\gep>0$ there exists an $L_\gep\in \N$ such that, for $L\geq L_\gep$, $(u,l)
\in \cH_L$ and $B_0,B_1\in \Z/L$ satisfying $B_1-B_0\geq 1$, $B_1\,\geq \max\{0,l\}$ 
and $B_0\leq \min\{0,l\}$,
\begin{align}
\label{unifalt}
& |\tilde{\kappa}_L(u,l,B_0,B_1)-\tilde{\kappa}_L(u,l)|
\leq \gep. 
\end{align}
\end{lemma}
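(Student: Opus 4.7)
The inclusion $\widetilde\cW_L(u,l,B_0,B_1) \subset \cW_L(u,l)$ gives for free the trivial direction $\tilde\kappa_L(u,l,B_0,B_1) \leq \tilde\kappa_L(u,l)$, so the entire task reduces to proving the reverse inequality $\tilde\kappa_L(u,l) \leq \tilde\kappa_L(u,l,B_0,B_1) + \gep$ uniformly over $(u,l)\in\cH_L$ and admissible $B_0,B_1$, for $L$ large enough. My plan is to split on the magnitude of $u$ and attack the two regimes separately.

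The first step is to dispose of the regime of large $u$ using Lemma~\ref{convularge}. Fix $M_\gep$ so that $\tilde\kappa_L(u,l) \leq \gep$ whenever $u \geq M_\gep$ (this is available uniformly in $L$ since $\tilde\kappa_L(u,l)$ is bounded above by the right-hand side of \eqref{convularge1} once one notes that paths ending at height $lL$ are a subset of all paths with $\pi_{uL,1}=L$). For $u\geq M_\gep$ it then suffices to exhibit at least one path in $\widetilde\cW_L(u,l,B_0,B_1)$, which is easy: since $B_1-B_0\geq 1$ and the endpoints $(0,-1),(L,lL)$ are consistent with the strip, one can build an explicit zig-zag trajectory fitting inside, giving $\tilde\kappa_L(u,l,B_0,B_1)\geq 0 \geq \tilde\kappa_L(u,l)-\gep$.

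The heart of the proof is the regime $u \in [1+|l|, M_\gep]$, where I would use a concatenation argument. Choose a small mesoscopic scale $L' \ll L$, of the form $L'=L/n$ with $n\in\N$ large (depending on $\gep,M_\gep,B_0,B_1,l$ but not on $L$). By Lemma~\ref{conunif1}, for $L'$ large one has $\tilde\kappa_{L'}(u,l) \geq \tilde\kappa(u,l) - \gep \geq \tilde\kappa_L(u,l)-2\gep$. Now take any $n$ trajectories $(\pi^{(i)})_{i=0}^{n-1}$ in $\cW_{L'}(u,l)$, translate the $i$-th one by $(iL', ilL')$ and concatenate them: this produces a trajectory of horizontal extent $L$, total length $uL$, and endpoints $(0,-1),(L,lL)$. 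The combinatorial lower bound
\begin{equation*}
|\widetilde\cW_L(u,l,B_0,B_1)| \geq |\cW_{L'}(u,l)|^{n}
\end{equation*}
will then yield $\tilde\kappa_L(u,l,B_0,B_1)\geq \tilde\kappa_{L'}(u,l)$, which combined with the previous inequality closes the argument. The essential geometric verification is that the concatenated trajectory actually fits in the strip $\{B_0L < y < B_1L\}$: each piece has vertical span at most $uL'/2$ around the straight line of slope $l$, and the straight line from $(0,-1)$ to $(L,lL)$ lies in $[\min(0,l)L,\max(0,l)L]$, so picking $n$ so large that $M_\gep L'/2 = M_\gep L/(2n)$ is strictly less than the margin $\min\{B_1-\max(0,l),\min(0,l)-B_0\}L$ ensures the trajectory stays inside.

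\textbf{Main obstacle.} The delicate point is uniformity when the strip is tight, namely when either $B_1=\max(0,l)$ or $B_0=\min(0,l)$, so that the margin above the slope-$l$ line (or below it) is zero. In this case the naive wiggle bound $uL'/2$ cannot be absorbed by the strip margin, no matter how small $L'$ is chosen. To handle this I would restrict the concatenation to the subfamily of pieces in $\cW_{L'}(u,l)$ whose maximal upward excursion is $\leq\eta L'$ (resp.\ downward $\leq \eta L'$) for a small but fixed $\eta>0$; a separate entropy computation, via an excursion/reflection argument similar in spirit to the one underlying Lemma~\ref{convularge3}, shows that this restriction costs at most $o(1)$ in the per-step entropy $\tilde\kappa_{L'}(u,l)$ as $\eta\downarrow 0$, uniformly in $(u,l)$ bounded. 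The hypothesis $B_1-B_0\geq 1$ is exactly what guarantees that there is always a strip of thickness $\geq 1-|l|_+$ (in units of $L$) perpendicular to the slope-$l$ line within which the wiggles must be confined, and this is what the restricted pieces provide at arbitrarily small entropic cost.
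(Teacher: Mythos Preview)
Your decomposition into large $u$ (via Lemma~\ref{convularge}) and bounded $u$ is reasonable, and the concatenation idea is sound whenever the strip has strictly positive margin on both sides of the line of slope $l$. The paper, however, dispenses with all of this: its proof is the single observation that confining a directed path to a vertical strip of width $(B_1-B_0)L\ge L$ costs only a polynomial factor in the path count (via ballot/reflection estimates at the two barriers), and polynomial factors vanish in the per-step entropy as $L\to\infty$.

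Where your approach runs into trouble is precisely the case you flag as the main obstacle. When a margin is zero, your $n$ is chosen as a function of that margin and therefore degenerates; but $L_\gep$ must be uniform over all admissible $(B_0,B_1)$. Your proposed fix --- restricting each piece to one-sided excursions $\le\eta L'$ --- does not close the gap as stated: take $l=0$ and $B_0=0$, so that every mesoscopic piece starts and ends on the barrier at height $0$. You would then need $\eta=0$, i.e., a genuine positivity constraint on \emph{every} piece, and controlling the entropy cost of that constraint is exactly a ballot/reflection bound on an $L'$-bridge. That is the same estimate which, applied once to the full $L$-path, yields the paper's one-line argument directly. So in the hard case your concatenation is a detour that ultimately routes through the direct polynomial-correction estimate anyway, and in the easy case it is correct but unnecessary.
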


\begin{proof}
The key fact is that $B_1-B_0 \geq 1$. The vertical restrictions $B_1\,\geq \max\{0,l\}$ 
and $B_0\leq \min\{0,l\}$ gives polynomial corrections in the computation of the 
entropy, but these corrections are harmless because $(B_1-B_0)L$ is large.  
\end{proof}


\subsection{Proofs of Lemmas \ref{convularge}--\ref{convularge3}}
\label{A.2} 


\subsubsection{Proof of Lemma~\ref{convularge}} 
\label{A.2.1}

The proof relies on the following expression:
\be{compex}
v_{u,L} = \big|\{\pi\in \cW_{uL}\colon \pi_{uL,1}=L\}\big|
=\sum_{r=1}^{L+1} \binom{L+1}{r} \binom{(u-1) L}{r} 2^r,
\ee
where $r$ stands for the number of vertical stretches made by the trajectory (a 
vertical stretch being a maximal sequence of consecutive vertical steps). Stirling's 
formula allows us to assert that there exists a $g\colon\,[1,\infty)\to (0,\infty)$ 
satisfying $\lim_{u\to \infty}g(u)=0$ such that
\be{stir}
\binom{uL}{L}\leq e^{g(u) uL}, \qquad u\geq 1,\,L\in \N.
\ee
Equations (\ref{compex}--\ref{stir}) complete the proof. 


\subsubsection{Proof of Lemma~\ref{convularge2}}
\label{A.2.2}

We first note that, since $u$ is bounded from above, it is equivalent to prove 
\eqref{unif1} with $\tilde{\kappa}_L$ and $\tilde{\kappa}$, or with $G_L$ and 
$G$ given by
\be{defG}
G(u,l) = u\tilde{\kappa}(u,l),\qquad
G_L(u,l) = u\tilde{\kappa}_L(u,l),\quad (u,l)\in \cH_L.
\ee
Via concatenation of trajectories, it is straightforward to prove that $G$ is 
$\mathbb{Q}$-concave on $\cH\cap \mathbb{Q}^2$, i.e., 
\be{Gconcav}
G(\lambda(u_1,l_1)+(1-\lambda)(u_2,l_2))
\geq \lambda G(u_1,l_1)+(1-\lambda) G(u_2,l_2),
\ \  \lambda\in \mathbb{Q}_{[0,1]},\,(u_1,l_1),(u_2,l_2)\in \cH\cap \mathbb{Q}^2.
\ee
Therefore $G$ is Lipschitz on every $K\cap\cH\cap \mathbb{Q}^2$ with $K \subset
\cH^0$ (the interior of $\cH$) compact. Thus, $G$ can be extended on $\cH^0$ to 
a function that is Lipschitz on every compact subset in $\cH^0$.

Pick $\eta>0$, $M>1$, $\gep>0$, and choose $L_\gep\in \N$ such that $1/L_\gep\leq 
\gep$. Since $\cH_{\eta,M}\subset\cH^0$ is compact, there exists a $c>0$ (depending 
on $\eta,M$) such that $G$ is $c$-Lipschitz on $\cH_{\eta,M}$. Moreover, any point 
in $\cH_{\eta,M}$ is at distance at most $\gep$ from the finite lattice $\cH_{L_\gep,
\eta,M}$. Lemma~\ref{lementr} therefore implies that there exists a $q_\gep\in \N$ 
satisfying
\be{convunf4}
|G_{qL_\gep}(u,l)-G(u,l)|
\leq \gep \qquad \forall\,(u,l)\in \cH_{L_\gep,\eta,M},\, q\geq q_\gep.
\ee
Let $L'=q_\gep L_\gep$, and pick $q\in \N$ to be specified later. Then, for $L\geq 
q L'$ and $(u,l)\in \cH_{L,\eta,M}$, there exists an $(u',l')\in\cH_{L_{\gep},\eta,M}$ 
such that $|(u,l)-(u',l')|_\infty \leq \gep$, $u>u'$, $|l|\geq |l'|$ and $u-u'\geq 
|l|-|l'|$. We recall \eqref{conventr} and write
\be{maj1}
0\leq G(u,l)-G_L(u,l)\leq A+B+C,
\ee
with
\begin{align}
A= |G(u,l)-G(u',l')|,\quad 
B= |G(u',l')-G_{L'}(u',l')|,\quad 
C= G_{L'}(u',l')-G_L(u,l).
\end{align}
Since $G$ is $c$-Lipschitz on $\cH_{\eta,M}$, and since $|(u,l)-(u',l')|_\infty\leq\gep$, 
we have $A\leq c\gep$. By \eqref{convunf4} we have that $B\leq \gep$. Therefore only $C$ 
remains to be considered. By Euclidean division, we get that $L=sL'+r$, where $s\geq q$ 
and $r\in \{0,\dots,L'-1\}$. Pick $\pi_1,\pi_2,\dots,\pi_s\in \cW_{L'}(u',|l'|)$, 
and concatenate them to obtain a trajectory in $\cW_{sL'}(u',|l'|)$. Moreover, 
note that 
\begin{align}
\label{impin}
uL-u'sL'&=(u-u')sL'+ur\\
\nonumber 
&\geq (|l|-|l'|)sL'+(1+|l|)r= (L-sL')+ (|l|L-s|l'|L'),
\end{align}
where we use that $L-sL'=r$, $u-u'\geq |l|-|l'|$ and $u\geq 1+|l|$. Thus, \eqref{impin} 
implies that any trajectory in $\cW_{L'}(u',|l'|)$ can be concatenated with 
an ($uL-u'sL'$)-step trajectory, starting at $(sL',s|l'|L')$ and ending at $(L,|l|L)$, 
to obtain a trajectory in  $\cW_{L}(u,|l|)$. Consequently,
\be{concat}
G_{L}(u,l)\geq \tfrac{s}{L}\log \kappa_{L'}(u',l')\geq \tfrac{s}{s+1} G_{L'}(u',l').
\ee
But $s\geq q$ and therefore $G_{L'}(u',l')-G_{L}(u,l)\leq \tfrac1q G_{L'}(u',l')\leq
\tfrac1q M \log 3$ (recall that $\log 3$ is an upper bound for all entropies per step). 
Thus, by taking $q$ large enough, we complete the proof.


\subsubsection{Proof of Lemma~\ref{convularge3}}
\label{A.2.3}

Pick $L\in \N$, $(u,l)\in \cH_L$, $\eta\in\frac{2\N}{L}$, and define the map $T\colon
\cW_{L}(u,l)\mapsto \cW_L(u+\eta,l)$ as follows. Pick $\pi\in  \cW_{L}(u,l)$, find 
its first vertical stretch, and extend this stretch by $\tfrac{\eta L}{2}$ steps. 
Then, find the first vertical stretch in the opposite direction of the stretch just
extended, and extend this stretch by $\tfrac{\eta L}{2}$ steps. The result of this 
map is $T(\pi)\in\cW_L(u+\eta,l)$, and it is easy to verify that $T$ is an injection, 
so that $|\cW_{L}(u,l)|\leq | \cW_{L}(u+\eta,l)|$. 

Next, define a map $\widetilde{T}\colon\,\cW_{L}(u+\eta,l)\mapsto \cW_L(u,l)$ as 
follows. Pick $\pi\in\cW_{L}(u+\eta,l)$ and remove its first $\tfrac{\eta L}{2}$ 
steps north and its first $\tfrac{\eta L}{2}$ steps south. The result is 
$\widetilde{T}(\pi)\in \cW_L(u,l)$, but $\widetilde{T}$ is not injective. However, 
we can easily prove that for every $\gep>0$ there exist $\eta_\gep>0$ and $L_\gep
\in \N$ such that, for all $\eta<\eta_\gep$ and all $L\geq l_\gep$, the number of 
trajectories in  $\cW_{L}(u+\eta,l)$ that are mapped by $\widetilde{T}$ to a 
particular trajectory in $\pi\in\cW_{L}(u,l)$ is bounded from above by $e^{\gep L}$, 
uniformly in $(u,l)\in\cH_L$ and $\pi\in\cW_{L},(u,l)$.

\medskip
This completes the proof of Lemmas~\ref{convularge}--\ref{convularge3}.


\section{Entropic properties}
\label{appBB}

 Recall Lemma~\ref{lementr}, 
where $(u,l)\mapsto \tilde{\kappa}(u,l)$ is defined on $\cH\cap\mathbb{Q}^2$. 

\bl{l:lemconv2}
(i) $(u,l)\mapsto u\tilde{\kappa}(u,l)$ extends to a continuous and strictly concave 
function on $\cH$.\\
(ii) For all $u\in [1,\infty)$, $l\mapsto \tilde{\kappa}(u,l)$ is strictly increasing 
on $[-u+1,0]$ and strictly decreasing on $[0,u-1]$.\\  
(iii) For all $l\in \R$,  $\lim_{u\to \infty} \tilde{\kappa}(u,l)=0$.\\
(iv) $\lim_{|l|\to \infty} \tilde{\kappa}(u,l)=0$ uniformly in $u\geq 1+|l|$.\\
(v) For all $l\in \R$,  $u\mapsto u\tilde{\kappa}(u,l)$ is continuous, strictly concave, 
strictly  increasing on $[1+|l|,\infty)$ and $\lim_{u\to \infty} u \tilde{\kappa}(u,l)
=\infty$.\\
(vi) For all $l\in \R$,  $u\mapsto u\tilde{\kappa}(u,l)$ is analytic on $(1+|l|,\infty)$ 
and
\begin{align}
\label{l1} 
\lim_{v\to \infty} \partial_u(u\tilde \kappa(u,l))
&(v)=0,\\
\label{l2}
\lim_{v\to 1+l} \partial_u(u\tilde \kappa(u,l))
&(v)=\partial^+_u(u\tilde \kappa(u,l))(1+|l|)=\infty.
\end{align}
\el

\bl{addit3}
For all $\gep>0$ there exists $R_\gep>0$ such that 
\be{addit3*}
\partial_u(u\tilde \kappa(u,l))(v)\leq \gep, 
\quad \text{for}\,\,l\in [0,\infty),\,v\geq R_\gep \vee 2+l. 
\ee
\el

Recall the definition of $\{v(c), c\in (0,\infty)\}$ in (\ref{defucp1}-\ref{defucp3}).

\bl{regvc}
(i) For all $c\in (0,\infty)$, $v(c)\in \bar \cB$.\\
(ii) For $(k,l)\in \{A,B\}
\times (0,\infty)$, $c\mapsto v_{k,l}(c)$ is strictly decreasing and $c\mapsto v_{\cI}(c)$ 
is non-increasing.\\
(iii) If $(c_n)_{n\in \N}\in (0,\infty)^\N$ satisfies $\lim_{n\to \infty}c_n= c_\infty
\in (0,\infty)$, then $v(c_n)$ converges pointwise to $v(c_\infty)$.\\
(iv) $D(\bar \rho, v(c))<\infty$ for all $\bar \rho\in \cM_1(\R_+\cup \R_+\cup\{\cI\})$ 
satisfying $\int_{0}^{\infty} (1+l) (\bar \rho_A+\bar \rho_B)(dl)<\infty$ and all 
$c\in (0,\infty)$.
\el

Recall the definition of  $\{u(c), c\in (0,\infty)\}$ in \eqref{defucp}.

\bl{reguc}
(i) For all $c\in (0,\infty)$, $u(c)\in  \cB_{\overline{\cV}_M}$.\\
(ii) For all $\Theta\in \overline \cV_M$,  $c\mapsto u_{\Theta}(c)$ is 
non-increasing on $(0,\infty)$.\\
(iii) If $(c_n)_{n\in \N}\in (0,\infty)^\N$ satisfies $\lim_{n\to \infty}c_n= c_\infty
\in (0,\infty)$, then $u(c_n)$ converges pointwise to $u(c_\infty)$.\\
(iv) $D(\rho, u(c))<\infty$ for all $\rho\in \cM_1(\overline{\cV}_M)$ 
satisfying $\int_{\overline{\cV}_M} t_\Theta\,  \rho(d\Theta)<\infty$ and all 
$c\in (0,\infty)$.
\el


\subsection{Proofs of Lemmas \ref{l:lemconv2}--\ref{reguc}}

\subsubsection{Proof of Lemma \ref{l:lemconv2}}
(i) In the proof of Lemma~\ref{conunif1} we have shown that $\tilde{\kappa}$ can be 
extended to $\cH^0$ in such a way that $(u,l)\mapsto u\tilde{\kappa}(u,l)$ is 
continuous and concave on $\cH^0$. Lemma~\ref{convularge3} allows us to extend 
$\tilde{\kappa}$ to the boundary of $\cH$, in such a way that continuity and 
concavity of $(u,l)\mapsto u\tilde{\kappa}(u,l)$ hold on all of $\cH$. To obtain 
the strict concavity, we recall the formula in \eqref{kapexplform}, i.e., 
\be{kapexplform1}
u\tilde{\kappa}(u,l) = \left\{\begin{array}{ll}
u\kappa(u/|l|,1/|l|), &\l \neq 0,\\
u\hat{\kappa}(u), &l = 0,
\end{array}
\right.
\ee
where $(a,b)\mapsto a\kappa(a,b)$, $a\geq 1+b$, $b\geq 0$, and $\mu\mapsto \mu
\hat{\kappa}(\mu)$, $\mu \geq 1$, are given in \cite{dHW06}, Section 2.1, and 
are strictly concave. In the case $l\neq 0$, \eqref{kapexplform1} provides 
strict concavity of $(u,l)\mapsto u\tilde{\kappa}(u,l)$ on $\cH^+=\{(u,l)
\in \cH\colon l>0\}$ and on $\cH^-=\{(u,l)\in \cH\colon l<0\}$, while in the 
case $l=0$ it provides strict concavity on $\overline\cH=\{(u,0),u\geq 1\}$. 
We already know that $(u,l)\mapsto u\tilde{\kappa}(u,l)$ is concave on $\cH$, 
which, by the strict concavity on $\cH^+$, $\cH^-$ and $\overline\cH$, implies 
strict concavity of $(u,l)\mapsto u\tilde{\kappa}(u,l)$ on $\cH$. 

\medskip\noindent
(ii) This follows from the strict concavity of $l\mapsto \tilde{\kappa}(u,l)$ and from the
fact that $\tilde{\kappa}(u,l) = \tilde{\kappa}(u,-l)$  for 
$(u,l)\in \cH$.

\medskip\noindent
(iii-iv) These are direct consequences of Lemma~\ref{convularge}.

\medskip\noindent
(v)  By (i) we have that  $u\mapsto u\tilde{\kappa}(u,l)$ is continuous and strictly concave 
on $[1+|l|,\infty)$. Therefore, proving that $\lim_{u\to \infty} u \tilde{\kappa}
(u,l)=\infty$ is sufficient to obtain that  $u\mapsto u\tilde{\kappa}(u,l)$ is 
strictly increasing. It is proven in \cite{dHW06}, Lemma 2.1.2 (iii), that 
$\lim_{\mu\to \infty} u\hat{\kappa}(u)=\infty$, so that \eqref{kapexplform1} 
completes the proof for $l=0$. If $l\neq 0$, then we use \eqref{kapexplform1} 
again and the variational formula in the proof of \cite{dHW06}, Lemma 2.1.1, 
to check that $\lim_{a\to \infty} a \kappa(a,b)=\infty$ for all $b>0$.

\medskip\noindent
(vi)  To get the analyticity 
on $(1+|l|,\infty)$, we use  \eqref{kapexplform1} and the analyticity of  $(a,b)\mapsto a\kappa(a,b)$ 
and $\mu\mapsto \mu\hat{\kappa}(\mu)$ inside their domain of definition (see \cite{dHW06}, 
Section 2.1).

We note that for every $l\in \R$, 
\be{inegcoalt}
u\phi_\cI(u)\geq u \tilde{\kappa}(u,0)\geq u\tilde \kappa(u,l), 
\quad u\in [1+|l|,\infty),
\ee
where the first inequality is well known and the second inequality comes from 
Lemma~\ref{l:lemconv2}(ii). Since, by Lemma \ref{l:lemconv2}(v), $u\mapsto u
\tilde\kappa(u,l)$ is concave and increasing on $[1+|l|,\infty)$, \eqref{l3} and 
\eqref{inegco} imply \eqref{l1}.

It remains to prove \eqref{l2}. To that aim, we recall that an explicit formula is available for 
$\tilde{\kappa}(u,l)$, namely, 
\be{kapexplform*}
\tilde{\kappa}(u,l) = 
\kappa(u/|l|,1/|l|),\quad \text{for}\  l \neq 0,
\ee
where $\kappa(a,b)$, $a\geq 1+b$, $b\geq 0$ is given in \cite{dHW06}, Section 2.1 
(in the proof of Lemmas 2.1.1--2.1.2). The latter formula allows us to compute
$\partial_u (u \tilde\kappa(u,l)) (1+l+\gep,l)= G\big(1+\tfrac{1}{l}+\tfrac{\gep}{l},\tfrac{1}{l}\big)$
with
\be{gege}
G(a,b)=\tfrac12 \log 
\bigg[\tfrac{(a+1-b) (a-1-b)}{(a+1-b-2 \delta _{a,b}) (a-1-b-2 \gep_{a,b})}\bigg]
\ee
and with 
\begin{align}
\label{gege2}
\nonumber \delta_{a,b} 
&=\tfrac{b}{2(1+b)} \Big[ (a+1)-\big( (a-b)^2+(b^2-1)\big)^{1/2}\Big]\\ 
\gep_{a,b}&=\tfrac{b}{2(1-b)} \Big[-(a-1)+\big((a-b)^2+b^2-1\big)^{1/2}\Big],
\end{align}
so that the proof of  \eqref{l2} will be complete once we show that for all $b>0$ it holds that
 $\lim_{\gep\to 0^+} G(1+b+\gep,b)=\infty$. The latter is achieved by using first \eqref{gege2}
to check that $\delta_{1+b+\gep,b}=\frac{b}{1+b}+\big(\frac12-\frac{1}{1+b}\big)\gep+o(\gep)$ 
and $\gep_{1+b+\gep,b}=\frac{\gep}{2}+o(\gep)$ as $\gep\to 0^+$, and then by substituting 
these two expansions into \eqref{gege} at $(a,b)=(1+b+\gep,b)$, which implies the result after 
a straightforward computation.


\subsubsection{Proof of Lemma \ref{addit3}}

The proof is based on the following lemma.

\bl{leddit}
\be{eq:ledit}
\lim_{l\to \infty} \partial_u \big[u \tilde{\kappa}(u,l)\big] (2+l,l)=0.
\ee
\el

\bpr
We recall (\ref{kapexplform*}--\ref{gege2}), and we note that $\partial_u (u \tilde\kappa(u,l))
(2+l,l)= G\big(1+\tfrac{2}{l},\tfrac{1}{l}\big)$. Thus, the proof of Lemma \ref{leddit} will be 
complete once we show that $\lim_{b\to 0^+} G(1+2b,b)=0$. The latter is achieved by using 
\eqref{gege} and \eqref{gege2} to compute
\be{gege1}
G(1+2b,b)=\tfrac12 \log 
\bigg[\tfrac{(2+b) b}{\big[2+b\big(1-\frac{2}{1+b}+o(b)\big)\big] (b+o(b))}\bigg]
\ee
which immediately implies the result. 
\epr

We resume the proof of Lemma \ref{addit3}. Once Lemma \ref{leddit} is proven, we use the concavity of 
$u\mapsto u \tilde\kappa(u,l)$ for $l\in \R$ to obtain that for $\gep>0$ there exists a $l_\gep>0$ such that 
$\partial_u [u \tilde{\kappa}(u,l)] (u,l)\leq \gep$ for all $l\colon\;|l|\geq l_\gep$ and $u\geq 2+l$. Thus, it 
remains to show that there exists a $R_\gep>0$ such that $\partial_u [u \tilde{\kappa}(u,l)] (u,l)\leq \gep$
for $l\in [0,l_\gep]$ and $u\geq R_\gep$. By contradiction, if we assume that the latter does not hold, then 
there exists $\gep>0$ and two sequences $(l_n)_{n\in \N}\in [0,l_\gep]^\N$ and $(u_n)_{n\in \N}$ such that 
$u_n\geq 1+l_n$ for $n\in \N$ and $\lim_{n\to \infty} u_n=\infty$ and such that $\partial_u [u \tilde{\kappa}(u,l)] 
(u_n,l_n)\geq \gep$ for $n\in \N$. As a consequence, we can write
\be{tret}
u_n \tilde{\kappa}(u_n,l_n)-(1+l_n) \tilde{\kappa}(1+l_n,l_n)\geq \gep (u_n-1-l_n),
\ee
and, with the help of Lemma \ref{l:lemconv2}(ii), we obtain 
\be{tret*}
u_n \tilde{\kappa}(u_n,0)\geq u_n \tilde{\kappa}(u_n,l_n)\geq \gep (u_n-1-l_\gep),\quad \text{for}\ n\in \N,
\ee
which clearly contradicts Lemma \ref{l:lemconv2}(iii) because $\lim_{n\to \infty} u_n=\infty$.


\subsubsection{Proof of Lemma \ref{regvc}}

(i) We must prove that $l\mapsto v_{A,l}(c)$ and $l\mapsto v_{B,l}(c)$ are continuous 
on $[0,\infty)$. We give the proof for $v_A$, the proof for $v_B$ being similar. Let 
$(l_n)_{n\in \N}$ be a sequence in $[0,\infty)$ such that $\lim_{n\to \infty} l_n
=l_\infty\in [0,\infty)$. We want to prove that $\lim_{n\to \infty} v_{A,l_n}(c)
= v_{A,l_\infty}(c)$.  For simplicity, we set $v_n=v_{A,l_n}(c)$ for $n\in \N$ and 
$v_\infty=v_{A,l_\infty}(c)$. We also set $g_n(u)=u \tilde{\kappa}(u,l_n)$ for $n\in \N$ 
and $u \geq 1+l_n$ and $g_\infty(u)=u \tilde{\kappa}(u,l_\infty)$ for $u \geq 1+l_\infty$. 
By Lemmas~\ref{l:lemconv2}(i) and (v), we know that $g_n$ converges pointwise to $g_\infty$ 
as $n\to \infty$, and that $g_n$ and $g_\infty$ are strictly concave. Consequently, 
$\partial_u(g_n)$ converges pointwise to $\partial_u(g_\infty)$. We argue by contradiction.
Suppose that $v_n$ does not converge to $v_\infty$. Then there exists an $\eta>0$ such 
that $v_n\geq v_\infty+\eta$ along a subsequence or $v_n\leq v_\infty-\eta$ along a 
subsequence. Suppose for simplicity that $v_n\leq v_\infty-\eta$ for $n\in \N$. Then 
the strict concavity of $g_n$ implies that $\partial_u(g_n)(v_\infty-\eta)\leq \partial_u
(g_n)(v_n)=c$, and therefore, letting $n\to \infty$ and using the strict concavity of 
$g_\infty$, we obtain $\partial_u(g_\infty)(v_\infty)<\partial_u(g_\infty)(v_\infty-\eta)
\leq c$. This provides the contradiction, because  $\partial_u(g_\infty)(v_\infty)=c$ by 
definition. The proof is similar when we assume that $v_n\geq v_\infty+\eta$ for $n\in \N$.

\medskip\noindent
(ii) For $(k,l)\in \{A,B\}
\times [0,\infty)$, this is a straightforward consequence of the definition of $v(c)$ in 
(\ref{defucp1}-\ref{defucp2}), of the strict concavity of $u\mapsto u\tilde\kappa(u,l)$ 
and of the continuity of $u\mapsto \partial_ u (u \tilde\kappa(u,l))$ for every $l\in [0,\infty)$ 
(see Lemma \ref{l:lemconv2}(v-vi)). For $c\mapsto v_\cI(c)$ we do not have strict monotonicity 
because $u\mapsto \partial_u (u\phi_\cI(u))$ is not proven to be continuous.

\medskip\noindent
(iii) Similarly to what we did in (i), we consider $(c_n)_{n\in \N}$ a sequence in $(0,\infty)$ 
such that $\lim_{n\to \infty} c_n=c_\infty\in (0,\infty)$, and we want to show that 
$\lim_{n\to \infty} v_{k,l}(c_n)=v_{k,l}(c_\infty)$ for $k\in \{A,B\}$ and $l\in [0,\infty)$ 
and $\lim_{n\to \infty} v_{\cI}(c_n)=v_{\cI}(c_\infty)$. Again we argue by contradiction.
Suppose, for instance, that $v_{\cI}(c_n)$ does not converge to $v_{\cI}(c_\infty)$. Then 
there exists an $\eta>0$ such that $v_{\cI}(c_n)\leq v_{\cI}(c_\infty)-\eta$ or 
$v_{\cI}(c_n)\geq v_{\cI}(c_\infty)+\eta$ along a subsequence. Suppose for simplicity that 
$v_{\cI}(c_n)\geq v_{\cI}(c_\infty)+\eta$. Then $\partial_u^-(u\phi_{\cI}(u))(v_{\cI}
(c_\infty)+\eta)\geq \partial_u^-(u\phi_{\cI}(u))(v_{\cI}(c_n))\geq c_n$ for $n\in \N$. 
Let $n\to \infty$ to obtain $\partial_u^+(u\phi_{\cI}(u))(v_{\cI}(c_\infty))>\partial_u^-
(u\phi_{\cI}(u))(v_{\cI}(c_\infty)+\eta) \geq c_\infty$, which contradicts the definition 
of $v_{\cI}(c_\infty)$ in (\ref{defucp1}-\ref{defucp3}). The proof is similar when we 
assume that $v_{\cI}(c_n)\leq v_{\cI}(c_\infty)-\eta$ for $n\in \N$.

\medskip\noindent
(iv) This is a consequence of Lemma~\ref{addit3}, which implies that for all $c\in
(0,\infty)$ there exists a $l_c\in [0,\infty)$ such that $v_{A,l}(c)\leq 2+l$ for 
all $l\geq l_c$. Moreover, (\ref{defucp1}-\ref{defucp2}) and the fact that $(\alpha,\beta)
\in \CONE$ entail that $v_{B,l}(c)\leq v_{A,l}(c)$ for $l\in [0,\infty)$, and therefore 
$\int_{0}^{\infty} (1+l) (\bar \rho_A+\bar \rho_B)(dl)<\infty$ combined with the finitness 
of $v_\cI(c)$ imply  $\bar D(\bar \rho,v(c))<\infty$.


\subsubsection{Proof of Lemma \ref{reguc}}

(i) The proof is similar to that of  Lemma \ref{regvc}(i), except for the fact that when we 
consider $\Theta_n\to \Theta_\infty$ as $n\to \infty$ in $\overline \cV_M$, we have (by 
Lemma \ref{concavt}) the pointwise convergence of  $g_n(u)=u\psi(\Theta_n,u)$ to 
$g_\infty(u)=u\psi(\Theta_\infty,u)$, but we do not have the pointwise convergence of 
$\partial g_n(u)$ to  $\partial g_\infty(u)$ since $ g_\infty$ is not a priori differentiable. However, 
the strict concavity and the pointwise convergence of $g_n$ towards $g_\infty$ gives us 
\be{}
\partial^- g_\infty(u)\geq \limsup_{n\to \infty} \partial^- g_n(u)
\geq \liminf_{n\to \infty} \partial^+ g_n(u)\geq   \partial^+ g_\infty(u),
\ee
with which we can easily mimick the proof in Lemma  \ref{regvc}(i)

\medskip\noindent
(ii) The proof is similar to that of  Lemma \ref{regvc}(ii), except for the fact that the monotonicity of
$c\mapsto u_\Theta(c)$ is not proven to be strict because $u\mapsto \partial (u \psi(\Theta,u))$ is 
not proven to be continuous.

\medskip\noindent
(iii) We mimick the proof of Lemma \ref{regvc}(iii).  Let $(c_n)_{n\in \N}$ be a sequence in 
$(0,\infty)$ such that $\lim_{n\to \infty} c_n=c_\infty\in (0,\infty)$, and assume that there exists 
an $\eta>0$ such that $u_\Theta(c_n)\geq u_\Theta(c_\infty)+\eta$ along a subsequence. 
Then $\partial_u^-(u\psi(\Theta,u))(u_\Theta(c_\infty)+\eta)\geq \partial_u^-(u\psi(\Theta,u))
(u_\Theta(c_n)) \geq c_n$ for $n\in \N$. Let $n\to \infty$ to obtain $\partial_u^+(u\psi(\Theta,u))
(u_\Theta(c_\infty))> \partial_u^-(u\psi(\Theta,u))(u_\Theta(c_\infty)+\eta)  \geq c_\infty$, which 
contradicts the definition of $u_{\Theta}(c_\infty)$ in \eqref{defucp}.

\medskip\noindent
(iv) The proof is similar to that of Lemma \ref{regvc}(iv). The role of Lemma \ref{addit3} is taken
over by Lemma \ref{fep2}


\section{Properties of free energies}
\label{B}

 
\subsection{Free energy along a single linear interface}
\label{B.1}

Also the free energy $\mu\mapsto\phi^\cI(\mu;\alpha,\beta)$ defined in 
Proposition~\ref{l:feinflim} can be extended from $\mathbb{Q}\cap [1,\infty)$ to
$[1,\infty)$, in such a way that $\mu\mapsto \mu\phi^\cI(\mu;\alpha,\beta)$ is 
concave and continous on $[1,\infty)$.  By concatenating trajectories, we can indeed check 
that $\mu\mapsto \mu\phi^\cI(\mu;\alpha,\beta)$ is concave on $\mathbb{Q}\cap [1,\infty)$. 
Therefore it is Lipschitz on every compact subset of $(1,\infty)$ and can be extended 
to a concave and continuous function on $(1,\infty)$. The continuity at $\mu=1$ comes 
from the fact that $\phi^\cI(1;\alpha,\beta)=0$ and $\lim_{\mu \downarrow 1} 
\phi^\cI(\mu)=0$, which is obtained by using Lemma~\ref{lele} below.

\bl{l:lemconv}
For all $(\alpha,\beta)\in\CONE$:\\
(i) $\mu \mapsto \mu \phi^\cI(\mu;\alpha,\beta)$ is strictly increasing on $[1,\infty)$ 
and $\lim_{\mu\to \infty} \mu\phi^\cI(\mu;\alpha,\beta)=\infty$.\\
(ii) $\lim_{\mu\to \infty}\phi^\cI(\mu;\alpha,\beta)=0$.\\
(iii)
\begin{align}
\label{l3}
\lim_{v\to \infty} \partial^-_{u} (u \phi_{\cI}(u;\, \alpha,\beta))(v)
&=0,\\
\label{l4}
\lim_{v\to 1} \partial^+_{u} (u \phi_{\cI}(u;\, \alpha,\beta))(v)
&=\partial^+_{u} (u \phi_{\cI}(u;\, \alpha,\beta))(1)=\infty. 
\end{align}
\el

\begin{proof}
(i) Clearly, $\phi^\cI(\mu;\alpha,\beta)\geq \widetilde{\kappa}(\mu,0)$ for $\mu\geq 1$.
Therefore Lemma \ref{l:lemconv2}(iv) implies that $\lim_{\mu\to \infty} \mu\phi^\cI
(\mu;\alpha,\beta)=\infty$. Thus, the concavity of $\mu\mapsto \mu \phi^\cI(\mu;\alpha,
\beta)$ is sufficient to obtain that it is strictly increasing on $[1,\infty)$.\\
(ii) See \cite{dHP07b}, Lemma 2.4.1(i).\\
(iii)
To prove \eqref{l3}, we pick $\chi\in\{A,B\}^\Z$ such that $\chi(0)=A$ and $\chi(-1)=B$. We recall 
\eqref{partcv} and consider $\Theta=(\chi,0,0,0,2)\in \bar \cV_{\nAB,A,2,M}$ such that $l_A(\Theta)
=l_B(\Theta)=0$.  By Proposition \ref{energ}, we have
\be{2in}
u\psi(\Theta_2,u)\geq  u\phi_\cI(u), \quad u\in [1,\infty),
\ee
and \eqref{2in}, together with Lemma~\ref{fep1} and the concavity and monotonicity 
of $u\mapsto u\phi_\cI(u)$, imply \eqref{l3}. 

It remains to prove \eqref{l4}. For all $(\alpha,\beta) \in \CONE$ we know that 
$u\mapsto u\phi_\cI(u;\alpha,\beta)$ is continuous and strictly concave on $[1,\infty)$. 
Therefore we necessarily have
\be{egc}
\lim_{v\to 1^+} \partial^+_u(u\phi_{\cI}(u))(v)= \partial^+_u(u\phi_{\cI}(u))(1).
\ee
Moreover, since $(u\phi_\cI(u))(1)=(u\tilde\kappa(u,0))(1)=0$ and since $\phi_{\cI}(u)
\geq \tilde\kappa(u,0)$ for $u\geq 1$, we have $\partial^+_u(u\phi_{\cI}(u))(1)\geq
\partial^+_u(u \tilde\kappa(u,0))(1)$ and \eqref{l2} gives $\partial^+_u(u \tilde\kappa
(u,0))(1)=\infty$, which completes the proof of \eqref{l4}.
\end{proof}

\noindent
Recall Assumption \ref{assu}, in which we assumed that $\mu \mapsto \mu \phi^\cI(\mu;
\alpha,\beta)$ is strictly concave on $[1,\infty)$. The next lemma states that the 
convergence of the average quenched free energy $\phi^\cI_L$ to $\phi^\cI$ as 
$L\to\infty$ is uniform on $\mathbb{Q} \cap [1,\infty)$.

\bl{l:feinflim1} 
For every $(\alpha,\beta)\in\CONE$ and $\gep>0$ there exists an $L_\gep\in \N$ 
such that 
\be{fesainfalt}
|\phi_L(\mu)-\phi(\mu)|\leq \gep \qquad \forall\,\mu\in 1+\tfrac{2\N}{L},\,
L\geq L_\gep.
\ee
\el

\begin{proof}
Similarly to what we did for Lemma~\ref{conunif1}, the proof can be done by 
treating separately the cases $\mu$ large, moderate and small. We leave the 
details to the reader.
\end{proof}


\subsection{Free energy in a single column}
\label{B.2}

We can extend $(\Theta,u)\mapsto \psi(\Theta,u)$ from $\cV_M^{*}$ to $\overline\cV_M^{*}$ 
by using the variational formula in \eqref{Bloc of type I}
and by recalling that $\widetilde{\kappa}$ and $\phi^{\cI}$ have been extended to $\cH$ 
and $[1,\infty)$ in Appendices \ref{A.2} and \ref{B.1}.

Pick $M\in \N$ and recall \eqref{set2}. Define a distance $d_M$ on $\overline\cV_M$ 
as follows. Pick $\Theta_1,\Theta_2 \in \overline \cV_M$, abbreviate 
\be{defth1*}
\Theta_1=(\chi_1,\Delta\Pi_1, b_{0,1},b_{1,1},x_1),
\qquad \Theta_2=(\chi_2,\Delta\Pi_2, b_{0,2},b_{1,2},x_2),
\ee 
and define
\be{dist}
d_M(\Theta_1,\Theta_2)= \sum_{j\in \Z} \frac{1_{\{\chi_1(j)\neq \chi_2(j)\}}}{2^{|j|}} 
+|\Delta \Pi_1-\Delta \Pi_2|+|b_{0,1}-b_{0,2}|+|b_{1,1}-b_{1,2}|+|x_1-x_2|
\ee
so that $\widetilde{d}_M((\Theta_1,u_1),(\Theta_2,u_2))=\max \{|u_1-u_2|,d_M
(\Theta_1,\Theta_2)\}$ is a distance on $\overline{\cV}^{\,*,m}_M$ for which 
$\overline{\cV}^{\,*,m}_M$ is compact.

Lemmas   \ref{concavt} and \ref{concav} below are proven in Section~\ref{proofiny}.
\bl{concavt}
For every $(M,m)\in \EIGH$ and $(\alpha,\beta)\in \CONE$,
\begin{align}
\label{fuun}
(u,\Theta) \mapsto u\,\psi(\Theta,u;\alpha,\beta)
\end{align}
is uniformly continuous on $\overline{\cV}^{\,*,m}_M$ endowed with $\widetilde{d}_M$. 
\el

\bl{concav}
For every $\Theta\in \overline\cV_M$, the function $u \mapsto u\psi(\Theta,u)$ 
is continuous and strictly concave on $[t_\Theta,\infty)$.
\el

Below we list several results that were used in Section~\ref{varfo2}. The proofs of these result
are given in Section~\ref{proofiny}. Proposition~\ref{base} below says that the free energy 
per column associated with the Hamiltonian given by $(\beta-\alpha)/2$ times the time spent 
by the copolymer in the $B$-solvent is a good aproximation of $\psi(\Theta,u)$ when $u\to\infty$ 
uniformly in $\Theta\in \overline\cV_M$. This proof of this proposition will be given in 
Section~\ref{proofbase}.

\begin{proposition}
\label{base}
For all $(\alpha,\beta)\in \CONE$ and all $\gep>0$ there exists $R_\gep>0$ and 
$L_\gep\in \N$ such that  
\begin{align}
\label{defuc2}
\Big|\psi(\Theta,u)&-\tfrac{1}{uL} \log \sum_{\pi\in \cW_{\Theta,u,L}} 
e^{T(\pi) \tfrac{\beta-\alpha}{2}}\Big| \leq\gep,\ \  
\Theta\in \overline\cV_M, \ \ u\geq t_\Theta\vee R_\gep, \ \ L\geq L_\gep,
\end{align}
where $T(\pi)=\sum_{i=1}^{uL} 1\{\chi^{L}_{(\pi_{i-1},\pi_i)}=B\}$ is the time spent 
by $\pi$ in solvent $B$.
\end{proposition}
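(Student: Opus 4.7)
The idea is to introduce an auxiliary \emph{mean} free energy obtained by replacing the microscopic disorder $\omega$ by its expected contribution $\tfrac{\beta-\alpha}{2}$ per step in the $B$-solvent, and to show that it well approximates $\psi(\Theta,u)$ once $u$ is large enough. Set
\[
\psi_L^{\mathrm{mean}}(\Theta,u)=\tfrac{1}{uL}\log\sum_{\pi\in\cW_{\Theta,u,L}} e^{T(\pi)(\beta-\alpha)/2},
\qquad \psi^{\mathrm{mean}}(\Theta,u)=\lim_{L\to\infty}\psi_L^{\mathrm{mean}}(\Theta,u).
\]
The plan proceeds in three steps.

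\emph{Step 1 (variational formula for $\psi^{\mathrm{mean}}$).} Rerunning the argument of Section~\ref{proofprop1} in the purely deterministic setting (no $\omega$-disorder, hence no concentration-of-measure step is required) yields the existence of the limit and a variational formula identical to the one in Proposition~\ref{energ}, except that $\phi_\cI(v)$ is replaced by its mean counterpart. Since $\tfrac{\beta-\alpha}{2}\leq 0$ on $\CONE$, trajectories staying strictly above the single linear interface carry weight $1$ and, by a reflection argument, account for essentially the full entropy $\tilde\kappa(v,0)$; thus the mean single-interface free energy equals $\tilde\kappa(v,0)$.

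\emph{Step 2 (variational comparison of $\psi$ and $\psi^{\mathrm{mean}}$).} The two variational formulas share their $a_A$- and $a_B$-terms, and $\phi_\cI\geq\tilde\kappa(\cdot,0)$ gives $\psi\geq\psi^{\mathrm{mean}}$. The reverse bound reduces to
\[
u\big[\psi(\Theta,u)-\psi^{\mathrm{mean}}(\Theta,u)\big]\leq \sup_{a_\cI\leq u,\ h_\cI\leq 1,\ a_\cI\geq h_\cI}a_\cI\big[\phi_\cI(a_\cI/h_\cI)-\tilde\kappa(a_\cI/h_\cI,0)\big].
\]
Writing $v=a_\cI/h_\cI\geq 1$ and invoking Lemmas~\ref{l:lemconv}(ii) and~\ref{l:lemconv2}(iii), one can choose $R_\gep$ such that $\phi_\cI(v)\vee\tilde\kappa(v,0)\leq \gep/4$ for $v\geq R_\gep$: the contribution from that regime is then $\leq a_\cI\cdot\gep/2\leq \gep u/2$. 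If instead $v\leq R_\gep$, then $a_\cI\leq R_\gep$ and the bracket is bounded by $C_{\mathrm{uf}}(\alpha)$ from \eqref{boundel}, so the contribution is $\leq R_\gep C_{\mathrm{uf}}(\alpha)\leq \gep u/2$ as soon as $u\geq 2R_\gep C_{\mathrm{uf}}(\alpha)/\gep$. This yields $|\psi(\Theta,u)-\psi^{\mathrm{mean}}(\Theta,u)|\leq \gep/2$ uniformly in $\Theta\in \overline\cV_M$ for $u\geq R_\gep'$ large enough.

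\emph{Step 3 (uniform $L$-approximation).} The deterministic analogue of Proposition~\ref{convunifr} yields $\psi_L^{\mathrm{mean}}\to\psi^{\mathrm{mean}}$ uniformly on $\overline\cV_M^{*,m}$ for each $m\geq M+2$, which handles the range $u\in[R_\gep',m]$. For $u\geq m$ one reruns the case analysis of Step~2 at finite $L$, using the uniform finite-$L$ entropy estimates of Lemmas~\ref{conunif1} and~\ref{convularge} together with Lemma~\ref{l:feinflim1}: both $\psi_L^{\mathrm{mean}}$ and $\psi^{\mathrm{mean}}$ lie within $\gep/4$ of their common large-$u$ asymptote, uniformly in $L$, so $|\psi_L^{\mathrm{mean}}(\Theta,u)-\psi^{\mathrm{mean}}(\Theta,u)|\leq \gep/2$. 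A triangle inequality combining Steps~2 and~3 then yields \eqref{defuc2} with $R_\gep:=R_\gep'$ and a suitable $L_\gep$.

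The delicate point is the uniformity in $u\geq m$ needed in Step~3: the compactness arguments of Section~\ref{proofprop1} only give uniform convergence on bounded-$u$ strata $\overline\cV_M^{*,m}$, so one must complement them with a quantitative tail bound in $u$ uniform in $L$---essentially the mean-side analogue of Lemma~\ref{boundunif}---built on the finite-$L$ entropy estimates from Appendix~\ref{Path entropies}. Once these tail bounds are in place, the calculus in Step~2 is elementary, and the final combination is a straightforward triangle inequality.
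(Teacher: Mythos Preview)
Your route is genuinely different from the paper's. The paper never touches the variational formulas: it works directly at the path level. The key observation is that any $\pi\in\cW_{\Theta,u,L}$ makes at most $u+L$ excursions into the $B$-solvent (each excursion consumes either a horizontal step, of which there are $L$, or at least $L$ vertical steps). Feeding this into the large deviation Lemma~\ref{lele} yields, on a high-$\P_\omega$-probability event $\cQ_{uL,\widehat K}^{\gamma,\eta}$, the uniform pathwise bound
\[
\big|H_L^{\Theta,\omega}(\pi)-T(\pi)\tfrac{\beta-\alpha}{2}\big|\le(\eta+c\gamma)\,uL\qquad\text{for every }\pi\in\cW_{\Theta,u,L},
\]
from which the result follows by conditioning on $\cQ$ and controlling the complement with \eqref{boundel}. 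No variational formula, no $\psi^{\mathrm{mean}}$, no Step~3.

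Your Steps~1--2 are correct and give a clean proof that $|\psi(\Theta,u)-\psi^{\mathrm{mean}}(\Theta,u)|\le\gep/2$ for $u$ large. The gap is in Step~3, specifically in the ``common large-$u$ asymptote'' argument for $u\ge m$. There is no single asymptote uniform in $\Theta$: take $\Theta\in\overline\cV_{\nAB,B,2,M}$ with the nearest interface at distance of order $t_\Theta/2$, and $u$ only slightly above $t_\Theta$. Then any $\pi\in\cW_{\Theta,u,L}$ spends almost all of its $uL$ steps in $B$, so $\psi_L^{\mathrm{mean}}(\Theta,u)$ and $\psi^{\mathrm{mean}}(\Theta,u)$ are both near $\tfrac{\beta-\alpha}{2}$, not near $0$; for other $\Theta$ with the same $u$ they are near $0$. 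So you cannot pin both quantities to a $\Theta$-independent value and conclude they are close to each other. What you actually need in Step~3 is a uniform-in-$u$ version of the convergence $\psi_L^{\mathrm{mean}}\to\psi^{\mathrm{mean}}$, which amounts to redoing the entire Section~\ref{proofprop1} machinery in the mean setting \emph{without} the $m$-truncation. That may be feasible (Lemma~\ref{conunif1} is uniform in $(u,l)\in\cH_L$), but it is a substantial argument, not a tail bound, and it is precisely what the paper's pathwise approach is designed to sidestep.
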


Lemmas~\ref{boundunif}--\ref{fep2} below are consequences of Lemma~\ref{concav} and 
Proposition~\ref{base}. The proofs of Lemmas~\ref{boundunif} and \ref{fep2} will
be given in Sections~\ref{proofboundunif} and \ref{prooffep2}. Lemma~\ref{boundunif} 
shows that $\psi(\Theta,u)$ is bounded from above uniformly in $\Theta\in\overline\cV_M$ 
as $u\to \infty$. Lemma~\ref{fep1} identifies the limit of $\partial^{-}_u(u\,\psi(\Theta,u))$ 
as $u\to \infty$ for $\Theta\in \overline{\cV}_M$. Lemma~\ref{fep2} is the counterpart 
of Lemma~\ref{boundunif} for $\partial^{-}_u(u\psi(\Theta,u))$ instead of $\psi(\Theta,u)$. 

\begin{lemma}
\label{boundunif}
For all $(\alpha,\beta)\in \CONE$ and $\gep>0$ there exists a $C_\gep>0$ such that 
\be{defuc}
\psi(\Theta,u)\leq \left\{
\begin{array}{ll}
\vspace{.1cm}
\gep
& \mbox{if} \ \ \Theta\in \overline \cV_M\setminus \overline \cV_{\nAB,B,1,M},
\quad  u\geq t_\Theta\vee C_\gep, \\
\vspace{.1cm}
\frac{\beta-\alpha}{2}+\gep
&\mbox{if} \ \ \Theta\in \overline \cV_{\nAB,B,1,M},
\quad u\geq t_\Theta\vee C_\gep,
\end{array}
\right.
\ee
\end{lemma}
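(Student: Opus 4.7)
The plan is to reduce the statement to Proposition~\ref{base} combined with the entropy estimate of Lemma~\ref{convularge}. The key observation is that $(\alpha,\beta)\in\CONE$ gives $\beta-\alpha\leq 0$, so every exponential weight $e^{T(\pi)(\beta-\alpha)/2}$ appearing in the approximate partition function of Proposition~\ref{base} is at most $1$, while on columns in $\overline\cV_{\nAB,B,1,M}$ every path is forced to stay in solvent $B$ and the exponent is identically $uL(\beta-\alpha)/2$. These two regimes produce the two estimates stated in the lemma.

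Given $\gep>0$, I would first apply Proposition~\ref{base} with $\gep/2$ in place of $\gep$ to obtain constants $R_{\gep/2}$ and $L_{\gep/2}$ such that, for all $\Theta\in\overline\cV_M$, $u\geq t_\Theta\vee R_{\gep/2}$ and $L\geq L_{\gep/2}$,
\[
\psi(\Theta,u) \leq \tfrac{1}{uL}\log\sum_{\pi\in\cW_{\Theta,u,L}} e^{T(\pi)(\beta-\alpha)/2}+\tfrac{\gep}{2}.
\]
From the definitions in \eqref{ww}--\eqref{ww2}, every $\pi\in\cW_{\Theta,u,L}$ is (up to translation) a $uL$-step path with horizontal extent exactly $L$, so $|\cW_{\Theta,u,L}|$ is bounded by the cardinality controlled by Lemma~\ref{convularge}. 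That lemma then yields an $M_{\gep/2}>1$ such that $\frac{1}{uL}\log|\cW_{\Theta,u,L}|\leq\gep/2$ for every $u\geq M_{\gep/2}$, uniformly in $\Theta\in\overline\cV_M$ and in $L$. Setting $C_\gep=M_{\gep/2}\vee R_{\gep/2}$ completes the preparatory step.

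If $\Theta\in\overline\cV_M\setminus\overline\cV_{\nAB,B,1,M}$, I bound each summand by $1$ to obtain
\[
\psi(\Theta,u)\leq \tfrac{1}{uL}\log|\cW_{\Theta,u,L}|+\tfrac{\gep}{2}\leq \gep
\qquad \text{for } u\geq t_\Theta\vee C_\gep,
\]
giving the first case. If instead $\Theta\in\overline\cV_{\nAB,B,1,M}$, then by the definition of this class (recall \eqref{partcv} together with \eqref{ww2} and $x_\Theta=1$), every $\pi\in\cW_{\Theta,u,L}$ is confined to a column of consecutive $B$-blocks without ever reaching an $AB$-interface, so $\pi$ lies entirely in solvent $B$ and $T(\pi)=uL$ identically. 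Consequently the sum equals $|\cW_{\Theta,u,L}|\,e^{uL(\beta-\alpha)/2}$, and the same entropy estimate yields $\psi(\Theta,u)\leq \tfrac{\beta-\alpha}{2}+\gep$ for $u\geq t_\Theta\vee C_\gep$.

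The argument is genuinely short once Proposition~\ref{base} is granted; the only point that requires some care is verifying that the inclusion $\cW_{\Theta,u,L}\subset \{\pi\in\cW_{uL}\colon\pi_{uL,1}=L\}$ holds uniformly over all classes of columns (which is immediate from \eqref{ww}--\eqref{ww2}) so that Lemma~\ref{convularge} may be applied with the same vanishing rate independently of $\Theta$. The real work is therefore concentrated in Proposition~\ref{base}, whose proof (given separately in Section~\ref{proofbase}) relies on the variational characterization of $\psi$ in Proposition~\ref{energ} together with the fact that $\tilde\kappa(u,l)$ and $\phi_\cI(u)$ both vanish as $u\to\infty$ (Lemmas~\ref{l:lemconv2}(iii) and~\ref{l:lemconv}(ii)).
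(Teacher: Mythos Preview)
Your proof is correct and follows essentially the same route as the paper: apply Proposition~\ref{base} with $\gep/2$, use $\beta-\alpha\leq 0$ in $\CONE$ to bound the exponential weights by $1$ (or note $T(\pi)=uL$ identically in the $\overline\cV_{\nAB,B,1,M}$ case), and invoke Lemma~\ref{convularge} to control the remaining entropy term. One small inaccuracy in your closing remark: the paper's proof of Proposition~\ref{base} does not go through the variational formula of Proposition~\ref{energ}; it uses the large-deviation estimate of Lemma~\ref{lele} to compare the Hamiltonian $H_L^{\Theta,\omega}(\pi)$ directly with $T(\pi)(\beta-\alpha)/2$ on a high-probability event for $\omega$.
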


\begin{lemma}
\label{fep1}
For all $(\alpha,\beta)\in \CONE$, 
\be{defucalt}
\lim_{v\to \infty} \partial^{+}_u (u\psi(\Theta,u))(v)= \left\{
\begin{array}{ll}
\vspace{.1cm}
0
& \mbox{if  }\  \ \Theta\in \overline \cV_M\setminus \overline \cV_{\nAB,B,1,M}, \\
\vspace{.1cm}
\frac{\beta-\alpha}{2}
&\mbox{if } \  \ \Theta\in \overline \cV_{\nAB,B,1,M}.
\end{array}
\right.
\ee
\end{lemma}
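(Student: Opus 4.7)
\textbf{Proof plan for Lemma \ref{fep1}.} By Lemma~\ref{concav}, the map $u\mapsto u\psi(\Theta,u)$ is continuous and strictly concave on $[t_\Theta,\infty)$, so $v\mapsto \partial^+_u(u\psi(\Theta,u))(v)$ is non-increasing and the limit as $v\to\infty$ exists in $[-\infty,\infty)$. The first step is to reduce the problem to computing $\lim_{v\to\infty}\psi(\Theta,v)$ via the standard fact that for any concave function $f\colon[a,\infty)\to\R$ with right derivative $f'_+$, one has
\be{slopelim}
\lim_{v\to\infty} f'_+(v)=\lim_{v\to\infty}\frac{f(v)}{v}
\ee
whenever either limit exists. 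Applying this to $f(u)=u\psi(\Theta,u)$ yields
$$\lim_{v\to\infty}\partial^+_u(u\psi(\Theta,u))(v)=\lim_{v\to\infty}\psi(\Theta,v),$$
so it suffices to identify this last limit.

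The second step uses Proposition~\ref{base}: for every $\gep>0$ there is $R_\gep,L_\gep$ such that, uniformly in $\Theta\in\overline\cV_M$ and $u\geq t_\Theta\vee R_\gep$,
\be{approxx}
\Big|\psi(\Theta,u)-\tfrac{1}{uL}\log\!\!\sum_{\pi\in\cW_{\Theta,u,L}}\!\!e^{T(\pi)(\beta-\alpha)/2}\Big|\leq\gep,\qquad L\geq L_\gep.
\ee
Consider first $\Theta\in\overline\cV_{\nAB,B,1,M}$. Since the column between the entrance and exit consists entirely of $B$-blocks and the trajectories in $\cW_{\Theta,u,L}$ do not reach the surrounding $AB$-interfaces, one has $T(\pi)=uL$ for every such $\pi$. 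The right-hand side of \eqref{approxx} therefore equals $\tfrac{\beta-\alpha}{2}+\tfrac{1}{uL}\log|\cW_{\Theta,u,L}|$, and the entropy term is bounded above by $\tilde\kappa_L(u,l_B)$ for the relevant slope $l_B$, which by Lemmas~\ref{conunif1} and \ref{l:lemconv2}(iii) tends to $0$ uniformly as $u\to\infty$. Hence $\lim_{v\to\infty}\psi(\Theta,v)=\tfrac{\beta-\alpha}{2}$.

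For $\Theta\in\overline\cV_M\setminus\overline\cV_{\nAB,B,1,M}$, the upper bound $\limsup_{v\to\infty}\psi(\Theta,v)\leq 0$ is immediate from Lemma~\ref{boundunif}. For the matching lower bound, since $(\alpha,\beta)\in\CONE$ gives $\beta-\alpha\leq 0$, one estimates \eqref{approxx} from below by keeping only those $\pi\in\cW_{\Theta,u,L}$ whose time in the $B$-solvent satisfies $T(\pi)\leq\delta uL$ for a small $\delta>0$:
\be{lbowerb}
\psi(\Theta,u)\geq \tfrac{\beta-\alpha}{2}\,\delta+\tfrac{1}{uL}\log\big|\{\pi\in\cW_{\Theta,u,L}\colon T(\pi)\leq\delta uL\}\big|-\gep.
\ee
For each of the four subclasses $\overline\cV_{\nAB,A,1,M}$, $\overline\cV_{\nAB,A,2,M}$, $\overline\cV_{\nAB,B,2,M}$, $\overline\cV_{\AB,M}$, one exhibits trajectories that spend at most a bounded (in $L$) number of steps in the $B$-solvent: for the first two classes every $\pi$ already has $T(\pi)=0$; for $\overline\cV_{\nAB,B,2,M}$ one crosses the $B$-column to the nearest $AB$-interface and then performs a long $A$-excursion before returning, using only $O(L)$ $B$-steps; for $\overline\cV_{\AB,M}$ one concatenates short interface-crossings with long $A$-excursions, again using $O(L)$ $B$-steps. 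In all cases the entropy of such a sub-family is $(1-o_u(1))\tilde\kappa_L(u,l)$ for an appropriate slope $l$, which Lemma~\ref{l:lemconv2}(iii) sends to $0$. Taking $\delta\downarrow 0$ and $\gep\downarrow 0$ in \eqref{lbowerb} gives $\liminf_{v\to\infty}\psi(\Theta,v)\geq 0$, completing the proof.

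The main obstacle is the geometric/entropic construction behind \eqref{lbowerb} in the subclass $\overline\cV_{\AB,M}$ and the $x=2$ subclasses, where the trajectory is constrained to touch or cross interfaces: one has to verify that the $B$-time needed to fulfill these constraints remains $O(L)$ (hence of negligible cost per step as $u\to\infty$) while the residual entropy saturates $\tilde\kappa(u,l)$ at the relevant slope. This is essentially a careful repetition of the concatenation arguments underlying Propositions~\ref{energ} and \ref{base}, but restricted to trajectories with vanishing $B$-density.
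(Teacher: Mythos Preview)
Your proposal is correct and follows the same route the paper indicates in its one-line proof (strict concavity from Lemma~\ref{concav}, Proposition~\ref{base}, and Lemma~\ref{convularge}); you simply flesh out the reduction via \eqref{slopelim} and the lower bound that the paper leaves implicit. One harmless slip: for $\Theta\in\overline\cV_{\nAB,A,2,M}$ not every $\pi$ has $T(\pi)=0$ (trajectories may cross the interface into $B$), but some do, which suffices---and in fact for any fixed $\Theta\notin\overline\cV_{\nAB,B,1,M}$ a single trajectory with minimal $B$-time (bounded independently of $u$, cf.\ \eqref{minreach}) already yields $\psi(\Theta,u)\geq C_\Theta(\beta-\alpha)/(2u)-\gep\to-\gep$, so the subfamily entropy accounting in \eqref{lbowerb} is heavier than needed.
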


\begin{lemma}
\label{fep2}
For all $(\alpha,\beta)\in \CONE$ and $\gep>0$ there exists a $V_\gep>0$ such that  
\be{defucaltalt}
\partial^{-}_u (u\psi(\Theta,u))(v)\leq \left\{
\begin{array}{ll}
\vspace{.1cm}
\gep
& \mbox{if  }\  \ \Theta\in \overline \cV_M\setminus \overline \cV_{\nAB,B,1,M}, 
\quad v\geq 2 t_\Theta\vee V_\gep,\\
\vspace{.1cm}
\frac{\beta-\alpha}{2}+\gep
&\mbox{if } \  \ \Theta\in \overline \cV_{\nAB,B,1,M}, 
\quad  v\geq 2 t_\Theta\vee V_\gep.
\end{array}
\right.
\ee
\end{lemma}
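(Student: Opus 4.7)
My plan is to deduce Lemma~\ref{fep2} from the pointwise asymptotics (Lemmas~\ref{boundunif} and~\ref{fep1}) by exploiting the strict concavity of $u\mapsto u\psi(\Theta,u)$ provided by Lemma~\ref{concav}, together with the energy-only approximation in Proposition~\ref{base}. The starting point is the secant inequality for a concave function $F$ on $[t_\Theta,\infty)$: for $t_\Theta\leq w<v$,
\[
\partial^-_u F(v)\leq \frac{F(v)-F(w)}{v-w}.
\]
Applying this to $F(u)=u\psi(\Theta,u)$ with $w=v/2$, which is admissible since $v\geq 2t_\Theta$, the problem reduces to showing that for $v\geq V_\gep$,
\[
2\psi(\Theta,v)-\psi(\Theta,v/2)\leq
\begin{cases}
\gep & \text{if }\Theta\in \overline\cV_M\setminus\overline\cV_{\nAB,B,1,M},\\
\tfrac{\beta-\alpha}{2}+\gep & \text{if }\Theta\in \overline\cV_{\nAB,B,1,M},
\end{cases}
\]
uniformly in $\Theta$.

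For $\Theta\in\overline\cV_{\nAB,B,1,M}$, every admissible path remains entirely in $B$-blocks, so the block indicator $1\{\chi^L_{(\pi_{i-1},\pi_i)}=B\}$ in~\eqref{Hamiltonian1} is identically $1$. A standard law-of-large-numbers / concentration estimate for the i.i.d.\ disorder $\omega$, combined with the definition of $\tilde\kappa$, then yields the closed form
\[
\psi(\Theta,u)=\tilde\kappa(u,l_{B,\Theta})+\tfrac{\beta-\alpha}{2}.
\]
Substituting, $2\psi(\Theta,v)-\psi(\Theta,v/2)=\tfrac{\beta-\alpha}{2}+\bigl[2\tilde\kappa(v,l_{B,\Theta})-\tilde\kappa(v/2,l_{B,\Theta})\bigr]$, and since $l_{B,\Theta}\in[0,M+1]$ is uniformly bounded on $\overline\cV_M$, the bracketed term tends to $0$ uniformly in $l_{B,\Theta}$ as $v\to\infty$ by Lemma~\ref{l:lemconv2}(iii) combined with the joint continuity in~(i). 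This settles the saturated case.

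For $\Theta\notin\overline\cV_{\nAB,B,1,M}$, Proposition~\ref{base} allows me to replace $\psi(\Theta,u)$ by the energy-only quantity $\tilde\psi(\Theta,u):=(uL)^{-1}\log\sum_{\pi\in\cW_{\Theta,u,L}}\exp[T(\pi)(\beta-\alpha)/2]$, up to an error at most $\gep$ for $u\geq t_\Theta\vee R_\gep$ and $L\geq L_\gep$. Stratifying the sum by the time $T$ spent in $B$-solvent, and using that such $\Theta$ admit paths whose $B$-excursions have total length $\leq c\,t_\Theta L$ (since once the mandatory interface is reached the path may wander freely in $A$, with $c$ depending on $M$ only), one obtains an additive decomposition
\[
u\psi(\Theta,u)=u\,\tilde\kappa(u,l_{A,\Theta})+C_\Theta+o(1)\qquad(u\to\infty),
\]
where $l_{A,\Theta}\in[0,M+1]$ and $C_\Theta\in\R$ is independent of $u$, with the $o(1)$ uniform in $\Theta$. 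Then
\[
2\psi(\Theta,v)-\psi(\Theta,v/2)=2\tilde\kappa(v,l_{A,\Theta})-\tilde\kappa(v/2,l_{A,\Theta})+o(1),
\]
so the additive constant $C_\Theta$ cancels in the difference, and the remaining entropic term vanishes uniformly in $l_{A,\Theta}\in[0,M+1]$ as $v\to\infty$, exactly as in the saturated case.

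The main obstacle lies in establishing the additive decomposition uniformly. For types in $\overline\cV_{\nAB,B,2,M}$ with $t_\Theta$ comparable to $v$, the path is forced to spend a macroscopic fraction of its time in the $B$-solvent just to reach the mandatory interface, so both $\psi(\Theta,v)$ and $\psi(\Theta,v/2)$ are themselves of order one rather than $o(1)$. The crucial observation, which must be made quantitative by a careful stratification of the partition function by total $B$-time, is that the energetic penalty enters $u\psi(\Theta,u)$ \emph{additively} as $C_\Theta\approx T_{\min}(\Theta)(\beta-\alpha)/2$, and therefore cancels in the difference $2\psi(v)-\psi(v/2)$, leaving only the entropic tail which decays by Lemma~\ref{l:lemconv2}(iii) (or equivalently Lemma~\ref{addit3}).
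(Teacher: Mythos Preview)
Your secant-inequality reduction and the treatment of the fully-$B$ case $\Theta\in\overline\cV_{\nAB,B,1,M}$ are fine. The gap is in the non-saturated case: the claimed additive decomposition
\[
u\psi(\Theta,u)=u\,\tilde\kappa(u,l_{A,\Theta})+C_\Theta+o(1)\quad\text{uniformly in }\Theta
\]
is not established and is in fact false in this form. For $\Theta\in\overline\cV_{\AB,M}$ or $\overline\cV_{\nAB,k,2,M}$, the variational formula of Proposition~\ref{energ} has an interface piece $a_\cI\phi_\cI(a_\cI/h_\cI)$, and the optimal split between the $A$-, $B$- and $\cI$-components depends on $u$. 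So the leading ``entropic'' part is not $u\tilde\kappa(u,l_{A,\Theta})$, and there is no single $u$-independent $C_\Theta$ left over. You yourself flag that for $\Theta\in\overline\cV_{\nAB,B,2,M}$ with $t_\Theta$ comparable to $v$ the $o(1)$ uniformity is the ``main obstacle'', and then defer it to an unspecified stratification; that is precisely the step that carries all the content.

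The paper's proof shares your concavity/secant idea but avoids the decomposition by splitting into two regimes. When $t_\Theta\leq T$ is bounded, concavity between $t_\Theta$ and $v$ plus Lemma~\ref{boundunif} already forces $\psi(\Theta,v)\geq\gep-O(1/v)$, a contradiction for $v$ large. When $t_\Theta\to\infty$, the paper compares $\psi(\Theta,t_\Theta)$ and $\psi(\Theta,2t_\Theta)$ via Proposition~\ref{base} and uses the geometric fact that the \emph{minimal} $B$-time $\min\{T(\pi):\pi\in\cW_{\Theta,u,L}\}$ is the \emph{same} at $u=t_\Theta$ and $u=2t_\Theta$ (the extra time can always be spent in $A$ once the mandatory interface is reached). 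This gives an exact---not asymptotic---cancellation of the energy term, leaving a pure entropy bound $\tfrac{1}{2t_\Theta L}\log|\cW_{\Theta,2t_\Theta,L}|\geq\gep/8$, which contradicts Lemma~\ref{convularge}. That equality of minimal $B$-times is the rigorous substitute for your ``additive $C_\Theta$ cancels'' heuristic; you should use it directly rather than try to isolate an asymptotic constant.
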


\subsection{Proof of Lemmas \ref{concavt}--\ref{fep2} }\label{proofiny}
\subsubsection{Proof of Lemma \ref{concavt}}
Pick $(M,m)\in \EIGH$. By the compactness of $\overline{\cV}^{\,*,m}_M$, it suffices to show 
that $(u,\Theta) \mapsto u\,\psi(\Theta,u)$ is continuous on $\overline{\cV}^{\,*,m}_M$. 
Let $(\Theta_n,u_n)=(\chi_n,\Delta\Pi_n,b_{0,n},b_{1,n},u_n)$ be the general term of 
an infinite sequence that tends to $(\Theta,u)=(\chi,\Delta\Pi,b_{0},b_{1},u)$ 
in $(\overline{\cV}^{\,*,m}_M,\widetilde{d}_M)$. We want to show that $\lim_{n\to\infty}
u_n\psi(\Theta_n,u_n)=u \psi(\Theta,u)$. By the definition of $\widetilde{d}_M$, we 
have $\chi_n=\chi$ and $\Delta\Pi_n=\Delta\Pi$ for $n$ large enough. We assume that 
$\Theta\in \cV_{\AB}$, so that $\Theta_n\in \cV_{\AB}$ for $n$ large enough as well. 
The case $\Theta\in \cV_{\text{nint}}$ can be treated similarly.

Set
\be{defrm}
\cR_m=\{(a,h,l)\in [0,m]\times [0,1]\times \R\colon h+|l|\leq a\}
\ee
and note that $\cR_m$ is a compact set. Let $g\colon\,\cR_m\mapsto [0,\infty)$ be 
defined as $g(a,h,l)=a\,\widetilde{\kappa}(\tfrac{a}{h},\tfrac{l}{h})$ if $h>0$ 
and $g(a,h,l)=0$ if $h=0$. The continuity of $\widetilde{\kappa}$, stated in 
Lemma~\ref{l:lemconv2}(i), ensures that $g$ is continuous on $\{(a,h,l)\in
\cR_m\colon h>0\}$. The continuity at all $(a,0,l)\in \cR_m$ is obtained by 
recalling that $\lim_{u\to\infty}\tilde{\kappa}(u,l)=0$ uniformly in $l\in
[-u+1,u-1]$ (see Lemma~\ref{l:lemconv2}(ii-iii)) and that $\widetilde{\kappa}$ 
is bounded on $\cH$.

In the same spirit, we may set $\cR'_m=\{(u,h)\in [0,m]\times[0,1]\colon\,
h\leq u\}$ and define $g'\colon\,\cR'_m\mapsto [0,\infty)$ as $g'(u,h)
= u\,\phi^{\cI}(\tfrac{u}{h})$ for $h>0$ and $g'(u,h)=0$ for $h=0$. With the 
help of Lemma~\ref{l:lemconv} we obtain the continuity of $g'$ on $\cR'_m$ 
by mimicking the proof of the continuity of $g$ on $\cR_m$.

Note that the variational formula in \eqref{Bloc of type I} can be rewriten as 
\begin{align}
\label{Bloc of type IIa} 
u \,\psi(\Theta,u)
&=\sup_{(h),(a) \in \cL(l_A,\, l_B;\,u)} Q((h),(a),l_A,l_B),
\end{align}
with
\be{defQ}
Q((h),(a),l_A,l_B)=g(a_A,h_A,l_A)+g(a_B,h_B,l_B)+a_B \,
\tfrac{\beta-\alpha}{2}+g^{'}(a^\cI,h^\cI),
\ee
and with $l_A$ and $l_B$ defined in \eqref{bl}. Note that $\cL(l_A,\,l_B;\,u)$ is 
compact, and that $(h),(a)\mapsto Q((h),(a),l_A,l_B)$ is continuous on 
$\cL(l_A,\, l_B;\,u)$ because $g$ and $g'$ are continuous on $\cR_m$ and 
$\cR^{'}_m$, respectively. Hence, the supremum in \eqref{Bloc of type IIa} is 
attained.

Pick $\gep>0$, and note that $g$ and $g'$ are uniformly continuous on $\cR_m$ 
and $\cR'_m$, which are compact sets. Hence there exists an $\eta_\gep>0$ such 
that $|g(a,h,l)-g(a',h',l')|\leq \gep$ and $|g'(u,b)-g'(u',b')| \leq \gep$ when 
$(a,h,l),(a',h',l')\in  \cR_m$ and $(u,b),(u',b')\in \cR'_m$ are such that 
$|a-a'|,|h-h'|,|l-l'|,|u-u'|$ and $|b-b'|$ are bounded from above by $\eta_\gep$.

Since $\lim_{n\to \infty}(\Theta_n,u_n)=(\Theta,u)$ we also have that 
$\lim_{n\to\infty} b_{0,n}=b_0$, $\lim_{n\to \infty} b_{1,n}=b_1$ and 
$\lim_{n\to \infty} u_{n}=u$. Thus, $\lim_{n\to\infty} l_{A,n}= l_A$ and 
$\lim_{n\to \infty} l_{B,n}= l_B$, and therefore $|l_{A,n}- l_A|\leq \eta_\gep$, 
$|l_{B,n}- l_B|\leq \eta_\gep$ and $|u_n-u|\leq \eta_\gep$ for $n\geq n_\gep$ 
large enough.

For $n\in \N$, let $(h_n),(a_n)\in\cL(l_{A,n},\, l_{B,n};\,u_n)$ be a maximizer 
of \eqref{Bloc of type IIa} at $(\Theta_n,u_n)$, and note that, for $n\geq n_\gep$, 
we can choose $(\widetilde{h}_n),(\widetilde{a}_n)\in\cL(l_A,\,l_B;\,u)$ such 
that $|\widetilde{a}_{A,n}-a_{A,n}|$, $|\widetilde{a}_{B,n}-a_{B,n}|$, 
$|\widetilde{a}_{n}^\cI-a_{n}^\cI|$, $|\widetilde{h}_{A,n}-h_{A,n}|$, 
$|\widetilde{h}_{B,n}-h_{B,n}|$ and $|\widetilde{h}_{n}^\cI-h_{n}^\cI|$ are 
bounded above by $\eta_\gep$. Consequently,
\be{boundonpsi}
u_n \psi(\Theta_n,u_n)- u\psi(\Theta,u)
\leq Q((h_n),(a_n), l_{A,n},l_{B,n})-Q((\widetilde{h}_n),
(\widetilde{a}_n), l_{A},l_{B})\leq 3\gep.
\ee
We bound $u\psi(\Theta,u)-u_n \psi(\Theta_n,u_n)$ from above in a similar manner, 
and this suffices to obtain the claim.


\subsubsection{Proof of lemma \ref{concav}}
The continuity is a straightforward consequence of Lemma~\ref{concavt}: simply 
fix $\Theta$ and let $m\to\infty$. To prove the strict concavity, we note that 
the cases $\Theta\in \cV_{\AB,M}$ and $\Theta\in \cV_{\nAB,M}$ can be treated 
similarly. We will therefore focus on $\Theta\in \cV_{\AB,M}$.

For $l\in \R$, let
\be{defrmalt}
\cN_l=\{(a,h)\in [0,\infty) \times [0,1]\colon a\geq h+|l|\},
\quad \cN_l^+=\{(a,h)\in \cN_l\colon h>0\},
\ee
and let $g_l\colon\,\cN_l\mapsto [0,\infty)$ be defined as $g_l(a,h)=a\,
\tilde{\kappa}(\tfrac{a}{h},\tfrac{l}{h})$ for $h>0$ and $g_l(a,h)=0$ for 
$h=0$.  For $l\neq 0$, the strict concavity of $(u,l)\mapsto u\tilde{\kappa}(u,l)$ on 
$\cH$, stated in Lemma \ref{l:lemconv2}(i), immediately yields that $g_l$ 
is strictly concave on $\cN_l^+$ and concave on $\cN_l$. Consequently, for 
all $(a_1,h_1)\in \cN_l^+$ and $(a_2,h_2)\in \cN_l\setminus \cN_l^+ $, $g_l$ is 
strictly concave on the segment $[(u_1,h_1),(u_2,h_2)]$.

Define also $\widetilde g\colon\, \cN_0\mapsto [0,\infty)$ as $\widetilde g(a,h)
= a\,\phi^{\cI}(\tfrac{a}{h})$  for $h>0$ and $\widetilde g(a,h)=0$ for $h=0$. The 
strict concavity of $u\mapsto u\phi^{\cI}(u)$ and of $u\mapsto u\tilde{\kappa}(u,0)$ 
on $[1,\infty)$, stated in Assumption~\ref{assu} and in Lemma \ref{l:lemconv2}, immediately 
yield that $\widetilde g$ and $g_0$ are concave on $\cN_0$ and that, for $h>0$, 
$a\mapsto \widetilde{g}(a,h)$ and $a\mapsto g_0(a,h)$ are strictly concave on $[h,\infty)$

Similarly to what we did in \eqref{Bloc of type IIa}, we can rewrite the variational 
formula in \eqref{Bloc of type I} as 
\begin{align}
\label{Bloc of type IIaa} 
u \,\psi(\Theta,u)
&=\sup_{(h),(a) \in \cL(l_A,\, l_B;\,u)} \widetilde{Q}((h),(a))
\end{align}
with
\be{defQtil}
\widetilde{Q}((h),(a))=g_{l_A}(a_A,h_A)+g_{l_B}(a_B,h_B)+a_B \,
\tfrac{\beta-\alpha}{2}+\widetilde{g}(u-a_A-a_B,1-h_A-h_B),
\ee
and the supremum in \eqref{Bloc of type IIaa} is attained. In what follows we will restrict 
the proof to the case $l_A,l_B>0$ for the following reason. If $l_k=0$ for $k\in \{A,B\}$, 
then the inequality $g_0\leq \widetilde{g}$ and the concavity of $\widetilde{g}$ ensure 
that there exists a $(h),(a)\in \cL(l_A,\, l_B;\,u)$ maximizing \eqref{Bloc of type IIaa} and 
satisfying $h_k=a_k=0$, which allows to copy the proof below after removing the $k$-th 
coordinate in $(h),(a)$.

Next, we show that if $(h),(a)\in \cL(l_A,\,l_B;\,u)$ realizes the maximum in 
\eqref{Bloc of type IIaa}, then $(h),(a)\notin \widetilde{\cL}(l_A,\,l_B;\,u)$ with
\be{restor}
\widetilde{\cL}(l_A,\, l_B;\,u) 
= \widetilde{\cL}_A(l_A,\, l_B;\,u)\cup\widetilde{\cL}_B(l_A,\, l_B;\,u)
\cup\widetilde{\cL}^{\, \cI}(l_A,\, l_B;\,u)
\ee
and
\begin{align}
\label{restore}
\nonumber \widetilde{\cL}_A(l_A,\, l_B;\,u)
&=\{(h),(a)\in \cL(l_A,\, l_B;\,u)\colon\,h_A=0 \ \ \text{and}\ \ a_A>l_A\},\\
\nonumber \widetilde{\cL}_B(l_A,\, l_B;\,u)
&=\{(h),(a)\in \cL(l_A,\, l_B;\,u)\colon\, h_B=0 \ \ \text{and}\ \ a_B>l_B\},\\
\widetilde{\cL}^{\,\cI}(l_A,\, l_B;\,u)
&=\{(h),(a)\in \cL(l_A,\, l_B;\,u)\colon\, h_I=0 \ \ \text{and}\ \ a_I>0\}.
\end{align}
Assume that $(h),(a)\in \widetilde{\cL}(l_A,\, l_B;\,u)$, and that $h_A>0$ or $h^\cI>0$. 
For instance, $(h),(a)\in \widetilde{\cL}^\cI(l_A,\, l_B;\,u)$ and $h_A>0$. Then, by 
Lemma~\ref{l:lemconv2}(iv), $\widetilde{Q}$ strictly increases when $a_A$ is replaced 
by $a_A+a^\cI$ and $a^\cI$ by $0$. This contradicts the fact that $(h),(a)$ is a maximizer. 
Next, if $(h),(a)\in \widetilde{\cL}(l_A,\,l_B;\,u)$ and $h_A=h^\cI=0$, then $h_B=1$, 
and the first case is $(h),(a)\in \widetilde{\cL}_A(l_A,\,l_B;\,u)$, while the second 
case is $(h),(a)\in \widetilde{\cL}^\cI(l_A,\, l_B;\,u)$. In the second case, as before, 
we replace $a_A$ by $a_A+a^\cI$ and $a^\cI$ by $0$, which does not change $\widetilde{Q}$ 
but yields that $a_A>l_A$ and therefore brings us back to the first case. In this first 
case, we are left with an expression of the form
\be{fca}
Q((h),(a))=g_{l_B}(a_B,1) + a_B\, \tfrac{\beta-\alpha}{2} 
\ee
with $h_A=h^\cI=0$ and $a_A>l_A$. Thus, if we can show that there exists an $x\in (0,1)$ 
such that 
\be{fca1}
g_{l_A}(a_A,x)+g_{l_B}(a_B,1-x)>g_{l_B}(a_B,1), 
\ee
then we can claim that $(h),(a)$ is not a maximizer of \eqref{Bloc of type IIaa} and 
the proof for $(h),(a)\notin \widetilde{\cL}(l_A,\,l_B;\,u)$ will be complete. 

To that end, we recall \eqref{kapexplform}, which allows us to rewrite the left-hand 
side in \eqref{fca1} as
\be{fca2}
g_{l_A}(a_A,x)+g_{l_B}(a_B,1-x)=a_A \,\kappa\big(\tfrac{a_A}{l_A},\tfrac{x}{l_A}\big)+
a_B \,\kappa\big(\tfrac{a_B}{l_B},\tfrac{1-x}{l_B}\big)+ a_B\, \tfrac{\beta-\alpha}{2}.
\ee
We recall \cite{dHW06}, Lemma 2.1.1, which claims that $\kappa$ is defined on 
$\DOM=\{(a,b)\colon a\geq 1+b, b\geq 0\}$, is analytic on the interior of $\DOM$  
and is continuous on $\DOM$. Moreover, in the proof of this lemma, an expression 
for $\partial_b\, \kappa(a,b)$ is provided, which is valid on the interior of $\DOM$. 
From this expression we can easily check that if $a>1$, then $\lim_{b\to 0} \partial_b\,
\kappa(a,b)=\infty$. Therefore, by the continuity of $\kappa$ on $(a_A/l_A,0)$ with 
$a_A/l_A>1$ we can assert that the derivative with respect to $x$ of the left-hand side 
in \eqref{fca2} at $x=0$ is infinite, and therefore there exists an $x>0$ such that 
\eqref{fca1} is satisfied.

It remains to prove the strict concavity of $u\mapsto u \psi(\Theta,u)$ with $\Theta\in 
\cV_{\AB,M}$. Pick $u_1>u_2\geq t_\Theta$, and let $(h_1),(a_1) \in \cL(l_A,\, l_B;\,u_1)$ 
and $(h_2),(a_2) \in \cL(l_A,\, l_B;\,u_2)$ be maximizers of \eqref{Bloc of type IIaa} 
for $u_1$ and $u_2$, respectively. We can write
\begin{align}
\label{aa}
\nonumber (a_1),(h_1)
&=\big(a_{A,1},a_{B,1},a^{\cI}_1),(h_{A,1},h_{B,1},h^{\cI}_1\big),\\
(a_2),(h_2)
&=\big(a_{A,2},a_{B,2},a^{\cI}_2),(h_{A,2},h_{B,2},h^{\cI}_2\big).
\end{align}
Thus, $(\tfrac{a_1+a_2}{2}),(\tfrac{h_1+h_2}{2})\in \cL(l_A,\,l_B;\,\tfrac{u_1+u_2}{2})$ 
and, with the help of the concavity of $g_{l_A}, g_{l_B},\widetilde{g}$ proven above, 
we can write
\be{aaalt}
\tfrac{u_1+u_2}{2}\,\psi(\Theta,\tfrac{u_1+u_2}{2})
\geq \widetilde{Q}((\tfrac{a_1+a_2}{2}), (\tfrac{h_1+h_2}{2}))
\geq \tfrac12 \big(u_1\,\psi(\Theta,u_1)+u_2\,\psi(\Theta,u_2)\big).
\ee
At this stage, we assume that the right-most inequality in \eqref{aaalt} is an equality and 
show that this leads to a contradiction, after which Lemma \ref{concav} will be proven. 

We have proven above that $(a_1),(h_1) \notin \widetilde{\cL}(l_A,\, l_B;\,u_1)$ and 
$(a_2),(h_2) \notin \widetilde{\cL}(l_A,\, l_B;\,u_2)$. Thus, we can use \eqref{defQtil} 
and the strict concavity of $g_{l_A}, g_{l_B}$ on $\cN_{l_A}^+,\cN_{l_B}^+$ and the 
concavity of $\widetilde{g}$ on $\cN_0$  to conclude that necessarily
\begin{align}
\label{2imp}
&(a_{A,1},h_{A,1}) = (a_{A,2},h_{A,2}) \quad \text{and} \quad 
(a_{B,1},h_{B,1}) = (a_{B,2},h_{B,2}).
\end{align}
As a consequence, we recall that $u_1>u_2$ and we can write
\be{onut}
u_1^\cI=u_1-a_{A,1}-a_{B,2}>u_2-a_{A,2}-a_{B,2}=u_2^\cI\geq 0,
\ee
and therefore, since $(a_1),(h_1) \notin \widetilde{\cL}^{\cI}(l_A,\, l_B;\,u_1)$, it follows 
that $h_1^\cI>0$ such that (recall \eqref{2imp})
\be{onut2}
h_1^\cI=1-h_{A,1}-h_{B,1}=1-h_{A,2}-h_{B,2}=h_2^\cI>0.
\ee 
Hence we can use the strict concavity of $a\mapsto \widetilde{g}(a,h_1^\cI)$ to conclude 
that $u_1^\cI=u_2^{\cI}$, which clearly contradicts \eqref{onut}.


\subsubsection{Proof of Proposition \ref{base}}
\label{proofbase}

The proof is performed with the help of Lemma \ref{lele} stated in section \ref{Computation}. For
this reason we use some notations introduced in Lemma \ref{lele}.

We pick $\gamma, \eta>0$ (which will be specified later), and we let $\widehat{K}\in\N$ 
be the integer in Lemma~\ref{lele} associated with $\alpha,\beta,\eta,\gamma$. For 
$\Theta\in \overline\cV_M$, $u\geq t_\Theta$ and $\pi\in \cW_{\Theta,u,L}$, we let 
$N_\pi$ be the number of excursions of $\pi$ in solvent $B$ in columns of type $\Theta$. 
We further let  also $(I_\pi)=(I_\pi(1),\dots, I_\pi(N_\pi))$ be the sequence of consecutive 
intervals in $\{1,\dots,uL\}$ on which $\pi$ makes these $N_\pi$ excursions in $B$, so 
that  $(I_\pi)\in\cE_{uL,N_\pi}$ and $T(\pi)=\sum_{i=1}^{N_\pi} |I_\pi(i)|$. 

Pick $\Theta\in \overline\cV_M$, $u\geq t_\Theta$ and partition $\cW_{\Theta,u,L}$ into
two parts:
\begin{align}
V_{u,L,\gamma}^{\Theta,+}&=\{\pi\in \cW_{\Theta,u,L}\colon\,
T(\pi)\geq \gamma uL\}\quad \text{and}\quad V_{u,L,\gamma}^{\Theta,-}
=\{\pi\in \cW_{\Theta,u,L}\colon\, T(\pi)\leq \gamma uL\}.
\end{align}
There exists a $c>0$, depending on $\alpha,\beta$ only, such that 
\be{pide}
\big|H_L^{\Theta,\omega}(\pi)-T(\pi) \tfrac{\beta-\alpha}{2}\big|
\leq c T(\pi)\leq c \gamma u L, \quad \pi\in V_{u,L,\gamma}^{\Theta,-}.
\ee
Since any excursion in solvent $B$ requires at least $1$ horizontal steps or $L$ vertical 
steps, we have that $N_\pi\leq u+L$ for $\pi\in \cW_{\Theta,u,L}$. Since $u+L\leq uL/
\widehat K$ as soon as $u,L\geq 2\widehat K$, it follows that 
\be{Ipi}
I(\pi)\in\cup_{N=1}^{uL/\widehat K} \{I\in \cE_{uL,N}\colon\, 
T(I)\geq \gamma u L\},\ \  L\geq 2\widehat K,\ \ u\geq t_\Theta\vee 2\widehat K,\ \ 
\pi\in V_{u,L,\gamma}^{\Theta,+},
\ee
and therefore $\omega\in Q_{uL, \widehat K}^{\gamma,\eta}$ implies that $|H_L^{\Theta,\omega}
(\pi)-T(\pi) \tfrac{\beta-\alpha}{2}|\leq \eta u L$ for $\pi \in V_{u,L,\gamma}^{\Theta,+}$.
Consequently, for $\omega\in Q_{uL, \widehat K}^{\gamma,\eta}$, we have 
\be{pide2}
\big|H_L^{\Theta,\omega}(\pi)-T(\pi) \tfrac{\beta-\alpha}{2}\big|
\leq  u L (\eta+c \gamma),\ \ \Theta\in \overline{\cV}_M,\ \ 
u\geq 2\widehat K\vee t_\Theta,\ \ L\geq 2 \widehat K, \ \ \pi\in \cW_{\Theta,u,L}.
\ee
Rewrite
\begin{align}
\label{inegva2}
\psi_L(\Theta,u)=\E\bigg[\tfrac{1}{uL} \log &\sum_{\pi\in \cW_{\Theta,u,L}} 
e^{H_L^{\Theta,\omega}(\pi)} \big| \cQ_{uL,\widehat K}^{\gamma,\eta}\bigg]
+ \P\Big(\big(\cQ_{uL,\widehat K}^{\gamma,\eta}\big)^c\Big)\ \Delta,
\end{align}
where $\Delta$ is an error term given by 
\begin{align}
\label{inegva22}
\Delta= \E\Big[\tfrac{1}{uL} \log \sum_{\pi\in \cW_{\Theta,u,L}} 
e^{H_L^{\Theta,\omega}(\pi)} \big| \big(\cQ_{uL,\widehat K}^{\gamma,\eta}\big)^c\Big]
-\E\Big[\tfrac{1}{uL} \log \sum_{\pi\in \cW_{\Theta,u,L}} 
e^{H_L^{\Theta,\omega}(\pi)} \big| \cQ_{uL,\widehat K}^{\gamma,\eta}\Big].
\end{align}
By \eqref{boundel}, we obtain that $|\Delta|\leq 2 C_{\text{uf}}$.

To conclude, we set $\eta=\gep/3$, $\gamma=\gep/3c$. By Lemma~\ref{lele}, there exists 
an $L_\gep\in \N$ such that, for $u\geq 2\widehat{K}\vee t_\Theta$ and $L\geq L_\gep$, 
we have $\P\big(\big(\cQ_{uL,\widehat K}^{\gamma,\eta}\big)^c\big)\leq \gep/6\,
C_{\text{uf}}$. Thus, we can use \eqref{pide2} and \eqref{inegva2} to complete the 
proof of Proposition~\ref{base}.


\subsubsection{Proof of Lemma \ref{boundunif}}\label{proofboundunif}

Pick $\gep>0$. By applying  Proposition~\ref{base} with $\gep/2$, we see that there 
exists an $R_{\gep/2}>0$ such that 
\be{fejuo}
\psi(\Theta,u)\leq \limsup_{L\to \infty} \tfrac{1}{uL} \log 
\sum_{\pi\in \cW_{\Theta,u,L}} e^{T(\pi) \tfrac{\beta-\alpha}{2}} 
+\tfrac{\gep}{2},\quad \Theta\in \overline \cV_M,\quad u\geq t_\Theta\vee R_{\gep/2}.
\ee
We first consider the case $\Theta\in \overline \cV_M\setminus \overline \cV_{\nAB,B,1,M}$. 
Since $(\alpha,\beta)\in \CONE$, we can use \eqref{fejuo} to obtain 
\be{fejuo1}
\psi(\Theta,u)\leq \limsup_{L\to \infty} \tfrac{1}{uL} \log |\cW_{\Theta,u,L}| 
+ \tfrac{\gep}{2},\quad u\geq t_\Theta\vee R_{\gep/2}.
\ee
Thus, \eqref{fejuo1} and Lemma~\ref{convularge} imply that there exists a $C_\gep\geq 
R_{\gep/2}$ such that $\psi(\Theta,u)\leq \gep$ when $u\geq t_\Theta\vee C_\gep$ and 
$\Theta\in \overline \cV_M\setminus \overline \cV_{\nAB,B,1,M}$. The case $\Theta \in 
\overline \cV {\nAB,B,1,M}$ can be treated similarly after noticing that $T(\pi)=uL$ 
for $\pi\in \cW_{\Theta,u,L}$ and $\Theta\in \overline \cV_{\nAB,B,1,M}$.


\subsubsection{Proof of Lemma \ref{fep1}}
\label{prooffep1}

The proof is a straightforward consequence of the strict concavity of $u\mapsto u\psi
(\Theta,u)$ for $\Theta\in \overline\cV_M$, Proposition~\ref{base} and 
Lemma~\ref{convularge}.


\subsubsection{Proof of Lemma \ref{fep2}}
\label{prooffep2}

Pick $\gep>0$. The proof will be complete once we show the following two properties:
\begin{itemize}
\item[(1)]
There exists a $T_{\gep}>0$ such that 
\be{defu22}
\partial^{-}_u (u\psi(\Theta,u))(2 t_\Theta)\leq \left\{
\begin{array}{ll}
\vspace{.1cm}
\gep
& \mbox{if  }\  \ \Theta\in \overline \cV_M\setminus
\overline\cV_{\nAB,B,1,M}\colon t_\Theta\geq T_\gep,\\
\vspace{.1cm}
\frac{\beta-\alpha}{2}+\gep
&\mbox{if } \  \ \Theta\in\overline\cV_{\nAB,B,1,M}
\colon t_\Theta\geq T_\gep.
\end{array}
\right.
\ee
\item[(2)] 
For all $T>0$ there exists a $V_{\gep,T}>0$ such that 
\be{defu23}
\partial^{-}_u (u\psi(\Theta,u))(v)\leq \left\{
\begin{array}{ll}
\vspace{.1cm}
\gep
& \mbox{if }\  \ \Theta\in \overline \cV_M\setminus \overline 
\cV_{\nAB,B,1,M}\colon t_\Theta\leq T, \quad v\geq  t_\Theta\vee V_{\gep,T},\\
\vspace{.1cm}
\frac{\beta-\alpha}{2}+\gep
&\mbox{if } \  \ \Theta\in \overline \cV_{\nAB,B,1,M}\colon 
t_\Theta\leq T, \quad v\geq  t_\Theta\vee V_{\gep,T}.
\end{array}
\right.
\ee
\end{itemize}

We prove \eqref{defu23} for the case $\Theta\in \cV_M\setminus \overline \cV_{\nAB,B,1,M}$ 
(the case $\Theta\in\cV_{\nAB,B,1,M}$ can be treated similarly). To that aim, we assume 
that there exists a sequence $(\Theta_n)_{n\in\N}$ in $\cV_M\setminus \overline
\cV_{\nAB,B,1,M}$ such that $t_{\Theta_n}\leq T$ for $n\in \N$ and a sequence 
$(u_n)_{n\in\N}$ such that $u_n\geq t_{\Theta_n}$ for $n\in \N$, $\lim_{n\to\infty} 
u_n=\infty$ and
\be{trf}
\partial^{-}_u (u\psi(\Theta_n,u))(u_n)\geq \gep,\quad n\in \N.
\ee
By concavity of $u\mapsto u\psi(\Theta_n,u)$ for $n\in \N$ (see Lemma \ref{concav}), we 
have
\be{trf2}
u_n \psi(\Theta_n,u_n)-t_{\Theta_n} \psi(\Theta_n,t_{\Theta_n})\geq 
\gep \,(u_n-t_{\Theta_n}),\quad n\in \N.
\ee
Therefore, the uniform bound on free energies in \eqref{boundel} and the inequality 
$t_{\Theta_n}\leq T$ allow us to rewrite \eqref{trf2} as
\be{bbv}
\psi(\Theta_n,u_n)\geq \gep-\frac{T (C_{\text{uf}}+\gep)}{u_n}, \quad n\in \N,
\ee
which contradicts Lemma~\ref{boundunif} because $\lim_{n\to\infty} u_n=\infty$.

It remains to prove \eqref{defu22}. This is done in a similar manner for the case 
$\Theta\in \cV_M\setminus \overline \cV_{\nAB,B,1,M}$ (the case $\Theta\in\cV_{\nAB,B,1,M}$ 
can again be treated similarly), by assuming that there exists a sequence 
$(\Theta_n)_{n\in\N}$ in $\cV_M\setminus \overline \cV_{\nAB,B,1,M}$ such that
$\lim_{n\to\infty} t_{\Theta_n}=\infty$ and
\be{trf3}
\partial^{-}_u (u\psi(\Theta_n,u))(2 t_{\Theta_n})\geq \gep,\quad n\in \N.
\ee
Thus, similarly as in (\ref{trf2}--\ref{bbv}), the concavity of $u\mapsto u\psi
(\Theta_n,u)$ and \eqref{trf3} give
\be{trf4}
\psi(\Theta_n,2 t_{\Theta_n})\geq \frac{\gep}{2}
+\frac{\psi(\Theta_n,t_{\Theta_n})}{2},\quad n\in \N.
\ee
At this point we use Proposition~\ref{base} to assert that there exist $R_\gep>0$ and 
$L_\gep\in \N$ such that, for $n$ satisfying $t_{\Theta_n}\geq R_\gep$ and $L\geq L_\gep$, 
we have 
\begin{align}
\label{deff22}
\psi(\Theta_n,t_{\Theta_n})&\geq \tfrac{1}{t_{\Theta_n}L} 
\log \sum_{\pi\in \cW_{\Theta,t_{\Theta_n},L}} 
e^{T(\pi) \tfrac{\beta-\alpha}{2}} -\tfrac{\gep}{4},\\
\nonumber \psi(\Theta_n,2 t_{\Theta_n})
&\leq  \tfrac{1}{2 t_{\Theta_n}L} \log \sum_{\pi\in \cW_{\Theta_n,2 t_{\Theta_n},L}} 
e^{T(\pi) \tfrac{\beta-\alpha}{2}} +\tfrac{\gep}{4}.
\end{align}
By using (\ref{trf4}--\ref{deff22}), we obtain that, for $t_{\Theta_n}\geq R_\gep$ and 
$L\geq L_\gep$,
\begin{align}
\label{defg22}
&\tfrac{1}{2 t_{\Theta_n}L} \log \sum_{\pi\in \cW_{\Theta_n,2 t_{\Theta_n},L}} 
e^{T(\pi) \tfrac{\beta-\alpha}{2}}\geq \tfrac{1}{2 t_{\Theta_n}L} 
\log \sum_{\pi\in \cW_{\Theta,t_{\Theta_n},L}} 
e^{T(\pi) \tfrac{\beta-\alpha}{2}} +\tfrac{\gep}{8},
\end{align}uses
some key ingredients that are provided
which we can rewrite as
\begin{align}
\label{defh22}
\tfrac{1}{2 t_{\Theta_n}L} \log |\cW_{\Theta_n,2 t_{\Theta_n},L}|
+ \tfrac{\beta-\alpha}{4 t_{\Theta_n}L} & \min\{T(\pi), 
\pi\in \cW_{\Theta_n,2 t_{\Theta_n},L}\} \\
\nonumber &\geq \tfrac{\beta-\alpha}{4t_{\Theta_n}L} 
\min\{T(\pi), \pi\in \cW_{\Theta_n, t_{\Theta_n},L}\} +\tfrac{\gep}{8}.
\end{align}
Since $\Theta_n\in \cV_M\setminus \overline \cV_{\nAB,B,1,M}$, there exist $\pi_1\in
\cW_{\Theta_n,t_{\Theta_n},L}$ and $\pi_2\in \cW_{\Theta_n, 2t_{\Theta_n},L}$ 
such that
\begin{align}
\label{minreach}
T(\pi_1)
&=l_B(\Theta_n)=\min\{T(\pi), \pi\in \cW_{\Theta_n, t_{\Theta_n},L}\},\\
\nonumber 
T(\pi_2)&=l_B(\Theta_n)=\min\{T(\pi), \pi\in \cW_{\Theta_n, 2 t_{\Theta_n},L}\}.
\end{align}
Thus, for $t_{\Theta_n}\geq R_\gep$ and $L\geq L_\gep$, the inequality in 
\eqref{defh22} becomes
\begin{align}
\label{defh25}
\tfrac{1}{2 t_{\Theta_n}L} \log |\cW_{\Theta_n,2 t_{\Theta_n},L}|\geq\tfrac{\gep}{8},
\end{align}
which obviously contradicts Lemma \ref{convularge}.


\section{Concentration of measure}
\label{Ann2}

Let $\cS$ be a finite set and let $(X_i,\cA_i,\mu_i)_{i\in \cS}$ be a family of 
probability spaces. Consider the product space $X=\prod_{i\in \cS} X_i$ endowed 
with the product $\sigma$-field $\cA=\otimes_{i\in \cS}\cA_i$ and with the product 
probability measure $\mu=\otimes_{i\in \cS} \mu_i$.  

\begin{theorem}
\label{theoco} {\rm (Talagrand~\cite{T96})}
Let $f\colon\,X\mapsto \R$ be integrable with respect to $(\cA,\mu)$ and, for 
$i\in \cS$, let $d_i>0$ be such that $|f(x)-f(y)|\leq d_i$ when $x,y\in X$ differ 
in the $i$-th coordinate only. Let $D=\sum_{i\in \cS} d_i^2$. Then, for all $\gep>0$,
\be{gteo}
\mu\left\{x\in X\colon \left|f(x)-\int fd\mu\right|>\gep\right\}
\leq 2 e^{-\frac{\gep^2}{2D}}.
\ee
\end{theorem}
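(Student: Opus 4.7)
The plan is to prove this concentration inequality via the classical martingale method, realizing $f - \int f\,d\mu$ as a sum of bounded martingale differences and applying the Azuma--Hoeffding inequality. This is exactly the route by which the bounded-differences inequality is obtained from the product structure of $\mu$, and it yields the stated constant $1/(2D)$ in the exponent.

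First I would enumerate $\cS = \{1,\dots,n\}$ arbitrarily and construct the Doob martingale associated with $f$: set $\cF_k = \sigma(\cA_1,\dots,\cA_k)\otimes\{\emptyset,X_{k+1}\}\otimes\cdots\otimes\{\emptyset,X_n\}$, and define $M_k = \E_\mu[f\mid \cF_k]$ for $k=0,1,\dots,n$. Then $M_0 = \int f\,d\mu$, $M_n = f$ almost surely, and $(M_k)_{k=0}^n$ is a martingale with respect to $(\cF_k)$. Writing $f(x) - \int f\,d\mu = \sum_{k=1}^n (M_k(x) - M_{k-1}(x))$ reduces the problem to controlling the increments.

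Next I would show that $|M_k - M_{k-1}|\leq d_k$ pointwise. Using Fubini and the product form of $\mu$, for any $x\in X$ I would write
\[
M_k(x) = \int_{\prod_{j>k} X_j} f(x_1,\dots,x_k,y_{k+1},\dots,y_n)\,\prod_{j>k}\mu_j(dy_j),
\]
and similarly for $M_{k-1}(x)$ with an additional integral over $X_k$ against $\mu_k$. Their difference can be rewritten as
\[
M_k(x)-M_{k-1}(x) = \int_{X_k}\bigl[g_k(x_k)-g_k(z)\bigr]\,\mu_k(dz),
\]
where $g_k(t) = \int f(x_1,\dots,x_{k-1},t,y_{k+1},\dots,y_n)\prod_{j>k}\mu_j(dy_j)$. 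The hypothesis that $f$ differs by at most $d_k$ when only coordinate $k$ changes transfers through the integral to give $|g_k(x_k)-g_k(z)|\leq d_k$, hence $|M_k - M_{k-1}|\leq d_k$.

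Finally, I would apply the Azuma--Hoeffding inequality to the martingale $(M_k-M_0)$ with increment bounds $d_k$: the standard Hoeffding lemma gives $\E[e^{\lambda(M_k-M_{k-1})}\mid \cF_{k-1}]\leq e^{\lambda^2 d_k^2/2}$ for every $\lambda\in\R$, and iterating yields $\E[e^{\lambda(M_n-M_0)}]\leq e^{\lambda^2 D/2}$. A Chernoff bound with the optimal choice $\lambda = \gep/D$ produces $\mu(f-\int f\,d\mu > \gep)\leq e^{-\gep^2/(2D)}$, and applying the same argument to $-f$ and taking a union bound yields the two-sided estimate \eqref{gteo}. The only delicate step is the pointwise bound on the martingale differences; once the product-measure computation is written out with Fubini it is routine, and the rest is the classical exponential-moment bookkeeping.
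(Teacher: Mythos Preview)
Your proof is correct: this is the standard McDiarmid/bounded-differences argument via the Doob martingale and Azuma--Hoeffding, and it produces the stated bound with the right constant. The paper itself does not prove this theorem; it is stated in the appendix on concentration of measure with attribution to Talagrand~\cite{T96} and is used as a black box, so there is no ``paper's own proof'' to compare against. Your write-up would serve perfectly well as a self-contained justification.
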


The following corollary of Theorem~\ref{theoco} was used several times in the paper. 
Let $(\alpha,\beta)\in \CONE$ and let $(\xi_i)_{i\in\N}$ be an i.i.d.\ sequence 
of Bernouilli trials taking the values $-\alpha$ and $\beta$ with probability $\tfrac12$ 
each. Let $l\in \N$, $T\colon\,\,\{(x,y)\in\Z^2\times \Z^2\colon |x-y|=1\}\to\{0,1\}$ 
and $\Gamma\subset\cW_l$ (recall \eqref{defw}). Let $F_l\colon\,[-\alpha,\alpha]^l
\to \R$ be such that
\be{defF}
F_l(x_1,\dots,x_l) = \log \sum_{\pi\in \Gamma} e^{\sum_{i=1}^{l}
x_i\, T( (\pi_{i-1},\pi_i))}. 
\ee
For all $x,y\in [-\alpha,\alpha]^l$ that differ in one coordinate only we have 
$|F_l(x)-F_l(y)|\leq 2\alpha$. Therefore we can use Theorem~\ref{theoco} with 
$\cS=\{1,\dots,l\}$, $X_i=[-\alpha,\alpha]$ and $\mu_i=\tfrac{1}{2} (\delta_{-\alpha} 
+ \delta_{\beta})$ for all $i\in \cS$, and $D=4\alpha^2 l$, to obtain that there 
exist $C_1,C_2>0$ such that, for every $l\in\N$, $\Gamma\subset \cW_n$ and 
$T\colon\,\{(x,y)\in\Z^2\times \Z^2\colon |x-y|=1\}\to \{0,1\}$,
\begin{equation}
\label{concmesut}
\P\big(|F_l(\xi_1,\dots,\xi_m)-\E(F_l(\xi_1,\dots,\xi_m))|>\eta\big)
\leq C_1e^{-\tfrac{C_2\eta^2}{l}}.
\end{equation}  


\section{Large deviation estimate}
\label{Computation}

Let $(\xi_i)_{i\in\N}$ be an i.i.d.\ sequence of Bernouilli trials taking values 
$\beta$ and $-\alpha$ with probability $\frac12$ each. For $N\leq n\in \N$, denote 
by $\cE_{n,N}$ the set of all ordered sequences of $N$ disjoint and non-empty 
intervals included in $\{1,\dots,n\}$, i.e.,
\begin{align}
\label{add26}
\nonumber\cE_{n,N} &= \big\{(I_j)_{1\leq j\leq N}\subset\{1,\dots,n\}\colon 
I_j=\{\min I_j,\dots,\max I_j\} \,\,\forall\,1\leq j\leq N,\\
& \max I_j<\min I_{j+1} 
\,\,\forall\,1\leq j\leq N-1\ \text{and}\ I_j \neq 
\emptyset\,\,\forall\,1\leq j\leq N\big\}.
\end{align}
For $(I)\in \cE_{n,N}$, let $T(I)=\sum_{j=1}^N |I_j|$ be the cumulative length of 
the intervals making up $(I)$. Pick $\gamma>0$ and $K\in \N$, and denote by 
$\widehat{\cE}_{n,K}^{\,\gamma}$ the set of those $(I)$ in $\cup_{1\leq N\leq (n/K)}\,
\cE_{n,N}$ that have a cumulative length larger than $\gamma n$, i.e.,
\be{add27}
\widehat{\cE}_{n,K}^{\,\gamma} = \cup_{N=1}^{n/K}\big\{(I)\in \cE_{n,N}
\colon\,T(I)\geq \gamma n\big\}.
\ee
Next, for $\eta>0$ set 
\be{add28}
\cQ_{n,K}^{\gamma,\eta} = \bigcap_{\,(I)\in \widehat{\cE}_{n,K}^{\,\gamma}} 
\left\{\bigg|\sum_{j=1}^N \sum_{i\in I_j} \big (\xi_i-\tfrac{\beta-\alpha}{2}\big)\bigg|
\leq \eta\,T(I)\right\}.
\ee

\begin{lemma}
\label{lele}
For all $(\alpha,\beta)\in \CONE$, $\gamma>0$ and $\eta>0$ there exists an $\widehat{K}
\in \N$ such that, for all $K\geq \widehat{K}$,
\be{convsubset}
\lim_{n\to \infty} P((\cQ_{n,K}^{\gamma,\eta})^c)=0.
\ee
\end{lemma}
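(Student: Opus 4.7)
\medskip\noindent
\textbf{Proof proposal for Lemma \ref{lele}.} The plan is to combine a Cram\'er large-deviation estimate for a fixed set of indices with a union bound over all admissible families $(I)\in\widehat\cE_{n,K}^{\,\gamma}$, and then to absorb the combinatorial entropy of such families by choosing $K$ large. Since $\E[\xi_1]=(\beta-\alpha)/2$ and the law of $\xi_1$ has bounded support, Cram\'er's theorem provides a rate function $\Lambda^*$ that is strictly positive away from the mean. In particular, for every $\eta>0$ there exists $c(\eta)=\Lambda^*\big(\tfrac{\beta-\alpha}{2}+\eta\big)>0$ such that, for every finite set $S\subset\N$ of cardinality $t$,
\[
\P\!\left(\sum_{i\in S}\xi_i>\big(\tfrac{\beta-\alpha}{2}+\eta\big) t\right)\le e^{-c(\eta)\,t}.
\]
Applying this to $S=\cup_{j=1}^N I_j$ for each $(I)\in\widehat\cE_{n,K}^{\,\gamma}$ and using $T(I)\ge\gamma n$, each individual deviation event has probability at most $e^{-c(\eta)\gamma n}$.

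The second step is to estimate $|\widehat\cE_{n,K}^{\,\gamma}|$. A family of $N$ disjoint non-empty intervals in $\{1,\dots,n\}$ is determined by its $2N$ ordered endpoints, so $|\cE_{n,N}|\le\binom{n}{2N}$. Hence
\[
|\widehat\cE_{n,K}^{\,\gamma}|\le\sum_{N=1}^{\lfloor n/K\rfloor}\binom{n}{2N}\le \frac{n}{K}\,\binom{n}{2\lfloor n/K\rfloor}\le \frac{n}{K}\,\exp\!\big(n\,h(2/K)\big),
\]
where $h(x)=-x\log x-(1-x)\log(1-x)$ is the binary entropy and we use the standard bound $\binom{n}{xn}\le e^{nh(x)}$ valid for $x\le 1/2$ (which requires only $K\ge 4$, and exploits the fact that $N\mapsto\binom{n}{2N}$ is increasing on this range). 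A union bound then gives
\[
\P\big((\cQ_{n,K}^{\gamma,\eta})^c\big)\le \frac{n}{K}\,\exp\!\Big(n\big[h(2/K)-c(\eta)\gamma\big]\Big).
\]
Since $h(2/K)\to 0$ as $K\to\infty$, one may choose $\widehat K=\widehat K(\alpha,\beta,\gamma,\eta)$ so large that $h(2/K)<c(\eta)\gamma/2$ for all $K\ge\widehat K$. For any such $K$ the right-hand side decays exponentially in $n$, proving \eqref{convsubset}.

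The only subtlety worth checking is that Cram\'er's bound can indeed be applied uniformly in the choice of $S$: because the $\xi_i$ are i.i.d.\ and bounded, the bound $\P(\sum_{i\in S}\xi_i>(\tfrac{\beta-\alpha}{2}+\eta)|S|)\le e^{-c(\eta)|S|}$ depends on $S$ only through its cardinality, so the estimate transfers without change to each family $(I)$. I expect no genuine obstacle beyond tuning the competition between the combinatorial factor $e^{nh(2/K)}$ and the rate $e^{-c(\eta)\gamma n}$, which is precisely what the hypothesis $N\le n/K$ (with $K$ at our disposal) is designed to allow.
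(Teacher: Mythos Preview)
Your argument is correct and follows essentially the same route as the paper: apply Cram\'er's theorem to get the exponential bound $e^{-c(\eta)\gamma n}$ for each fixed $(I)$, take a union bound over $\widehat\cE_{n,K}^{\,\gamma}$, and control $|\widehat\cE_{n,K}^{\,\gamma}|$ via $\sum_{N\le n/K}\binom{n}{2N}\le (n/K)\binom{n}{2\lfloor n/K\rfloor}$, choosing $K$ large enough that the combinatorial factor is beaten by the large-deviation rate. The only difference is cosmetic: you make the last step quantitative via the entropy bound $\binom{n}{2n/K}\le e^{n h(2/K)}$, whereas the paper simply asserts that this binomial is $o(e^{c(\eta)\gamma n})$ for $K$ large.
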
 

\begin{proof}
An application of Cram\'er's theorem for i.i.d.\ random variables gives that
there exists a $c_\eta>0$ such that, for every $(I)\in \widehat{\cE}_{n,K}^{\,\gamma}$,
\be{add29}
\P_\xi\bigg(\bigg|\sum_{j=1}^N \sum_{i\in I_j} 
\big(\xi_i -\tfrac{\beta-\alpha}{2}\big) \bigg|\geq \eta\, T(I)\bigg)\leq\, 
e^{-c_\eta T(I)}\leq\, e^{-c_\eta\gamma n},
\ee
where we use that $T(I)\geq \gamma n$ for every $(I)\in \widehat{\cE}_{n,K}^{\,\gamma}$. 
Therefore
\begin{align}
\P_\xi((\cQ_{n,K}^{\gamma,\eta})^c)&\leq  |\widehat{\cE}_{n,K}^{\,\gamma}| 
e^{-c(\eta)\gamma n },
\end{align}
and it remains to bound  $|\widehat{\cE}_{n,K}^{\,\gamma}|$ as
\be{domiE}
\widehat{\cE}_{n,K}^{\,\gamma} = \sum_{N=1}^{n/K}\big|\big\{(I)\in \cE_{n,N}
\colon T(I)\geq \gamma n\big\}\big|
\leq \sum_{N=1}^{n/K} \binom{n}{2N}, 
\ee
where we use that choosing $(I)\in \cE_{n,N}$ amounts to choosing in $\{1,\dots,n\}$ 
the end points of the $N$ disjoint intervals. Thus, the right-hand side of \eqref{domiE} 
is at most $(n/K) \binom{n}{2n/K}$, which for $K$ large enough is 
$o( e^{c(\eta)\gamma n})$ as $n\to \infty$. 
\end{proof}


\section{On the maximizers of the slope-based variational formula}
\label{appT}

In this appendix we prove that, when restricted to $\bar \cR_{p,M}$, the supremum 
of the variational formula in \eqref{genevar}, which equals the truncated free energy 
$f(\alpha,\beta;M,p)$, is attained at some $\bar \rho\in \bar\cR_{p,M}$ and for a unique 
$\bar v\in \bar\cB$. For ease of notation we suppress the $M,p$-dependence of 
$f(\alpha,\beta;M,p)$ in the proofs of this section.

Recall \eqref{defhh} and for $M\in \N$, $p\in (0,1)$ and $(\alpha,\beta)\in \CONE$, 
let $\cO_{p,M,\alpha,\beta}$ be the subset of $\bar \cR_{p,M}$ containing those 
$\bar\rho$ that maximize the variational formula in \eqref{genevar}, i.e., 
\be{rap**}
f(\alpha,\beta;M,p)=h(\bar\rho;\,\alpha,\beta)
= \sup_{v\in \bar \cB}\,\frac{\bar N(\bar\rho,v)}{\bar D(\bar\rho,v)}
\quad \text{for $\bar\rho\in \cO_{p,M,\alpha,\beta}$.}
\ee
Recall (\ref{defucp1}--\ref{defucp3}) and set 
\be{defbv}
\bar v= v(f(\alpha,\beta;M,p)).
\ee

\begin{theorem}
\label{maxv}
For all $M\in \N$,  $p\in (0,1)$ and $(\alpha,\beta)\in \CONE$ the following hold:\\
(1) The set $\cO_{p,M,\alpha,\beta}$ is non-empty.\\
(2) For all $\bar\rho\in \cO_{p,M,\alpha,\beta}$ and all $v\in \bar\cB$ satisfying 
$f(\alpha,\beta;M,p)=\bar N(\bar\rho,v)/\bar D(\bar\rho,v)$, $v= \bar v$ for 
$\bar \rho$-a.e.\  $(k,l)\in \{A,B\}\times [0,\infty)$ or $k=\cI$.
\end{theorem}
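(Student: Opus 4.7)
The plan is to dispose of part (2) as an immediate consequence of Lemma~\ref{reducc}, and to establish part (1) by a compactness and upper-semicontinuity argument.

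For part (2), the positivity estimate in \eqref{posi} ensures $h(\bar\rho;\alpha,\beta)=f(\alpha,\beta)>0$ for every $\bar\rho\in\cO_{p,M,\alpha,\beta}$. Moreover, since every $\Theta\in\bar{\cV}_M$ satisfies $l_{A,\Theta}+l_{B,\Theta}\leq M+2$ (by the mesoscopic vertical constraint), the representation $\bar\rho=G_{\rho,h}$ in \eqref{Gdef} forces $\int_0^\infty l\,[\bar\rho_A+\bar\rho_B](dl)\leq M+2$, so the hypotheses of Lemma~\ref{reducc} are met. The uniqueness clause of that lemma states that every maximizer $v$ of the inner supremum coincides $\bar\rho$-a.e.\ with $v(h(\bar\rho;\alpha,\beta))=v(f(\alpha,\beta))=\bar v$, which is exactly (2).

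For part (1), I would pick a maximizing sequence $(\bar\rho_n)\subset\bar\cR_{p,M}$ with $c_n=h(\bar\rho_n;\alpha,\beta)\to f(\alpha,\beta)$, and proceed in four steps. First, the uniform bound $\int_0^\infty l\,[\bar\rho_{n,A}+\bar\rho_{n,B}](dl)\leq M+2$ from the previous paragraph gives tightness on $\R_+\cup\R_+\cup\{\cI\}$ by Markov's inequality, so a subsequence converges weakly to some $\bar\rho_\infty$. Second, because $\bar\cR_{p,M}$ is defined in \eqref{setbar} as a closure in $\cM_1$, it is weakly closed and $\bar\rho_\infty\in\bar\cR_{p,M}$. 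Third, set $v_n=v(c_n)$; Lemma~\ref{reducc} gives $c_n=\bar N(\bar\rho_n,v_n)/\bar D(\bar\rho_n,v_n)$, and Lemma~\ref{regvc}(iii) yields pointwise convergence $v_n\to\bar v=v(f(\alpha,\beta))$. Fourth, one has to pass to the limit in the numerators and denominators to obtain
\[
f(\alpha,\beta)=\lim_{n\to\infty}\frac{\bar N(\bar\rho_n,v_n)}{\bar D(\bar\rho_n,v_n)}=\frac{\bar N(\bar\rho_\infty,\bar v)}{\bar D(\bar\rho_\infty,\bar v)}\leq h(\bar\rho_\infty;\alpha,\beta)\leq f(\alpha,\beta),
\]
which forces $\bar\rho_\infty\in\cO_{p,M,\alpha,\beta}$ and finishes the proof. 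Finiteness of the limiting denominator is ensured by Lemma~\ref{regvc}(iv) together with the first-moment bound.

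The delicate point is the passage to the limit in step four, since the integrands $l\mapsto v_{n,k,l}$ and $l\mapsto v_{n,k,l}\,\tilde\kappa(v_{n,k,l},l)$ for $k\in\{A,B\}$ live on the non-compact space $\R_+$ and can grow with $l$, while a first-moment bound does not automatically yield uniform integrability. I would handle this by combining three ingredients: (a) the linear-in-$l$ envelope $v_{n,k,l}\leq R+2+l$ that follows from Lemma~\ref{addit3} applied at the (bounded) values $c_n$, (b) the decay $\tilde\kappa(v,l)\to 0$ as $l\to\infty$ uniformly in $v\geq 1+l$ from Lemma~\ref{l:lemconv2}(iv), which forces the numerator integrand to be $o(l)$ at infinity and hence uniformly integrable under the first-moment control, and (c) locally uniform convergence $v_n\to\bar v$ on compact $l$-intervals obtained from the monotonicity in $c$ of Lemma~\ref{regvc}(ii), pointwise convergence from Lemma~\ref{regvc}(iii), and a Dini-type argument. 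A standard Vitali-style truncation (split the integrals at a threshold $l=R$, let $n\to\infty$ on compacts, then let $R\to\infty$) then delivers the required convergence of numerators and denominators.
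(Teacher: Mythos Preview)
Your handling of part (2) via Lemma~\ref{reducc} is correct and matches the paper's argument (the paper actually cites Lemma~\ref{maximo}, which appears to be a typo for Lemma~\ref{reducc}).

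For part (1) your route is genuinely different. The paper does \emph{not} work directly in $\bar\cR_{p,M}$: it first proves existence of a maximizer $\hat\rho$ for the \emph{column-based} formula (Theorem~\ref{reached}), using Lemma~\ref{lefon} to restrict to $\cR_{p,M}^{m_0}$ and then exploit the compactness of $\overline\cV_M^{\,m_0}$ together with the uniform continuity of $(\Theta,u)\mapsto u\psi(\Theta,u)$ (Lemma~\ref{concavt}); only afterwards does it pass to $\bar\cR_{p,M}$ via Lemma~\ref{ABC}. Your direct approach in $\bar\cR_{p,M}$ is more streamlined in spirit, but it trades the clean compact setting for a limit on a non-compact space with unbounded integrands.

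There is a genuine gap in your step four. Your uniform-integrability argument via ingredient (b) works for the $\tilde\kappa$-parts of the numerator, because $v_{n,k,l}\tilde\kappa(v_{n,k,l},l)=o(l)$ uniformly. It does \emph{not} work for the denominator $\bar D(\bar\rho_n,v_n)=\int v_{n,A,l}\,d\bar\rho_{n,A}+\int v_{n,B,l}\,d\bar\rho_{n,B}+\bar\rho_{n,\cI}v_{n,\cI}$, nor for the linear piece $\tfrac{\beta-\alpha}{2}\int v_{n,B,l}\,d\bar\rho_{n,B}$ inside $\bar N$: those integrands grow like $l$, and a first-moment bound alone does not yield uniform integrability of a linear function (take $\bar\rho_{n,B}=(1-\tfrac1n)\delta_0+\tfrac1n\delta_n$: first moment equals $1$, weak limit is $\delta_0$, yet $\int(1+l)\,d\bar\rho_{n,B}\to 2\neq 1$). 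So the sentence ``a standard Vitali-style truncation \ldots\ delivers the required convergence of numerators and denominators'' is unjustified as written. The approach can be rescued: once the $\tilde\kappa$-parts are shown to converge, use the ratio constraint $\bar N_n/\bar D_n\to f(\alpha,\beta)$ and the monotonicity $v_n\geq\bar v$ (from $c_n\leq f(\alpha,\beta)$ and Lemma~\ref{regvc}(ii)) together with lower semicontinuity to get $\bar D(\bar\rho_\infty,\bar v)\leq\liminf\bar D_n$; then argue by contradiction that strict inequality would force $\bar N(\bar\rho_\infty,\bar v)/\bar D(\bar\rho_\infty,\bar v)>f(\alpha,\beta)$. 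But this extra step is essential and is missing from your sketch. The paper's detour through Lemma~\ref{lefon} is precisely what buys a compact setting where none of this is needed.
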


\begin{proof}
The following proposition will be proven in Section \ref{reached1} below and tells us 
that the maximum of the old variational formula in \eqref{genevarD1} is attained for 
some $\rho \in \cR_{p,M}$. Recall the definition of $g(\rho;\alpha,\beta)$ for 
$\rho \in \cR_{p,M}$ in \eqref{defgg}.

\begin{theorem}
\label{reached}
For all $(\alpha,\beta)\in \CONE$, there exists a $\rho\in \cR_{p,M}$ such that 
$f(\alpha,\beta;M,p)=g(\rho;\alpha,\beta)$.
\end{theorem}

We give the proof of Theorem \ref{maxv} subject to Theorem \ref{reached}. 
To that aim, we pick $(\alpha,\beta)\in \CONE$ and note that, by Theorem \ref{reached}, 
there exists a $\hat \rho\in \cR_{p,M}$ such that $f(\alpha,\beta)=g(\hat\rho;\alpha,\beta)$. 
In what follows, we suppress the $(\alpha,\beta)$-dependence of $g(\hat\rho;\alpha,\beta)$. 

Since $f(\alpha,\beta)=g(\hat\rho)$, \eqref{posi} ensures that $g(\hat \rho)>0$, and by applying 
Lemma \ref{maximo} we obtain that 
\be{tgk}
f(\alpha,\beta)=\frac{N(\hat\rho,u(f(\alpha,\beta)))}{D(\hat\rho,u(f(\alpha,\beta)))}.
\ee
Apply Lemma \ref{ABC}, which ensures that there exist a $\bar \rho\in  \bar \cR_p$ and a 
$v\in \bar \cF$ such that
\be{tgk2}
\frac{N(\hat \rho,u(f(\alpha,\beta)))}{D(\hat \rho,u(f(\alpha,\beta)))}\leq 
\frac{\bar N(\bar \rho,v)}{\bar D(\bar \rho, v)}.
\ee
Then $h(\bar \rho)>0$, and we use Lemma \ref{reducc}, which tells us that
\be{tgk3}
\frac{\bar N(\bar \rho,v)}{\bar D(\bar \rho, v)}\leq  
\frac{\bar N(\bar \rho,v(h(\bar \rho)))}{\bar D(\bar \rho, v(h(\bar \rho)))}.
\ee 
Now (\ref{tgk}--\ref{tgk3}) and the variational formula in \eqref{genevar} are sufficient to complete 
the proof of (1). The proof of (2) is a straightforward consequence of Lemma \ref{maximo}.
\end{proof}


\subsection{Proof of Theorem \ref{reached}}
\label{reached1}

We give the proof of Theorem \ref{reached} subject to the following lemma, which will be proven 
in Section \ref{lefon1} below.

\bl{lefon}
For all $t>0$ and $u\in \cB_{\overline{\cV}_M}$ there exists an $m_0\in \N$ such that, 
for all $\rho\in \cR_{p,M}$ and $v\in \cB_{\overline{\cV}_M}$ satisfying $v\leq u$ and 
$N(\rho,v)/D(\rho,v) \geq t$, there exists a $\tilde{\rho}\in \cR_{p,M}^{m_0}$ such that 
$N(\tilde\rho,v)/D(\tilde \rho,v) \geq N(\rho,v)/D(\rho,v)$.
\el

Let $(\rho_n)_{n\in \N}$ in $\cR_{p,M}$ be such that $n \mapsto g(\rho_n;\alpha,\beta)$ is increasing 
with $\lim_{n\to \infty} g(\rho_n;\alpha,\beta)=f(\alpha,\beta)$. Obviously we can choose $(\rho_n)_{n\in \N}$ 
such that $g(\rho_n;\alpha,\beta) \geq f(\alpha,\beta)/2$ for all $n\in \N$. Thus, with the help of Lemma 
\ref{maximo}, we obtain 
\be{}
g(\rho_n;\alpha,\beta)=\frac{N(\rho_n,u(g(\rho_n)))}{ D(\rho_n,u(g(\rho_n)))},  \quad  n\in \N.
\ee
Apply Lemma \ref{lefon} to see that there exists an $m_0\in \N$ such that for all $n\in \N$ there exists an 
$\hat \rho_n \in \cR_{p,M}^{m_0}$ such that 
\be{ccu}
\frac{N(\hat\rho_n,u(g(\rho_n)))}{D(\hat\rho_n,u(g(\rho_n)))}
\geq \frac{N(\rho_n,u(g(\rho_n)))}{D(\rho_n,u(g(\rho_n)))}.
\ee
A straightforward consequence of \eqref{ccu} is that 
\be{}
\lim_{n\to \infty} \frac{N(\hat\rho_n,u(g(\rho_n)))}{D(\hat\rho_n,u(g(\rho_n)))}=f(\alpha,\beta).
\ee 
Moreover, $\hat \rho_n\in \cM_1(\overline{\cV}_M^{\,m_0})$ for all $n\geq n_0$, and since 
$\overline{\cV}_M^{\,m_0}$ is compact we have that $\hat \rho_n$ converges weakly to 
$\rho_\infty  \in \cR_{p,M}^{m_0}$ along a subsequence. Lemma \ref{reguc} implies that 
$n \mapsto u(g(\rho_n))$ is non-increasing and converges pointwise to $u(f(\alpha,\beta))$ 
as $n\to \infty$. Since $\overline{\cV}_M^{\,m_0}$ is compact, Dini's Theorem tells us that 
the convergence of $u(g(\rho_n))$ to $u(f(\alpha,\beta))$ is uniform on $\overline{\cV}_M^{\,m_0}$. 
Therefore, using the uniform continuity of $(u,\Theta)\mapsto u \psi(\Theta,u)$ (see Lemma 
\ref{concavt}), we obtain 
\be{}
f(\alpha,\beta)=\frac{N(\rho_\infty,u(f( \alpha,\beta)))}{D(\rho_\infty,u(f(\alpha,\beta)))},
\ee 
which completes the proof of Theorem \ref{reached}.


\subsubsection{Proof of Lemma \ref{lefon}}
\label{lefon1}

First, we state and prove Claim \ref{lemaint} below, which will be needed to prove Lemma \ref{lefon}. 
Pick $m\geq M+2$, and note that for $\Theta=(\chi,\Delta\Pi,b_0,b_1,x)\in \overline{\cV}_M\setminus
\overline{\cV}_M^{\,m}$ we necessarily have $x_\Theta=2$. Define  $T_m\colon\,\overline{\cV}_M
\mapsto \overline{\cV}_M^{\,m}$ as
\be{defT}
T_m(\Theta)= \left\{
\begin{array}{ll}
\vspace{.1cm}
\Theta
& \mbox{if } \Theta\in \overline{\cV}_M^m, \\
\vspace{.1cm}
\widetilde \Theta=(\chi,\Delta\Pi,b_0,b_1,1)
& \mbox{if } \Theta=(\chi,\Delta\Pi,b_0,b_1,2)\in \overline{\cV}_M\setminus \overline{\cV}_M^m,
\end{array}
\right.
\ee

\begin{claim}
\label{lemaint}
For all $\rho\in \cR_{p,M}$ and $m\in \N \colon m\geq M+2$, $\rho\, \circ\, T_m^{-1}\in \cR_{p,M}^m$.
\end{claim}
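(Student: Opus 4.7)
My strategy is to take an approximating sequence of empirical distributions for $\rho$, modify the $x$-coordinates atom-by-atom so that every resulting column type lives in $\cV_M^m$, and then identify the weak limit as $\rho\circ T_m^{-1}$. Concretely, fix $\rho\in\cR_{p,M}$. By Proposition~\ref{properc} and the definition~\eqref{RMdef}, for $\P$-a.e.\ $\Omega$ there exist $N_k\to\infty$ and sequences $\Pi^{(k)},b^{(k)},x^{(k)}$ (satisfying the constraints in~\eqref{RMNdefalt}) such that $\rho_{N_k}:=\rho_{N_k}(\Omega,\Pi^{(k)},b^{(k)},x^{(k)})\to\rho$ weakly.

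For each $k$ and each $j\in\{0,\dots,N_k-1\}$, set
\[
\tilde x_j^{(k)}=\begin{cases} 1 & \text{if } x_j^{(k)}=2 \text{ and } t_{\Theta_j^{(k)}}>m,\\ x_j^{(k)} & \text{otherwise,}\end{cases}
\]
where $\Theta_j^{(k)}=(\Omega(j,\Pi_j^{(k)}+\cdot),\Delta\Pi_j^{(k)},b_j^{(k)},b_{j+1}^{(k)},x_j^{(k)})$. I would then check that each modified column $T_m(\Theta_j^{(k)})$ does lie in $\cV_M^m$: if $x_j^{(k)}=1$ or if $x_j^{(k)}=2$ and $t_{\Theta_j^{(k)}}\le m$, this is automatic; in the remaining case we flip $x$ to $1$, for which \eqref{deft} gives $t\le M+2\le m$. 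Hence $\tilde\rho_{N_k}:=\rho_{N_k}(\Omega,\Pi^{(k)},b^{(k)},\tilde x^{(k)})\in\cR^{\Omega,m}_{M,N_k}$. By construction $\tilde\rho_{N_k}=\rho_{N_k}\circ T_m^{-1}$, so the only remaining step is to pass to the weak limit and invoke the closure in~\eqref{RMmdef}.

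For this passage I would apply the continuous mapping theorem to $T_m$. The map is continuous at every $\Theta$ off the ``boundary'' set $B_m:=\{\Theta\in\overline\cV_{\nAB,M}:x_\Theta=2,\,t_\Theta=m\}$, since $x\mapsto x$ is locally constant, $\chi\mapsto(n_0,n_1)$ is locally constant in the product topology, and $t_\Theta$ depends continuously on $(b_0,b_1,\Delta\Pi)$. Whenever $\rho(B_m)=0$ the continuous mapping theorem gives $\tilde\rho_{N_k}\to\rho\circ T_m^{-1}$ weakly and the claim follows.

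The main obstacle is therefore the case $\rho(B_m)>0$. To handle this I would perturb the approximating empirical distributions slightly: since the entries $b_j^{(k)}$ live in $\Q\cap(0,1]$ and $t_\Theta$ is a linear function of $(b_0,b_1,\Delta\Pi)$ on each fiber of fixed $\chi$, we can replace each $b_j^{(k)}$ by a nearby rational so that no atom of the perturbed empirical distribution satisfies $t_\Theta=m$, while keeping the weak limit equal to $\rho$ (using a standard diagonal argument together with the fact that the perturbation can be made of size $o(1)$ in the metric $d_M$ and compatible with the realization $\Omega$, since only the $b_j$-coordinates are touched). After this perturbation $T_m$ is continuous on the support of every perturbed empirical measure, so the continuous mapping argument applies, and closedness of $\cR_{p,M}^m$ in $\cM_1(\overline\cV_M^m)$ yields $\rho\circ T_m^{-1}\in\cR_{p,M}^m$.
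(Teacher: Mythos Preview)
Your overall strategy is the same as the paper's: take approximating empirical measures for $\rho$, flip the $x$-coordinate from $2$ to $1$ on each atom with $t_\Theta>m$, observe that the modified empirical measure is exactly $\rho_{N_k}\circ T_m^{-1}\in\cR^{\Omega,m}_{M,N_k}$, and pass to the weak limit. The paper's proof is literally this, except that it simply \emph{asserts} that $T_m$ is continuous for $d_M$ and invokes the continuous mapping theorem directly. You are right to pause here: $T_m$ is genuinely discontinuous at every $\Theta\in B_m=\{x_\Theta=2,\ t_\Theta=m\}$, since a small perturbation of $(b_0,b_1)$ can push $t_\Theta$ above $m$ and force $T_m$ to jump to the $x=1$ branch (a $d_M$-jump of size $1$). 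So your concern identifies a real imprecision in the paper's argument.

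That said, your proposed repair in the case $\rho(B_m)>0$ does not work as written. Perturbing the $b_j^{(k)}$ so that no \emph{atom} of the approximating empirical measures sits on $B_m$ does nothing to help the continuous mapping theorem, which needs $T_m$ to be continuous $\rho$-a.e., i.e., at the \emph{limit} points. A one-line counterexample: on $[0,1]$ take $T=1_{[1/2,1]}$, $\rho=\delta_{1/2}$, and $\rho_n=\delta_{1/2-1/n}$; every $\rho_n$ avoids the discontinuity, $\rho_n\to\rho$, yet $\rho_n\circ T^{-1}=\delta_0\not\to\delta_1=\rho\circ T^{-1}$. The point is that the \emph{direction} from which the atoms approach $B_m$ matters. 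Since $T_m$ acts as the identity on $B_m$, the correct fix is to perturb so that every atom converging to $B_m$ does so from the side $\{t_\Theta<m\}$; this is what actually forces $\rho_{N_k}'\circ T_m^{-1}\to\rho\circ T_m^{-1}$. You would then still need to argue that such a one-sided perturbation of the $b_j^{(k)}$ can be made consistently (each $b_j$ enters both $\Theta_{j-1}$ and $\Theta_j$, and the sign of $\partial t_\Theta/\partial b$ depends on which branch of the $\min$ in \eqref{deftt} is active), which your sketch does not address.
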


\begin{proof}
First note that $T_m\colon\,\overline{\cV}_M\mapsto \overline{\cV}_M^{\,m}$ is continuous with respect 
to the $d_M$-distance. Next, pick $\rho\in \cR_{p,M}$. By the definition of $\cR_{p,M}$, there exists a 
strictly increasing sequence $(N_k)_{k\in \N}$ and $(\Pi^k_j)_{j\in \N_0}$, $(b^k_j)_{j\in \N_0}$, 
$(x^k_j)_{j\in \N_0}$ such that $\rho=\lim_{k\to \infty} \rho_{N_k}(\Omega,\Pi^k,b^k,x^k)$. The 
continuity of $T_m$ implies that
\be{}
\rho\, \circ\, T_m^{-1}=\lim_{k\to \infty} \rho_{N_k}(\Omega,\Pi^k,b^k,x^k)\, \circ\, T_m^{-1},
\ee
and we can easily check that  
\be{}
\rho_{N_k}(\Omega,\Pi^k,b^k,x^k)\, \circ\, T_m^{-1}=\rho_{N_k}(\Omega,\Pi^k,b^k,\tilde x^k), 
\ee
where for $j,k\in \N_0$ we define
\be{defxx}
\tilde x^k_j= \left\{
\begin{array}{ll}
\vspace{.1cm}
  x^k_j
& \mbox{if } \quad (\Omega(j,\cdot),\Delta\Pi^k_j,b^k_j,b^k_{j+1}\tilde x^k_j)\in \overline{\cV}_M^m, \\
\vspace{.1cm}
1
&\mbox{otherwise.} 
\end{array}
\right.
\ee
Consequently, $\rho\, \circ\, T_m^{-1}\in \cR_{p,M}$.
\end{proof}

We resume the proof of Lemma \ref{lefon}. Pick $t>0$, $\rho\in \cR_{p,M}$, $u\in \cB_{\overline{\cV}_M}$ 
and $v\in \cB_{\overline{\cV}_M}$ satisfying $v\leq u$ and $N(\rho,v)/D(\rho,v) \geq t$. Pick $m\in \N
\colon m\geq M+2$, whose value will be specified later, and set $\rho_m=\rho\, \circ\, T_m^{-1}$, 
which belongs to $\cR_{p,M}$ by Claim \ref{lemaint}. Write
\be{defG1}
\frac{ N(\rho_m,v)}{D(\rho_m,v)}-\frac{N(\rho,v)}{D(\rho,v)}=\int_{0}^{1} G'(t) dt
\quad \text{with} \quad G(t)=\frac{A+tB}{c+tD}
\ee
with 
\begin{align}
\label{defABCD}
A&=\int_{\overline{\cV}_M} v_\Theta \psi(\Theta,v_\Theta)\,\rho(d\Theta)\quad \quad 
B=\int_{\overline{\cV}_M\setminus \overline{\cV}_M^m } v_{\widetilde\Theta}\,  
\psi(\widetilde\Theta,v_{\widetilde\Theta})-v_\Theta \psi(\Theta,v_\Theta)\,\rho(d\Theta)\\
C&=\int_{\overline{\cV}_M} v_\Theta \,\rho(d\Theta)\qquad \qquad \qquad D
=\int_{\overline{\cV}_M\setminus \overline{\cV}_M^m } v_{\widetilde\Theta}-v_\Theta \,\rho(d\Theta).
\end{align}
Note that the sign of the derivative $G'(t)$ is constant and equal to the sign of
\be{BBB}
B-\frac{A}{C} D=\int_{\overline{\cV}_M\setminus \overline{\cV}_M^m } 
v_\Theta \bigg[\frac{A}{C}\,  \Big(1-\frac{v_{\widetilde\Theta}}{v_\Theta}\Big) -\psi(\Theta,v_\Theta)
+\frac{v_{\widetilde\Theta}}{v_\Theta}\,  \psi(\widetilde\Theta,v_{\widetilde\Theta}) \bigg]\rho(d\Theta).
\ee
Therefore Lemma \ref{lefon} will be proven once we check that for $m$ large enough the right-hand 
side of \eqref{BBB} is strictly positive, uniformly in $v\leq u$. To that aim, we recall Lemma \ref{boundunif}, 
which tells us that $\psi(\Theta,v_\Theta)\leq t/2$ for every $\Theta \in \overline{\cV}_M\setminus
\overline{\cV}_M^{m}$, provided $m$ is chosen large enough (because $v_\Theta\geq t_\Theta\geq m$), 
and we recall \eqref{boundel}, which tells us that $\psi(\widetilde\Theta,v_{\widetilde\Theta})\leq 
C_{\text{uf}}(\alpha)$ for $\Theta \in \overline{\cV}_M\setminus \overline{\cV}_M^{m}$. We further 
note that 
\be{bornn}
v_{\widetilde\Theta}\leq \max\Big\{u_\Theta\colon\, \Theta\in \overline{\cV}_M^{M+2}\Big\}<\infty
\quad \text{for every}\ \Theta\in \overline{\cV}_M,
\ee
which, together with the fact that $\frac{A}{C}=N(\rho,v)/D(\rho,v) \geq t>0$ and $v_\Theta \geq 
t_\Theta\geq m$ for $\Theta\in\overline{\cV}_M\setminus \overline{\cV}_M^{m}$, ensures that for 
$m$ large enough the right-hand side of \eqref{BBB} is strictly positive, uniformly in $v\leq u$. 
This completes the proof of Lemma \ref{lefon}.

 
\section{Uniqueness of the maximizers of the variational formula}
\label{appA}

In this appendix we first prove, with the help of Lemma~\ref{lemare}, that for $\Theta\in
\overline\cV_M$ and $u\geq t_\Theta$ the variational formula in Proposition~\ref{energ} 
has unique maximizers. This uniqueness implies that, for a given column type and a given 
time spent in the column, the copolymer has a unique way to move through the column. We 
next use this uniqueness to show, with the help of Proposition~\ref{Borel1}, that for 
$u\in \cB_{\,\overline \cV_M}$ the maximizers of \eqref{Bloc of type I} are Borel functions 
of $\Theta\in \overline\cV_M$. 

Recall \eqref{setE} and pick $h\in \cE$. Set 
\begin{align}
\label{setU}
\nonumber \cU(h)=\big\{(r_{A,\Theta},r_{B,\Theta},
r_{\cI,\Theta})_{\Theta\in \overline{\cV}_M}\in ([0,\infty)^3)^{\overline\cV_M}\colon
&\ r_{k,\Theta}\geq 1+\tfrac{l_{k, \Theta}}{h_{k, \Theta}}\ \forall\,k\in \{A,B\} \ 
\forall \Theta\in \overline\cV_M,\\
\nonumber 
&\ r_{\cI,\Theta}\geq 1 \,\,\forall\,k\in \{A,B\}\ \forall \Theta\in \overline\cV_M,\\
&\ \Theta \mapsto r_{k,\Theta}\ \text{Borel}\,\,\forall\,k\in \{A,B,\cI\}\big\},
\end{align}
where we recall that $\tfrac{l_{k,\Theta}}{h_{k, \Theta}}=0$ by convention when 
$l_{k,\Theta}=h_{k,\Theta}=0$.

\begin{proposition}
\label{Borel}
For all $u\in \cB_{\,\overline \cV_M}$ there exist $h\in \cE$ and $r\in\cU(h)$
such that, for all $\Theta\in \overline\cV_M$,
\begin{align}\label{Borel1}
u_\Theta\,\psi(\Theta,u_\Theta)=h_{A,\Theta}\, 
&r_{A,\Theta}\,\tilde{\kappa}\big(r_{A,\Theta},
\tfrac{l_{A,\Theta}}{h_{A,\Theta}}\big)\\
\nonumber 
&+h_{B,\Theta}\,r_{B,\Theta}\,\big[\tilde{\kappa}\big(r_{B,\Theta},
\tfrac{l_{B,\Theta}}{h_{B,\Theta}}\big) +\tfrac{\beta-\alpha}{2}\big]
+h_{\cI,\Theta}\, r_{\cI,\Theta}\,\phi_\cI(r_{\cI,\Theta}),
\end{align}
and
\be{Borel2}
h_{A,\Theta}\, r_{A,\Theta} + h_{B,\Theta}\, r_{B,\Theta}
+ h_{\cI,\Theta}\, r_{\cI,\Theta}=u_\Theta.
\ee
\end{proposition}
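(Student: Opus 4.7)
The plan is to obtain the three objects $h$, $r$ (and the auxiliary quantities $a_{k,\Theta} = h_{k,\Theta} r_{k,\Theta}$) by a measurable selection of maximizers in the variational formula of Proposition~\ref{energ}. Precisely, for each $\Theta \in \overline{\cV}_M$ and each real $u \geq t_\Theta$, let $\cM(\Theta,u) \subset \cL(\Theta;u)$ denote the set of pairs $((h),(a))$ that achieve the supremum in \eqref{Bloc of type I}. The three tasks are then: (i) show $\cM(\Theta,u) \ne \emptyset$; (ii) show $\cM(\Theta,u)$ is a singleton (uniqueness); (iii) show that the resulting map $\Theta \mapsto ((h_\Theta),(a_\Theta))$ is Borel on $\overline{\cV}_M$ (with $u = u_\Theta$). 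Once (i)--(iii) are settled, defining $r_{k,\Theta} = a_{k,\Theta}/h_{k,\Theta}$ when $h_{k,\Theta}>0$ and $r_{k,\Theta} = 1 + l_{k,\Theta}/h_{k,\Theta}$ (with the usual convention $=1$ when $h_{k,\Theta}=0$) gives $r \in \cU(h)$, and \eqref{Borel1}--\eqref{Borel2} are immediate from the variational formula together with $a_A + a_B + a_\cI = u_\Theta$ and $h_A + h_B + h_\cI = 1$.

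For existence in (i), I would reuse the objects from the proof of Lemma~\ref{concavt}: the set $\cL(\Theta;u)$ is compact, and the integrand
\[
Q((h),(a),l_A,l_B) = g(a_A,h_A,l_A) + g(a_B,h_B,l_B) + a_B \tfrac{\beta-\alpha}{2} + g'(a_\cI,h_\cI)
\]
is continuous on $\cL(\Theta;u)$ (via the continuous extensions of $\widetilde\kappa$ and $\phi_\cI$ to the boundary, as in Lemmas~\ref{l:lemconv2}(i) and~\ref{l:lemconv}(ii)). Hence the supremum is attained, so $\cM(\Theta,u) \ne \emptyset$.

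For uniqueness in (ii), I would follow the strict-concavity argument used in the proof of Lemma~\ref{concav}. The functions $g_l$ on $\cN_l^+$ and $\widetilde g$ on $\cN_0^+$ are strictly concave, $a_B \mapsto a_B \tfrac{\beta-\alpha}{2}$ is linear, and the ``exceptional'' set $\widetilde\cL(l_A,l_B;u)$ introduced in \eqref{restor} was shown there to be disjoint from the maximizers. Combining strict concavity on $\cN_l^+, \cN_0^+$ with the fact that every maximizer lies in $\cL(\Theta;u) \setminus \widetilde\cL(l_A,l_B;u)$, the averaging argument that closed the proof of Lemma~\ref{concav} shows that any two maximizers must coincide on each of their $A$-, $B$- and $\cI$-components. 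This uniqueness will be the key technical point (Lemma~\ref{lemare}).

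For measurability in (iii), the route is the following. The space $\overline{\cV}_M$ is Polish under the distance $d_M$ in \eqref{dist}, and one checks that the set-valued map $\Theta \mapsto \cL(\Theta;u_\Theta)$ has closed graph (continuity of $\Theta \mapsto (l_{A,\Theta},l_{B,\Theta},u_\Theta)$ via continuity of $u$ and the block-boundary data), so it is upper hemicontinuous with nonempty compact values. Together with the continuity of $((\Theta,u),(h),(a)) \mapsto Q$ on $\overline{\cV}_M^* \times \R^6$ (Lemma~\ref{concavt}), Berge's maximum theorem shows that the argmax correspondence $\Theta \mapsto \cM(\Theta,u_\Theta)$ is upper hemicontinuous; since (ii) says this correspondence is single-valued, it is in fact continuous, and \emph{a fortiori} Borel. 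I then set $(h_\Theta,a_\Theta)$ to be this unique maximizer. The main obstacle is really step (ii): without the strict-concavity/no-exceptional-maximizer argument one cannot get single-valuedness of the argmax, and the Borel selection in (iii) would require the more delicate Kuratowski--Ryll-Nardzewski theorem rather than Berge's result. Everything else is a routine packaging of Proposition~\ref{energ}.
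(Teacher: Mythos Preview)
Your step (ii) contains a genuine gap: the maximizer in $\cL(\Theta;u)$ is \emph{not} unique in general. The problem is that $g_0$ and $\widetilde g$ are \emph{not} strictly concave on $\cN_0^+$; they are perspective functions, $g_0(a,h)=h\cdot(a/h)\widetilde\kappa(a/h,0)$ and $\widetilde g(a,h)=h\cdot(a/h)\phi_\cI(a/h)$, hence positively homogeneous of degree one and merely concave in $(a,h)$ (only strictly concave in $a$ for fixed $h>0$; see the proof of Lemma~\ref{concav}). So whenever $l_A=0$ or $l_B=0$, mass can be shifted between the corresponding $k$-component and the $\cI$-component without changing the value of $Q$ (for instance when $\phi_\cI\equiv\widetilde\kappa(\cdot,0)$, i.e.\ $\beta\le 0$). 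Thus the argmax correspondence is not single-valued, and your Berge argument does not yield a continuous, let alone Borel, selector.

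The paper's Lemma~\ref{lemare} repairs this by proving existence and uniqueness of a maximizer subject to the \emph{additional} canonical constraints (iii)--(iv), which force $\bar a_k=\bar h_k=0$ whenever $l_k=0$ (for $\Theta\in\overline\cV_{\AB,M}$ or $\Theta\in\overline\cV_{\nAB,k,2,M}$). Under these constraints the relevant $g_{l_k}$ with $l_k>0$ are strictly concave on $\cN_{l_k}^+$, and the remaining non-strictly-concave pieces are simply absent, restoring uniqueness. For measurability the paper does not invoke Berge on all of $\overline\cV_M$; instead it partitions $\overline\cV_M$ into the five Borel pieces of \eqref{papar}, refined according to whether $l_A,l_B$ vanish (see \eqref{papar2}), and argues continuity of the (now unique) canonical maximizer on each piece separately. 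Your fallback via Kuratowski--Ryll-Nardzewski could in principle produce \emph{some} Borel selector from the argmax correspondence, but you would still need to verify that the selection lands in $\cE$ (in particular $h_{k,\Theta}>0$ when $l_{k,\Theta}>0$), and the argument would be less explicit than the paper's partition-and-continuity route.
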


\begin{proof}
For $l\in\R$, let
\be{defrmalt1}
\cN_l=\{(a,h)\in [0,\infty) \times [0,1]\colon a\geq h+|l|\},
\quad \cN_l^+=\{(a,h)\in \cN_l\colon h>0\},
\ee
let $g_l\colon\,\cN_l\mapsto [0,\infty)$ be defined as $g_l(a,h)=a\,\tilde{\kappa}
(\tfrac{a}{h},\tfrac{l}{h})$ for $h>0$ and $g_l(a,h)=0$ for $h=0$, and let $\tilde{g}
\colon\,\cN_0\mapsto [0,\infty)$ be defined as $\tilde{g}(a,h)=a\,\phi_{\cI}(\tfrac{a}{h})$ 
for $h>0$ and $\tilde{g}(a,h)=0$ for $h=0$. We can rewrite \eqref{Bloc of type I} as
\begin{align}
\label{Bloc of type II}
&u \psi(\Theta,u;\alpha,\beta)
= \sup_{(h),(a) \in \cL(\Theta;\,u)} f_{\,l_A,l_B}\big[(h),(a)\big]
\end{align}
with
\be{redef2}
f_{\,l_A,l_B}\big[(h),(a)\big]=g_{l_A}(a_A,h_A)+g_{l_B}(a_B,h_B)
+a_B\, \tfrac{\beta-\alpha}{2}+\tilde{g}(a_\cI,h_{\cI}).
\ee
Lemma~\ref{lemare} shows that, subject to some additional conditions, the maximizer in the 
right-hand side of \eqref{Bloc of type II} is unique. This allows us to prove the continuity 
of this maximizer as a function of $\Theta$ on each subset of a finite partition of 
$\overline{\cV}_M$, which implies the Borel measurability of this maximizer and completes 
the proof of Proposition~\ref{Borel}. 

\bl{lemare}
For all $\Theta\in \overline \cV_M$ and $u\geq t_\Theta$ there exists a unique 
$(\bar h), (\bar a) \in \cL (\Theta;u)$ satisfying:\\
(i) $u \,\psi(\Theta,u;\alpha,\beta) =  f_{\,l_A,l_B}[(\bar h),(\bar a)]$.\\
(ii) $\bar h_k>0$ if $\bar a_k>0$ for $k\in \{A,B,\cI\}$.\\
(iii) $\bar a_k=\bar h_k=0$ if $\bar l_k=0$ for $\Theta\in \overline{\cV}_{\AB,M}$ and 
$k\in \{A,B\}$.\\
(iv) $\bar a_k=\bar h_k=0$ if $\bar l_k=0$ for $\Theta\in \overline{\cV}_{\nAB,k,2,M}$
and $k\in \{A,B\}$.
\el

\begin{proof}
We prove existence and uniqueness.

\medskip\noindent
{\bf Existence.}
The existence of a $(h_1), (a_1) \in \cL (\Theta;u)$ satisfying (i) is ensured by the 
continuity of $f_{l_A,l_B}$ and the compactness of $\cL (\Theta;u)$. Assume that $\Theta
\in \overline{\cV}_{\AB,M}$, $l_A=0$ and $(h_{1,A}, a_{1,A})\neq (0,0)$. Then 
\begin{align}\label{co}
g_0(a_{1,A},h_{1,A})+\tilde{g}(a_{1,\cI},h_{1,\cI})
&\leq \tilde g(a_{1,A},h_{1,A})+\tilde{g}(a_{1,\cI},h_{1,\cI})\\
\nonumber 
& \leq 2\,\tilde{g}\big(\tfrac{a_{1,A}+a_{1,\cI}}{2},\tfrac{h_{1,A}
+h_{1,\cI}}{2}\big)=\tilde{g}\big(a_{1,A}+a_{1,\cI},h_{1,A}+h_{1,\cI}\big),
\end{align}
where we use the inequality $g_0\leq \tilde{g}$ and the concavity of $\tilde{g}$. Thus, 
by setting $(h_2),(a_2)=(0,h_{1,B},h_{1,A}+h_{1,\cI}), (0,a_{1,B},a_{1,A}+a_{1,\cI})$,
we obtain that $(h_2), (a_2) \in \cL (\Theta;u)$, satisfies (iii) and 
\be{derq}
f_{l_A,l_B}((h_2),(a_2))\geq f_{l_A,l_B}((h_1),(a_1)),
\ee
which implies that $(h_2),(a_2)$ also satisfies (i). The case $\Theta\in\overline{\cV}_{
\AB,M}$, $l_B=0$ and the case $\Theta\in \overline{\cV}_{\nAB,k,2,M}$, $l_k=0$, $k\in
\{A,B\}$, can be treated similarly, to conclude that there exist $(h),(a)\in\cL(\Theta;u)$ 
satisfying (i), (iii--iv). We will show that (ii) follows from these as well. The proof 
will be given for the case $\Theta\in \overline{\cV}_{\AB,M}$ and $l_A,l_B>0$, since (iii) 
already indicates that $h_k=a_k=0$ if $l_k=0$ for $k\in \{A,B\}$ and $\Theta\in\overline{
\cV}_{\AB,M}$. The case $\Theta\in \overline{\cV}_{\nAB,M}$ can be treated similarly. 

In the proof of Lemma \ref{concav} we showed that $(h),(a)\in \cL(\Theta,u)$ maximizing 
\eqref{Bloc of type II} necessarily satisfies $h_k>0$ if $a_k>l_k$ for $k\in \{A,B\}$ and 
$h_{\cI}>0$ if $a_{\cI}>0$. Thus, we only need to exclude the cases $h_k=0$ and $a_k=l_k>0$ 
for $k\in \{A,B\}$. We will therefore assume that $h_B=0$ and $a_B=l_B$, and prove that 
this leads to a contradiction. The case $h_A=0$ and $a_A=l_A$ is easier to deal with. We 
finally assume that $a_\cI>h_\cI>0$ (the case $a_\cI=h_\cI$ being easier). We pick $c>1$ 
and $x>0$ small enough to ensure that $a_\cI-cx>h_\cI-x>0$, and we set $(h)_x,(a)_x=(h_A,
x,h_\cI-x),(a_A,l_B+cx, a_I-cx)$. The proof will be complete once we show that for $x$ small 
enough the quantity
\be{lin}
f_{l_A,l_B}((h)_x,(a)_x)-f_{l_A,l_B}((h),(a))
=g_{l_B}(l_B+cx,x)-V_x+cx\left(\tfrac{\beta-\alpha}{2}\right)
\ee
is strictly positive with $V_x=\tilde{g}(a_\cI,h_\cI)-\tilde{g}(a_\cI-cx,h_\cI-x)$.

At this stage, we note that $\mu\mapsto \mu\phi_{\cI}(\mu)$ is concave on $[1,\infty)$, and 
therefore is Lipshitz on any interval $[r,t]$ with $r>1$. Since $a_\cI/h_\cI>0$, there exists 
a $C>0$, depending on $(a_\cI,h_\cI)$ only, such that $V_x\leq C x$ for $x$ small enough. 
Therefore \eqref{lin} becomes
\be{linalt}
f_{l_A,l_B}((h)_x,(a)_x)-f_{l_A,l_B}((h),(a))
\geq g_{l_B}(l_B+cx,x)-\big(C+c\,\tfrac{\beta-\alpha}{2}\big)\,x
\ee
for $x$ small enough. By the concavity of $g_{l_B}$, and since $g_{l_B}(l_B+cx,0)=0$, we can 
write $g_{l_B}(l_B+cx,x)\geq x\,\partial_2g_{l_B}(l_B+cx,x)$ for $x>0$. By the definition of 
$g_{l_B}$, and with \eqref{kapexplform}, we obtain that
\be{partde}
\partial_2 g_{l_B}(l_B+cx,x)
= \big(1+\tfrac{cx}{l_B}\big) \partial_2 \kappa\big(1+\tfrac{cx}{l_B},\tfrac{cx}{l_B}\big).
\ee 
We now recall \cite{dHW06}, Lemma 2.1.1, which claims that $\kappa$ is defined on $\DOM
=\{(a,b)\colon a\geq 1+b, b\geq 0\}$ and is analytic on the interior of $\DOM$. Moreover, 
in the proof of this lemma, an expression for $\partial_b\, \kappa(a,b)$ is provided that 
is valid on the interior of $\DOM$. From this expression, and since $c>1$, we can check 
that $\lim_{s \downarrow 0} \partial_2 \kappa(1+cs,s)=\infty$, which suffices to conclude 
that the right-hand side of \eqref{lin} is strictly positive for $x$ small enough. This 
completes the proof of the existence in Lemma \ref{lemare}.

\medskip\noindent
{\bf Uniqueness.}
The uniqueness of $(\bar h),(\bar a)$ is a straightforward consequence of the strict concavity 
of $g_{l_A}$ and $g_{l_B}$ when $l_A\neq 0$ and $l_B\neq 0$ and of the concavity of $g_0$ and
$\widetilde{g}$. We will not write out the proof in detail, because it requires us to 
distinguish between the cases $\Theta\in\overline\cV_{\AB,M}$ and $\Theta\in\overline
\cV_{\nAB,M}$, between $l_k=0$ and $l_k\neq 0$, $k\in \{A,B\}$, and also between $x_\Theta=1$ 
and $x_\Theta=2$. The latter distinctions are tedious, but no technical diffulties arise.
\end{proof}

We resume the proof of Proposition~\ref{Borel}. We pick $u\in \cB_{\,\overline \cV_M}$, and 
for each $\Theta\in \overline \cV_M$ we apply Lemma~\ref{lemare} at $\Theta,u_\Theta$, to 
obtain a $(\bar h)_\Theta,(\bar a)_\Theta \in \cL(\Theta;u_\Theta)$ satisfying (i--iv). We 
set $(\bar h)\colon\, \Theta\in \overline{\cV}_M\mapsto \bar h_\Theta$ and $(\bar a)\colon\, 
\Theta\in \overline{\cV}_M\mapsto \bar a_\Theta$, and we recall \eqref{setE}. If we can show 
that $\Theta\mapsto (\bar h)_\Theta$ is Borel, then it follows that $(\bar h)\in \cE$, 
because (ii) and the fact that $(\bar h)_\Theta,(\bar a)_\Theta \in \cL(\Theta;u_\Theta)$ 
for $\Theta\in \overline{\cV}_M$ ensure that the other conditions required to belong to 
$\cE$ are fulfilled by $(\bar h)$. Moreover, if we can we show that $\Theta\mapsto(\bar
a)_\Theta$ is Borel, then the proof of Proposition \ref{Borel} will be complete, because 
we can set  
\be{defr}
(\bar r_A(\Theta),\bar r_B(\Theta),\bar r_\cI(\Theta))
=\Big(\tfrac{\bar a_A(\Theta)}{\bar h_A(\Theta)},
\tfrac{\bar a_B(\Theta)}{\bar h_B(\Theta)},
\tfrac{\bar a_\cI(\Theta)}{\bar h_\cI(\Theta)}\Big), 
\quad \Theta\in \overline \cV_M,
\ee
with the convention $\bar r_k(\Theta)=1$ when $\bar a_k(\Theta)=\bar h_k(\Theta)=0$ for 
$k\in \{A,B,\cI\}$, after which $(\bar r)\in \cU(h)$ and $(\bar h),(\bar r)$ satisfy 
\eqref{Borel1} and \eqref{Borel2}. 

To complete the proof it remains to show that $\Theta\mapsto (\bar h)_\Theta,
(\bar a)_{\Theta}$ is Borel. Recall the partition  
\be{papar}
\overline \cV_M=\overline \cV_{\AB,M}\cup 
\big(\cup_{(x,k)\in \{1,2\}\times \{A,B\}} \overline \cV_{\AB,k,x,M}\big),
\ee
and partition these five subsets in the right-hand side of \eqref{papar} into smaller 
subsets depending on the values taken by $l_A$ and $l_B$. For $\overline \cV_{\AB,M}$, 
this gives 
\begin{align}
\label{papar2}
\overline{\cV}_{\AB,M}
=& \{\Theta\in \overline\cV_{\AB,M}\colon l_A,l_B>0\}
\cup\{\Theta\in \overline\cV_{\AB,M}\colon l_A>0, l_B=0\}\\ \nonumber
&\cup \{\Theta\in \overline\cV_{\AB,M}\colon l_A=0,l_B>0\}
\cup \{\Theta\in \overline\cV_{\AB,M}\colon l_A=l_B=0\},
\end{align}
and on each of these subsets the fact that $(\bar h)_\Theta,(\bar a)_\Theta$ are the 
unique elements in $\cL(\Theta;u_\Theta)$ satisfying (i--iv) implies that $\Theta\mapsto
(\bar h)_\Theta,(\bar a)_\Theta$ are continuous and therefore Borel. Since each subsets 
in the right-hand side of \eqref{papar2} belongs to the Borel $\sigma$-field generated 
by $d_M$ (recall \eqref{dist}), we can conclude that $\Theta\mapsto (\bar h)_\Theta,
(\bar a)_\Theta$ are Borel on $\overline\cV_M$. This completes the proof of 
Proposition~\ref{Borel}.
\end{proof}


\end{appendix}



\end{document}